\documentclass[12pt,reqno]{amsart}
\usepackage{amsmath,amsfonts,amscd,amssymb,graphicx,amsthm,enumerate,parskip,mathrsfs}  
 \usepackage[a4paper, margin=1in]{geometry}
\usepackage{times}

\theoremstyle{definition}
\newtheorem{theorem}{Theorem}[section]
\newtheorem{proposition}[theorem]{Proposition}
\newtheorem{lemma}[theorem]{Lemma}
\newtheorem{corollary}[theorem]{Corollary}

\newtheorem{remark}[theorem]{Remark}
 \numberwithin{equation}{section}
\numberwithin{equation}{section}
 
\linespread{1.2}
\setlength\parskip{8pt}
 
\newcommand{\di}{\displaystyle}
\newcommand{\pa}{\partial}
\newcommand{\vep}{\varepsilon}

\usepackage{cite}
\usepackage[colorlinks,citecolor=blue]{hyperref}

\begin{document}
  
\title[Local Index Theorem]{  Local Index Theorem for Cofinite Hyperbolic Riemann Surfaces}

\author{Lee-Peng Teo}

\address{Department of  Mathematics, Xiamen University Malaysia, Jalan Sunsuria, Bandar Sunsuria, 43900, Sepang, Selangor, Malaysia. }

\email{lpteo@xmu.edu.my}

\subjclass[2020]{Primary 30F60, 32G15, 32A55}

\date{\today}
\begin{abstract}
We discuss the local index theorem for cofinite Riemann surfaces in a pedagogical way, from a more computational perspective. Given a cofinite Riemann surface $X$, let $\Delta_n$ be the $n$-Laplacian and let   $N_n$ be the Gram matrix of a basis of holomorphic $n$-differentials on $X$.
The local index theorem says that on the Teichm\"uller space $T(X)$, the second variation of $\log\det\Delta_n-\log \det N_n$ can be written  as a sum of three symplectic forms $\omega_{\text{WP}}$, $\omega_{\text{TZ}}^{\text{cusp}}$ and $\omega_{\text{TZ}}^{\text{ell}}$. These are the symplectic forms  for the three K\"ahler metrics on $T(X)$ -- the Weil-Petersson metric, the parabolic Takhtajan-Zograf (TZ) metric and the elliptic Takhtajan-Zograf metric.

 Using Ahlfors' variational formulas and projection formulas, we derive explicitly integral formulas for the variations of $\log\det\Delta_n$ and $\log \det N_n$. The integrals are regular integrals that allow explicit computations. In the spirit of the Selberg trace formula, we identify the identity, hyperbolic, parabolic and elliptic contributions to the second variations of $\log\det\Delta_n$ and $\log \det N_n$. We showed that the Weil-Petersson term comes from the identity contribution, while the parabolic TZ metric and elliptic TZ metric terms come from parabolic and elliptic contributions respectively.  The hyperbolic contributions are cancelled. As a byproduct, we obtain alternative integral formulas for the parabolic TZ metric and the elliptic TZ metric. 
\end{abstract}

\keywords{Local Index Theorem, Cofinite Riemann Surfaces, Determinant of Laplacian, $n$-differentials,  Gram Matrix.}
\maketitle

 \section{Introduction}
 The purpose of this work is to give a computational proof of the local index theorem for cofinite hyperbolic Riemann surfaces. Inspired by the work \cite{BK}, the local index theorem was  first proved by   Takhtajan and Zograf in \cite{TZ_index_1} for compact hyperbolic Riemann surfaces. It was then extended to hyperbolic Riemann surfaces with cusp in \cite{TZ_index_2}, and finally to general cofinite hyperbolic Riemann surfaces in \cite{TZ_index_3}. A detailed exposition of the background of the local index theorem were given in \cite{TZ_index_1, TZ_index_2, TZ_index_3} .

The local index theorem for general cofinite Riemann surfaces says that  (see Section \ref{localindex} for more details)

\begin{align*}
\pa_{\mu}\pa_{\bar{\nu}}\log\frac{\det \Delta_n}{\det N_n}=&\frac{6n^2-6n+1}{12\pi}\langle \mu,\nu\rangle_{\text{WP}}-\frac{\pi}{9}\langle \mu, \nu\rangle_{\text{TZ}}^{\text{cusp}}-\sum_{j=1}^v \mathfrak{B}(m_j, n)\langle \mu, \nu\rangle_{\text{TZ}, j}^{\text{ell}}.
\end{align*}Here $\Delta_n$ is the determinant of $n$-Laplacian, $N_n$ is the Gram matrix of a basis of holomorphic $n$-differentials, and $\mathfrak{B}(m,n)$ is a constant depending on $m$ and $n$.
 
The derivation given in \cite{TZ_index_3} is based on the results from the previous papers  \cite{TZ_index_1, TZ_index_2}. In \cite{TZ_index_2} and \cite{TZ_index_3}, the parabolic and elliptic Takhtajan-Zograf (TZ) metrics $\langle \;\cdot\;, \;\cdot\;\rangle_{\text{TZ}}^{\text{cusp}}$ and   $\langle \;\cdot\;, \;\cdot\;\rangle_{\text{TZ}}^{\text{ell}}$ were introduced, which are K\"ahler metrics on the moduli space. These can be considered as the parabolic contribution and elliptic contribution to the local index theorem.

In the series of works  \cite{TZ_index_1, TZ_index_2, TZ_index_3}, Takhtajan and Zograf used a lot of properties of singular integral kernels and operator theory.  In this work, we rely less on properties of singular integrals, using explicit formulas whenever possible. This is in closer spirit to our work on universal Weil-Petersson Teichm\"uller space \cite{Memoir}, where  Ahlfors' approach to variational formulas \cite{Ahlfors_remarks, Ahlfors_curvature} were used extensively. 

  In the spirit of the Selberg trace formula (see for example \cite{Iwaniec}),  the  second variation of $\log\det N_n$  can be  split into the identity contribution, hyperbolic contribution, parabolic contribution, and elliptic contribution.  In this work, we  prove that the identity contribution gives the term
$$-\frac{6n^2-6n+1}{12\pi}\langle \mu,\nu\rangle_{\text{WP}}$$that involves the Weil-Petersson metric, while the parabolic  contribution and the elliptic contribution give respectively the terms involving the parabolic TZ metric and the elliptic TZ metric. The hyperbolic contribution is shown to be  equal to the second variation of $\log\det\Delta_n$. As a byproduct, we obtain an explicit formula for $\pa_{\mu}\pa_{\bar{\nu}}\log\det \Delta_n$. This is another main result of this work.

Finally, we would  like to mention that the approach used in this work is computational in nature. For some of the tedious calculations that are elementary, we use   computer algebra to do the calculation and no detailed steps would be provided. Due to the insufficiency of symbols, some of the symbols might be used for different purposes in different sections. 

\subsection*{Acknowledgements}
This research is supported by the XMUM Research Fund XMUMRF/2018-C2/IMAT/0003. I would
like to dedicate this work to  L. Takhtajan, who has provided valuable insights and comments during various phases of this work. 

\section{Preliminaries}
 In this section, we are going to present some definitions, facts and known results that are needed in this work.

 \smallskip
\subsection{Cofinite Hyperbolic Riemann Surfaces}\label{intro_surface}~

Let $X$ is be a cofinite hyperbolic Riemann surface and 
let $\mathbb{U}$ be the upper half plane. By uniformization theorem,   there is a discrete subgroup $\Gamma$ of $\text{PSL}\,(2,\mathbb{R})$ which acts discontinuously on $\mathbb{U}$ and so that the  quotient $\Gamma\backslash\mathbb{U}$ is biholomorphic to $X$ (i.e., $\Gamma\backslash\mathbb{U}\simeq X$).  If $X$ has genus $g$, $q$ cusps and $v$ ramification points, the group $\Gamma$ is generated by $2g$  hyperbolic elements $\alpha_1$, $\beta_1$,  $\ldots$, $\alpha_g$, $\beta_g$, $q$ parabolic elements $\kappa_1$, $\ldots$, $\kappa_q$, as well as $v$ elliptic elements $\tau_1, \ldots, \tau_v$ of orders $m_1, \ldots, m_v$ respectively, which can be assumed to satisfy $$2\leq m_1\leq m_2\leq \ldots \leq m_v.$$ 
The generators of $\Gamma$ satisfy the following nontrivial relations
\begin{gather*}
\alpha_1\beta_1\alpha_1^{-1}\beta_1^{-1}\ldots \alpha_g\beta_g\alpha_g^{-1}\beta_g^{-1}\kappa_1\ldots\kappa_q\tau_1\ldots\tau_v=I,\\
\tau_j^{m_j}=1,\hspace{1cm}1\leq j\leq v,
\end{gather*}where $I$ is the identity element. We say that the Riemann surface $X$ and the group $\Gamma$ are of type $(g;q;m_1, m_2, \ldots, m_v)$. This is also called the signature of the Riemann surface $X$.

Under the local coordinates $z=x+iy$ induced from $\mathbb{U}$, the hyperbolic metric density on $X$ is given by  $$\rho(z)= \frac{1}{y^{2}},$$ and the    area form is
$$dA(z)=\frac{dxdy}{y^2}=\rho(z)d^2z.$$ This is a metric with constant curvature $-1$.
The hyperbolic area of $X$
is  given by
$$|X|=2\pi\left\{2g-2+q+\sum_{j=1}^v\left(1-\frac{1}{m_j}\right)\right\}.$$
 In fact, a sufficent and necessary condition for $X$ to be hyperbolic is 
$$2g-2+q+\sum_{j=1}^v\left(1-\frac{1}{m_j}\right)>0.$$

If $\Gamma$ is a Fuchsian group so that $\Gamma\backslash\mathbb{U}\simeq X$, for any $\gamma\in \text{PSL}\,(2,\mathbb{R})$, $$\widetilde{\Gamma}=\gamma^{-1}\circ\Gamma\circ \gamma$$is  also a Fuchsian group and $\widetilde{\Gamma}\backslash\mathbb{U}\simeq X$. As such, we need to impose a marking and normalization on $\Gamma$. 
\begin{enumerate}[$\bullet$\;\;]
\item If $g\geq 1$, let $\alpha_1$ and $\alpha_2$ be the attracting fixed points of $a_1$ and $a_2$ respectively, and let $\beta_1$ be the repelling fixed point of $a_1$. We say that $\Gamma$ is marked and normalized if $\alpha_1=0, \alpha_2=1$ and $\beta_1=\infty$.
\item If $g=0$,  we must have $q+v\geq 3$. For $1\leq i\leq q$,   let $x_i$ be the fixed point of $\kappa_i$. If we also have $q\geq 3$, $\Gamma$ is marked and normalized if $x_1=0$, $x_2=1$ and $x_q=\infty$. 
\item The most complicated case is when $g=0$ and $q\leq 2$. In this case, we cannot normalize $\Gamma$ by using $0, 1$ and $\infty$. For $1\leq j\leq v$,  let $w_j$ be the fixed point of $\tau_j$.  If $q=2$, then $v\geq 1$. In this case, $\Gamma$ is marked and normalized if $x_1=0$, $x_2=\infty$ and $w_1=i$. If $q=1$, then $v\geq 2$. In this case, choose  $p_3\in\mathbb{U}$ such that $p_3\neq i$.  $\Gamma$ is marked and normalized if $x_1=\infty$, $w_1=i$ and $w_2=p_3$. If $q=0$, then $v\geq 3$. In this case, choose  $p_2$ and $p_3$ in $\mathbb{U}$ so that $i, p_2, p_3$ are distinct. $\Gamma$ is marked and normalized if $w_1=i, w_2=p_2, w_3=p_3$.
\end{enumerate}
In any of the cases above, there are three special points $p_1$, $p_2$ and $p_3$ on $\overline{\mathbb{U}}$ that are prescribed fixed points of elements of $\Gamma$. In case 1 and case 2, $p_1=0$, $p_2=1$ and $p_3=\infty$.

\smallskip
\subsection{Automorphic Forms, n-Laplacian Operators and Eisenstein series}\label{Laplace}~

Let $\hat{\mathbb{C}}=\mathbb{C}\cup\{\infty\}$. Given $X\simeq \Gamma\backslash\mathbb{U}$
and  integers $n$ and $m$, let $B_{n,m}(\Gamma)$ be the space of automorphic forms of type $(n,m)$, which are functions $\phi :\mathbb{U}\rightarrow\hat{\mathbb{C}}$ satisfying $$\phi(\gamma z)\gamma'(z)^n\overline{\gamma'(z)}^m=\phi(z),\hspace{1cm}\text{for all}\,\gamma\in\Gamma.$$ $B_{n,m}(\Gamma)$ can be identified with $B_{n,m}(X)$, the space of smooth sections of $\mathcal{K}^n\otimes \overline{\mathcal{K}}^m$, where
  $\mathcal{K}$ is the canonical bundle of $X$. When $m=0$, $B_{n}(X)=B_{n,0}(X)$ is 
 the space of $n$-differentials on $X$.   
 We denote  by $H_{n, m}^2(X)$ the subspace of $B_{n,m}(X)$ which have finite norm under the Hermittian inner product 
\begin{align}\label{eq3_8_1}\langle \phi, \psi\rangle =\iint\limits_X \phi(z)\overline{\psi(z)}\rho(z)^{1-n-m}d^2z.\end{align}
$H_{n, m}^2(X)$ is a Hilbert space. 

The complex structure and the metric on $X$ determine a connection
$$D:\partial_n\oplus \overline{\pa}_n:B_n(X)\rightarrow  B_{n+1}(X)\oplus B_{n,1}(X)$$ on the line bundle $\mathcal{K}^n$, which are given by
$$\overline{\pa}_n=\frac{\pa}{\pa \bar{z}},\hspace{1cm}\pa_n=\rho^n\frac{\pa}{\pa z}\rho^{-n}$$ locally. 
The $n$-Laplacian operator $\Delta_n=4\bar{\partial}^*_n\bar{\partial}_n$ is an operator on $H_n^2(X)=H_{n,0}^2(X)$ with explicit formula given by  $$\Delta_n= -4y^{2-2n}\frac{\pa}{\pa z}y^{2n}\frac{\pa}{\pa\bar{z}}.
$$This is a nonnegative self-adjoint  operator. In contrast to the convention used in \cite{TZ_index_1, TZ_index_2, TZ_index_3}, we put a factor $4$ in front of $\bar{\partial}^*_n\bar{\partial}_n$  so that when $n=0$, we get the usual Laplacian on functions $$\Delta_0=-y^2\left(\frac{\pa^2}{\pa x^2}+\frac{\pa^2}{\pa y^2}\right).$$
The nullspace of the linear transformation $\overline{\pa}_n: H_n^2(X)\rightarrow H_{n,1}^2(X)$ is called the space of integrable meromorphic $n$-differentials. They are functions on $\mathbb{U}$, automorphic with respect to $\Gamma$, which is holomorphic except possibly with poles at the elliptic fixed points. In the following, we just call them holomorphic $n$-differentials on $X$. Denote this space by $\Omega_n(X)$. It is a complex vector space with dimension $d_n$, where
\begin{align}\label{eq3_4_1}
d_n=\begin{cases} 1,\quad   n=0,\\
g,\quad   n=1,\\
\di (2n-1)(g-1)+(n-1)q+\sum_{j=1}^v\left[n\left(1-\frac{1}{m_j}\right)\right],\quad & n\geq 2.\end{cases}
\end{align}

For any positive integer $n$, there is   an operator $\mathscr{P}_n: H_n^2(X)\rightarrow \Omega_n(X)$ that projects an integrable $n$-differential to the subspace of holomorphic $n$-differentials. Let $K_n(z,w)$ be the kernel of $\mathscr{P}_n$.  Using the coordinates on $\mathbb{U}$,   when $n\geq 2$, $K_n(z,w)$ is given by
\begin{align}\label{eq3_5_1}
K_n(z,w)= \sum_{\gamma\in\Gamma}\mathcal{K}(\gamma z, w)\gamma'(z)^n= \sum_{\gamma\in\Gamma}\mathcal{K}( z, \gamma w)\overline{\gamma'(w)}^n,\end{align}
where \begin{align}\label{eq2_17_2}\mathcal{K}_n(z,w)=(-1)^n\frac{2^{2n-2}(2n-1)}{\pi}\frac{1}{(z-\bar{w})^{2n}}.\end{align}
If $\psi(z)\in H_n^2(X)$, then
$$\left(\mathscr{P}_n\psi\right)(z)=\int_X K_n(z,w)\psi(w)\rho(w)^{1-n}d^2w\in \Omega_n(X).$$ When $\phi\in \Omega_n(X)$,
then
\begin{align}\label{eq3_1_1} \mathscr{P}_n\phi =\phi.\end{align} In other words, restricted to the finite dimensional vector space $\Omega_n(X)$, $\mathscr{P}_n$ is the identity operator.

Notice that for fixed $w$, $K_n(z,w)$ is a holomorphic $n$-differential in $z$. For fixed $z$, $K_n(z,w)$ is an anti-holomorphic $n$-differential in $w$.

The spectrum of $\Delta_n$ consists of a discrete part and a continuous part. To describe the continuous spectrum, one needs to use the Eisenstein series.

For $1\leq i\leq q$, let $x_i $ be the fixed point of $\kappa_i$. Then $x_i$ is a representative of the cusp associated to $\kappa_i$. Let $\sigma_i\in\text{PSL}\,(2,\mathbb{R})$ be an element that conjugates $\kappa_i$ to $\di \begin{pmatrix} 1 & \pm1\\ 0 & 1\end{pmatrix}$, namely,
$$\sigma_i^{-1}\kappa_i\sigma_i=\begin{pmatrix} 1 & \pm1\\ 0 & 1\end{pmatrix}.$$Then $\sigma_i(\infty)=x_i$.

Given an element $\gamma\in \Gamma$, let $\Gamma_{\gamma}$ be the stabilizer of $\gamma$ in $\Gamma$. It consists of all $\alpha\in \Gamma$ such that $\alpha \gamma \alpha^{-1} =\gamma$. 

 If $B$ is  the parabolic subgroup generated by $\di \begin{pmatrix} 1 & 1\\ 0 & 1\end{pmatrix}$, then $\di \Gamma_{\kappa_i}=\sigma_i B\sigma_i^{-1}$ is the stabilizer of the cusp $\kappa_i$ in $\Gamma$. 

Given a nonnegative integer $n$, the Eisenstein series associated to the cusp $x_i$ is defined by
\begin{align*}
E_i(z,s; n)=\sum_{\gamma\in \Gamma_{\kappa_i}\backslash\Gamma}\left[\text{Im}\,\left(\sigma_i^{-1}\gamma z\right)\right]^{s-n}\left[(\sigma_i^{-1} \gamma)'(z)\right]^{n}
\end{align*}when $\text{Re}\,s>1$. To simplify notation, throoughout this paper,  $\gamma(z)$ is written as $\gamma z$,   $\sigma^{-1}\circ\gamma$ is written as $\sigma^{-1}\gamma$, and   $(\sigma^{-1}\circ\gamma)(z)$ is written  as $\sigma^{-1}\gamma z$. The Eisenstein series $E_i(z,s;n)$ can be analytically extended to be a meromorphic function on $\mathbb{C}$ \cite{Hejhal_2, Fischer, Venkov, Iwaniec}.

When $y\rightarrow\infty$,
$$
E_i(\sigma_j z,s;n)\sigma_j'(z)^n=\delta_{ij}y^{s-n}+\varphi_{ij}(s;n)y^{1-s-n}+\text{exponentially decaying terms}.
$$The $q\times q$ matrix $\Phi(s;n)=[\varphi_{ij}(s;n)]$ is called the scattering matrix. The function $\varphi(s;n)$ is defined to be the determinant of $\Phi(s;n)$, namely,
$$\varphi(s;n)=\det \Phi(s;n).$$

The spectral theory for the Riemann surface $X $ states that
  there is a countable orthonormal system $\{u_k\}_{k\geq 0}$ of eigenfunctions of $\Delta_n$ with eigenvalues $0=\lambda_0\leq \lambda_1\leq \lambda_2\leq \ldots$, and eigenpackets given by the Eisenstein series so that for any $g\in H_n^2(X)$,
\begin{align*}g(z)=&\sum_{k=0}^{\infty}\langle g, u_k\rangle u_k(z)+\frac{1}{4\pi}\sum_{j=1}^q\int_{-\infty}^{\infty}\left\langle g, E_j\left(\cdot,\frac{1}{2}+ir;n\right)\right\rangle E_j\left(z,\frac{1}{2}+ir;n\right)dr.\end{align*}

\smallskip
\subsection{Teichm\"uller Space}~
Given a Riemann surface of type $(g;q;m_1, m_2, \ldots, m_v)$, the Teichm\"uller space $T(X)$ of the Riemann surface $X$ is the universal cover of the moduli space of Riemann surfaces with signature $(g;q;m_1, m_2, \ldots, m_v)$. This space is a complex manifold with dimension $d_2$, the dimension of the space of holomorphic quadratic differentials $\Omega_2(X)$  \eqref{eq3_4_1}. Since we are working with variations, we only consider hyperbolic Riemann surfaces $X$ with $d_2>0$. In the following, we briefly present some facts about the Teichm\"uller space $X$ that are needed in this work.

Given $X\sim \Gamma\backslash\mathbb{U}$, let $L^{\infty}(X)$ be the subspace of $B_{-1,1}(X)$  consists of $\mu$ with finite sup-norm 
$$\Vert\mu\Vert_{\infty}=\sup_{z\in \mathbb{U}}|\mu(z)|.$$ It is called the space of Beltrami differentials on $X$.  Let $L^{\infty}(X)_1$ be the subset of $L^{\infty}(X)$ consists of those $\mu$ such that $$\Vert\mu\Vert_{\infty}<1. $$ Given  $\mu\in L^{\infty}(X)_1$, extend $\mu$ so that it is zero outside $\mathbb{U}$. Let $f^{\mu}:\hat{\mathbb{C}}\rightarrow\hat{\mathbb{C}}$ be the unique quasiconformal mapping such that
\begin{gather*}
f^{\mu}_{\bar{z}}(z)=\mu(z)f^{\mu}_z(z),\\
f^{\mu}(p_i)=p_i,\hspace{1cm} i=1,2, 3,
\end{gather*}where $p_1, p_2, p_3$ are the fixed points involved in the normalization of $\Gamma$. 

Let $\tilde{\mu}$ be the extension of $\mu$ to $\mathbb{C}$ by reflection. Namely, for $z\in \mathbb{L}$, define
$$\tilde{\mu}(z)=\overline{\mu(\bar{z})},$$ and let $\tilde{\mu}(z)=0$ for $z\in \mathbb{R}$. Let $f_{\mu}: \hat{\mathbb{C}}\rightarrow\hat{\mathbb{C}}$ be the unique quasiconformal mapping such that
\begin{gather*}
(f_{\mu})_{\bar{z}}(z)=\tilde{\mu}(z)(f_{\mu})_z(z),
\end{gather*}and $f_{\mu}$ fixes $0$, 1 and $\infty$. Then $f_{\mu}$ maps $\mathbb{U}$ to $\mathbb{U}$, and $f^{\mu}\circ f_{\mu}^{-1}$ is holomorphic on $\mathbb{U}$. 
For each $\gamma\in \Gamma$,
$f_{\mu}\circ\gamma\circ (f_{\mu})^{-1}$ is also an element of $\text{PSL}\,(2,\mathbb{R})$. Thus $$\Gamma_{\mu}=f_{\mu}\circ \Gamma\circ f_{\mu}^{-1}$$ is a Fuchsian group and we can identify the Riemann surface 
$X_{\mu}=\Gamma_{\mu}\backslash\mathbb{U}$ with the Beltrami differential $\mu$. The group $$\Gamma^{\mu}=f^{\mu}\circ \Gamma\circ (f^{\mu})^{-1}$$ is not a subgroup of $\text{PSL}\,(2,\mathbb{R})$, but it is a subgroup of $\text{PSL}\,(2,\mathbb{C})$. If $\Omega=f^{\mu}(\mathbb{U})$, then $\Omega$ is a component of the domain of discontinuity of the group action of $\Gamma^{\mu}$ on $\hat{\mathbb{C}}$. $X^{\mu}=\Gamma^{\mu}\backslash\Omega$ is a Riemann surface biholomorphic to $X_{\mu}$.

Two Beltrami differentials $\mu$ and $\nu$ in $L^{\infty}(X)_1$ are said to be equivalent, denoted by $\mu\sim \nu$, if and only if $f_{\mu}(z)=f_{\nu}(z)$ for all $z\in S^1$, if and only if  $f^{\mu}(z)=f^{\nu}(z)$ for all $z$ on the lower half plane $\mathbb{L}$. When $\mu\sim \nu$, $\Gamma_{\mu}=\Gamma_{\nu}$. The Teichm\"uller space $T(X)$ is defined as the set of equivalence classes of Beltrami differentials, namely,
$$T(X)=L^{\infty}(X)_1/\sim.$$Each point $[\mu]$ of $T(X)$   corresponds to a Riemann surface $X_{\mu}=\Gamma_{\mu}\backslash \mathbb{U}$. There is a complex manifold structure on $T(X)$ so that the canonical mapping
$$\Phi: L^{\infty}(X)_1\rightarrow T(X)$$ is a holomorphic submersion. With respect to this complex structure, the surface $X^{\mu}$ varies holomorphically with respect to $\mu$ (the moduli).

Given any two Riemann surfaces $X_1=\Gamma_1\backslash\mathbb{U}$ and $X_2=\Gamma_2\backslash\mathbb{U}$ which have the same signature $(g;q;,m_1, m_2, \ldots, m_v)$, there exists a normalized quasiconformal mapping $w:\hat{\mathbb{C}}\rightarrow\hat{\mathbb{C}}$ such that 
$$\Gamma_2=w \circ\Gamma_1\circ w^{-1}.$$$w$ fixes $\mathbb{U}$, $\mathbb{R}$ and $\mathbb{L}$ respectively. Let $\tilde{\lambda}$ be the Beltrami differential of $w$, namely,
$$\tilde{\lambda} =\frac{w_{\bar{z}}}{w_z},$$ and let 
$$\lambda =\tilde{\lambda}\bigr|_{\mathbb{U}}$$be the restriction of $\tilde{\lambda}$ to $\mathbb{U}$. Then $\lambda\in L^{\infty}(X_1)_1$. If $\nu\in L^{\infty}(X_2)_1$ and $f^{\nu}$ is the corresponding normalized quasiconformal mapping, $f^{\nu}\circ w$ is a quasiconformal mapping with Beltrami differential $\mu\in L^{\infty}(X)_1$, where $\mu$ is related to $\nu$ by
$$\nu =\left(\frac{\mu-\lambda}{1-\mu\overline{\lambda}}\frac{w_z}{\overline{w_z}}\right)\circ w^{-1}.$$This defines a mapping $T(X_1)\rightarrow T(X_2)$ taking $\mu$ to $\nu$,   establishes a biholomorphism between the Teichm\"uller spaces $T(X_1)$ and $T(X_2)$.

Let $\Omega_{-1, 1}(X)$ be the space of harmonic Beltrami differentials on $X$. They are functions $\mu: \mathbb{U}\rightarrow \hat{\mathbb{C}}$ that can be written as
$$\mu(z)=y^2\overline{\phi(z)}$$ for some $\phi\in \Omega_2(X)$. The space
$\Omega_{-1,1}(X)$ is a vector space of complex dimension
$$d_2=3g-3+q+v.$$
A local coordinate system at the point $[0]\in T(X)$ can be established by choosing a basis $\{\mu_1, \ldots, \mu_{d_2}\}$ of $\Omega_{-1,1}(X)$, and mapping a point $(\varepsilon_1, \ldots, \varepsilon_{d_2})$ in a neighbourhood of $\mathbf{0}$ in $\mathbb{C}^{d_2}$ to $T(X)$ by 
$$(\varepsilon_1, \ldots, \varepsilon_{d_2})\mapsto \varepsilon_1\mu_1+\ldots+ \varepsilon_{d_2}\mu_{d_2}.$$As such, the tangent space of $T(X)$ at $X$ can be identified with $\Omega_{-1, 1}(X)$. 

At any other point $[\lambda]$ on the Teichm\"uller space $T(X)$, we can use the biholomorphism between $T(X)$ and $T(X_{\lambda})$ to establish local coordinates at $[\lambda]$. Henceforth, when we consider derivative on $T(X)$, it is sufficient to compute locally at the origin. 
\smallskip
\subsection{Variational Formulas}\label{variationalformula}~

Given a function $F: T(X)\rightarrow\mathbb{C}$ and  $\mu\in\Omega_{-1, 1}(X)$, $\pa_{\mu}F(X)$ and $ {\pa}_{\bar{\mu}}F(X)$ are defined as 
\begin{align*}
\pa_{\mu}F(X)=&\left.\frac{\pa}{\pa\varepsilon}\right|_{\varepsilon=0}F(X^{\varepsilon\mu}),\\
\pa_{\bar{\mu}}F(X)=&\left.\frac{\pa}{\pa\bar{\varepsilon}}\right|_{\varepsilon=0}F(X^{\varepsilon\mu}).
\end{align*}
Let $\mathscr{F}=\left\{\phi:\hat{\mathbb{C}}\rightarrow\hat{\mathbb{C}}\right\}$ be the space of functions from $\hat{\mathbb{C}}$ to $\hat{\mathbb{C}}$. A function $\Phi: T(X)\rightarrow \mathscr{F} $ satisfying
$$\phi^{\mu}=\Phi([\mu])\in B_{n,m}(X_{\mu}) $$  defines  a family of $(n,m)$ tensors on $T(X)$. The Lie derivatives of the family in the direction of $\mu $ and $\bar{\mu}$ are defined as 
\begin{align*}
(L_{\mu})\phi(z)=\left.\frac{\pa }{\pa\varepsilon}\right|_{\varepsilon=0}(f^{\varepsilon\mu})^*\phi^{\varepsilon\mu}(z),\\
(L_{\bar{\mu}})\phi(z)=\left.\frac{\pa }{\pa\bar{\varepsilon}}\right|_{\varepsilon=0}(f^{\varepsilon\mu})^*\phi^{\varepsilon\mu}(z),
\end{align*} where
$$(f^{\varepsilon\mu})^*\phi^{\varepsilon\mu}(z)=\phi^{\varepsilon\mu}\circ f^{\varepsilon\mu}(z)(f_z^{\varepsilon\mu}(z))^n\overline{(f_z^{\varepsilon\mu}(z))^m}$$is the pull-back of $\phi^{\varepsilon\mu}$ from $X^{\varepsilon\mu}$ to $X$.

Given $\mu\in \Omega_{-1, 1}(X)$, the function $J_{\mu}=f^{\mu}\circ f_{\mu}^{-1}: \mathbb{U}\rightarrow\Omega$ establishes a biholomorphism between $X_{\mu}$ and $X^{\mu}$.
Let $\rho^{\mu}(z)$ be the hyperbolic metric density on $X^{\mu}$. Then $$\rho^{\mu}(z)=\frac{|\left(J_{\mu}^{-1}\right)_z(z)|^2}{\left[\text{Im}\left(J_{ \mu}^{-1}(z)\right)\right]^2}.$$  It follows that
  $$(f^{\mu})^*\rho^{\mu}(z)=\frac{|\left(f_{\mu} \right)_z(z)|^2}{\left[\text{Im}\left(f_{ \mu} (z)\right)\right]^2}.$$ 

As we mentioned earlier, the complex structure on $T(X)$ is such that function $f^{\varepsilon\mu}$ varies holomorphically with respect to moduli, namely,
\begin{align*}
\frac{\pa}{\pa\bar{\varepsilon}}f^{\varepsilon\mu}=0.
\end{align*}

In some cases, it is still useful to work with $f_{\mu}$ which fixes the upper half plane rather than $f^{\mu}$ which is holomorphic with respect to moduli.
From classical theory of quasiconformal mappings, we know that
\begin{align}
F[\mu](z)=\left.\frac{\pa}{\pa\varepsilon}\right|_{\varepsilon=0}f_{\varepsilon\mu}(z)=-\frac{1}{\pi}\int\limits_{\mathbb{U}}R(z,\zeta)\mu(\zeta)d^2\zeta,\label{eq2_17_9}
\end{align}where
$$R(z,\zeta)=\frac{1}{\zeta-z}+\frac{z-1}{\zeta}-\frac{z}{\zeta-1}.$$

 There are a few useful variational formulas in \cite{Ahlfors_curvature}.
Given $\mu\in\Omega_{-1,1}(X)$, let
\begin{align*}
\mathscr{L}_{ \mu}(z,w)=\frac{(f_{ \mu})_z (z)(f_{ \mu})_w(w)}{\left(f_{ \mu}(z)- f_{ \mu}(w)\right)^2}\quad\text{and}\quad
\mathscr{M}_{ \mu}(z, w)=\frac{(f_{ \mu})_z (z)\overline{(f_{ \mu})_w(w)}}{\left(f_{ \mu}(z)-\overline{ f_{ \mu}(w)}\right)^2}.
\end{align*}
The variations of these two kernels are given in the following proposition.
\begin{proposition}[\cite{Ahlfors_curvature}]\label{prop_vary} Let $\mu\in\Omega_{-1,1}(X)$, and let $z, w\in \mathbb{U}$. Then
\begin{align*}
 \frac{\pa}{\pa\varepsilon} \mathscr{L}_{\varepsilon \mu}(z,w)=&-\frac{1}{\pi}\int_{\mathbb{U}} \mu(\zeta) \mathscr{L}_{\varepsilon\mu}(z,\zeta)\mathscr{L}_{\varepsilon\mu}(\zeta, w)d^2\zeta,\end{align*}
\begin{align*}
 \frac{\pa}{\pa\varepsilon} \mathscr{M}_{\varepsilon \mu}(z, w)=&-\frac{1}{\pi}\int_{\mathbb{U}} \mu(\zeta) \mathscr{L}_{\varepsilon\mu}(z,\zeta)\mathscr{M}_{\varepsilon\mu}(\zeta, w)d^2\zeta,\end{align*}
\begin{align*}
 \frac{\pa}{\pa\bar{\varepsilon}} \mathscr{L}_{\varepsilon \mu}(z,w)=&-\frac{1}{\pi}\int_{\mathbb{U}} \overline{\mu(\zeta) }\mathscr{M}_{\varepsilon\mu}(z,\zeta)\mathscr{M}_{\varepsilon\mu}(w,\zeta)d^2\zeta,\end{align*}
\begin{align*}
 \frac{\pa}{\pa\bar{\varepsilon}} \mathscr{M}_{\varepsilon \mu}(z,w)=&-\frac{1}{\pi}\int_{\mathbb{U}} \overline{\mu(\zeta) }\mathscr{M}_{\varepsilon\mu}(z,\zeta)\overline{\mathscr{L}_{\varepsilon\mu}( \zeta, w)}d^2\zeta.\end{align*}
\end{proposition}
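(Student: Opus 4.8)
The four identities are Ahlfors' variational formulas, and I would deduce all of them from the single first-variation formula \eqref{eq2_17_9}. The starting observation is that both kernels are mixed second logarithmic derivatives of a difference of values of $f_{\varepsilon\mu}$: a direct Wirtinger computation gives
\[
\mathscr{L}_{\varepsilon\mu}(z,w)=\frac{\pa^2}{\pa z\,\pa w}\log\bigl(f_{\varepsilon\mu}(z)-f_{\varepsilon\mu}(w)\bigr),\qquad
\mathscr{M}_{\varepsilon\mu}(z,w)=\frac{\pa^2}{\pa z\,\pa\bar w}\log\bigl(f_{\varepsilon\mu}(z)-\ov{f_{\varepsilon\mu}(w)}\bigr),
\]
so it suffices to compute the $\varepsilon$- and $\bar\varepsilon$-derivatives of $\log(f_{\varepsilon\mu}(z)-f_{\varepsilon\mu}(w))$ and of $\log(f_{\varepsilon\mu}(z)-\ov{f_{\varepsilon\mu}(w)})$, and then apply $\pa_z\pa_w$ (resp.\ $\pa_z\pa_{\bar w}$), commuting this differentiation past the $\zeta$-integral coming from \eqref{eq2_17_9}.

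It is enough to establish the four identities at $\varepsilon=0$, where $f_0=\mathrm{id}$: the value at an arbitrary $[\varepsilon_0\mu]$ follows by transplanting through the biholomorphism $T(X)\simeq T(X_{\varepsilon_0\mu})$, i.e.\ by applying the $\varepsilon=0$ identity on the surface $X_{\varepsilon_0\mu}=\Gamma_{\varepsilon_0\mu}\backslash\mathbb{U}$ and translating back, using the change of variable $\zeta\mapsto f_{\varepsilon_0\mu}(\zeta)$ and the covariance of the kernels under $f_{\varepsilon_0\mu}$ (which turns the flat kernels $(z'-\zeta')^{-2}$, $(z'-\ov{\zeta'})^{-2}$ on $X_{\varepsilon_0\mu}$ into $\mathscr{L}_{\varepsilon_0\mu}$, $\mathscr{M}_{\varepsilon_0\mu}$); the composition law for Beltrami coefficients records how $\mu$ on $X$ corresponds to the perturbation direction on $X_{\varepsilon_0\mu}$. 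At $\varepsilon=0$, \eqref{eq2_17_9} gives $\pa_\varepsilon\log(f_{\varepsilon\mu}(z)-f_{\varepsilon\mu}(w))\big|_{0}=(F[\mu](z)-F[\mu](w))/(z-w)$, and the elementary partial-fraction identity
\[
\frac{R(z,\zeta)-R(w,\zeta)}{z-w}=\frac{1}{(\zeta-z)(\zeta-w)}-\frac{1}{\zeta(\zeta-1)}
\]
rewrites this as $-\tfrac1\pi\int_{\mathbb{U}}\mu(\zeta)\bigl[(\zeta-z)^{-1}(\zeta-w)^{-1}-(\zeta(\zeta-1))^{-1}\bigr]\,d^2\zeta$; applying $\pa_z\pa_w$ annihilates the $\zeta$-only term and turns the rest into $\mathscr{L}_0(z,\zeta)\mathscr{L}_0(\zeta,w)$, which is the first formula. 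The remaining three at $\varepsilon=0$ are entirely parallel: the $\bar\varepsilon$-derivative of $f_{\varepsilon\mu}$ sees only the part of its Beltrami coefficient on $\mathbb{L}$, which the change of variable $\zeta\mapsto\bar\zeta$ together with the definition of the reflection $\widetilde\mu$ turns into an $\mathbb{U}$-integral against $\ov\mu$, while the symmetry $f_{\varepsilon\mu}(\bar z)=\ov{f_{\varepsilon\mu}(z)}$ lets one re-express kernels evaluated at conjugated points; the partial-fraction identities for $R(z,\bar\zeta)-R(w,\bar\zeta)$, $R(z,\zeta)-\ov{R(w,\bar\zeta)}$ and $R(z,\bar\zeta)-\ov{R(w,\zeta)}$ again collapse, after $\pa_z\pa_w$ or $\pa_z\pa_{\bar w}$, into the appropriate product of two of the kernels $\mathscr{L}_0$, $\mathscr{M}_0$.

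I expect the only genuine difficulty to be analytic rather than algebraic: one must justify differentiating under the integral sign in $z$ and $w$, locally uniformly, for a harmonic Beltrami differential $\mu\in\Omega_{-1,1}(X)$, which is bounded on $\mathbb{U}$ but not compactly supported. The point that makes this work is that $R(z,\zeta)=O(|\zeta|^{-2})$ as $\zeta\to\infty$ and, after $\pa_z\pa_w$, the kernel decays like $|\zeta|^{-4}$, so the differentiated integral converges at infinity; the behaviour near the cusps (where harmonic Beltrami differentials decay) and near the limit set is controlled by the standard distortion estimates for $f_{\varepsilon\mu}$, exactly as in \cite{Ahlfors_curvature}. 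Once differentiation under the integral is legitimate, the identities are forced by the partial-fraction computation above.
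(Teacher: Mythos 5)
The paper does not prove this proposition at all: it is quoted verbatim from \cite{Ahlfors_curvature} and used as a black box, so there is no internal proof to compare against. Your reconstruction is essentially Ahlfors' original derivation and the algebra is correct: the identifications $\mathscr{L}_{\varepsilon\mu}=\pa_z\pa_w\log(f_{\varepsilon\mu}(z)-f_{\varepsilon\mu}(w))$ and $\mathscr{M}_{\varepsilon\mu}=\pa_z\pa_{\bar w}\log(f_{\varepsilon\mu}(z)-\ov{f_{\varepsilon\mu}(w)})$ check out, the partial-fraction collapse of $(R(z,\zeta)-R(w,\zeta))/(z-w)$ produces exactly $\mathscr{L}_0(z,\zeta)\mathscr{L}_0(\zeta,w)$ after $\pa_z\pa_w$, the splitting of $\pa_\varepsilon$ versus $\pa_{\bar\varepsilon}$ according to the $\mathbb{U}$- and $\mathbb{L}$-parts of the reflected Beltrami coefficient $\widetilde{\varepsilon\mu}$ is the right mechanism for producing $\mu$ versus $\ov\mu$ and the conjugated kernels, and the transplantation to a general basepoint via the composition law is the standard way to upgrade the $\varepsilon=0$ computation.

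One caveat on the analytic side: you locate the difficulty at $\zeta\to\infty$, but the decay there is not the delicate point. The kernels $\mathscr{L}_{\varepsilon\mu}(z,\zeta)\mathscr{L}_{\varepsilon\mu}(\zeta,w)$ and their relatives have a non-integrable singularity $|\zeta-z|^{-2}$ (and $|\zeta-w|^{-2}$) at the interior points $\zeta=z$ and $\zeta=w$ — note $\int_0^1 r^{-2}\,r\,dr$ diverges — so the integrals on the right-hand side are genuinely singular integrals. They converge only in the principal-value sense, using the smoothness of $\mu$ and the vanishing of the angular average of $(\zeta-z)^{-2}$ on circles about $z$; this is precisely why the paper later introduces Proposition \ref{prop2_2_1}, whose stated purpose is to replace these singular kernels by regular ones before differentiating under the integral sign. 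Your argument goes through once the right-hand sides are interpreted this way, but the justification of differentiating under the integral (in $z$, $w$, and $\varepsilon$) should be anchored at the diagonal singularity rather than at infinity.
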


Finally, we need to quote the following important result in \cite{Wolpert_ChernForm}.

\begin{proposition}\label{vary_differential}
If $\mu$ and $\nu$ are in $\Omega_{-1,1}(X)$, then  $L_{\bar{\nu}}\mu$ is orthogonal to the space $\Omega_{-1,1}(X)$ under the Weil-Petersson metric, and it is given by
\begin{align}\label{eq2_22_12}L_{\bar{\nu}}\mu=-4\frac{\pa}{\pa\bar{z}}\rho(z)^{-1}\frac{\pa}{\pa\bar{z}}(\Delta_0+2)^{-1}(\mu\bar{\nu}).\end{align}
 
\end{proposition}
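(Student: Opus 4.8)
The plan is to compute the Lie derivative $L_{\bar\nu}\mu$ directly from the variational formulas, and then recognize the answer as the stated operator expression. First I would recall that a harmonic Beltrami differential has the form $\mu(z)=\rho(z)^{-1}\overline{\phi(z)}$ with $\phi\in\Omega_2(X)$, and that the pull-back of a Beltrami differential under $f^{\varepsilon\nu}$ along the Teichm\"uller curve is governed by the classical formula for $\dot f^{\varepsilon\nu}$. Differentiating the pull-back $(f^{\varepsilon\nu})^*\mu^{\varepsilon\nu}$ with respect to $\bar\varepsilon$ at $\varepsilon=0$, and using that $\mu^{\varepsilon\nu}$ (the representative of the same tangent direction on the deformed surface) varies holomorphically in the modulus while the pull-back operator contributes the $\bar\varepsilon$-dependence through $f_{\bar z}^{\varepsilon\nu}$ and $\overline{f_z^{\varepsilon\nu}}$, one gets an explicit expression of the schematic form $L_{\bar\nu}\mu = -\,(\text{something built from }\nu,\ \mu,\ \text{and a potential})$. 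The key input is that $\partial_{\bar z}$ of the infinitesimal deformation field is $\bar\nu$, so the variation produces a term $\partial_{\bar z}(\text{field})\cdot(\ldots)$ that must be inverted against the Laplacian.

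The core of the argument is the following identity: the first variation of the complex dilatation, when expressed intrinsically, satisfies an inhomogeneous $\bar\partial$-equation whose right-hand side involves $\mu\bar\nu$. Concretely, one writes $L_{\bar\nu}\mu = \partial_{\bar z}\bigl(\text{vector field }V\bigr)$ where $V$ is determined by solving $(\Delta_0+2)$ applied to an auxiliary scalar equals $\mu\bar\nu$ (the $+2$ appearing because $\mu\bar\nu$ transforms as a function but the natural operator on the relevant weight space is the shifted Laplacian acting on $(-1,1)$-forms, i.e. $-4\rho^{-1}\partial_z\partial_{\bar z}$ conjugated appropriately picks up the $+2$). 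I would make this precise by checking the weights: $\mu\bar\nu\in B_{0,0}$ after using $\mu\in B_{-1,1}$ and $\bar\nu\in B_{1,-1}$, and then $(\Delta_0+2)^{-1}$ is well-defined on it because $\mu\bar\nu$ is smooth of rapid decay at cusps (both $\mu$ and $\nu$ decay there) and bounded near elliptic points, so it lies in the domain where $\Delta_0+2$ is invertible (its spectrum is bounded below by $2>0$). Applying $-4\,\partial_{\bar z}\rho^{-1}\partial_{\bar z}$ to this solution and comparing term by term with the direct computation from Proposition \ref{prop_vary} and \eqref{eq2_17_9} yields the formula.

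For the orthogonality statement: since $L_{\bar\nu}\mu$ is, by the formula just derived, of the form $\partial_{\bar z}(\text{globally defined }(-1,0)\text{-object})$, pairing it against any harmonic Beltrami differential $\eta=\rho^{-1}\bar\psi$ in the Weil-Petersson inner product $\langle\,\cdot\,,\eta\rangle=\iint_X (\,\cdot\,)\overline{\eta}\,\rho\,d^2z$ and integrating by parts moves the $\partial_{\bar z}$ onto $\overline{\eta}\rho = \psi$, which is holomorphic, hence $\partial_{\bar z}$-annihilated; the boundary/cusp/elliptic contributions vanish because of the decay of the potential. This shows $L_{\bar\nu}\mu\perp\Omega_{-1,1}(X)$.

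The main obstacle I anticipate is not the formal computation but the \emph{analytic justification at the cusps and elliptic fixed points}: one must verify that $(\Delta_0+2)^{-1}(\mu\bar\nu)$ is genuinely in the relevant function space (smooth, with the right decay/growth so that all integrations by parts are valid and no boundary terms at the cusps survive), and that the operator $\partial_{\bar z}\rho^{-1}\partial_{\bar z}$ produces an object with no distributional singularities at the elliptic points. In the cofinite setting this requires care with the behavior of $\mu,\nu$ near cusps (they are of the form $O(y^{-1})$ or faster in cusp coordinates, making $\mu\bar\nu$ rapidly decaying) and near elliptic points (local orbifold charts). Granting these analytic facts — which is legitimate here since Proposition \ref{vary_differential} is quoted from \cite{Wolpert_ChernForm} and we only need to reproduce the computation — the derivation is a direct application of the variational formulas of Proposition \ref{prop_vary} together with the explicit potential \eqref{eq2_17_9}.
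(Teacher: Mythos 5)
The first thing to note is that the paper offers no proof of this proposition: it is quoted from \cite{Wolpert_ChernForm} and used as a black box, so there is no internal argument to measure yours against. Judged on its own, your outline has the correct global shape. The orthogonality argument is sound as stated: writing $\eta=\rho^{-1}\bar\psi$ one has $\overline{\eta}\rho=\psi$ holomorphic, so pairing $-4\partial_{\bar z}\rho^{-1}\partial_{\bar z}g$ with $\eta$ in the Weil--Petersson inner product and integrating by parts kills the integral, modulo the cusp estimates you flag. The weight count ($\mu\bar\nu$ is a genuine function) and the invertibility of $\Delta_0+2$ are also correct.

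The gap is that the formula \eqref{eq2_22_12} itself is asserted rather than derived, and that is the entire content of the proposition. At the decisive point you write that differentiating the pull-back ``gets an explicit expression of the schematic form'' and that ``comparing term by term \dots yields the formula,'' but neither side of that comparison is ever written down. Two things are concretely missing. First, $L_{\bar\nu}\mu$ is only defined relative to a choice of family $\mu^{\varepsilon\nu}\in B_{-1,1}(X^{\varepsilon\nu})$ extending $\mu$ (the harmonic representative of a fixed coordinate tangent vector), and you never pin this down; different extensions of $\mu$ change $L_{\bar\nu}\mu$ by exactly the kind of term you are trying to compute, so without this the left-hand side is not even well defined. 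Second, the ``inhomogeneous $\bar\partial$-equation with right-hand side involving $\mu\bar\nu$'' is named but never exhibited, and the appearance of $(\Delta_0+2)^{-1}$ is justified only by a heuristic about weights; a real derivation has to produce it, e.g.\ by differentiating the harmonicity condition on $\rho^{\varepsilon\nu}\mu^{\varepsilon\nu}$ using Proposition \ref{prop_vary} and the variations of $\rho$ collected in Proposition \ref{varymetric}, where the operator $(\Delta_0+2)^{-1}(\mu\bar\nu)$ actually originates. Relatedly, you identify the bottleneck as the analysis at cusps and elliptic points; that part is indeed routine once the formula is known, whereas the genuinely hard step --- the computation you defer --- is the one your proposal skips.
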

At first sight, one might thought that this formula is awkward since   the complex structure of $T(X)$ would lead one to expect  that $\mu$ defines a holomorphic vector field. However,  since $L_{\bar{\nu}}\mu$ is orthogonal to $\Omega_{-1,1}(X)$, it is indeed "zero" as a tangent vector of $T(X)$.  This term  is needed when one needs to do a second variation with respect to $\bar{\nu}$, and the first variation has not been projected to the space of holomorphic quadratic differentials $\Omega_{2}(X)$, the dual space of $\Omega_{-1,1}(X)$.

\smallskip
\subsection{Weil-Petersson Metric and Takhtajan-Zograf Metric}~

On the Teichm\"uller space $T(X)$, there are a few metrics that arise naturally in the local index theorem.

Identifying the tangent space of $T(X)$ at $X$ with $\Omega_{-1,1}(X)$, the space of harmonic Beltrami differentials on $X$, the Weil-Petersson metric on $T(X)$ is the metric induced by the natural inner product
$$\langle \mu, \nu\rangle_{\text{WP}}=\int_X\mu(z)\overline{\nu(z)}dA(z).$$The integral over $X$ is actually over a fundamental domain $F$ of $X$ on $\mathbb{U}$. When there is no confusion, we will continue to write it as an integral over $X$.

The Weil-Petersson metric is a K\"ahler metric with symplectic form given by
$$\omega_{\text{WP}}(\mu,\nu)=\frac{i}{2}\langle \mu, \nu\rangle_{\text{WP}}.$$

Now we define the  Takhtajan-Zograf metric. 
If $q\geq 1$, there are $q$ cusps on the Riemann surface $X$, each of which corresponds to a parabolic generator $\kappa_i$. As in Section \ref{Laplace}, there is an Eisenstein series $E_i(z,s)=E_i(z,s;0)$ associated with this cusp. The Takhtajan-Zograf (TZ) metric associated with this cusp is given by
\begin{align*}
\langle\mu, \nu\rangle_{\text{TZ}, i}^{\text{cusp}}=\int_X \mu(z)\overline{\nu(z)}E_i(z,2)dA(z).
\end{align*}The parabolic Takhtajan-Zograf metric on $T(X)$ is simply the sum of the TZ metric associated to each cusp. Namely,
$$\langle\mu, \nu\rangle_{\text{TZ}}^{\text{cusp}}=\sum_{i=1}^q\langle\mu, \nu\rangle_{\text{TZ}, i}^{\text{cusp}}.$$This is also a K\"ahler metric with symplectic form
$$\omega_{\text{TZ}}^{\text{cusp}}(\mu,\nu)=\frac{i}{2}\langle \mu, \nu\rangle_{\text{TZ}}^{\text{cusp}}.$$

Let $G(z,w)$ be the kernel of the operator $2(\Delta_0+2)^{-1}$ on the space of functions on $X$. Using the coordinates on $\mathbb{U}$, it is given by
\begin{align}\label{eq3_3_1}G(z,w)=\sum_{\gamma\in \Gamma}\mathcal{G}(\gamma z, w),\end{align} where
\begin{align}\label{eq3_1_2}\mathcal{G}(z,w)=\frac{2u+1}{2\pi}\log\frac{u+1}{u}-\frac{1}{\pi},\end{align}and $u=u(z,w)$ is the point-pair invariant 
\begin{align}\label{eq1_24_1}
u(z,w)=\frac{|z-w|^2}{4\text{Im}\,z\;\text{Im}\,w}=-\frac{(z-w)(\bar{z}-\bar{w})}{(z-\bar{z})(w-\bar{w})}.
\end{align}
 If $v\geq 1$, there are $v$ ramification points on the Riemann surface $X$, each of which corresponds to an elliptic generator $\tau_j$. Let $w_j$ be the fixed point of $\tau_j$ on the upper half plane. The Takhtajan-Zograf (TZ) metric associated with this ramification point is given by
\begin{align}\label{eq3_4_2}
\langle\mu, \nu\rangle_{\text{TZ}, j}^{\text{ell}}=2\int_X G(w_j, z)\mu(z)\overline{\nu(z)}dA(z).
\end{align} This is also a K\"ahler metric with symplectic form
$$\omega_{\text{TZ},j}^{\text{ell}}(\mu,\nu)=\frac{i}{2}\langle \mu, \nu\rangle_{\text{TZ}}^{\text{ell},j}.$$

\smallskip
\subsection{Local Index Theorem}\label{localindex}~

Given a positive integer $n$, the space $\Omega_n(X)$ of integrable holomorphic $n$-differentials on $X$ is a complex vector space of dimension $d_n$. Since we only consider Riemann surfaces $X$ with $d_2>0$ so that the Teichm\"uller space $T(X)$ has positive dimension, and eq.\ \eqref{eq3_4_1} shows that $d_n$ is an increasing function of $n$, $d_n>0$ for all $n\geq 2$. However, the complex dimension of holomorphic one-differentials has dimension $g$, the genus of the Riemann surface $X$. So it can be zero.

When $d_n>0$, choose a basis $\{\phi_1, \ldots, \phi_{d_n}\}$ for each Riemann surface $X^{\mu}$ in $T(X)$ so that each $\phi_j, 1\leq j\leq d_n$ varies holomorphically on $T(X)$. This means that for any $\mu\in \Omega_{-1,1}(X)$, 
$$\frac{\pa}{\pa\bar{\varepsilon}}\left[(f^{\varepsilon\mu})^*\phi_j^{\varepsilon\mu}\right](z)=0.$$
One can use Bers integral operator to prove that such a family of basis exists \cite{Bers_integral, Mcintyre_Teo}.  

Define the Gram matrix $N_n$, a $d_n\times d_n$ matrix, so that its $(k,l)$-entry is given by
$$(N_n)_{k,l} =\langle\phi_k, \phi_l\rangle=\int_X \phi_k(z)\overline{\phi_l)(z)}\rho(z)^{-n}dA(z).$$ $N_n$ is a positive definite Hermittian matrix. The logarithmic derivative of its determinant, $\log\det N_n$, defines a real-valued function on $T(X)$. If we choose a different basis $\{\psi_1, \ldots, \psi_{d_n}\}$ that also varies holomorphically with respect to moduli, then there is a $d_n\times d_n$ matrix  $P$ so that
$$\psi_k=\sum_{l=1}^{d_n}P_{k,l}\phi_{l}.$$ If
$ \widetilde{N_n}$ is the Gram  matrix defined by the basis 
$\{\psi_1, \ldots, \psi_{d_n}\}$, we find that
$$\widetilde{N}_n=PN_nP^*,$$ where $P^*$ is the Hermittian conjugate of $P$. It follows that
$$\log\det \widetilde{N}_n=\log \det P+\overline{\log \det P}+\log \det N_n.$$ 
Since both  $\{\phi_1, \ldots, \phi_{d_n}\}$ and $\{\psi_1, \ldots, \psi_{d_n}\}$ vary holomorphically  on $T(X)$, each of  the entries in $P$ is a holomorphic function on $T(X)$. Hence, $\log\det P+\overline{\log\det P}$ is a harmonic function on $T(X)$. This shows that
the difference $$\log\det\widetilde{N}_n-\log\det N_n$$ is a harmonic function. This implies that
for any $\mu$ and $\nu$ in $\Omega_{-1, 1}(X)$,
$$\pa_{\mu}\pa_{\bar{\nu}}\log\det \widetilde{N}_n=\pa_{\mu}\pa_{\bar{\nu}}\log\det N_n.$$

The local index theorem  \cite{TZ_index_1, TZ_index_2, TZ_index_3}  is a theorem about the $\pa\bar{\pa}$ of $$\log \frac{\det \Delta_n}{\det N_n}$$ on $T(X)$. This can be interpreted as the first Chern class of a determinant line bundle on $T(X)$ \cite{TZ_index_2, TZ_index_3}. In this work, we  only consider the case where $n$ is a positive integer.
 
\begin{theorem}[Local Index Theorem \cite{TZ_index_3}]~\\Let $n$ be a positive integer and let $X$ be a cofinite hyperbolic Riemann surface such that the Teichm\"uller space $T(X)$ has positive dimension. On the Teichm\"uller space $T(X)$, 
\begin{align*}
\pa_{\mu}\pa_{\bar{\nu}}\log\frac{\det \Delta_n}{\det N_n}=&\frac{6n^2-6n+1}{12\pi}\langle \mu,\nu\rangle_{\text{WP}}-\frac{\pi}{9}\langle \mu, \nu\rangle_{\text{TZ}}^{\text{cusp}}-\sum_{j=1}^v\mathfrak{B}(m_j,n)\langle \mu, \nu\rangle_{\text{TZ}, j}^{\text{ell}}.
\end{align*}Here $\mathfrak{B}(m,n)$ is a constant depending on $m$ and $n$. (See Theorem \ref{thm2_24_2}.)

\end{theorem}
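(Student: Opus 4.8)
\noindent The plan is to produce \emph{explicit} integral formulas for both $\pa_{\mu}\pa_{\bar\nu}\log\det N_n$ and $\pa_{\mu}\pa_{\bar\nu}\log\det\Delta_n$, to write each of them as a Poincar\'e series over $\Gamma$, and then to compare the two expansions class by class in the sense of the Selberg trace formula: the identity term will supply the Weil--Petersson contribution, the parabolic and elliptic packets the two Takhtajan--Zograf contributions, and the hyperbolic packets of the two quantities will cancel.

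I would first compute $\pa_{\mu}\pa_{\bar\nu}\log\det N_n$. Since the basis $\{\phi_k\}$ of $\Omega_n(X^{\varepsilon\mu})$ is chosen to vary holomorphically, $\pa_{\bar\nu}\log\det N_n=\operatorname{tr}\!\big(N_n^{-1}\pa_{\bar\nu}N_n\big)$ only involves the holomorphic first variations $L_{\nu}\phi_k$ and the variation of the pulled--back metric density, which by Proposition~\ref{prop_vary} equals $-4\,\pa_{\bar\varepsilon}\mathscr M_{\varepsilon\nu}(z,z)\big|_{0}$ because $(f^{\varepsilon\nu})^{*}\rho^{\varepsilon\nu}(z)=-4\,\mathscr M_{\varepsilon\nu}(z,z)$; contracting $N_n^{-1}$ with the basis assembles the reproducing kernel $K_n(z,w)$ of \eqref{eq3_5_1}, so that $\pa_{\bar\nu}\log\det N_n$ is already a regular integral of $K_n$ against $\bar\nu$. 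Applying $\pa_{\mu}$, using $\pa_{\mu}N_n^{-1}=-N_n^{-1}(\pa_{\mu}N_n)N_n^{-1}$, the remaining variational formulas of Proposition~\ref{prop_vary}, the projection identity \eqref{eq3_1_1}, and Proposition~\ref{vary_differential} together with its orthogonality statement (for those terms in which a $\mu$--variation has not yet been projected to $\Omega_n(X)$), one arrives at a closed formula
\[
\pa_{\mu}\pa_{\bar\nu}\log\det N_n=\sum_{\gamma\in\Gamma}\ \iint\limits_{X}\Big(\text{explicit kernel built from }\mathcal K_n,\ \mathscr M_{0},\ \mu,\ \nu\Big),
\]
the Poincar\'e series arising from the copies of $K_n$. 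Unfolding and collecting representatives of conjugacy classes splits the $\gamma$--sum into the identity term, the hyperbolic terms, one parabolic packet per cusp $\kappa_i$, and one elliptic packet per ramification point $w_j$.

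Next I would evaluate the three non--hyperbolic packets. The identity term ($\gamma=I$) is a local integral of a constant multiple of $\mathcal K_n(z,z)$ against $\mu\bar\nu$; inserting $\mu=y^{2}\overline{\phi}$ and the constant $(-1)^{n}2^{2n-2}(2n-1)/\pi$ of \eqref{eq2_17_2} and integrating yields $-\tfrac{6n^{2}-6n+1}{12\pi}\langle\mu,\nu\rangle_{\mathrm{WP}}$. For each cusp, conjugating the parabolic packet to the standard cusp at $\infty$ and summing over $\Gamma_{\kappa_i}\backslash\Gamma$ reassembles the Eisenstein series value $E_i(z,2)$; evaluating the remaining one--variable integral gives $\tfrac{\pi}{9}\langle\mu,\nu\rangle^{\mathrm{cusp}}_{\mathrm{TZ},i}$, and summing over $i$ produces $\tfrac{\pi}{9}\langle\mu,\nu\rangle^{\mathrm{cusp}}_{\mathrm{TZ}}$. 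For each ramification point of order $m_j$, the elliptic packet factors through the cyclic stabilizer $\langle\tau_j\rangle$: the finite sum over $\tau_j,\dots,\tau_j^{m_j-1}$ of a rational function of $m_j$--th roots of unity produces the constant $\mathfrak B(m_j,n)$ (this is where computer algebra is used), while summing the remaining series over $\Gamma_{\tau_j}\backslash\Gamma$ reassembles the automorphic Green's function $G(w_j,z)$ of \eqref{eq3_3_1}, giving $\mathfrak B(m_j,n)\langle\mu,\nu\rangle^{\mathrm{ell}}_{\mathrm{TZ},j}$. These computations also yield the alternative integral formulas for the two TZ metrics announced in the abstract.

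Finally I would treat $\pa_{\mu}\pa_{\bar\nu}\log\det\Delta_n$. Using the spectral theory recalled in Section~\ref{Laplace} to express $\log\det\Delta_n$ through a regularized heat trace of $\Delta_n$ (so that the continuous spectrum and the scattering determinant $\varphi(s;n)$ are accounted for), its second variation is again an integral of $\mu\bar\nu$ against a Poincar\'e series, now built from the resolvent kernel of $\Delta_n$ --- whose residue at the bottom eigenvalue $\lambda=0$ is precisely the projection $\mathscr P_n$ with kernel $K_n$, which is what links it to the $\mathcal K_n$ appearing above. One shows that the identity, parabolic, and elliptic contributions to $\pa_{\mu}\pa_{\bar\nu}\log\det\Delta_n$ drop out --- the identity piece because the hyperbolic area is a deformation invariant, the elliptic pieces because the corresponding Selberg trace terms depend only on the orders $m_j$, and the parabolic piece through the analytic structure of $\varphi(s;n)$ --- so that $\pa_{\mu}\pa_{\bar\nu}\log\det\Delta_n$ reduces to a hyperbolic packet which, compared term by term, is \emph{identical} to the hyperbolic packet obtained for $\log\det N_n$. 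Subtracting the two expansions then cancels every hyperbolic term and leaves exactly
\[
\pa_{\mu}\pa_{\bar\nu}\log\frac{\det\Delta_n}{\det N_n}=\frac{6n^{2}-6n+1}{12\pi}\langle\mu,\nu\rangle_{\mathrm{WP}}-\frac{\pi}{9}\langle\mu,\nu\rangle^{\mathrm{cusp}}_{\mathrm{TZ}}-\sum_{j=1}^{v}\mathfrak B(m_j,n)\langle\mu,\nu\rangle^{\mathrm{ell}}_{\mathrm{TZ},j}.
\]
\textbf{The main obstacle} is precisely this last step: one must establish the resolvent formula for $\pa_{\mu}\pa_{\bar\nu}\log\det\Delta_n$ in the cofinite setting and prove that its hyperbolic packet agrees, as a regular integral, with the one coming from the Gram matrix, while at the same time justifying that all the unfolded integrals --- in particular near the cusps and near the elliptic fixed points --- converge absolutely, so that the term--by--term rearrangement into conjugacy classes is legitimate.
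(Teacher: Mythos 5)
Your overall architecture for the Gram-matrix side matches the paper's: write $\pa_{\mu}\pa_{\bar\nu}\log\det N_n$ as a Poincar\'e series via Ahlfors' variational formulas and the reproducing kernel, split it into identity, hyperbolic, parabolic and elliptic packets, and identify the first with the Weil--Petersson term and the last two with the TZ metrics. (One caveat on mechanism: in the paper the parabolic and elliptic packets do not simply ``reassemble'' $E_i(z,2)$ or $G(w_j,z)$; each packet splits into three pieces $\mathscr X,\mathscr Y,\mathscr Z$, two of which cancel against part of the third after repeated integration by parts, and only the boundary terms of those integrations survive to produce the TZ metrics and the constants $\mathfrak B(m_j,n)$. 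Your sketch hides all of that cancellation, but the endpoint is the same.)

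The genuine gap is exactly where you place your ``main obstacle,'' and it is not a technicality. The paper does \emph{not} derive $\pa_{\mu}\pa_{\bar\nu}\log\det\Delta_n$ from a regularized heat trace or resolvent and then argue that the non-hyperbolic spectral contributions vanish. Instead it invokes the closed-form evaluation $\det{}'\Delta_n=\mathcal C_nZ(n)$ (for $n\ge2$) with $\mathcal C_n$ \emph{constant on} $T(X)$ --- a nontrivial input whose proof requires, among other things, that $\operatorname{Tr}[I-\Phi(\tfrac12)]$ is an even integer and $T(X)$ is connected. This reduces the variation of the determinant at a stroke to the Selberg zeta function, i.e.\ to a sum over hyperbolic conjugacy classes; the first variation is then computed from Gardiner's formula for $\dot\lambda/\lambda$, and the second variation is matched term by term against the hyperbolic packet of $\log\det N_n$. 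Your proposed route --- showing directly that the identity, parabolic and elliptic pieces of the varied trace drop out ``because the area is invariant,'' ``because the elliptic terms depend only on $m_j$,'' and ``through the analytic structure of $\varphi(s;n)$'' --- is essentially asking you to re-prove that anomaly formula, and the parabolic case in particular is not a one-line deformation-invariance argument since $\varphi(s;n)$ does vary with moduli. A second unaddressed gap is $n=1$: there the Poincar\'e series for $K_1(z,w)$ fails to converge absolutely (and $N_1$ is undefined when $g=0$), so the unfolding and rearrangement into conjugacy classes that your whole argument rests on is not legitimate; the paper needs a separate treatment via the canonical basis of abelian differentials, Rauch's formula, and a regularized Green's function $G_{0,1}$, with the conjugacy-class sums defined by analytic continuation in $s$.
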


In the following sections, we are going to give a proof of this theorem that are more computational. In some sense, the  $n=1$ case should be the easiest one. However, in our approach which uses extensively Poincar$\acute{\text{e}}$ series (which is the sum of a function over the Fuchsian group elements), such series   fails to converge in the $n=1$ case. Hence, it in the following, we will concentrate on the case where $n\geq 2$. Whenever there is a problem with the $n=1$ case, we will discuss it in Section \ref{n1} an alternative method to solve the problem.

\smallskip

\section{Some Important Formulas}\label{importantformula}
Recall that  $\mu$ is a harmonic differential if there exists a holomorphic function $\phi$ such that
$$\mu(z)=y^2\overline{\phi(z)}.$$
 In this section, we want to derive an alternative expression for the integral
$$-\frac{1}{\pi}\int_{\mathbb{U}}\frac{\mu(u)}{(u-z)^2(u-\bar{w})^2}d^2u$$which by Proposition \ref{prop_vary} is   the variation of the kernel
$$\mathcal{H}(z,w)=\frac{1}{(z-\bar{w})^2}.$$ Proposition \ref{prop2_2_1} is crucial in our subsequent computations.

\begin{proposition}\label{prop2_2_1}
Given that $\mu$ is a harmonic Beltrami differential on $\mathbb{U}$, 
\begin{equation}\label{eq2_24_8}
\begin{split}\int_{\mathbb{U}}\frac{\mu(u)}{(u-z)^2(u-\bar{w})^2}d^2u=&\int_{\mathbb{U}}\mu(\zeta)\left\{\frac{(\bar{w}-\bar{z})^2}{(\zeta-\bar{z})^2(\zeta-\bar{w})^2(z-\bar{w})^2}\right.\\
&\hspace{1cm}\left.+\frac{2(\bar{w}-\bar{z})^2(z-\bar{z})}{(\zeta-\bar{z})^3(\zeta-\bar{w})(z-\bar{w})^3} \right\}d^2\zeta.
\end{split}\end{equation}
 
\end{proposition}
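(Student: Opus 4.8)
The plan is to prove \eqref{eq2_24_8} by an integration by parts that uses the one feature distinguishing a harmonic Beltrami differential: writing $\mu(u)=y^2\overline{\phi(u)}$ with $\phi$ holomorphic, the quotient $\mu(u)/(u-\bar u)^2=-\tfrac14\,\overline{\phi(u)}$ is \emph{anti-holomorphic} in $u$ — equivalently $\partial_u\mu=2\mu/(u-\bar u)$ and $\partial_u^2\mu=2\mu/(u-\bar u)^2$. The whole point of the identity is that the integrand on the left has a genuine double pole at the interior point $u=z$, so that integral is a finite-part integral there, whereas the kernels on the right are holomorphic in the integration variable throughout $\mathbb U$; the manipulation reflects the pole from $z$ across $\mathbb R$ to $\bar z$.

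Concretely, I would write $\mu(u)=-\tfrac14(u-\bar u)^2\overline{\phi(u)}$, so the left side of \eqref{eq2_24_8} becomes $-\tfrac14\int_{\mathbb U}\overline{\phi(u)}\,(u-\bar u)^2\big((u-z)^2(u-\bar w)^2\big)^{-1}\,d^2u$, and let $\mathcal R(u)$ denote the rational function of $u$ obtained by subtracting the claimed right-hand kernel from $\big((u-z)^2(u-\bar w)^2\big)^{-1}$. The assertion is then $\int_{\mathbb U}\overline{\phi(u)}\,(u-\bar u)^2\mathcal R(u)\,d^2u=0$, and since $\overline\phi$ is anti-holomorphic this follows once one exhibits $G(u,\bar u)$ with $\partial_uG=(u-\bar u)^2\mathcal R(u)$ for which $\int_{\mathbb U}\overline{\phi(u)}\,\partial_uG\,d^2u$, reduced by Stokes to an integral over $\partial\mathbb U=\mathbb R\cup\{\infty\}$, vanishes. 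Equivalently, invoking $\partial_u^2\mu=2\mu/(u-\bar u)^2$, one seeks $h$ with $\partial_u^2h-2h/(u-\bar u)^2=\mathcal R$ and integrates by parts twice in $u$; such an $h$ is produced by variation of parameters from the two homogeneous solutions $(u-\bar u)^2$ and $(u-\bar u)^{-1}$ of $\partial_u^2h=2h/(u-\bar u)^2$, which have constant Wronskian $-3$ and are exactly the "directions" in which harmonic differentials sit. What then remains is the purely algebraic verification that the two explicit rational kernels differ by $\partial_uG$; as the paper does elsewhere, I would relegate this to computer algebra, the only structural input being that $\mathcal R$ has residues $-2/(z-\bar w)^3$ at $u=z$, $+2/(z-\bar w)^3$ at $u=\bar z$, and $0$ at $u=\bar w$, which is what makes the two sides compatible.

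The step I expect to be the real obstacle is not this algebra but the justification that the integration by parts is exact — that every boundary contribution genuinely vanishes. At the interior pole $u=z$ one excises a small disc; since it is always $\partial_u$, and never $\partial_{\bar u}$, that is moved across $1/(u-z)$, no $\delta$-function is generated, and the integral over the small circle is $O(\varepsilon\log\varepsilon)$ and drops out in the limit. The contributions along $\mathbb R$ and near $\infty$ must be controlled using the $O(|u|^{-4})$ decay of the kernels, the boundedness of $\mu$, and its exponential decay into the cusps; a truncation-and-limiting argument is then needed to dispose of the logarithmic terms that the potential $h$ (or $G$) unavoidably carries. Carrying out this boundary analysis cleanly is the heart of the matter; once it is in hand, \eqref{eq2_24_8} is exactly the comparison of the two rational kernels. (One may alternatively organize the same computation around the Bergman reproducing kernels $\mathcal K_1$ of $\mathbb U$ — the right-hand kernels being built from $(u-\bar z)^{-2}$ and $(u-\bar w)^{-2}=-\pi\mathcal K_1(u,w)$ — and use the reproducing property in place of the explicit $h$.)
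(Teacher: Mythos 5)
Your strategy is genuinely different from the paper's, but as written it has a gap precisely at the step you yourself flag as ``the heart of the matter,'' and that step is not merely technical. Two concrete problems. First, by your own residue computation $\mathcal{R}$ has a nonzero residue $-2/(z-\bar w)^3$ at the interior point $u=z$, so the coefficient of $(u-z)^{-1}$ in $(u-\bar u)^2\mathcal{R}(u)$ (with $\bar u$ treated as a parameter) is $(z-\bar u)^2\bigl(-2/(z-\bar w)^3\bigr)+2(z-\bar u)/(z-\bar w)^2$, which does not vanish identically; hence any $u$-antiderivative $G$ carries a genuine $\log(u-z)$ term and is multivalued on $\mathbb U\setminus\{z\}$. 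Stokes' theorem then requires a slit from $z$ to $\partial\mathbb U$, and the jump of $G$ across the slit contributes $2\pi i$ times a nontrivial integral along the slit --- this is not disposed of by the $O(\varepsilon\log\varepsilon)$ estimate on the small circle, and you give no argument that it cancels. Second, the boundary terms along $\mathbb R$ are not controlled by the hypotheses: a harmonic Beltrami differential is only bounded near $\mathbb R$ (it is $y^2\overline{\phi}$ with $\overline{\phi}$ possibly blowing up), $\partial_u\mu=2\mu/(u-\bar u)$ grows like $y^{-1}$, and your variation-of-parameters solution $h$ is built from $(u-\bar u)^{-1}$ and logarithms, so the integrand $\mu\,\partial_u h-(\partial_u\mu)h$ on a line $y=\varepsilon$ has no evident decay as $\varepsilon\to0$. (The appeal to ``exponential decay into the cusps'' is also misplaced: the proposition is about a single harmonic Beltrami differential on $\mathbb U$, with no group action near $\mathbb R$ to invoke.) Until these two issues are resolved the argument does not close.

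For comparison, the paper avoids boundary terms entirely: it substitutes the weight-two reproducing formula $\mu(z)=-\tfrac{3(z-\bar z)^2}{\pi}\int_{\mathbb U}\mu(\zeta)(\zeta-\bar z)^{-4}\,d^2\zeta$ into the left-hand side, reducing the claim to the evaluation of an explicit, absolutely convergent kernel integral $\Xi(\zeta,z,w)$, which is then computed in the unit disc by Taylor expansion, the orthogonality $\int_{\mathbb D}u^m\bar u^n\,d^2u=0$ for $m\neq n$, and M\"obius invariance. That route trades your ODE/Stokes structure for a single closed-form integral and is why no delicate limiting argument appears in the paper. If you want to salvage your approach, the cleanest fix is probably to replace the ad hoc antiderivative by the reproducing identity itself --- which is in effect what your closing parenthetical suggests, and what the paper does.
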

\begin{proof}
Using the definition of harmonic Beltrami differentials and \eqref{eq3_1_1}, we find that
\begin{align}\label{eq2_6_6}
\mu(z)=-\frac{3(z-\bar{z})^2}{\pi}\int_{\mathbb{U}}\frac{\mu(\zeta)}{(\zeta-\bar{z})^4}d^2\zeta.
\end{align}Therefore, 
$$\int_{\mathbb{U}}\frac{\mu(u)}{(u-z)^2(u-\bar{w})^2}d^2u=\int_{\mathbb{U}}\mu(\zeta)\Xi(\zeta, z, w)d^2\zeta,$$where
\begin{align*}
\Xi(\zeta, z, w)=-\frac{3}{\pi}\int_{\mathbb{U}}\frac{(u-\bar{u})^2}{(z-u)^2(u-\bar{w})^2(\zeta-\bar{u})^4}d^2u.
\end{align*}It is easier to compute this integral if we use coordinates in the unit disc $\mathbb{D}$. Notice that the mapping
\begin{align}\label{eq2_2_2} h(z)= i\frac{1+z}{1-z}\end{align} maps   $\mathbb{D}$ biholomorphically onto $\mathbb{U}$. Under this mapping,
\begin{align}\label{eq2_7_1}\frac{h'(\zeta)h'(\eta)}{\left(h(\zeta)-h(\eta)\right)^2}=\frac{1}{(\zeta-\eta)^2},\hspace{1cm}\frac{h'(\zeta)\overline{h'(\eta)}}{\left(h(\zeta)-\overline{h(\eta)}\right)^2}=-\frac{1}{(1-\zeta\bar{\eta})^2}.\end{align}Therefore,
\begin{align*}
\widetilde{\Xi}(\zeta, z, w)=&\Xi\left(h((\zeta), h(z), h(w)\right)h'(\zeta)^2h'(z)\overline{h'(w)}\\
=&-\frac{3}{\pi}\int_{\mathbb{D}}\frac{(1-|u|^2)^2}{(z-u)^2(1-u\bar{w})^2(1-\zeta\bar{u})^4}d^2u.
\end{align*}If $z$ is a point in $\mathbb{D}$ and 
\begin{align}\label{eq2_24_11}\sigma(v)=\frac{v-z}{1-v\bar{z}},\end{align}$\sigma$ is a linear fractional transformation that maps $\mathbb{D}$ onto $\mathbb{D}$, and $\sigma(z)=0$. Moreover,
$$\widetilde{\Xi}(\zeta, z, w)=\widetilde{\Xi}\left(\sigma(\zeta), \sigma(z), \sigma(w)\right)\sigma'(\zeta)^2\sigma'(z)\overline{\sigma'(w)}.$$ Let us first compute
$$\widetilde{\Xi}(\zeta_1, 0, w_1)=-\frac{3}{\pi}\int_{\mathbb{D}}\frac{(1-|u|^2)^2}{u^2(1-u\overline{w_1})^2(1-\zeta_1\bar{u})^4}d^2u.$$
Using Taylor expansion, the fact that
\begin{equation}\label{eq2_2_3}\int_{\mathbb{D}}u^m\bar{u}^nd^2u=0\hspace{1cm}\text{if}\;m\neq n,\end{equation}  and the fact that when $m$ is a positive integer,
\begin{align}\label{eq2_2_4}
\frac{1}{(1-x)^m}=\sum_{k=0}^{\infty} \frac{(k+m-1)!}{k\,!\;(m-1)!}x^k,
\end{align}we have
\begin{align*}
\widetilde{\Xi}(\zeta_1, 0, w_1)=&-\frac{1}{2\pi}\sum_{k=0}^{\infty} (k+1)(k+2)(k+3)^2\zeta_1^k\overline{w}^{k+2}\int_{\mathbb{D}}|u|^{2k}(1-|u|^2)^2d^2u\\
=&-\sum_{k=0}^{\infty}  (k+3)\zeta_1^k\overline{w}^{k+2}\\
=&-\frac{\overline{w_1}^2}{(1-\zeta_1\overline{w_1})^2}-\frac{2\overline{w_1}^2}{1-\zeta_1\overline{w_1}}.
\end{align*}This gives
\begin{align*}
\widetilde{\Xi}(\zeta, z, w)=-\frac{(\bar{w}-\bar{z})^2}{(1-\zeta\bar{z})^2(1-\zeta\bar{w})^2(1-z\bar{w})^2}-\frac{2(\bar{w}-\bar{z})^2(1-|z|^2)}{(1-\zeta\bar{z})^3(1-\zeta\bar{w})(1-z\bar{w})^3}.
\end{align*}Hence,
$$\Xi(\zeta, z, w)=\frac{(\bar{w}-\bar{z})^2}{(\zeta-\bar{z})^2(\zeta-\bar{w})^2(z-\bar{w})^2}+\frac{2(\bar{w}-\bar{z})^2(z-\bar{z})}{(\zeta-\bar{z})^3(\zeta-\bar{w})(z-\bar{w})^3}.$$
 \end{proof}

Now we are going to use the formula \eqref{eq2_24_8} to derive some well-known results for the hyperbolic metric density $\rho$. By definition, 
$$L_{\mu}\rho=L_{\mu}\left[\frac{-4}{(z-\bar{z})^2}\right]=\frac{4}{\pi}\int_{\mathbb{U}}\frac{\mu(u)}{(u-z)^2(u-\bar{z})^2}d^2u.$$ 
Setting $w=z$ in \eqref{eq2_24_8} immediately gives the well known result of Wolpert  \cite{Wolpert_ChernForm} that 
$$L_{\mu}\rho=0.$$In \cite{Memoir}, we have re-derived another important result of Wolpert  \cite{Wolpert_ChernForm} which says that 
 \begin{align}\label{eq2_24_9}L_{\mu}L_{\bar{\nu}}\rho=2\rho(\Delta_0+2)^{-1}(\mu\bar{\nu}).\end{align}Here we give yet another derivation similar to the spirit of the proof given for Proposition \ref{prop2_2_1}.

We first start with an important proposition, which is Corollary 6.4 in \cite{Memoir}.
\begin{proposition}\label{integral2_24_1}
Let $\mu$ and $\nu$ be harmonic Beltrami differentials on $\mathbb{U}$,  and let $\mathcal{G}(z,w)$ be the kernel of $2(\Delta_0+2)^{-1}$ defined by \eqref{eq3_1_2}. Then
\begin{equation}\label{eq2_24_10}\begin{split}
&2(\Delta_0+2)^{-1}(\mu\bar{\nu})(z)\\=&\int_{\mathbb{U}}\mathcal{G}(z,w)\mu(w)\overline{\nu(w)}\rho(w)^{1-n}d^2w\\
=&\frac{2}{\pi^2}\int_{\mathbb{U}}\int_{\mathbb{U}}\mu(\zeta)\overline{\nu(\eta)}\left\{ \frac{(z-\bar{z})^2}{( \zeta -\bar{z})^2(\zeta-\bar{\eta})^2(z-\bar{\eta})^2}
+ \frac{(z-\bar{z})^3}{( \zeta -\bar{z})^3(\zeta-\bar{\eta}) (z-\bar{\eta})^3}\right\}d^2\zeta d^2\eta.
\end{split}\end{equation}
\end{proposition}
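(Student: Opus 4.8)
The plan is to express the resolvent $2(\Delta_0+2)^{-1}(\mu\bar\nu)$ as a Poincar\'e series built from the free-space kernel $\mathcal{G}(z,w)$ and then apply Proposition \ref{prop2_2_1} to the elementary building block $1/(z-\bar w)^2$. Concretely, I would start from the definition
\[
2(\Delta_0+2)^{-1}(\mu\bar\nu)(z)=\int_{\mathbb{U}}G(z,w)\,\mu(w)\overline{\nu(w)}\,\rho(w)^{-1}d^2w
=\int_{\mathbb{U}}\mathcal{G}(z,w)\,\mu(w)\overline{\nu(w)}\,\rho(w)^{-1}d^2w,
\]
where the last step unfolds the sum \eqref{eq3_3_1} over $\Gamma$ against the $\Gamma$-invariant integrand and replaces the integral over a fundamental domain by an integral over all of $\mathbb{U}$ (the factor $\rho(w)^{-1}=4/(w-\bar w)^2$ coming from $dA$). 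The issue is that $\mathcal{G}(z,w)$ in \eqref{eq3_1_2} is a transcendental function of the point-pair invariant $u(z,w)$, not a rational kernel, so Proposition \ref{prop2_2_1} does not apply to it directly.

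The key idea is to differentiate. One checks by a direct computation (using \eqref{eq1_24_1}) that $\partial_{\bar z}\partial_{\bar w}\mathcal{G}(z,w)$, or rather a suitable mixed derivative, collapses to a rational expression in $z,\bar z,w,\bar w$ of the shape $c/(z-\bar w)^2$ times lower-order pieces — this is exactly the manipulation that turns the logarithmic kernel into the rational kernel $\mathcal{H}(z,w)=1/(z-\bar w)^2$ appearing in Section \ref{importantformula}. Equivalently, recalling that $\mathcal{G}$ is the Green's kernel for $\tfrac12(\Delta_0+2)$, one writes $\mu(w)\overline{\nu(w)}\rho(w)^{-1}$ in terms of $L_{\bar\nu}\mu$-type quantities via \eqref{eq2_22_12} and \eqref{eq2_24_9}, integrates by parts to move the derivatives onto $\mathcal G$, and arrives at an integral against a rational kernel. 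Then I would substitute the reproducing identity \eqref{eq2_6_6} for harmonic Beltrami differentials to write both $\mu$ and $\overline\nu$ as integrals of $\mu(\zeta)$ and $\overline{\nu(\eta)}$ against fourth-power Cauchy kernels, producing the double integral over $\zeta,\eta\in\mathbb{U}$ with a rational kernel in all four variables.

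The final step is to evaluate the resulting $u$-integral in closed form. Exactly as in the proof of Proposition \ref{prop2_2_1}, I would conformally transfer to the unit disc via $h(z)=i(1+z)/(1-z)$ using the transformation rules \eqref{eq2_7_1}, normalize one of the points to the origin with a disc automorphism $\sigma(v)=(v-z)/(1-v\bar z)$ as in \eqref{eq2_24_11}, expand everything in Taylor series, kill the off-diagonal terms with the orthogonality relation \eqref{eq2_2_3}, sum the resulting hypergeometric-type series using \eqref{eq2_2_4} together with $\int_{\mathbb{D}}|u|^{2k}(1-|u|^2)^2\,d^2u$, and then transfer back to $\mathbb{U}$ and undo the automorphism $\sigma$. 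The bookkeeping should reproduce the two terms
\[
\frac{(z-\bar z)^2}{(\zeta-\bar z)^2(\zeta-\bar\eta)^2(z-\bar\eta)^2}
+\frac{(z-\bar z)^3}{(\zeta-\bar z)^3(\zeta-\bar\eta)(z-\bar\eta)^3}
\]
with the overall constant $2/\pi^2$. The main obstacle is purely computational: correctly tracking the conformal weights and the constants through the $\mathbb D$-transfer and the series summation, and making sure the two rational terms (and not spurious extra ones) survive — this is the kind of elementary-but-delicate calculation the author flags as done with computer algebra, so I would organize it by first reducing to the single-kernel statement of Proposition \ref{prop2_2_1} and only then iterating, rather than attacking the double integral head-on.
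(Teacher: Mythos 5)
Your scaffolding is right at both ends — unfold the Poincar\'e series to an integral over $\mathbb{U}$, substitute the reproducing identity \eqref{eq2_6_6} for both $\mu$ and $\overline{\nu}$, transfer to the disc via $h$, normalize with the automorphism \eqref{eq2_24_11}, expand in series and use \eqref{eq2_2_3} — but the middle step you insert is both unnecessary and, in the concrete form you give it, runs the logic backwards. You claim one must first reduce the transcendental kernel $\mathcal{G}$ to a rational one, either by differentiating $\mathcal{G}$ or ``equivalently'' by rewriting $\mu\bar\nu\rho^{-1}$ via \eqref{eq2_22_12} and \eqref{eq2_24_9} and integrating by parts. The second route assumes \eqref{eq2_24_9}, which is exactly the identity the paper derives \emph{from} this proposition immediately afterwards; the text preceding the proposition states explicitly that the point of this proof is to avoid the earlier argument that applies $(\Delta_0+2)$ to both sides, i.e.\ to avoid presupposing \eqref{eq2_24_9}. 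The first route (differentiate $\mathcal{G}(z,w)$ in $w$ and integrate by parts) stalls because the derivatives land on $\mu(w)\overline{\nu(w)}$, which is neither holomorphic nor antiholomorphic, so you do not arrive at a clean rational-kernel integral without further work that you have not specified.

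The paper's actual proof shows no such reduction is needed. After substituting \eqref{eq2_6_6} twice one is left with the single integral
\begin{equation*}
\Lambda(z,\zeta,\eta)=-36\int_{\mathbb{U}}\mathcal{G}(z,w)\,\frac{(w-\bar{w})^2}{(\zeta-\bar{w})^4(w-\bar{\eta})^4}\,d^2w,
\end{equation*}
and the transcendental kernel is handled head-on: on the disc, $\check{\mathcal{G}}(0,w)$ is a radial function involving $\log\frac{1}{|w|^2}$, the angular integration kills the off-diagonal terms of the two geometric-series expansions, and the surviving radial integrals $\int_{\mathbb{D}}\bigl(\tfrac{(1-|w|^2)(1+|w|^2)}{2}\log\tfrac{1}{|w|^2}-(1-|w|^2)^2\bigr)|w|^{2k-4}d^2w$ are elementary, summing to $\widetilde{\Lambda}(0,\zeta,\eta)=\frac{2}{(1-\zeta\bar\eta)^2}+\frac{2}{1-\zeta\bar\eta}$. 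So the logarithm never needs to be differentiated away. Two smaller points: your identification $\rho(w)^{-1}=4/(w-\bar w)^2$ has the wrong sign and the wrong power (the area form contributes $\rho(w)\,d^2w$, not $\rho(w)^{-1}d^2w$), and Proposition \ref{prop2_2_1} is not actually invoked in this proof at all — only its disc-transfer technique is reused.
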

In \cite{Memoir}, we derived this as a corollary to the proof of \eqref{eq2_24_9}. Essentially we applied the operator $(\Delta_0+2)$ to both sides of \eqref{eq2_24_10} and showed that they give equal results. The conclusion follows from the invertibility of the operator $(\Delta_0+2)$.  Here we give a proof without relying on the later fact.
\begin{proof}
By \eqref{eq2_6_6},
\begin{align*}
2(\Delta_0+2)^{-1}(\mu\bar{\nu})(z)=&\frac{1}{\pi^2}\int_{\mathbb{U}}\int_{\mathbb{U}}\mu(\zeta)\overline{\nu(\eta)}\Lambda(z,\zeta, \eta)d^2\zeta d^2\eta,
\end{align*}where
\begin{align*}
\Lambda(z,\zeta,\eta)=&-36\int_{\mathbb{U}}\mathcal{G}(z,w)\frac{(w-\bar{w})^2}{(\zeta-\bar{w})^4(w-\bar{\eta})^4}d^2w.
\end{align*}

Using the same notation as in the proof of Proposition \ref{prop2_2_1}, let  
 \begin{align}\label{eq2_7_3}\check{\mathcal{G}}(z,w)=\mathcal{G}(h(z), h(w))=\frac{2\check{u}(z,w)+1}{2\pi}\log\frac{\check{u}(z,w)+1}{\check{u}(z,w)}-\frac{1}{\pi}, 
\end{align}where
$$\check{u}(z,w)=u(h(z), h(w))=\frac{|z-w|^2}{(1-|z|^2)(1-|w|^2)}.$$Then
\begin{align*}
\widetilde{\Lambda}(z,\zeta,\eta)=&\Lambda(h(z), h(\zeta), h(\eta))h'(\zeta)^2\overline{h'(\eta)}^2\\
=&36\int_{\mathbb{D}}\check{\mathcal{G}}(z,w)\frac{(1-|w|^2)^2}{(1-\zeta\bar{w})^4(1-w\bar{\eta})^4}d^2w.
\end{align*}
By a direct computation,
\begin{align*}
\check{\mathcal{G}}(0,w)=\frac{1}{\pi}\left(\frac{1+|w|^2}{2(1-|w|^2)}\log \frac{1}{|w|^2}-1\right).
\end{align*}
Therefore,
\begin{align*}
&\widetilde{\Lambda}(0,\zeta,\eta)\\=&\frac{1}{ \pi}\int_{\mathbb{D}}\left\{ \frac{(1-|w|^2) (1+|w|^2)}{2}\log\frac{1}{|w|^2}-(1-|w|^2)^2\right\}
\\&\hspace{3cm}\times\sum_{k=2}^{\infty}(k^3-k)(\zeta\bar{w})^{k-2}\sum_{m=2}^{\infty}(m^3-m)(w\bar{\eta})^{m-2}d^2w\\
=&\frac{1}{ \pi}\sum_{k=2}^{\infty}(k^3-k)^2(\zeta\bar{\eta})^{k-2}\int_{\mathbb{D}}\left\{ \frac{(1-|w|^2) (1+|w|^2)}{2}\log\frac{1}{|w|^2}-(1-|w|^2)^2\right\}
|w|^{2k-4}d^2w.
\end{align*}This integral is elementary and we find that
$$\widetilde{\Lambda}(0,\zeta,\eta)=2\sum_{k=2}^{\infty}k(\zeta\bar{\eta})^{k-2}=\frac{2}{(1-\zeta\bar{\eta})^2}+\frac{2}{1-\zeta\bar{\eta}}.$$Therefore, with $\sigma$ given by \eqref{eq2_24_11}, we find that
\begin{align*}
\widetilde{\Lambda}(z,\zeta,\eta)=&\widetilde{\Lambda}(\sigma(z), \sigma(\zeta), \sigma(\eta))\sigma'(\zeta)^2\overline{\sigma'(\eta)}^2\\
=&\frac{2(1-|z|^2)^2}{(1-\zeta\bar{z})^2(1-\zeta\bar{\eta})^2(1-z\bar{\eta})^2}+\frac{2(1-|z|^2)^3}{(1-\zeta\bar{z})^3(1-\zeta\bar{\eta})(1-z\bar{\eta})^3}.
\end{align*}The assertion follows.
\end{proof}

Now we proceed to derive the second variation of the hyperbolic metric density. By Proposition \ref{prop_vary},
\begin{align*}
L_{\mu}L_{\bar{\nu}}\rho=&-\frac{4}{\pi^2}\int_{\mathbb{U}}\int_{\mathbb{U}}\mu(\zeta)\overline{\nu(\eta)}\left\{\frac{1}{(z-\bar{\eta})^2(\zeta-\bar{\eta})^2(\zeta-\bar{z})^2}
+\frac{1}{(z-\zeta)^2(\zeta-\bar{\eta})^2(\bar{\eta}-\bar{z})^2}\right\}d^2\zeta d^2\eta.
\end{align*}By Proposition \ref{prop2_2_1},
\begin{align*}
&\int_{\mathbb{U}}\frac{\mu(\zeta)}{(z-\zeta)^2(\zeta-\bar{\eta})^2(\bar{\eta}-\bar{z})^2}d^2\zeta\\=&\int_{\mathbb{U}}\mu(\zeta)
\left\{\frac{1}{(\zeta-\bar{z})^2(\zeta-\bar{\eta})^2(z-\bar{\eta})^2}+\frac{2(z-\bar{z})}{(\zeta-\bar{z})^3(\zeta-\bar{\eta}) (z-\bar{\eta})^3}\right\}d^2\zeta.
\end{align*}It follows that 
\begin{align*}
L_{\mu}L_{\bar{\nu}}\rho=&-\frac{8}{\pi^2}\int_{\mathbb{U}}\int_{\mathbb{U}}\mu(\zeta)\overline{\nu(\eta)}\left\{\frac{1}{(z-\bar{\eta})^2(\zeta-\bar{\eta})^2(\zeta-\bar{z})^2}
+\frac{(z-\bar{z})}{(\zeta-\bar{z})^3(\zeta-\bar{\eta}) (z-\bar{\eta})^3}\right\}d^2\zeta d^2\eta.
\end{align*}By Proposition \ref{integral2_24_1}, this is equal to
$$2\rho (\Delta_0+2)^{-1}(\mu\bar{\nu})(z).$$

Let us collect these important results about the first and second variations of the hyperbolic metric density in the proposition below. 

\begin{proposition} \label{varymetric}
Given $\mu, \nu \in \Omega_{-1,1}(X)$, the first and second variations of the hyperbolic metric density $\rho$ is given by
$$L_{\mu}\rho=L_{\bar{\mu}}\rho=0,$$ and
$$L_{\mu}L_{\bar{\nu}}\rho=2\rho(\Delta_0+2)^{-1}(\mu\bar{\nu}).$$
\end{proposition}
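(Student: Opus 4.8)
The two identities are in fact established by the computation carried out in the paragraphs immediately preceding the statement, so the proof amounts to organizing that discussion into a clean argument. Here is the plan.

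For the first variation, I would start from $\rho(z)=-4/(z-\bar z)^2=-4\,\mathcal{H}(z,z)$ and note that $\mathcal{H}(z,w)=1/(z-\bar w)^2$ is the value at $\varepsilon=0$ of the kernel $\mathscr{M}_{\varepsilon\mu}$, so that $(f^{\varepsilon\mu})^{*}\rho^{\varepsilon\mu}(z)=-4\,\mathscr{M}_{\varepsilon\mu}(z,z)$ and hence $L_{\mu}\rho(z)=-4\,\partial_{\varepsilon}|_{\varepsilon=0}\mathscr{M}_{\varepsilon\mu}(z,z)$. The second formula of Proposition \ref{prop_vary}, evaluated at $\varepsilon=0$ and then at $w=z$, gives
$$L_{\mu}\rho(z)=\frac{4}{\pi}\int_{\mathbb{U}}\frac{\mu(u)}{(u-z)^2(u-\bar z)^2}\,d^2u.$$
Now I would invoke Proposition \ref{prop2_2_1} with $w=z$: every term on the right-hand side of \eqref{eq2_24_8} carries the prefactor $(\bar w-\bar z)^2$, which vanishes identically when $w=z$. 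Hence $L_{\mu}\rho=0$, and complex conjugation yields $L_{\bar\mu}\rho=0$.

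For the second variation, I would iterate the variational formulas of Proposition \ref{prop_vary} (the $\partial/\partial\bar\varepsilon$ formulas, which turn an $\mathscr{L}$-kernel into a conjugated $\mathscr{M}$-kernel, followed by the $\partial/\partial\varepsilon$ formula; one may reduce to $\mu=\nu$ and polarize). At the origin this produces
$$L_{\mu}L_{\bar\nu}\rho=-\frac{4}{\pi^2}\int_{\mathbb{U}}\int_{\mathbb{U}}\mu(\zeta)\overline{\nu(\eta)}\left\{\frac{1}{(z-\bar\eta)^2(\zeta-\bar\eta)^2(\zeta-\bar z)^2}+\frac{1}{(z-\zeta)^2(\zeta-\bar\eta)^2(\bar\eta-\bar z)^2}\right\}d^2\zeta d^2\eta.$$
The second summand is not yet in a usable form, so I would apply Proposition \ref{prop2_2_1} once more -- now with its variable $w$ playing the role of $\eta$ -- to rewrite the inner $\zeta$-integral of that summand; the two contributions then combine and one arrives at
$$L_{\mu}L_{\bar\nu}\rho=-\frac{8}{\pi^2}\int_{\mathbb{U}}\int_{\mathbb{U}}\mu(\zeta)\overline{\nu(\eta)}\left\{\frac{1}{(z-\bar\eta)^2(\zeta-\bar\eta)^2(\zeta-\bar z)^2}+\frac{z-\bar z}{(\zeta-\bar z)^3(\zeta-\bar\eta)(z-\bar\eta)^3}\right\}d^2\zeta d^2\eta.$$
Factoring $(z-\bar z)^2$ out of the bracket and comparing with the explicit Green's-kernel formula \eqref{eq2_24_10} of Proposition \ref{integral2_24_1} for $2(\Delta_0+2)^{-1}(\mu\bar\nu)$, the numerical constants collapse (using $\rho(z)=-4/(z-\bar z)^2$) and the right-hand side equals $2\rho(z)(\Delta_0+2)^{-1}(\mu\bar\nu)(z)$, as claimed.

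The only genuinely delicate step is the middle one in the second variation: recognizing that the manifestly asymmetric summand must be massaged via Proposition \ref{prop2_2_1} before the expression matches the kernel of Proposition \ref{integral2_24_1}. Alongside this, one must keep careful track of which variable plays the role of ``$z$'' versus ``$w$'' in each application of Proposition \ref{prop2_2_1}, and of the numerical factors together with the sign coming from $\rho=-4/(z-\bar z)^2$. Everything else is routine substitution.
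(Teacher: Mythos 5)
Your proposal is correct and follows essentially the same route as the paper: the first variation via the kernel variation formulas of Proposition \ref{prop_vary} combined with Proposition \ref{prop2_2_1} at $w=z$ (where the $(\bar w-\bar z)^2$ prefactors kill everything), and the second variation by iterating Proposition \ref{prop_vary}, rewriting the asymmetric summand with Proposition \ref{prop2_2_1}, and matching the resulting $-\tfrac{8}{\pi^2}$ double integral against Proposition \ref{integral2_24_1}. All intermediate expressions and constants agree with the paper's computation, so there is nothing to add.
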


\smallskip
\section{The Second Variation of $\log\det N_n$}\label{a1}
Let $n$ be a positive integer and assume that the space of integrable holomorphic $n$-differentials $\Omega_n(X)$ has positive dimension.
 Given a family of basis 
$\{\phi_1^{\mu}, \ldots, \phi_{d_n}^{\mu}\}$ of $\Omega_n(X^{\mu})$ that varies  holomorphically with
respect to moduli,  we want to  compute $\pa\bar{\pa}\log\det N_n$ on $T(X)$. In this section, we fixed a positive integer $n$, and abbreviate $N_n$ as $N$, $d_n$ as $d$ when no confusion arises.

\begin{lemma} Let  $n$ be a positive integer and let $\{\phi_1, \ldots, \phi_d\}$ be a basis of $\Omega_n(X)$. Define the 
  $d\times d$ matrix $N$ by
\begin{align}\label{eq1_26_1}N_{k,l}=\int_X\phi_k(z)\overline{\phi_l(z)}\rho(z)^{1-n}d^2z.\end{align} Then 
$N$ is a positive definite Hermitian matrix. In particular, $N$ is invertible.
\end{lemma}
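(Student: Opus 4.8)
The plan is to prove the two assertions --- that $N$ is Hermitian and that it is positive definite --- directly from the integral formula \eqref{eq1_26_1}, treating the inner product \eqref{eq3_8_1} on $H_{n,0}^2(X)$ as the underlying object. The crucial preliminary observation is that the integrals defining $N_{k,l}$ are finite: each $\phi_j\in\Omega_n(X)$ lies in $H_n^2(X)$, since by definition $\Omega_n(X)$ is the nullspace of $\overline{\pa}_n$ acting on the Hilbert space $H_n^2(X)$, so $\langle\phi_k,\phi_l\rangle$ makes sense as an absolutely convergent integral by Cauchy--Schwarz applied to the Hermitian form \eqref{eq3_8_1}.

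First I would verify the Hermitian property. Writing out $\overline{N_{l,k}}$ from \eqref{eq1_26_1} and using that $\rho(z)^{1-n}$ is real and positive, one gets $\overline{N_{l,k}}=\int_X\overline{\phi_l(z)}\,\phi_k(z)\,\rho(z)^{1-n}d^2z=N_{k,l}$, so $N^*=N$. Next, for positive semidefiniteness, I would take an arbitrary vector $c=(c_1,\ldots,c_d)\in\mathbb{C}^d$ and compute the quadratic form: $c^*Nc=\sum_{k,l}\overline{c_k}\,c_l\,N_{l,k}$, which after interchanging the finite sum with the integral becomes $\int_X\bigl|\sum_{l}c_l\phi_l(z)\bigr|^2\rho(z)^{1-n}d^2z\ge 0$. (One must be slightly careful about which index convention makes the form come out as $\|\sum c_l\phi_l\|^2$ versus $\|\sum \overline{c_l}\phi_l\|^2$, but either way it is manifestly nonnegative.) Finally, for strict positivity I would observe that the integrand vanishes identically only if $\sum_l c_l\phi_l\equiv 0$ on $X$ (using that $\rho>0$ everywhere on $\mathbb{U}$ and that a holomorphic $n$-differential vanishing on a set of positive measure vanishes identically), and since $\{\phi_1,\ldots,\phi_d\}$ is a \emph{basis} of $\Omega_n(X)$, this forces $c=0$. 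Hence $c^*Nc>0$ for all $c\ne 0$, so $N$ is positive definite, and in particular $\det N>0$, so $N$ is invertible.

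There is no serious obstacle here; this is essentially the standard fact that the Gram matrix of a linearly independent family in an inner product space is positive definite. The only point requiring a word of care is the finiteness of the entries --- i.e.\ that holomorphic (integrable meromorphic) $n$-differentials genuinely lie in $H_n^2(X)$ even though they may have poles at elliptic fixed points --- but this is built into the definition of $\Omega_n(X)$ as a subspace of $H_n^2(X)$ given in Section~\ref{Laplace}, so it may simply be cited. Interchanging the finite sum with the integral is justified by linearity (no limiting argument needed), and the identity theorem for holomorphic functions handles the passage from ``integrand vanishes a.e.'' to ``the differential is zero''.
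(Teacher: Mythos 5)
Your proposal is correct and follows essentially the same route as the paper: verify $N^*=N$ by conjugating the defining integral, then identify the quadratic form $c^*Nc$ with $\langle\phi,\phi\rangle$ for $\phi=\sum c_k\phi_k$ and use linear independence of the basis to get strict positivity. The extra remarks on finiteness of the entries and the identity theorem are sensible but not needed beyond what the paper already assumes.
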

\begin{proof}
From the definition, it is obvious that
$$\overline{N_{k,l}}=N_{l,k}.$$ This implies that $N=N^*$. Namely, $N$ is Hermitian. To prove the positive definiteness, let $\mathbf{c}=(c_1, \ldots, c_d)$ be a vector in $\mathbb{C}^d$. Then
\begin{align*}
\sum_{k=1}^d\sum_{l=1}^dc_k N_{k,l}\overline{c_l}=\int_X\left(\sum_{k=1}^dc_k\phi_k(z)\right)\overline{\left(\sum_{l=1}^dc_l\phi_l(z)\right)}\rho(z)^{1-n}d^2z=\left\langle \phi, \phi\right\rangle,
\end{align*}where
$$\phi=\sum_{k=1}^dc_k\phi_k.$$ This shows that
$$\sum_{k=1}^d\sum_{l=1}^dc_k N_{k,l}\overline{c_l}\geq 0$$ and equality holds if and only if $\phi=0$, if and only if $\mathbf{c}=\mathbf{0}$. Hence, $N$ is positive definite, and thus it is invertible.
\end{proof}

\begin{lemma} \label{lemma1_27_1} Let $n\geq 2$ and let $\{\phi_1, \ldots, \phi_d\}$ be a basis of $\Omega_n(X)$. If 
$N$ is the $d\times d$ matrix defined by \eqref{eq1_26_1},
then
\begin{align}\label{eq3_1_3} \sum_{k=1}^d\sum_{l=1}^d(N^{-1})_{kl}\phi_l(z)\overline{\phi_k(w)}=K_n(z,w), \end{align}
where $K_n(z,w)$ is the kernel of the projection operator $\mathscr{P}_n: H_n^2(X)\rightarrow \Omega_n(X)$.

When $n=1$, there is a special basis of $\Omega_1(X)$ and \eqref{eq3_1_3} is used to define the projection kernel $K_1(z,w)$ using this special basis. The formula \eqref{eq3_1_3} then holds for any other basis of $\Omega_1(X)$.
\end{lemma}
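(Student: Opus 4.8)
The plan is to verify the reproducing-kernel identity \eqref{eq3_1_3} by checking that the right-hand side of \eqref{eq3_1_3}, call it $\widetilde K(z,w) = \sum_{k,l}(N^{-1})_{kl}\phi_l(z)\overline{\phi_k(w)}$, acts as the identity on the finite-dimensional space $\Omega_n(X)$ and annihilates its orthogonal complement in $H_n^2(X)$; since $\mathscr{P}_n$ is by definition the orthogonal projection onto $\Omega_n(X)$, this characterizes its kernel uniquely and forces $\widetilde K = K_n$. First I would check that for each fixed $w$ the function $z\mapsto\widetilde K(z,w)$ lies in $\Omega_n(X)$ (it is a finite linear combination of the basis elements $\phi_l$) and, symmetrically, that for fixed $z$ it is an anti-holomorphic $n$-differential in $w$; this matches the stated properties of $K_n$ noted right after \eqref{eq2_17_2}. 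Next, I would compute, for an arbitrary $\psi\in H_n^2(X)$, the integral $\int_X \widetilde K(z,w)\psi(w)\rho(w)^{1-n}d^2w$. Writing this out and using the definition \eqref{eq1_26_1} of the Gram matrix, the key step is the elementary linear-algebra identity: if $\psi = \sum_m a_m\phi_m + \psi^{\perp}$ is the orthogonal decomposition with $\psi^{\perp}\perp\Omega_n(X)$, then $\int_X\phi_k(w)\overline{\psi(w)}$... wait — one must be careful with conjugations; concretely $\overline{\langle \psi,\phi_k\rangle} = \langle\phi_k,\psi\rangle = \sum_m a_m \overline{a_m\text{-free term}}$, so $\int_X \overline{\phi_k(w)}\psi(w)\rho^{1-n}d^2w = \sum_m N_{m,k}\,a_m$ because $\psi^\perp$ contributes nothing. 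Substituting this into the double sum and contracting $N^{-1}$ with $N$ collapses everything to $\sum_l a_l\phi_l(z) = (\mathscr{P}_n\psi)(z)$, which is exactly the reproducing property that pins down $K_n$.

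The one genuine subtlety — and the step I expect to be the main obstacle — is the case $n=1$. Here the Gram matrix $N_1$ is still a perfectly good positive-definite $g\times g$ matrix, but the Poincaré-series formula \eqref{eq3_5_1} defining $K_n$ does \emph{not} converge, so there is no a priori projection kernel to match. The resolution, as the statement indicates, is to run the argument backwards: one \emph{defines} $K_1(z,w)$ by formula \eqref{eq3_1_3} using one fixed choice of basis, and then checks that this definition is basis-independent. For this I would use the change-of-basis relation already recorded in Section \ref{localindex}: if $\psi_k = \sum_l P_{k,l}\phi_l$ then $\widetilde N_1 = P N_1 P^*$, hence $\widetilde N_1^{-1} = (P^*)^{-1}N_1^{-1}P^{-1}$, and substituting into $\sum_{k,l}(\widetilde N_1^{-1})_{kl}\psi_l(z)\overline{\psi_k(w)}$ and relabeling shows the $P$'s cancel, leaving the original expression in the $\phi$'s. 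This invariance is a two-line matrix computation and presents no real difficulty; the only thing to emphasize in the write-up is that for $n=1$ the object $K_1$ is defined by \eqref{eq3_1_3} rather than derived from \eqref{eq3_5_1}, so there is nothing to prove beyond well-definedness, whereas for $n\geq 2$ the content is the identification of \eqref{eq3_1_3} with the independently-defined reproducing kernel.

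A minor point worth addressing explicitly: the interchange of the finite sum with the integral in evaluating $\int_X\widetilde K(z,w)\psi(w)\rho(w)^{1-n}d^2w$ is trivially justified since the sum is finite, so no convergence issues arise for $n\geq 2$; all the subtlety about convergence is confined to the question of whether \eqref{eq3_5_1} represents $K_n$, which is a separate (known) fact we are entitled to assume. Thus the proof reduces to: (i) right-hand side of \eqref{eq3_1_3} is holomorphic in $z$, anti-holomorphic in $w$; (ii) it reproduces elements of $\Omega_n(X)$ and kills the orthogonal complement, via the $N^{-1}N=\mathrm{Id}$ contraction; (iii) uniqueness of the reproducing kernel finishes $n\geq 2$; (iv) the matrix identity $\widetilde N^{-1}=(P^*)^{-1}N^{-1}P^{-1}$ handles well-definedness for $n=1$.
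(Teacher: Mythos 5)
Your proof is correct and rests on the same core computation as the paper's: pairing the kernel against the basis elements $\phi_k$, producing the Gram matrix $N$, and contracting with $N^{-1}$. The only difference is direction — the paper expands $K_n(z,\cdot)$ in the basis $\{\overline{\phi_l}\}$ of integrable anti-holomorphic $n$-differentials and solves for the coefficients via the reproducing property \eqref{eq3_1_1}, so no separate uniqueness step is needed, whereas you verify that the candidate kernel implements $\mathscr{P}_n$ and then invoke uniqueness of the reproducing kernel; both are sound, and your explicit change-of-basis check for $n=1$ is a reasonable elaboration of what the paper leaves implicit.
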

\begin{proof}
For fixed $z$, $K_n(z,w)$ is an integrable anti-holomorphic $n$-differential in $w$. Since $\left\{\overline{\phi_1}, \ldots, \overline{\phi_d}\right\}$ is a basis of the space of integrable anti-holomorphic $n$-differentials of $X$, we find that there exists $c_l(z)$, $l=1, \ldots, d$, such that
$$K_n(z,w)=\sum_{l=1}^dc_l(z)\overline{\phi_l(w)}.$$
Multiply both sides by $\phi_k(w)\rho(w)^{1-n}$ and integrate over $w\in X$, we find that
\begin{align*}
\sum_{l=1}^dN_{k,l}c_l(z)=&\sum_{l=1}^dc_l(z)\int_X\phi_k(w)\overline{\phi_l(w)}\rho(w)^{1-n}d^2w\\
=&\int_XK_n(z,w)\phi_k(w)\rho(w)^{1-n}d^2w\\
=&\phi_k(w).
\end{align*}
This shows that
$$N\begin{bmatrix} c_1(z)\\\vdots\\c_d(z)\end{bmatrix}=\begin{bmatrix} \phi_1(z)\\\vdots\\\phi_d(z)\end{bmatrix}.$$ Since $N$ is invertible, we find that
$$ \begin{bmatrix} c_1(z)\\\vdots\\c_d(z)\end{bmatrix}=N^{-1}\begin{bmatrix} \phi_1(z)\\\vdots\\\phi_d(z)\end{bmatrix}.$$In other words,
$$c_l(z)=\sum_{k=1}^d (N^{-1})_{lk}\phi_k(z).$$ The assertion follows.
\end{proof}

Now we consider the   second variations of $\log\det N_n$.

\begin{proposition}\label{prop1_26_1}
Let $n$ be a positive integer. If $\mu\in\Omega_{-1,1}(X)$, then 
\begin{equation}\label{v1}
\frac{\pa}{\pa \varepsilon}\log \det N^{\varepsilon\mu}= \text{Tr}\,\left((N^{\varepsilon\mu})^{-1}\frac{\pa}{\pa\varepsilon}N^{\varepsilon\mu}\right),\end{equation}
\begin{equation}\label{v2}\begin{split}
\frac{\pa^2}{\pa\bar{\varepsilon} \pa\varepsilon}\log\det N^{\varepsilon\mu}=& -\text{Tr}\,\left((N^{\varepsilon\mu})^{-1}\left(\frac{\pa}{\pa\bar{\varepsilon}} N^{\varepsilon\mu}\right)(N^{\varepsilon\mu})^{-1}\left(\frac{\pa}{\pa \varepsilon} N^{\varepsilon\mu}\right)\right)  \\&+\text{Tr}\,\left((N^{\varepsilon\mu})^{-1}\frac{\pa^2}{\pa\bar{\varepsilon}\pa\varepsilon}N^{\varepsilon\mu}\right).\end{split}
\end{equation}
\end{proposition}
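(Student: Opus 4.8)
The plan is to derive both identities from Jacobi's formula for the derivative of a determinant together with the standard rule for differentiating a matrix inverse; everything here is finite-dimensional linear algebra. The one preliminary remark worth making is that $\varepsilon\mapsto N^{\varepsilon\mu}$ is smooth near $\varepsilon=0$ in the sense of Wirtinger calculus: each entry $N_{k,l}^{\varepsilon\mu}=\langle\phi_k^{\varepsilon\mu},\phi_l^{\varepsilon\mu}\rangle$ depends smoothly on $(\varepsilon,\bar{\varepsilon})$ because the basis $\{\phi_1^{\varepsilon\mu},\dots,\phi_d^{\varepsilon\mu}\}$ varies holomorphically with the moduli, which legitimizes differentiation under the integral sign; moreover $N^{\varepsilon\mu}$ is invertible with $\det N^{\varepsilon\mu}>0$ throughout a neighbourhood of the origin by the previous lemma (a positive definite Hermitian matrix has positive determinant), so that $\log\det N^{\varepsilon\mu}$ and the operations below are well-defined.

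For \eqref{v1} I would expand $\det N^{\varepsilon\mu}=\sum_{\sigma\in S_d}\operatorname{sgn}(\sigma)\prod_{i=1}^d N_{i,\sigma(i)}^{\varepsilon\mu}$, differentiate in $\varepsilon$ by the product rule, and collect the terms in which the $(k,l)$ entry is differentiated; the coefficient of $\pa_\varepsilon N_{k,l}^{\varepsilon\mu}$ is the $(k,l)$ cofactor, which equals $(\det N^{\varepsilon\mu})\,((N^{\varepsilon\mu})^{-1})_{l,k}$. Summing over $k,l$ gives $\pa_\varepsilon\det N^{\varepsilon\mu}=(\det N^{\varepsilon\mu})\,\text{Tr}\big((N^{\varepsilon\mu})^{-1}\pa_\varepsilon N^{\varepsilon\mu}\big)$, and dividing by $\det N^{\varepsilon\mu}$ yields \eqref{v1}. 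Equivalently, one may observe that the differential of $A\mapsto\log\det A$ at $N^{\varepsilon\mu}$ is $B\mapsto\text{Tr}\big((N^{\varepsilon\mu})^{-1}B\big)$ and apply the chain rule.

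For \eqref{v2} I would differentiate \eqref{v1} with respect to $\bar{\varepsilon}$. By linearity of the trace and the product rule,
\begin{equation*}
\frac{\pa^2}{\pa\bar{\varepsilon}\pa\varepsilon}\log\det N^{\varepsilon\mu}=\text{Tr}\left(\Big(\frac{\pa}{\pa\bar{\varepsilon}}(N^{\varepsilon\mu})^{-1}\Big)\frac{\pa}{\pa\varepsilon}N^{\varepsilon\mu}\right)+\text{Tr}\left((N^{\varepsilon\mu})^{-1}\frac{\pa^2}{\pa\bar{\varepsilon}\pa\varepsilon}N^{\varepsilon\mu}\right).
\end{equation*}
Differentiating the identity $(N^{\varepsilon\mu})^{-1}N^{\varepsilon\mu}=I$ in $\bar{\varepsilon}$ gives $\frac{\pa}{\pa\bar{\varepsilon}}(N^{\varepsilon\mu})^{-1}=-(N^{\varepsilon\mu})^{-1}\big(\frac{\pa}{\pa\bar{\varepsilon}}N^{\varepsilon\mu}\big)(N^{\varepsilon\mu})^{-1}$, and substituting this into the first trace produces exactly $-\text{Tr}\big((N^{\varepsilon\mu})^{-1}(\pa_{\bar{\varepsilon}}N^{\varepsilon\mu})(N^{\varepsilon\mu})^{-1}(\pa_\varepsilon N^{\varepsilon\mu})\big)$, which is \eqref{v2}. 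There is no substantial obstacle; the only points needing care are the smoothness of $\varepsilon\mapsto N^{\varepsilon\mu}$, so that one may differentiate under the integral and apply the calculus rules above, and the invertibility of $N^{\varepsilon\mu}$ near the origin — both already established above.
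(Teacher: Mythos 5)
Your proof is correct. The second identity \eqref{v2} is obtained exactly as in the paper: differentiate \eqref{v1} in $\bar{\varepsilon}$, apply the product rule to the trace, and substitute $\pa_{\bar{\varepsilon}}(N^{\varepsilon\mu})^{-1}=-(N^{\varepsilon\mu})^{-1}\bigl(\pa_{\bar{\varepsilon}}N^{\varepsilon\mu}\bigr)(N^{\varepsilon\mu})^{-1}$, which the paper also derives from $(N^{\varepsilon\mu})^{-1}N^{\varepsilon\mu}=I$. For \eqref{v1}, however, you take a genuinely different route. The paper writes $N^{\varepsilon\mu}=U^{\varepsilon\mu}D^{\varepsilon\mu}(U^{\varepsilon\mu})^{*}$ with $U$ unitary and $D$ diagonal positive, reads off $\pa_{\varepsilon}\log\det N^{\varepsilon\mu}=\text{Tr}\bigl((D^{\varepsilon\mu})^{-1}\pa_{\varepsilon}D^{\varepsilon\mu}\bigr)$ from the eigenvalues, and then checks by cyclicity of the trace and $U U^{*}=I$ that this agrees with $\text{Tr}\bigl((N^{\varepsilon\mu})^{-1}\pa_{\varepsilon}N^{\varepsilon\mu}\bigr)$. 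You instead invoke Jacobi's formula via the Leibniz/cofactor expansion of the determinant. Your route is more elementary and arguably more robust: it is purely polynomial in the matrix entries and sidesteps the question of whether the spectral decomposition $(U^{\varepsilon\mu},D^{\varepsilon\mu})$ can be chosen to depend differentiably on $(\varepsilon,\bar{\varepsilon})$, a point the paper's argument glosses over (this can be delicate at eigenvalue crossings, even though the final trace identity is of course basis-independent and true). What the paper's approach buys is a transparent reduction of the real-valuedness and positivity of $\log\det N$ to the diagonal case; your preliminary remark that $\det N^{\varepsilon\mu}>0$ by positive definiteness covers the same ground. Your attention to the smoothness of $\varepsilon\mapsto N^{\varepsilon\mu}$ (to justify differentiation under the integral) and to invertibility near the origin is appropriate and consistent with what the paper uses implicitly.
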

\begin{proof}
Since $N$ is a positive definite Hermittian matrix, there is a unitary matrix $U$ and a diagonal matrix $D$ with positive entries such that
$$N=UDU^*.$$ 
If $\lambda_1, \ldots, \lambda_{d}$ are the diagonal entries of $D$, then 
$$\log\det N=\log\det D=\sum_{i=1}^{d}\log \lambda_i.$$Hence,
$$\frac{\pa}{\pa \varepsilon}\log \det N^{\varepsilon\mu}=\sum_{i=1}^{d} (\lambda_i^{\varepsilon\mu})^{-1}\frac{\pa}{\pa\varepsilon}\lambda_i^{\varepsilon\mu}=\text{Tr}\,\left((D^{\varepsilon\mu})^{-1}\frac{\pa}{\pa\varepsilon}D^{\varepsilon\mu}\right).$$
On the other hand,
\begin{align*}
\frac{\pa}{\pa\varepsilon}N^{\varepsilon\mu}=\left(\frac{\pa}{\pa\varepsilon}U^{\varepsilon\mu}\right)D^{\varepsilon\mu}(U^{\varepsilon\mu})^*+
U^{\varepsilon\mu}\left(\frac{\pa}{\pa\varepsilon}D^{\varepsilon\mu}\right) (U^{\varepsilon\mu})^*+
U^{\varepsilon\mu} D^{\varepsilon\mu}  \frac{\pa}{\pa\varepsilon}(U^{\varepsilon\mu})^*.
\end{align*}
Since $(N^{\varepsilon\mu})^{-1}=U^{\varepsilon\mu}\left(D^{\varepsilon\mu}\right)^{-1}(U^{\varepsilon\mu})^*$, and  $U^{\varepsilon\mu}(U^{\varepsilon\mu})^*=I$, we have
\begin{align*}
(N^{\varepsilon\mu})^{-1}\frac{\pa}{\pa\varepsilon}N^{\varepsilon\mu}=&U^{\varepsilon\mu}\left(D^{\varepsilon\mu}\right)^{-1}(U^{\varepsilon\mu})^*\left(\frac{\pa}{\pa\varepsilon}U^{\varepsilon\mu}\right)D^{\varepsilon\mu}(U^{\varepsilon\mu})^*+
U^{\varepsilon\mu}  \frac{\pa}{\pa\varepsilon}(U^{\varepsilon\mu})^*\\&+
U^{\varepsilon\mu}\left(D^{\varepsilon\mu}\right)^{-1}\left(\frac{\pa}{\pa\varepsilon}D^{\varepsilon\mu}\right) (U^{\varepsilon\mu})^*.
\end{align*}
Since $U^{\varepsilon\mu}(U^{\varepsilon\mu})^*=I$, we have
$$\left(\frac{\pa}{\pa\varepsilon}U^{\varepsilon\mu}\right)(U^{\varepsilon\mu})^*+U^{\varepsilon\mu}\frac{\pa}{\pa\varepsilon}(U^{\varepsilon\mu})^*=0.$$Using also $\text{Tr}\,(AB)=\text{Tr}\,(BA)$, we have
\begin{align*}
&\text{Tr}\,\left((N^{\varepsilon\mu})^{-1}\frac{\pa}{\pa\varepsilon}N^{\varepsilon\mu}\right)\\=&
\text{Tr}\,\left(\left(\frac{\pa}{\pa\varepsilon}U^{\varepsilon\mu}\right)D^{\varepsilon\mu}(U^{\varepsilon\mu})^*U^{\varepsilon\mu}\left(D^{\varepsilon\mu}\right)^{-1}(U^{\varepsilon\mu})^*\right) +
\text{Tr}\,\left(U^{\varepsilon\mu}  \frac{\pa}{\pa\varepsilon}(U^{\varepsilon\mu})^*\right)\\&+\text{Tr}\,\left(\left(D^{\varepsilon\mu}\right)^{-1}\left(\frac{\pa}{\pa\varepsilon}D^{\varepsilon\mu}\right) (U^{\varepsilon\mu})^*U^{\varepsilon\mu}\right)\\
=&\text{Tr}\,\left(\left(\frac{\pa}{\pa\varepsilon}U^{\varepsilon\mu}\right)(U^{\varepsilon\mu})^*+U^{\varepsilon\mu}\frac{\pa}{\pa\varepsilon}(U^{\varepsilon\mu})^*\right)+\text{Tr}\,\left((D^{\varepsilon\mu})^{-1}\frac{\pa}{\pa\varepsilon}D^{\varepsilon\mu}\right)
\\=&\frac{\pa}{\pa\varepsilon}\log \det N^{\varepsilon\mu}.
\end{align*}This proves \eqref{v1}.

For the second variation, we then have
\begin{align*}
\frac{\pa^2}{\pa\bar{\varepsilon} \pa\varepsilon}\log\det N^{\varepsilon\mu}= \text{Tr}\,\left(\frac{\pa}{\pa\bar{\varepsilon}}(N^{\varepsilon\mu})^{-1}\frac{\pa}{\pa\varepsilon}N^{\varepsilon\mu}\right)+ \text{Tr}\,\left((N^{\varepsilon\mu})^{-1}\frac{\pa^2}{\pa\bar{\varepsilon}\pa\varepsilon}N^{\varepsilon\mu}\right).
\end{align*}Since $(N^{\varepsilon\mu})^{-1} N^{\varepsilon\mu}=I$, we have
$$\left(\frac{\pa}{\pa\bar{\varepsilon}}(N^{\varepsilon\mu})^{-1}\right) N^{\varepsilon\mu}+(N^{\varepsilon\mu})^{-1}\left(\frac{\pa}{\pa\bar{\varepsilon}} N^{\varepsilon\mu}\right)=0,$$ which implies that
\begin{align}\label{eq3_9_6}\frac{\pa}{\pa\bar{\varepsilon}}(N^{\varepsilon\mu})^{-1} =-(N^{\varepsilon\mu})^{-1}\left(\frac{\pa}{\pa\bar{\varepsilon}} N^{\varepsilon\mu}\right)(N^{\varepsilon\mu})^{-1}.\end{align}
Hence,
\begin{align*}
\frac{\pa^2}{\pa\bar{\varepsilon} \pa\varepsilon}\log\det N^{\varepsilon\mu}=&  -\text{Tr}\,\left((N^{\varepsilon\mu})^{-1}\left(\frac{\pa}{\pa\bar{\varepsilon}} N^{\varepsilon\mu}\right)(N^{\varepsilon\mu})^{-1}\left(\frac{\pa}{\pa \varepsilon} N^{\varepsilon\mu}\right)\right)+ \text{Tr}\,\left((N^{\varepsilon\mu})^{-1}\frac{\pa^2}{\pa\bar{\varepsilon}\pa\varepsilon}N^{\varepsilon\mu}\right).
\end{align*}This proves \eqref{v2}.
\end{proof}

To find explicit formulas for the second variation of $\log\det N_n$, 
we need  the variation formula for the kernel $K_n(z,w)$.

\begin{proposition}\label{prop1_26_2}  Let $n\geq 2$. Given $\mu\in \Omega_{-1,1}(X)$, the first  variation of the projection kernel $K(z,w)=K_n(z,w)$ is given by
\begin{align}\label{eq2_22_1}
(L_{\mu}K_n)(z,w)=&(-1)^{n-1}\frac{2^{2n-2}(2n-1)n}{\pi^2}
\sum_{\gamma\in\Gamma} \int\limits_{\mathbb{U}}\mu(\zeta)\frac{\gamma'(z)^n}{\left( \gamma(z) -\overline{w}\right)^{2n-2}(\gamma z-\zeta)^2(\zeta-\bar{w})^2}d^2\zeta.
\end{align} In particular, this shows that $(L_{\mu}K_n)(z,w)$ is antiholomorphic in $w$.

\end{proposition}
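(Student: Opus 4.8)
The strategy is to reduce the whole computation to the single variational formula for $\mathscr{M}_{\mu}$ in Proposition \ref{prop_vary}. The key observation is that the building block $\mathcal{K}_n$ of the Poincar\'e series \eqref{eq3_5_1} is, up to the constant $(-1)^n 2^{2n-2}(2n-1)/\pi$, nothing but the $n$-th power of $\mathscr{M}_0(z,w)=(z-\bar{w})^{-2}$, the value of $\mathscr{M}_{\varepsilon\mu}$ at $\varepsilon=0$. Hence, once the projection kernel on the deformed surface is written as a Poincar\'e series in $\mathscr{M}_{\varepsilon\mu}$, differentiating at $\varepsilon=0$ produces exactly one factor $\partial_{\varepsilon}\mathscr{M}_{\varepsilon\mu}$, which Proposition \ref{prop_vary} evaluates explicitly.

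The first step is to rewrite the pulled-back kernel. Although $K_n^{\varepsilon\mu}$ lives on $X^{\varepsilon\mu}=\Gamma^{\varepsilon\mu}\backslash\Omega^{\varepsilon\mu}$, this surface is biholomorphic to $X_{\varepsilon\mu}=\Gamma_{\varepsilon\mu}\backslash\mathbb{U}$ via $J_{\varepsilon\mu}=f^{\varepsilon\mu}\circ f_{\varepsilon\mu}^{-1}$. Since the Hermitian pairing \eqref{eq3_8_1} and the subspace $\Omega_n$ are preserved under biholomorphisms, the orthogonal projection $\mathscr{P}_n$ commutes with pullback, so $J_{\varepsilon\mu}^{*}K_n^{X^{\varepsilon\mu}}=K_n^{X_{\varepsilon\mu}}$; and since $f^{\varepsilon\mu}=J_{\varepsilon\mu}\circ f_{\varepsilon\mu}$ with $J_{\varepsilon\mu}$ holomorphic, $(f^{\varepsilon\mu})^{*}K_n^{\varepsilon\mu}=(f_{\varepsilon\mu})^{*}K_n^{X_{\varepsilon\mu}}$. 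Applying \eqref{eq3_5_1} to $\Gamma_{\varepsilon\mu}$, whose elements are $\tilde\gamma=f_{\varepsilon\mu}\gamma f_{\varepsilon\mu}^{-1}$ with $\gamma\in\Gamma$, and using $\tilde\gamma(f_{\varepsilon\mu}z)=f_{\varepsilon\mu}(\gamma z)$ together with the cocycle identity $\tilde\gamma'(f_{\varepsilon\mu}z)\,(f_{\varepsilon\mu})_z(z)=(f_{\varepsilon\mu})_z(\gamma z)\,\gamma'(z)$, as well as \eqref{eq2_17_2} and the definition of $\mathscr{M}_{\varepsilon\mu}$, the pulled-back kernel collapses to
\begin{align*}
(f^{\varepsilon\mu})^{*}K_n^{\varepsilon\mu}(z,w)=(-1)^n\frac{2^{2n-2}(2n-1)}{\pi}\sum_{\gamma\in\Gamma}\mathscr{M}_{\varepsilon\mu}(\gamma z,w)^n\,\gamma'(z)^n.
\end{align*}

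The second step is to differentiate this at $\varepsilon=0$. For $n\geq 2$ the Poincar\'e series above, and the series obtained by differentiating each summand once in $\varepsilon$, converge absolutely and locally uniformly, so the $\varepsilon$-derivative at $0$ may be taken term by term. By the chain rule each term contributes $n\,\mathscr{M}_0(\gamma z,w)^{n-1}$ times the value at $\varepsilon=0$ of $\partial_{\varepsilon}\mathscr{M}_{\varepsilon\mu}(\gamma z,w)$; substituting $\mathscr{M}_0(\gamma z,w)^{n-1}=(\gamma z-\bar{w})^{2-2n}$ and the third formula of Proposition \ref{prop_vary} at $\varepsilon=0$,
\begin{align*}
\left.\frac{\partial}{\partial\varepsilon}\right|_{\varepsilon=0}\mathscr{M}_{\varepsilon\mu}(\gamma z,w)=-\frac{1}{\pi}\int_{\mathbb{U}}\frac{\mu(\zeta)}{(\gamma z-\zeta)^2(\zeta-\bar{w})^2}\,d^2\zeta,
\end{align*}
and collecting the constants into $(-1)^{n-1}2^{2n-2}(2n-1)n/\pi^2$, one obtains precisely \eqref{eq2_22_1}. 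The anti-holomorphicity in $w$ then follows by inspection: in every summand $w$ enters only through $\bar{w}$, in the factors $(\gamma z-\bar{w})^{2-2n}$ and $(\zeta-\bar{w})^{-2}$, the differentiation under the $\zeta$-integral being legitimate since for fixed $z,w\in\mathbb{U}$ the integrand is integrable against the bounded density $\mu$; the locally uniform convergence above then passes the anti-holomorphicity to the sum.

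I expect the only genuine technical point to be the justification of the term-by-term differentiation: one needs bounds on $\mathscr{M}_{\varepsilon\mu}(\gamma z,w)$ and on its $\varepsilon$-derivative that are summable over $\gamma\in\Gamma$, uniformly for $\varepsilon$ in a neighbourhood of $0$ and $(z,w)$ in a compact subset of $\mathbb{U}\times\mathbb{U}$; this combines standard estimates for the quasiconformal maps $f_{\varepsilon\mu}$ with the absolute convergence of $\sum_{\gamma\in\Gamma}|\gamma'(z)|^n$, which holds for $n\geq 2$ but fails for $n=1$ (whence the special treatment of the $n=1$ case in Lemma \ref{lemma1_27_1} and Section \ref{n1}). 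The remaining ingredients --- the conformal naturality of \eqref{eq3_8_1}, and hence of $\mathscr{P}_n$ and $K_n$, under the biholomorphism $J_{\varepsilon\mu}$, and the compatibility $(f^{\varepsilon\mu})^{*}=(f_{\varepsilon\mu})^{*}\circ J_{\varepsilon\mu}^{*}$ of the pullback operation with composition by the holomorphic map $J_{\varepsilon\mu}$ --- are routine.
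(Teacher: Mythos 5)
Your proposal is correct and follows essentially the same route as the paper: rewrite the pulled-back kernel as the Poincar\'e series $(-1)^n\frac{2^{2n-2}(2n-1)}{\pi}\sum_{\gamma\in\Gamma}\mathscr{M}_{\varepsilon\mu}(\gamma z,w)^n\gamma'(z)^n$, differentiate term by term via the chain rule, and invoke Proposition \ref{prop_vary} for the single factor $\partial_\varepsilon\mathscr{M}_{\varepsilon\mu}$ at $\varepsilon=0$. One small slip: the formula you quote is the \emph{second} identity of Proposition \ref{prop_vary} (the one for $\partial_\varepsilon\mathscr{M}_{\varepsilon\mu}$ with $\mathscr{L}_0(z,\zeta)=(z-\zeta)^{-2}$ and $\mathscr{M}_0(\zeta,w)=(\zeta-\bar w)^{-2}$), not the third, though the expression you actually use is the correct one.
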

\begin{proof}
On the surface $X_{\mu}=\Gamma_{\mu}\backslash\mathbb{U}$, the kernel $K(z,w)$ is given by
$$K_{ \mu}(z,w)=(-1)^n\frac{2^{2n-2}(2n-1)}{\pi}\sum_{\gamma_{\mu}\in\Gamma_{\mu}}\frac{\gamma_{\mu}'(z)^n}{(\gamma_{\mu} z-\bar{w})^{2n}}.$$Hence, on $X^{\mu}$,
$$K^{\mu}(z,w)=K_{\mu}(J_{\mu}^{-1}(z), J_{\mu}^{-1}(w)) (J_{\mu}^{-1})_z(z)^n\overline{\left(J_{\mu}^{-1}\right)_w(w)^n}.$$
Using the fact that $J_{\varepsilon\mu}^{-1}=f_{\varepsilon\mu}\circ (f^{\varepsilon\mu})^{-1}$, we find that
\begin{align*}(L_{\mu}K)(z,w) =&\left.\frac{\pa}{\pa\varepsilon}\right|_{\varepsilon=0}K^{\varepsilon\mu}(f^{\varepsilon\mu}(z), f^{\varepsilon\mu}(w))
f^{\varepsilon\mu}_z(z)^n\overline{f^{\varepsilon\mu}_w(w)^n}\\
=&\left.\frac{\pa}{\pa\varepsilon}\right|_{\varepsilon=0}K_{\varepsilon\mu}(f_{\varepsilon\mu}(z), f_{\varepsilon\mu}(w))
(f_{\varepsilon\mu})_z(z)^n\overline{(f_{\varepsilon\mu})_w(w)^n}.
\end{align*}Since
\begin{align*}
\gamma_{\vep\mu}\circ f_{\vep\mu} =f_{\vep\mu}\circ \gamma,
\end{align*}we have
\begin{align*}
&K_{\varepsilon\mu}(f_{\varepsilon\mu}(z), f_{\varepsilon\mu}(w))
(f_{\varepsilon\mu})_z(z)^n\overline{(f_{\varepsilon\mu})_w(w)^n}\\=&(-1)^n\frac{2^{2n-2}(2n-1)}{\pi}
\sum_{\gamma\in\Gamma}\left\{\frac{(f_{\varepsilon\mu})_z(\gamma z)\gamma'(z)\overline{(f_{\varepsilon\mu})_w(w)}}{\left(f_{\varepsilon\mu}(\gamma(z))-\overline{f_{\varepsilon\mu}(w)}\right)^2}\right\}^n.\end{align*}Using Proposition \ref{prop_vary}, we have
\begin{align*}
 \frac{\pa}{\pa\varepsilon} \frac{(f_{\varepsilon\mu})_z(\gamma z)\gamma'(z)\overline{(f_{\varepsilon\mu})_w(w)}}{\left(f_{\varepsilon\mu}(\gamma(z))-\overline{f_{\varepsilon\mu}(w)}\right)^2}=&-\frac{1}{\pi}\int\limits_{\mathbb{U}}\mu(\zeta)\frac{(f_{\varepsilon\mu})_z(\gamma z)\gamma'(z) (f_{\varepsilon\mu})_{\zeta}(\zeta)}{\left(f_{\varepsilon\mu}(\gamma(z))- f_{\varepsilon\mu}(\zeta)\right)^2}\frac{(f_{\varepsilon\mu})_{\zeta}(\zeta) \overline{(f_{\varepsilon\mu})_w(w)}}{\left(f_{\varepsilon\mu}(\zeta)-\overline{f_{\varepsilon\mu}(w)}\right)^2}d^2\zeta.
\end{align*}
Therefore,
\begin{equation}\label{eq1_29_6}\begin{split}
& \frac{\pa}{\pa\varepsilon} K_{\varepsilon\mu}(f_{\varepsilon\mu}(z), f_{\varepsilon\mu}(w))
(f_{\varepsilon\mu})_z(z)^n\overline{(f_{\varepsilon\mu})_w(w)^n}\\=&(-1)^{n-1}\frac{2^{2n-2}(2n-1)n}{\pi^2}
\sum_{\gamma\in\Gamma}\left\{\frac{(f_{\varepsilon\mu})_z(\gamma z)\gamma'(z)\overline{(f_{\varepsilon\mu})_w(w)}}{\left(f_{\varepsilon\mu}(\gamma(z))-\overline{f_{\varepsilon\mu}(w)}\right)^2}\right\}^{n-1}\\&\hspace{3cm}\times \int\limits_{\mathbb{U}}\mu(\zeta)\frac{(f_{\varepsilon\mu})_z(\gamma z)\gamma'(z) (f_{\varepsilon\mu})_{\zeta}(\zeta)}{\left(f_{\varepsilon\mu}(\gamma(z))- f_{\varepsilon\mu}(\zeta)\right)^2}\frac{(f_{\varepsilon\mu})_{\zeta}(\zeta) \overline{(f_{\varepsilon\mu})_w(w)}}{\left(f_{\varepsilon\mu}(\zeta)-\overline{f_{\varepsilon\mu}(w)}\right)^2}d^2\zeta.
\end{split}\end{equation}It follows that
\begin{align*}
(L_{\mu}K)(z,w)=&(-1)^{n-1}\frac{2^{2n-2}(2n-1)n}{\pi^2}
\sum_{\gamma\in\Gamma} \int\limits_{\mathbb{U}}\mu(\zeta)\frac{\gamma'(z)^n}{\left( \gamma(z) -\overline{w}\right)^{2n-2}(\gamma z-\zeta)^2(\zeta-\bar{w})^2}d^2\zeta.
\end{align*}
\end{proof}

As we discussed in Section \ref{variationalformula}, when we compute the variations of families of  differentials on the Teichm\"uller space $T(X)$ with respect to $\mu\in \Omega_{-1,1}(X)$, we need to apply pull-back by the map $f^{\varepsilon\mu}$ that is holomorphic on $\mathbb{L}$ to conform with the complex structure of $T(X)$. However, since the kernels we are using are defined in terms of coordinates on $\mathbb{U}$, we find that essentially we need to apply pull-back by the map $f_{\varepsilon\mu}$ that maps the upper-half plane onto itself. This point can be quite confusing to those first learn how to work with variations on Teichm\"uller spaces and so we show the details of the derivations in the proof of Proposition \ref{prop1_26_2}.

In the following, we are going to use similar technique to find variations of various families of kernels on the Teichm\"uller space without going into the details.

Now we use Proposition \ref{prop1_26_1}  to derive an integral formula for $\pa_{\mu}\pa_{\bar{\nu}}\log\det N_n$. 
\begin{theorem}\label{thm1_29_1}
Let $n$ be a positive integer. Given $\mu, \nu\in \Omega_{-1,1}(X)$,  $\pa_{\mu}\pa_{\bar{\nu}}\log\det N_n $ is given by 
\begin{equation}\label{eq1_29_5}\begin{split}
\pa_{\mu}\pa_{\bar{\nu}}\log\det N_n 
=&\int_X\int_X(L_{\mu}K_n)(z,w)(L_{\bar{\nu}}K_n)(w,z)\rho(w)^{1-n}\rho(z)^{1-n}d^2wd^2z\\
&+(1-n)\int_{X} K_n(z, z)\rho(z)^{1-n}\int_{X}G(z,w) \mu(w)\overline{\nu(w)}\rho(w)d^2w d^2z
\\&-\int_{X}K_n( z, z)\rho(z)^{1-n}\mu(z)\overline{\nu(z)}d^2z.
\end{split}\end{equation}

\end{theorem}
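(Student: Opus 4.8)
The plan is to combine the algebraic formula for the second variation of $\log\det N_n$ from Proposition \ref{prop1_26_1} with the identification of $N^{-1}$ in terms of the projection kernel from Lemma \ref{lemma1_27_1}, and then compute the variations of the entries $N_{k,l}$ using the variational calculus of Section \ref{variationalformula}. First I would rewrite \eqref{v2} in the form
\begin{align*}
\pa_{\mu}\pa_{\bar{\nu}}\log\det N_n = -\text{Tr}\left(N^{-1}(\pa_{\bar{\nu}}N)N^{-1}(\pa_{\mu}N)\right)+\text{Tr}\left(N^{-1}\pa_{\mu}\pa_{\bar{\nu}}N\right),
\end{align*}
where the derivatives are evaluated at $\varepsilon=0$, and the entries of $N$ are computed with the pulled-back basis $(f^{\varepsilon\mu})^*\phi_k^{\varepsilon\mu}$ (using that the norm is invariant under pull-back). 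Since the basis varies holomorphically, $\pa_{\bar{\nu}}\left[(f^{\varepsilon\nu})^*\phi_k^{\varepsilon\nu}\right]=0$, so the $\bar\nu$-derivatives land only on the remaining factors: the conjugated basis element $\overline{\phi_l}$ and the density factor $\rho^{1-n}$. Thus $\pa_{\bar\nu}N_{k,l}=\int_X \phi_k\,\overline{L_\nu\phi_l}\,\rho^{1-n}d^2z + (1-n)\int_X \phi_k\overline{\phi_l}\,\rho^{-n}(L_{\bar\nu}\rho)\,d^2z$ — but the first term vanishes because $L_\nu\phi_l\in\Omega_n(X)$ can be expanded in the basis, contributing only a matrix acting by multiplication that drops out of $\pa_\mu\pa_{\bar\nu}\log\det$, OR more carefully it contributes a holomorphic-in-moduli piece; the cleanest route is to note we may choose the basis so the first variations are arranged conveniently, but since the final answer is basis-independent I would instead just carry all terms and watch the holomorphic pieces cancel.

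The key computational input is then: by Lemma \ref{lemma1_27_1}, $\sum_{k,l}(N^{-1})_{kl}\phi_l(z)\overline{\phi_k(w)}=K_n(z,w)$, and similarly any bilinear trace $\text{Tr}(N^{-1}A N^{-1}B)$ with $A_{k,l}=\int\int \phi_k(z)\overline{\phi_l(w)}a(z,w)$ collapses to a double integral of $K_n$ against $K_n$ against the kernels. Concretely, for the first trace I would write $\pa_\mu N_{k,l}=(1-n)\int_X\phi_k\overline{\phi_l}\rho^{-n}(L_\mu\rho)d^2z$ plus a holomorphic-in-$\mu$ term $\int_X(L_\mu\phi_k)\overline{\phi_l}\rho^{1-n}$; but $L_\mu\rho=0$ by Proposition \ref{varymetric}, so $\pa_\mu N$ is purely the holomorphic-in-moduli term, and likewise $\pa_{\bar\nu}N$ is purely the $(1-n)\int\phi_k\overline{\phi_l}\rho^{-n}\overline{L_\nu\rho}$ term — wait, one must use $L_{\bar\nu}\rho$, also zero. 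So actually $\pa_\mu N_{k,l}=\langle L_\mu\phi_k,\phi_l\rangle$ and $\pa_{\bar\nu}N_{k,l}=\langle\phi_k,L_\nu\phi_l\rangle=\overline{\langle L_\nu\phi_l,\phi_k\rangle}$, and the second variation $\pa_\mu\pa_{\bar\nu}N_{k,l}=\langle L_\mu\phi_k,L_\nu\phi_l\rangle + \langle L_\mu L_{\bar\nu}\phi_k,\phi_l\rangle + (1-n)\langle L_\mu\phi_k\cdot\text{(density var)},\dots\rangle$ — here is where $L_\mu L_{\bar\nu}\rho=2\rho(\Delta_0+2)^{-1}(\mu\bar\nu)=\rho\cdot\int G(z,w)\mu\bar\nu\rho\,d^2w$ enters, via the $\rho^{1-n}$ prefactor expanded to second order, producing the term with $(1-n)\int_X K_n(z,z)\rho^{1-n}\int_X G(z,w)\mu\overline{\nu}\rho\,d^2w\,d^2z$.

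The third term in \eqref{eq1_29_5}, namely $-\int_X K_n(z,z)\rho^{1-n}\mu\overline{\nu}\,d^2z$, I expect to come from the $\langle L_\mu L_{\bar\nu}\phi_k,\phi_l\rangle$ contribution after using a projection/integration-by-parts identity: $L_\mu L_{\bar\nu}$ applied to a holomorphic $n$-differential has a $\bar\partial$-exact structure (cf. Proposition \ref{vary_differential} for the $n=-1$ analogue), and projecting back onto $\Omega_n(X)$ via $\mathscr{P}_n$ — using $\mathscr{P}_n\phi=\phi$ for $\phi\in\Omega_n$ and integrating $K_n(z,w)$ against the $\bar\partial$-exact piece — yields a local term $-\int_X K_n(z,z)\rho^{1-n}\mu\overline{\nu}$ plus possibly a $G$-term that merges with the previous one. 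Meanwhile the $\text{Tr}(N^{-1}(\pa_{\bar\nu}N)N^{-1}(\pa_\mu N))$ trace, together with the $\langle L_\mu\phi_k,L_\nu\phi_l\rangle$ part of the second variation, reassembles — after inserting $K_n$ twice via Lemma \ref{lemma1_27_1} and using Proposition \ref{prop1_26_2} for $L_\mu K_n$ — into the first term $\int_X\int_X(L_\mu K_n)(z,w)(L_{\bar\nu}K_n)(w,z)\rho(w)^{1-n}\rho(z)^{1-n}d^2w\,d^2z$. The main obstacle I anticipate is precisely the bookkeeping in this last reassembly: one must carefully track which pieces of $\pa_\mu\pa_{\bar\nu}N_{k,l}$ are "holomorphic $\times$ antiholomorphic" (and combine with the $-\text{Tr}(\cdots)$ term to form the $L_\mu K_n\cdot L_{\bar\nu}K_n$ bilinear) versus which are "genuinely mixed" (the $\rho$-variation terms giving the local and $G$-kernel contributions), and to verify that the contributions involving the second variation of the basis elements $L_\mu L_{\bar\nu}\phi_k$ are correctly handled by projecting with $\mathscr{P}_n$ rather than producing spurious terms — this requires a $\bar\partial$-integration-by-parts argument analogous to the one behind Proposition \ref{vary_differential}, adapted to weight $n$.
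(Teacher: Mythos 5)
Your overall strategy (Proposition \ref{prop1_26_1} plus Lemma \ref{lemma1_27_1} to collapse traces into integrals of $K_n$, with the variational formulas of Section \ref{variationalformula} supplying the derivatives of $N_{k,l}$) is the same as the paper's, and your identification of the first variations $\pa_{\mu}N_{k,l}=\langle L_{\mu}\phi_k,\phi_l\rangle$, $\pa_{\bar{\nu}}N_{k,l}=\langle \phi_k,L_{\nu}\phi_l\rangle$ and of the source of the $(1-n)\,G$-term via $L_{\mu}L_{\bar{\nu}}\rho=2\rho(\Delta_0+2)^{-1}(\mu\bar{\nu})$ is correct. But there is a genuine gap in your account of the mixed second variation $\pa_{\mu}\pa_{\bar{\nu}}N_{k,l}$, and it is exactly where the third term of \eqref{eq1_29_5} comes from. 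When the norm $\langle\phi_k^{\varepsilon\mu},\phi_l^{\varepsilon\mu}\rangle$ on $X^{\varepsilon\mu}$ is pulled back to $X$, the change of variables $z\mapsto f^{\varepsilon\mu}(z)$ produces the Jacobian factor $|f^{\varepsilon\mu}_z|^2\bigl(1-|\varepsilon\mu(z)|^2\bigr)$; after absorbing $|f_z^{\varepsilon\mu}|^{2}$ into the pulled-back tensors one is left with an explicit factor $(1-|\varepsilon\mu(z)|^2)$ in the integrand, whose $\pa_{\varepsilon}\pa_{\bar{\varepsilon}}$ derivative is $-|\mu|^2$. That is the entire origin of $-\int_X K_n(z,z)\rho(z)^{1-n}\mu(z)\overline{\nu(z)}\,d^2z$. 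You omit this factor, and instead try to produce the term from $\langle L_{\mu}L_{\bar{\nu}}\phi_k,\phi_l\rangle$ --- but that contribution is identically zero: the basis is chosen so that $\pa_{\bar{\varepsilon}}\bigl[(f^{\varepsilon\mu})^{*}\phi_k^{\varepsilon\mu}\bigr]=0$ for \emph{all} $\varepsilon$, hence its $\pa_{\varepsilon}$ derivative vanishes too, and no projection or $\bar{\pa}$-integration-by-parts argument is needed or available to extract a local term from it. As written, your bookkeeping would therefore lose the third term of \eqref{eq1_29_5} entirely.

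A secondary, smaller gap: the reassembly of $-\mathrm{Tr}\bigl(N^{-1}(\pa_{\bar{\nu}}N)N^{-1}(\pa_{\mu}N)\bigr)$ with the $\langle L_{\mu}\phi_k,L_{\nu}\phi_l\rangle$ part into $\int_X\int_X(L_{\mu}K_n)(z,w)(L_{\bar{\nu}}K_n)(w,z)\rho^{1-n}\rho^{1-n}$ is not just a matter of inserting $K_n$ via Lemma \ref{lemma1_27_1} and quoting Proposition \ref{prop1_26_2}. The essential input is the identity obtained by varying the reproducing property $\int_{X^{\varepsilon\mu}}K^{\varepsilon\mu}(z,w)\phi_k^{\varepsilon\mu}(w)\rho^{\varepsilon\mu}(w)^{1-n}d^2w=\phi_k^{\varepsilon\mu}(z)$, which expresses $L_{\mu}\phi_k-\mathscr{P}_n(L_{\mu}\phi_k)$ as $\int_X(L_{\mu}K_n)(\cdot,w)\phi_k(w)\rho(w)^{1-n}d^2w$, together with the consequences of varying $K_n=K_n\ast K_n$ (namely that $L_{\mu}K_n$ is reproduced by $\mathscr{P}_n$ acting on its antiholomorphic slot and annihilated by $\mathscr{P}_n$ acting on its holomorphic slot). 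Without these identities the "holomorphic $\times$ antiholomorphic" pieces do not close up into the stated double integral. Proposition \ref{prop1_26_2} is used only to certify that $(L_{\mu}K_n)(z,w)$ is antiholomorphic in $w$, which is what makes the reproducing step legitimate.
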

\begin{proof}
We only need to consider the case where $\mu=\nu$. The general case follows by polarization.
Recall that
\begin{align*}
(N^{\varepsilon\mu})_{kl}=&\int_{X^{\varepsilon\mu}}\phi_k^{\varepsilon\mu}(z)\overline{\phi_l^{\varepsilon\mu}(z)}\left(\rho^{\varepsilon\mu}(z)\right)^{1-n}d^2z\\
=&\int_{X}(f^{\varepsilon\mu})^*\phi_k^{\varepsilon\mu}(z)\overline{(f^{\varepsilon\mu})^*\phi_l^{\varepsilon\mu}(z)}\left((f^{\varepsilon\mu})^*\rho^{\varepsilon\mu}(z)\right)^{1-n}\left(1-|\varepsilon\mu(z)|^2\right)d^2z.
\end{align*}Each of the $\phi_k^{\varepsilon\mu}$, $1\leq k\leq d_n$ and $f^{\varepsilon\mu}$ varies holomorphically with respect to moduli. Together with Proposition \ref{varymetric} we find that
\begin{align*}
\left.\frac{\pa}{\pa\varepsilon}\right|_{\varepsilon=0}(N^{\varepsilon\mu})_{kl}=&\int_{X}\left(L_{\mu}\phi_k \right)(z)\overline{ \phi_l(z)} \rho (z)^{1-n} d^2z,\\
\left.\frac{\pa}{\pa\bar{\varepsilon}}\right|_{\varepsilon=0}(N^{\varepsilon\mu})_{kl}=&\int_{X} \phi_k(z) \overline{ (L_{ \mu}\phi_l)(z)} \rho (z)^{1-n} d^2z,
\end{align*}and
\begin{align*}
\left.\frac{\pa^2}{\pa\bar{\varepsilon}\pa\varepsilon}\right|_{\varepsilon=0}(N^{\varepsilon\mu})_{kl}=&\int_{X}\left(L_{\mu}\phi_k \right)(z)\overline{(L_{ \mu}\phi_l)(z)} \rho (z)^{1-n} d^2z\\
&+2(1-n) \int_X\phi_k(z)\overline{\phi_l(z)}\rho(z)^{1-n} \left[(\Delta_0+2)^{-1}(|\mu(z)|^2)\right](z) d^2z\\
&-\int_X\phi_k(z)\overline{\phi_l(z)}\rho(z)^{1-n}|\mu(z)|^2d^2z.
\end{align*}Using Proposition \ref{prop1_26_1}, we find that
\begin{align*}
\left.\frac{\pa^2}{\pa\bar{\varepsilon}\pa\varepsilon}\right|_{\varepsilon=0}\log\det N^{\varepsilon\mu}=T_1+T_2+T_3+T_4,\end{align*}where, by using Lemma \ref{lemma1_27_1}, we have
\begin{align*}T_1=&-\sum_{l=1}^d\sum_{k=1}^d\sum_{q=1}^d\sum_{m=1}^d(N^{-1})_{qm}\int_{X} \phi_m(z) \overline{ (L_{\mu}\phi_l)(z)} \rho (z)^{1-n} d^2z\\&\hspace{3cm}\times(N^{-1})_{lk}\int_{X}\left(L_{\mu}\phi_k \right)(w)\overline{ \phi_q(w)} \rho (w)^{1-n} d^2w\\
=&-\sum_{l=1}^d\sum_{k=1}^d(N^{-1})_{lk}\int_X\int_XK(z,w)\left(L_{\mu}\phi_k \right)(w) \overline{ (L_{ \mu}\phi_l)(z)} \rho(z)^{1-n}\rho(w)^{1-n}d^2wd^2z,\\ 
T_2=&
\sum_{l=1}^d\sum_{k=1}^d(N^{-1})_{lk}\int_{X}\left(L_{\mu}\phi_k \right)(z)\overline{(L_{\mu}\phi_l)(z)} \rho (z)^{1-n} d^2z,\\
T_3=&2(1-n)\sum_{l=1}^d\sum_{k=1}^d(N^{-1})_{lk} \int_X\phi_k(z)\overline{\phi_l(z)}\rho(z)^{1-n} \left[(\Delta_0+2)^{-1}(|\mu|^2)\right](z) d^2z,\\
=&(1-n)  \int_X K(z,z)\rho(z)^{1-n} \int_XG(z,w)|\mu(w)|^2\rho(w)d^2w  d^2z,\\
T_4=&-\sum_{l=1}^d\sum_{k=1}^d(N^{-1})_{lk}\int_X\phi_k(z)\overline{\phi_l(z)}\rho(z)^{1-n}|\mu(z)|^2d^2z,\\
=&-  \int_X K(z,z)\rho(z)^{1-n}  |\mu(z)|^2   d^2z.
\end{align*}
 Now we want to simplify $T_1+T_2$.
For any $1\leq k\leq d$,
\begin{align*}
\int_{X^{\varepsilon\mu}}K^{\varepsilon\mu}(z,w)\phi_k^{\varepsilon\mu}(w)\rho^{\varepsilon\mu}(w)^{1-n}d^2w=\phi^{\varepsilon\mu}_k(z).
\end{align*}Therefore,
\begin{align}\label{eq1_29_8}
\left(L_{\mu}\phi_k\right)(z)-\int_XK(z,w)(L_{\mu}\phi_k)(w)\rho(w)^{1-n}d^2w =\int_X(L_{\mu}K)(z,w)\phi_k(w)\rho(w)^{1-n}d^2w.
\end{align}  
Hence, for any $1\leq k,l\leq d$,
\begin{align*}
&\int_{X}\left(L_{\mu}\phi_k \right)(z)\overline{(L_{\mu}\phi_l)(z)} \rho (z)^{1-n} d^2z\\&-\int_X\int_XK(z,w)\left(L_{\mu}\phi_k \right)(w) \overline{ (L_{ \mu}\phi_l)(z)} \rho(z)^{1-n}\rho(w)^{1-n}d^2wd^2z\\
=&\int_X\int_X\left(L_{\mu}K\right)(z,w) \phi_k (w) \overline{ (L_{ \mu}\phi_l)(z)} \rho(z)^{1-n}\rho(w)^{1-n}d^2wd^2z.
\end{align*}Therefore, $T_1+T_2$ can be simplied as
\begin{align}\label{eq1_17_3}
T_1+T_2=& \sum_{l=1}^d\sum_{k=1}^d(N^{-1})_{lk}\int_X\int_X(L_{\mu}K)(z,w) \phi_k(w) \overline{ (L_{ \mu}\phi_l)(z)} \rho(z)^{1-n}\rho(w)^{1-n}d^2wd^2z.
\end{align}
Similar to \eqref{eq1_29_8}, we find that for any $1\leq l\leq d$,
\begin{align*}
\overline{\left(L_{\mu}\phi_l\right)(z)}=\int_XK(\zeta,z)\overline{(L_{\mu}\phi_l)(\zeta)}\rho(\zeta)^{1-n}d^2\zeta +\int_X(L_{\bar{\mu}}K)(\zeta, z))\overline{\phi_l(\zeta)}\rho(\zeta)^{1-n}d^2\zeta,
\end{align*} and therefore  
\begin{align*}
& \sum_{l=1}^d\sum_{k=1}^d(N^{-1})_{lk}\int_X(L_{\mu}K)(z,w) \phi_k(w) \overline{ (L_{ \mu}\phi_l)(z)} \rho(z)^{1-n} d^2z
 \\=& \sum_{l=1}^d\sum_{k=1}^d(N^{-1})_{lk}\int_X \int_X(L_{\mu}K)(z,w) \phi_k(w) K(\zeta,z)\overline{(L_{\mu}\phi_l)(\zeta)}\rho(\zeta)^{1-n} \rho(z)^{1-n} d^2zd^2\zeta\\
&+\sum_{l=1}^d\sum_{k=1}^d(N^{-1})_{lk}\int_X \int_X(L_{\mu}K)(z,w) \phi_k(w) (L_{\bar{\mu}}K)(\zeta, z))\overline{\phi_l(\zeta)}\rho(\zeta)^{1-n}  \rho(z)^{1-n}d^2z d^2\zeta\\
=&J_1+J_2.
\end{align*}
By definition of projection kernel, we have
\begin{align}\label{eq1_28_1}
K(z,w)=\int_{X}K(z,\zeta)K(\zeta, w)\rho(\zeta)^{1-n}d^2\zeta.
\end{align}Taking variation implies that
\begin{align*}
(L_{\mu}K)(z,w)=& \int_{X}(L_{\mu}K)(z,\zeta)K(\zeta, w)\rho(\zeta)^{1-n}d^2\zeta+\int_{X}K(z,\zeta)(L_{\mu}K)(\zeta, w)\rho(\zeta)^{1-n}d^2\zeta.\end{align*}By Proposition \ref{prop1_26_2}, $(L_{\mu}K)(z,w)$ is anti-holomorphic in $w$. Hence,  
\begin{align}\label{eq1_29_1} \int_{X}(L_{\mu}K)(z,\zeta)K(\zeta, w)\rho(\zeta)^{1-n}d^2\zeta=(L_{\mu}K)(z,w),\end{align} and thus
\begin{align}\label{eq1_29_2}\int_{X}K(z,\zeta)(L_{\mu}K)(\zeta, w)\rho(\zeta)^{1-n}d^2\zeta=0.\end{align}
This shows that $J_1=0$. For $J_2$, Lemma \ref{lemma1_27_1} gives
$$J_2= \int_X \int_X(L_{\mu}K)(z,w) K(w,\zeta) (L_{\bar{\mu}}K)(\zeta, z)) \rho(\zeta)^{1-n}  \rho(z)^{1-n}d^2z d^2\zeta.$$Using \eqref{eq1_29_1}, we then find that
$$T_1+T_2=\int_X \int_X(L_{\mu}K)(z,w)   (L_{\bar{\mu}}K)(w, z) \rho(w)^{1-n}  \rho(z)^{1-n}d^2z d^2w.$$This completes the proof of \eqref{eq1_29_5}.
\end{proof}

 In Appendix \ref{interesting_formula}, we prove an alternative formula \eqref{eq1_29_10} for $\pa_{\mu}\pa_{\bar{\nu}}\log\det N_n$ which might be interesting of its own right.

If $g(z)$ is a function such that 
$$g(\gamma z)\gamma'(z)\overline{\gamma'(z)}=g(z)\hspace{1cm}\text{for all}\;\gamma\in \Gamma, \;z\in \mathbb{U},$$then
$$\int_X\sum_{\gamma\in \Gamma}g(\gamma z)\gamma'(z)\overline{\gamma'(z)}d^2z=\int_{\mathbb{U}}g(z)d^2z.$$ Using this, one can obtain from Theorem \ref{thm1_29_1} that 

\begin{proposition}\label{prop2_22_1}Let $n\geq 2$. Given $\mu, \nu\in\Omega_{-1,1}(X)$, 
$$\pa_{\mu}\pa_{\bar{\nu}}\log\det N_n=\int_X \sum_{\gamma\in \Gamma}\mathscr{A}(\gamma z, z, \mu, \nu)\gamma'(z)^n\rho(z)^{1-n}d^2z,$$ where  
\begin{equation}\label{eq2_2_1}\begin{split}
 \mathscr{A}(z',z, \mu, \nu)
=&\int_{\mathbb{U}}(L_{\mu}\mathcal{K}_n)(z',w)(L_{\bar{\nu}}\mathcal{K}_n)(w,z)\rho(w)^{1-n} d^2w \\
&+(1-n)  \mathcal{K}_n(z', z) \int_{ \mathbb{U}}\mathcal{G}(z,w) \mu(w)\overline{\nu(w)}\rho(w)d^2w  - \mathcal{K}_n( z', z) \mu(z)\overline{\nu(z)}.
\end{split}\end{equation}
 The function $\mathscr{A}(z', z, \mu, \nu)$ has the property that for any $\sigma\in \text{PSL}(2,\mathbb{R})$,
$$\mathscr{A}\left(\sigma z', \sigma z, \mu\circ\sigma'\frac{\overline{\sigma'}}{\sigma'}, \nu\circ\sigma'\frac{\overline{\sigma'}}{\sigma'}\right)\sigma'(z')^n\overline{\sigma'(z)}^n=\mathscr{A}(z',z, \mu, \nu).$$ It is an automorphic form of type $(n,0)$ in $z'$, and an automorphic form of type $(0,n)$ in $z$.
\end{proposition}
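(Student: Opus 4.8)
The plan is to derive the stated identity directly from the integral formula \eqref{eq1_29_5} of Theorem \ref{thm1_29_1}, which expresses $\pa_{\mu}\pa_{\bar{\nu}}\log\det N_n$ as a sum $T_1+T_2+T_3$ of the three integrals over $X$ appearing there. Writing $\mathscr{A}=A_1+A_2+A_3$ for the three summands in \eqref{eq2_2_1}, it is enough to show $T_i=\int_X\sum_{\gamma\in\Gamma}A_i(\gamma z,z)\gamma'(z)^n\rho(z)^{1-n}d^2z$ for $i=1,2,3$ and then add. Throughout, $(L_{\mu}\mathcal{K}_n)(z',w)$ denotes the building block of Proposition \ref{prop1_26_2}, so that \eqref{eq2_22_1} reads $(L_{\mu}K_n)(z,w)=\sum_{\gamma\in\Gamma}(L_{\mu}\mathcal{K}_n)(\gamma z,w)\gamma'(z)^n$, and $(L_{\bar{\nu}}\mathcal{K}_n)$ is the corresponding object for $L_{\bar{\nu}}$, with $(L_{\bar{\nu}}K_n)(w,z)=\sum_{\delta\in\Gamma}(L_{\bar{\nu}}\mathcal{K}_n)(\delta w,z)\delta'(w)^n$. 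Interchanges of $\Gamma$-sums and integrals below will be justified by absolute convergence, using $\mathcal{K}_n(z',w)=O(|z'-\bar w|^{-2n})$ and the cusp and elliptic behaviour of the integrands (this is where $n\ge 2$ is used, the relevant Poincar\'e series converging only then).

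I would handle $T_2$ and $T_3$ first, as they are routine. For $T_3=-\int_XK_n(z,z)\rho(z)^{1-n}\mu(z)\overline{\nu(z)}d^2z$, substitute the Poincar\'e series $K_n(z,z)=\sum_{\gamma}\mathcal{K}_n(\gamma z,z)\gamma'(z)^n$ from \eqref{eq3_5_1} and pull $\gamma'(z)^n$ out, producing exactly the $A_3$-summand $-\mathcal{K}_n(\gamma z,z)\mu(z)\overline{\nu(z)}$. For $T_2$, first unfold the inner $w$-integral: since $\mu,\nu\in\Omega_{-1,1}(X)$, the density $\mu(w)\overline{\nu(w)}\rho(w)d^2w$ is $\Gamma$-invariant, so by \eqref{eq3_3_1} (in the symmetric form $G(z,w)=\sum_\gamma\mathcal{G}(z,\gamma w)$) and the standard unfolding one gets $\int_XG(z,w)\mu(w)\overline{\nu(w)}\rho(w)d^2w=\int_{\mathbb{U}}\mathcal{G}(z,w)\mu(w)\overline{\nu(w)}\rho(w)d^2w$; substituting the series for $K_n(z,z)$ once more and pulling out $\gamma'(z)^n$ then gives the $A_2$-summand.

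The term $T_1=\int_X\int_X(L_{\mu}K_n)(z,w)(L_{\bar{\nu}}K_n)(w,z)\rho(w)^{1-n}\rho(z)^{1-n}d^2w\,d^2z$ is the heart of the argument, and the plan hinges on the identity, valid for each fixed $z$,
\begin{align*}
\sum_{\gamma\in\Gamma}A_1(\gamma z,z)\gamma'(z)^n
&=\int_{\mathbb{U}}\Bigl(\sum_{\gamma\in\Gamma}(L_{\mu}\mathcal{K}_n)(\gamma z,w)\gamma'(z)^n\Bigr)(L_{\bar{\nu}}\mathcal{K}_n)(w,z)\rho(w)^{1-n}d^2w\\
&=\int_{\mathbb{U}}(L_{\mu}K_n)(z,w)(L_{\bar{\nu}}\mathcal{K}_n)(w,z)\rho(w)^{1-n}d^2w\\
&=\int_X(L_{\mu}K_n)(z,w)(L_{\bar{\nu}}K_n)(w,z)\rho(w)^{1-n}d^2w.
\end{align*}
The first line is the definition of $A_1$ with the $\Gamma$-sum brought inside, the second is \eqref{eq2_22_1}, and the third is a \emph{folding} of the $w$-integral: writing $\mathbb{U}=\bigsqcup_{\delta}\delta F$ with $F$ a fundamental domain, substituting $w\mapsto\delta w$, and using that $(L_{\mu}K_n)(z,\cdot)$ is an automorphic form of type $(0,n)$ in $w$ (a consequence of \eqref{eq2_22_1}), that $\rho$ has weight $(-1,-1)$, and that $\sum_{\delta}(L_{\bar{\nu}}\mathcal{K}_n)(\delta w,z)\delta'(w)^n=(L_{\bar{\nu}}K_n)(w,z)$; the factors $\overline{\delta'(w)}^{-n}$, the $|\delta'(w)|^{2(n-1)}$ from $\rho$, and the Jacobian $|\delta'(w)|^{2}$ combine to precisely $\delta'(w)^n$, so the fold is exact. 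Integrating this identity against $\rho(z)^{1-n}d^2z$ over $X$ gives $T_1=\int_X\sum_{\gamma}A_1(\gamma z,z)\gamma'(z)^n\rho(z)^{1-n}d^2z$; adding the three pieces yields the asserted formula. I expect the main obstacle to be exactly this bookkeeping: the individual kernels $(L_{\mu}\mathcal{K}_n)(z',w)$ and $(L_{\bar{\nu}}\mathcal{K}_n)(w,z)$ are \emph{not} automorphic in either variable by themselves --- only the Poincar\'e sums are --- so neither $X$-integral in $T_1$ can be unfolded naively, and the device of reassembling $(L_{\mu}K_n)$ inside the $\mathbb{U}$-integral before folding in $w$ is what makes the weights close up.

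Finally, for the transformation law I would change variables $w\mapsto\sigma w$ (and, in the $\zeta$-integral defining $L_{\mu}\mathcal{K}_n$, $\zeta\mapsto\sigma\zeta$) in $A_i(\sigma z',\sigma z,\mu^{\sigma},\nu^{\sigma})$, where $\mu^{\sigma},\nu^{\sigma}$ are the Beltrami-differential pullbacks of $\mu,\nu$ appearing in the statement. The inputs are: $\mathcal{K}_n(\sigma a,\sigma b)\sigma'(a)^n\overline{\sigma'(b)}^n=\mathcal{K}_n(a,b)$ for $\sigma\in\text{PSL}(2,\mathbb{R})$ (and the analogous equivariance of $L_{\mu}\mathcal{K}_n$, in which replacing $\mu$ by $\mu^{\sigma}$ exactly compensates the substitution $\zeta\mapsto\sigma\zeta$); $\mathcal{G}(\sigma a,\sigma b)=\mathcal{G}(a,b)$ because $u$ in \eqref{eq1_24_1} is a point-pair invariant; invariance of the hyperbolic metric, $\rho(\sigma w)|\sigma'(w)|^2=\rho(w)$; and the fact that $\mu^{\sigma}(w)\overline{\nu^{\sigma}(w)}\rho(w)d^2w$ is the pullback by $\sigma$ of $\mu(w)\overline{\nu(w)}\rho(w)d^2w$. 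Exactly as in the folding computation above, the weights and Jacobians cancel, giving $A_i(\sigma z',\sigma z,\mu^{\sigma},\nu^{\sigma})\sigma'(z')^n\overline{\sigma'(z)}^n=A_i(z',z,\mu,\nu)$ for each $i$, hence the same for $\mathscr{A}$. Specializing to $\sigma=\gamma\in\Gamma$, where $\mu^{\gamma}=\mu$ and $\nu^{\gamma}=\nu$ by the definition of harmonic Beltrami differentials on $X$, shows that $\mathscr{A}(z',z,\mu,\nu)$ transforms with weight $(n,0)$ in $z'$ and $(0,n)$ in $z$, so that $\sum_{\gamma}\mathscr{A}(\gamma z,z,\mu,\nu)\gamma'(z)^n$ is a well-defined automorphic quantity on $X$.
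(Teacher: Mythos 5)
Your proposal is correct and follows essentially the same route as the paper: the paper derives this proposition from Theorem \ref{thm1_29_1} by exactly the unfolding/folding device you describe (it states the relevant unfolding identity just before the proposition and leaves the details implicit), and your term-by-term treatment of $T_1$, $T_2$, $T_3$, including the reassembly of $(L_{\mu}K_n)$ inside the $\mathbb{U}$-integral before folding in $w$, is the intended argument. The weight bookkeeping in your fold and in the $\mathrm{PSL}(2,\mathbb{R})$-equivariance check is accurate and consistent with how the resulting single-$\gamma$ terms are used later in Proposition \ref{prop2_22_2}.
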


\begin{remark}\label{remark2_2_1} 
One can write $\pa_{\mu}\pa_{\bar{\nu}}\log\det N_n$ as a sum of four terms:
$$\pa_{\mu}\pa_{\bar{\nu}}\log\det N_n=\mathscr{E}_0+\mathscr{E}_{H}+\mathscr{E}_{P}+\mathscr{E}_{E},$$
where 
\begin{align*}
\mathscr{E}_{0}=&\int_X  \mathscr{A}(z, z, \mu, \nu)\gamma'(z)^n\rho(z)^{1-n}d^2z,\\
\mathscr{E}_{H}=&\int_X \sum_{\substack{\gamma\in \Gamma\\\gamma \;\text{is hyperbolic}}}\mathscr{A}(\gamma z, z, \mu, \nu)\gamma'(z)^n\rho(z)^{1-n}d^2z,\\
\mathscr{E}_{P}=&\int_X \sum_{\substack{\gamma\in \Gamma\\\gamma \;\text{is parabolic}}}\mathscr{A}(\gamma z, z, \mu, \nu)\gamma'(z)^n\rho(z)^{1-n}d^2z,\\
\mathscr{E}_{E}=&\int_X \sum_{\substack{\gamma\in \Gamma\\\gamma \;\text{is elliptic}}}\mathscr{A}(\gamma z, z, \mu, \nu)\gamma'(z)^n\rho(z)^{1-n}d^2z,
\end{align*}are respectively the identity, hyperbolic, parabolic and elliptic contributions.

\end{remark}

Now let us simplify the expression in $\pa_{\mu}\pa_{\bar{\nu}}\log\det N_n$ that involves the term
$$\int_X\int_{\mathbb{U}}\left(L_{\mu}\mathcal{K}_n\right)(\gamma z, w)\gamma'(z)^n \left(L_{\bar{\nu}}\mathcal{K}_n\right)(w, z)\rho(w)^{1-n}\rho(z)^{1-n}d^2w d^2z.$$   First we have the following lemma.

\begin{proposition}\label{prop2_20_1} For $n\geq 1$, let
$  \mathcal{K}_n(z,w)$ be the kernel defined by \eqref{eq2_17_2}. If $\mu\in \Omega_{-1, 1}(X)$, then 
\begin{equation}\label{eq2_21_1}\begin{split}
\frac{\pa}{\pa\bar{z}} (L_{\mu}\mathcal{K}_n)(z,w)=&-2n\mu(z)\frac{(\bar{w}-\bar{z})}{(z-\bar{w})(z-\bar{z})}\mathcal{K}_n(z,w).
\end{split}
\end{equation}
\end{proposition}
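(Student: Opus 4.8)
The plan is to reduce the identity \eqref{eq2_21_1} to differentiating a \emph{regular} integral under the integral sign, using Proposition \ref{prop2_2_1} to remove the singularity, and then to invoke the reproducing formula \eqref{eq2_6_6} for harmonic Beltrami differentials. First I would record the explicit form of $L_\mu\mathcal{K}_n$. Writing $\mathcal{H}(z,w)=1/(z-\bar w)^2$, so that $\mathcal{K}_n=(-1)^n\tfrac{2^{2n-2}(2n-1)}{\pi}\mathcal{H}^n$, the Leibniz rule for $L_\mu$ together with the formula for $\tfrac{\pa}{\pa\varepsilon}\mathscr{M}_{\varepsilon\mu}$ in Proposition \ref{prop_vary} (evaluated at $\varepsilon=0$, where $\mathscr{L}_0(z,\zeta)=(z-\zeta)^{-2}$ and $\mathscr{M}_0(z,w)=\mathcal{H}(z,w)$) give
\[
(L_\mu\mathcal{K}_n)(z,w)=(-1)^{n-1}\frac{2^{2n-2}(2n-1)n}{\pi^2}\,\frac{1}{(z-\bar w)^{2n-2}}\int_{\mathbb{U}}\frac{\mu(\zeta)}{(\zeta-z)^2(\zeta-\bar w)^2}\,d^2\zeta,
\]
which is just the $\gamma=\mathrm{id}$ summand of the formula established in the proof of Proposition \ref{prop1_26_2} (and the derivation is equally valid for $n=1$). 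Abbreviate $\mathcal{J}(z,w)=\int_{\mathbb{U}}\mu(\zeta)(\zeta-z)^{-2}(\zeta-\bar w)^{-2}d^2\zeta$.

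Since $z-\bar w$ is holomorphic in $z$, the operator $\pa/\pa\bar z$ annihilates the prefactor $(z-\bar w)^{-(2n-2)}$, so everything reduces to computing $\pa_{\bar z}\mathcal{J}(z,w)$. The integrand of $\mathcal{J}$ is singular in $z$ at $\zeta=z$, so one cannot differentiate under the integral naively; the device is to first invoke Proposition \ref{prop2_2_1}, which rewrites
\[
\mathcal{J}(z,w)=\int_{\mathbb{U}}\mu(\zeta)\left\{\frac{(\bar w-\bar z)^2}{(\zeta-\bar z)^2(\zeta-\bar w)^2(z-\bar w)^2}+\frac{2(\bar w-\bar z)^2(z-\bar z)}{(\zeta-\bar z)^3(\zeta-\bar w)(z-\bar w)^3}\right\}d^2\zeta.
\]
Viewed as a function of $\bar z$, the new integrand has poles only at $\zeta=\bar z$ and $\zeta=\bar w$, both in the lower half plane and hence bounded away from $\mathbb{U}$ uniformly for $z,w$ in compact subsets of $\mathbb{U}$; the integrand and its $\bar z$-derivative decay like $O(|\zeta|^{-4})$ at infinity and $\mu$ is bounded, so differentiation under the integral sign is legitimate and I would carry it out termwise.

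Differentiating the brace in $\bar z$ produces five rational terms, and the key point is that they collapse. The two terms carrying a factor $(\zeta-\bar w)^{-2}$ combine, via $(\zeta-\bar z)-(\bar w-\bar z)=\zeta-\bar w$, into a single $(\zeta-\bar z)^{-3}(\zeta-\bar w)^{-1}$ term; collecting all $(\zeta-\bar z)^{-3}(\zeta-\bar w)^{-1}$ contributions, their numerator simplifies because $(z-\bar w)+2(z-\bar z)+(\bar w-\bar z)=3(z-\bar z)$; finally, combining the outcome with the remaining $(\zeta-\bar z)^{-4}(\zeta-\bar w)^{-1}$ term, via $-(\zeta-\bar z)+(\bar w-\bar z)=-(\zeta-\bar w)$, cancels the last $(\zeta-\bar w)$ factor, leaving
\[
\pa_{\bar z}\mathcal{J}(z,w)=-\frac{6(\bar w-\bar z)(z-\bar z)}{(z-\bar w)^3}\int_{\mathbb{U}}\frac{\mu(\zeta)}{(\zeta-\bar z)^4}\,d^2\zeta=\frac{2\pi\,\mu(z)(\bar w-\bar z)}{(z-\bar w)^3(z-\bar z)},
\]
the last equality being \eqref{eq2_6_6}, namely $\int_{\mathbb{U}}\mu(\zeta)(\zeta-\bar z)^{-4}d^2\zeta=-\pi\mu(z)/(3(z-\bar z)^2)$. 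Multiplying by the prefactor $(-1)^{n-1}\tfrac{2^{2n-2}(2n-1)n}{\pi^2}(z-\bar w)^{-(2n-2)}$ from the first step and rewriting the constant with the help of $\mathcal{K}_n(z,w)=(-1)^n\tfrac{2^{2n-2}(2n-1)}{\pi}(z-\bar w)^{-2n}$ yields exactly \eqref{eq2_21_1}.

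The proof has no conceptual obstacle; the one thing that needs care is the bookkeeping of the collapse in the third paragraph, together with the routine justification for differentiating under the integral in the second. As an independent check, one can instead differentiate $\mathcal{J}$ directly using $\pa_{\bar z}(\zeta-z)^{-2}=\pi\,\pa_\zeta\delta^{(2)}(\zeta-z)$, integrate by parts in $\zeta$, and use that $\mu$ harmonic forces $\pa_\zeta\mu(\zeta)=2\mu(\zeta)/(\zeta-\bar\zeta)$; this gives $\pa_{\bar z}\mathcal{J}(z,w)=-\pi\,\pa_\zeta[\mu(\zeta)(\zeta-\bar w)^{-2}]\big|_{\zeta=z}$, which evaluates to the same expression.
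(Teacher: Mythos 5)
Your argument is correct and follows essentially the same route as the paper's proof: both reduce to the identity-summand formula for $L_{\mu}\mathcal{K}_n$ from Proposition \ref{prop1_26_2}, invoke Proposition \ref{prop2_2_1} to replace the singular integrand by a regular one before differentiating in $\bar{z}$, and then collapse the resulting rational terms to a single $(\zeta-\bar{z})^{-4}$ integral that is evaluated by the reproducing formula \eqref{eq2_6_6}. The only difference is cosmetic bookkeeping (the paper works with $\mathcal{H}=(z-\bar{w})^{-2n}$ directly rather than factoring out $(z-\bar{w})^{-(2n-2)}$ first), and your distributional cross-check is consistent with the result.
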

\begin{proof}
Let
$$\mathcal{H}(z,w)=\frac{1}{(z-\bar{w})^{2n}},$$which is a constant times $\mathcal{K}_n(z,w)$. It is sufficient to prove \eqref{eq2_21_1} with $\mathcal{H}(z,w)$ in place of $\mathcal{K}_n(z,w)$. By Proposition \ref{prop1_26_2}, 
\begin{align}\label{eq2_22_4}
(L_{\mu}\mathcal{H})(z,w)=&-\frac{n}{\pi}\frac{1}{(z-\bar{w})^{2n-2}}\int_{\mathbb{U}}\frac{\mu(\zeta)}{(\zeta-z)^2(\zeta-\bar{w})^2}d^2\zeta\end{align}
One can apply $\pa_{\bar{z}}$ directly to \eqref{eq2_22_4} to prove \eqref{eq2_21_1}. However, this involves working with singular kernels. To bypass this complication, we apply Proposition \ref{prop2_2_1} to rewrite $(L_{\mu}\mathcal{H})(z,w)$ as
\begin{align*}
(L_{\mu}\mathcal{H})(z,w)=&-\frac{n}{\pi}\frac{1}{(z-\bar{w})^{2n}}\int_{\mathbb{U}} \mu(\zeta)\left\{\frac{(\bar{z}-\bar{w})^2}{(\zeta-\bar{z})^2(\zeta-\bar{w})^2}+\frac{2(\bar{z}-\bar{w})^2(z-\bar{z})}{(\zeta-\bar{z})^3(\zeta-\bar{w})(z-\bar{w})} \right\}d^2\zeta.
\end{align*}Notice that now the kernel does not have singularities. Applying $\pa_{\bar{z}}$ give
\begin{align*}
\frac{\pa}{\pa\bar{z}} (L_{\mu}\mathcal{K})(z,w)=n\mathcal{K}(z,w)\times \mathfrak{B}(z,w),
\end{align*}where
\begin{align*}
\mathfrak{B}(z,w)=&-\frac{1}{\pi}\int_{\mathbb{U}}\mu(\zeta)\frac{\pa}{\pa\bar{z}}\left\{\frac{(\bar{z}-\bar{w})^2}{(\zeta-\bar{z})^2(\zeta-\bar{w})^2}+\frac{2(\bar{z}-\bar{w})^2(z-\bar{z})}{(\zeta-\bar{z})^3(\zeta-\bar{w})(z-\bar{w})} \right\}d^2\zeta\\
=&\frac{6}{\pi}\frac{(z-\bar{z})(\bar{w}-\bar{z})}{(z-\bar{w})}\int_{\mathbb{U}}\frac{\mu(\zeta)}{(\zeta-\bar{z})^4}d^2\zeta\\
=&-\frac{2(\bar{w}-\bar{z})}{(z-\bar{w})(z-\bar{z})}\mu(z).
\end{align*}This proves  \eqref{eq2_21_1}.  
\end{proof}
Now we have the following important result. 
\begin{proposition}\label{prop2_22_2}
Let $n\geq 2$. Given   $\mu, \nu \in \Omega_{-1,1}(X)$, if $\gamma$ is an element of $\Gamma$, then 
\begin{equation}\label{eq2_22_3}\begin{split}
& \int_X\int_{\mathbb{U}}\left(L_{\mu}\mathcal{K}_n\right)(\gamma z, w)\gamma'(z)^n \left(L_{\bar{\nu}}\mathcal{K}_n\right)(w, z)\rho(w)^{1-n}\rho(z)^{1-n}d^2w d^2z\\
=&- \frac{n}{\pi^2}\int_X\int_{\mathbb{U}} \frac{\mu(z)\overline{\nu( \eta)}}{(z-\bar{\eta})^2(\gamma z-\bar{\eta})^2}  \frac{(  z-\bar{ z})^{2n-2} }{(\gamma z-\bar{z} )^{2n-2}}    \gamma'(z) ^n  d^2\eta d^2 z\\
& - \frac{2(n-1)}{\pi^2}\int_X\int_{\mathbb{U}} \frac{\mu(z)\overline{\nu( \eta)}}{(z-\bar{\eta})^3(\gamma z-\bar{\eta}) }  \frac{(  z-\bar{ z})^{2n-1} }{(\gamma z-\bar{z} )^{2n-1}}    \gamma'(z) ^n  d^2\eta d^2 z\\
&+\int_X \mathcal{K}_n(\gamma z, z)\gamma'(z)^n\mu(z)\overline{\nu(z)} \rho(z)^{1-n}d^2z.
\end{split}\end{equation}
\end{proposition}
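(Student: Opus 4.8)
The plan is to compute the left-hand side by substituting the explicit formula \eqref{eq2_22_1} for $L_\mu\mathcal{K}_n$ (and its conjugate for $L_{\bar\nu}\mathcal{K}_n$) and carrying out the $w$-integration over $\mathbb{U}$. The key observation that makes this tractable is Proposition \ref{prop2_20_1}: since $\pa_{\bar z}(L_\mu\mathcal{K}_n)(z,w)$ is a multiple of $\mathcal{K}_n(z,w)$ times a simple rational factor, we can integrate by parts in the variable $w$ and convert the double integral over $\mathbb{U}\times X$ into something local. Concretely, I would first write $L_{\bar\nu}\mathcal{K}_n(w,z)$ — using the conjugate of \eqref{eq2_22_1}, where the Poincar\'e sum over $\gamma\in\Gamma$ collapses because we are now working with the single term $\mathcal{K}_n$ rather than the automorphic kernel $K_n$ — as an integral $-\tfrac{n}{\pi}\cdots\int_{\mathbb U}\overline{\nu(\eta)}(\cdots)d^2\eta$. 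Then the $w$-integral
$$\int_{\mathbb U}(L_\mu\mathcal{K}_n)(\gamma z,w)\,\overline{(\text{something holomorphic in }w)}\,\rho(w)^{1-n}d^2w$$
is handled by Stokes/Green applied to $\pa_{\bar w}$, using that $L_\mu\mathcal{K}_n$ is antiholomorphic in $w$ except for the factor just computed, or more precisely by expressing the relevant integrand as a $\bar\pa_w$-exact form.

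The cleaner route, which I expect to work, is this: recall from the proof of Theorem \ref{thm1_29_1} (specifically the manipulations around \eqref{eq1_29_1}--\eqref{eq1_29_2} and the reproducing property) that $\int_X K_n(z,w)(L_\mu K_n)(w,\cdot)\rho(w)^{1-n}d^2w$-type integrals simplify because $L_\mu K_n$ is antiholomorphic in its second slot. Here, instead, I would take the $w$-integral of $(L_\mu\mathcal{K}_n)(\gamma z,w)(L_{\bar\nu}\mathcal{K}_n)(w,z)\rho(w)^{1-n}$ and integrate by parts moving $\pa_{\bar w}$ off the factor $(L_{\bar\nu}\mathcal{K}_n)(w,z)$ (which, being the $\bar\nu$-variation, is \emph{holomorphic} in $w$ up to the correction of Proposition \ref{prop2_20_1}). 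Applying \eqref{eq2_21_1} in the form appropriate to $L_{\bar\nu}\mathcal{K}_n(w,z)$ produces a factor $-2n\,\overline{\nu(w)}\,\tfrac{w-\bar w}{(w-\bar z)(w-\bar w)}$ wait — more carefully, $\pa_w$ of the antiholomorphic-in-$w$ object — so the boundary-free integration by parts turns the double integral into $\int_{\mathbb U}(L_\mu\mathcal{K}_n)(\gamma z,w)\cdot(\text{local factor involving }\nu)\,d^2w$ plus a term where $\pa_{\bar w}$ hits $\rho(w)^{1-n}$. Each resulting $w$-integral is then of the shape $\int_{\mathbb U}\mu(z')(\cdots)/(z'-\bar\eta)^2(\cdots)d^2\zeta$ handled by Proposition \ref{prop2_2_1}, which is exactly the mechanism that produces the two rational kernels $1/[(z-\bar\eta)^2(\gamma z-\bar\eta)^2]$ and $1/[(z-\bar\eta)^3(\gamma z-\bar\eta)]$ with the precise powers of $(z-\bar z)$ and $(\gamma z-\bar z)$ appearing on the right-hand side of \eqref{eq2_22_3}. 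The leftover $w$-integral that does not reduce this way should collapse, via the reproducing property \eqref{eq1_28_1} for $\mathcal{K}_n$ under $\int_{\mathbb U}\cdots\rho^{1-n}$, to the diagonal term $\int_X\mathcal{K}_n(\gamma z,z)\gamma'(z)^n\mu(z)\overline{\nu(z)}\rho(z)^{1-n}d^2z$.

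The main obstacle will be bookkeeping the integration by parts carefully enough to be sure no boundary contributions arise — one must check that the relevant $1$-form decays at the cusps and is genuinely $\Gamma$-periodic (or that the fundamental-domain boundary terms cancel in pairs), since $\mathcal{K}_n$ alone is not automorphic, only the full sum $K_n$ is. I would organize this by keeping the $\gamma$ fixed throughout, working on $\mathbb U$ rather than on $X$ for the $w$-variable, and only at the very end using the unfolding identity (the displayed formula just before Proposition \ref{prop2_22_1}, $\int_X\sum_\gamma g(\gamma z)\gamma'(z)\overline{\gamma'(z)}d^2z=\int_{\mathbb U}g$) in reverse to land the $z$-integral back on $X$. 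A secondary nuisance is tracking the many powers of $2$, the sign $(-1)^n$, and the constants hidden in $\mathcal{K}_n$ versus $\mathcal{H}(z,w)=1/(z-\bar w)^{2n}$; since these are elementary, I would verify them by a short symbolic computation and simply record the result, consistent with the paper's stated convention of delegating routine algebra to computer algebra.
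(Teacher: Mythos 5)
You have assembled the right toolkit --- Proposition \ref{prop2_2_1} to desingularize, Proposition \ref{prop2_20_1} to localize, an integration by parts free of boundary terms, and a reproducing property to finish --- but the pivotal step of your plan is carried out in the wrong variable, and as written it would fail. You propose to integrate by parts in $w$, moving $\pa_{\bar w}$ off the factor $(L_{\bar{\nu}}\mathcal{K}_n)(w,z)$, which you describe as ``holomorphic in $w$ up to the correction of Proposition \ref{prop2_20_1}.'' In fact $(L_{\bar{\nu}}\mathcal{K}_n)(w,z)$ is \emph{exactly} holomorphic in $w$: its integral representation involves only $1/(w-\bar\eta)^2$ and $1/(w-\bar z)^{2n-2}$ with $w\in\mathbb{U}$ and $\bar\eta,\bar z\in\mathbb{L}$, which never become singular. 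The anomalous (delta-type) derivative captured by Proposition \ref{prop2_20_1} acts on the \emph{first} argument of $L_{\mu}\mathcal{K}_n$; for the conjugate object $(L_{\bar{\nu}}\mathcal{K}_n)(w,z)$ the corresponding non-trivial derivative is $\pa_z$ in the \emph{second} slot, and it produces $\overline{\nu(z)}$, not $\overline{\nu(w)}$. Likewise $(L_{\mu}\mathcal{K}_n)(\gamma z,w)$ is exactly antiholomorphic in $w$, and its $\pa_{\bar w}$ is a smooth, non-localized expression. So an integration by parts in $w$ extracts no local contribution from either factor, and the ``local factor involving $\nu$'' at $w$ that your plan relies on cannot arise; note also that the target formula \eqref{eq2_22_3} keeps $\overline{\nu}$ in un-localized form as an $\eta$-integral in its first two terms.

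The argument that works routes everything through the $z$-variable. One first applies the complex conjugate of Proposition \ref{prop2_2_1} to rewrite $(L_{\bar{\nu}}\mathcal{K}_n)(w,z)$ without singularities, and then observes that $\rho(z)^{1-n}(L_{\bar{\nu}}\mathcal{K}_n)(w,z)=\pa_{\bar z}\bigl[\rho(z)^{1-n}\mathfrak{K}(w,z)\bigr]$ for an explicit non-singular primitive $\mathfrak{K}$. After unfolding $w\mapsto\gamma w$ so that the first argument of $L_{\mu}\mathcal{K}_n$ becomes $z$, integration by parts in $\bar z$ transfers the derivative onto $(L_{\mu}\mathcal{K}_n)(z,w)$, and it is there that Proposition \ref{prop2_20_1} localizes, yielding $\mu(z)\mathcal{K}_n(z,w)$ times a rational factor. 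The surviving $w$-integral then pairs $\mathcal{K}_n(z,w)$ against holomorphic $n$-differential data and is evaluated by the reproducing identity \eqref{eq2_21_2}; the diagonal term $\int_X\mathcal{K}_n(\gamma z,z)\gamma'(z)^n\mu(z)\overline{\nu(z)}\rho(z)^{1-n}d^2z$ emerges only at the very end from \eqref{eq2_6_6} applied to the $\eta$-integral, not from a reproducing property in $w$ as you suggest. Your instinct to verify the absence of boundary terms and to check the constants symbolically is sound, but the proof cannot be completed until the integration by parts is moved to the $z$-variable.
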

 \begin{proof} To simplify notation, let $\mathcal{K}(z,w)=\mathcal{K}_n(z,w)$.
Using the complex conjugate of Proposition \ref{prop2_2_1}, we find that
\begin{align*}
(L_{\bar{\nu}}\mathcal{K})(w,z)=&(-1)^{n-1}\frac{2^{2n-2}(2n-1)n}{\pi^2}\frac{1}{(w-\bar{z})^{2n-2}}\int_{\mathbb{U}}\frac{ \overline{\nu(\eta)}}{(w- \bar{\eta})^2( \bar{\eta}-\bar{z})^2}d^2\eta\\
=&(-1)^{n-1}\frac{2^{2n-2}(2n-1)n}{\pi^2}\frac{(w-z)^2}{(w-\bar{z})^{2n}}\int_{\mathbb{U}}\frac{\overline{\nu(\eta)}}{(w-\bar{\eta})^2(z-\bar{\eta})^2}d^2\eta\\
&+2(-1)^{n-1}\frac{2^{2n-2}(2n-1)n}{\pi^2}\frac{(w-z)^2(z-\bar{z})}{(w-\bar{z})^{2n+1}}\int_{\mathbb{U}}\frac{\overline{\nu(\eta)}}{(w-\bar{\eta})(z-\bar{\eta})^3}d^2\eta.
\end{align*}
This implies that
$$\rho(z)^{1-n}(L_{\bar{\nu}}\mathcal{K})(w,z)=\frac{\pa}{\pa \bar{z}}\left[\rho(z)^{1-n}\mathfrak{K}(w,z)\right],$$ where
\begin{equation}\label{eq3_1_4}\begin{split}
\mathfrak{K}(w,z)=&(-1)^{n}\frac{2^{2n-2}n}{\pi^2}\frac{(z-\bar{z})(w-z) }{(w-\bar{z} )^{2n-1}}\int_{\mathbb{U}}\frac{\overline{\nu( \eta)}}{(w-\bar{\eta})^2(z-\bar{\eta})^2}d^2\eta\\
&+(-1)^{n}\frac{2^{2n-2}(2n-1)}{\pi^2}\frac{(z-\bar{z})^2(w-z) }{(w-\bar{z})^{2n}} \int_{\mathbb{U}}\frac{\overline{\nu( \eta)}}{(w-\bar{\eta}) (z-\bar{\eta})^3}d^2\eta.
\end{split}\end{equation}Using change of variables and   integration by parts, one has
\begin{align*}
& \int_X\int_{\mathbb{U}}\left(L_{\mu}\mathcal{K}\right)(\gamma z, w)\gamma'(z)^n \left(L_{\bar{\nu}}\mathcal{K}\right)(w, z)\rho(w)^{1-n}\rho(z)^{1-n}d^2w d^2z\\
=& \int_X\int_{\mathbb{U}}\left(L_{\mu}\mathcal{K}\right)(\gamma z, \gamma w)\gamma'(z)^n \overline{\gamma'(w)}^n\left(L_{\bar{\nu}}\mathcal{K}\right)(\gamma w, z)\gamma'(w)^n\rho(w)^{1-n}\rho(z)^{1-n}d^2w d^2z\\
=&\int_{\mathbb{U}} \int_X\left(L_{\mu}\mathcal{K}\right)(z, w)  \frac{\pa}{\pa \bar{z}}\left[\rho(z)^{1-n}\mathfrak{K}(\gamma w,z)\gamma'(w)^n\right]\rho(w)^{1-n}  d^2z d^2w\\
=&- \int_{\mathbb{U}}\int_X\frac{\pa}{\pa\bar{z}}\left(L_{\mu}\mathcal{K}\right)(z,\  w)  \mathfrak{K}(\gamma w,z) \gamma'(w)^n\rho(z)^{1-n}\rho(w)^{1-n}  d^2z d^2w.
\end{align*}Proposition \ref{prop2_20_1} then gives
\begin{align*}
& \int_X\int_{\mathbb{U}}\left(L_{\mu}\mathcal{K}\right)(\gamma z, w)\gamma'(z)^n \left(L_{\bar{\nu}}\mathcal{K}\right)(w, z)\rho(w)^{1-n}\rho(z)^{1-n}d^2w d^2z\\
=&2n \int_X\int_{\mathbb{U}}\mu(z)\frac{(\bar{w}-\bar{z})}{(  z-\bar{w})(  z-\bar{z})} \mathcal{K}(  z,w) \mathfrak{K}(\gamma w,z)\gamma'(w)^n \rho(z)^{1-n}\rho(w)^{1-n}d^2w d^2z\\
=& \mathscr{M}_1 +\mathscr{M}_2,
\end{align*}where $\mathscr{M}_1$ and $\mathscr{M}_2$ come from the two terms of $\mathfrak{K}(w,z)$ \eqref{eq3_1_4}.
\begin{align*}
\mathscr{M}_1 =&- \frac{2n^2}{\pi^2}\int_X\int_{\mathbb{U}}\int_{\mathbb{U}}\mu(z)\overline{\nu( \eta)}\frac{(\bar{w}-\bar{z})}{(  z-\bar{w})(  z-\bar{z})} \mathcal{K}(  z,w) \frac{(z-\bar{z})^{2n-1} }{(\gamma w-\bar{z} )^{2n-1}}\\&\hspace{6cm}\times\frac{(\gamma w-z)}{(\gamma w-\bar{\eta})^2(z-\bar{\eta})^2}\gamma'(w)^n\rho(w)^{1-n}d^2\eta d^2w d^2z,\\
\mathscr{M}_2 =&- \frac{2n(2n-1)}{\pi^2}\int_X\int_{\mathbb{U}}\int_{\mathbb{U}}\mu(z)\overline{\nu( \eta)}\frac{(\bar{w}-\bar{z})}{(  z-\bar{w})(  z-\bar{z})} \mathcal{K}(  z,w) \frac{(z-\bar{z})^{2n} }{(\gamma w-\bar{z} )^{2n}}\\&\hspace{6cm}\times\frac{(\gamma w-z)}{(\gamma w-\bar{\eta})(z-\bar{\eta})^3}\gamma'(w)^n\rho(w)^{1-n}d^2\eta d^2w d^2z.
\end{align*}The integrals with respect to $w$ can be computed explicitly. First we notice that if 
$\phi(w)$ is a holomorphic $n$-differential, then
\begin{align}\label{eq2_21_2}
\int_{\mathbb{U}}\frac{(\bar{w}-\bar{z})}{(  z-\bar{w})(  z-\bar{z})} \mathcal{K}(  z,w)  \rho(w)^{1-n} d^2w=&-\frac{1}{2n}\rho(z)^{n}\frac{\pa}{\pa z}\rho(z)^{-n}\phi(z).
\end{align}This can be proved by writting
$$\frac{(\bar{w}-\bar{z})}{(  z-\bar{w})(  z-\bar{z})}=\frac{1}{z-\bar{w}}-\frac{1}{z-\bar{z}},$$ and using the fact that
\begin{align*}
\int_{\mathbb{U}}\frac{1}{z-\bar{w}}\mathcal{K}(z,w)\phi(w)\rho(w)^{1-n}d^2w=&-\frac{1}{2n}\frac{\pa}{\pa z}\int_{\mathbb{U}} \mathcal{K}(z,w)\phi(w)\rho(w)^{1-n}d^2w\\
=&-\frac{1}{2n}\frac{\pa\phi(z)}{\pa z}.
\end{align*}From this, we find that
\begin{align*}
\mathscr{M}_1 =& \frac{n}{\pi^2}\int_X\int_{\mathbb{U}} \frac{\mu(z)\overline{\nu( \eta)}}{(z-\bar{\eta})^2} \frac{1}{z-\bar{z}}\left.\frac{\pa}{\pa w}\right|_{w=z}\left\{\frac{(\gamma w-\overline{\gamma w})^{2n} }{(\gamma w-\bar{z} )^{2n-1}} \frac{(\gamma w-z)}{(\gamma w-\bar{\eta})^2}\frac{1}{\overline{\gamma'(w)}^n}\right\}d^2\eta   d^2z,\\
\mathscr{M}_2 =& \frac{2n-1}{\pi^2}\int_X\int_{\mathbb{U}} \frac{\mu(z)\overline{\nu( \eta)}}{(z-\bar{\eta})^3}  \left.\frac{\pa}{\pa w}\right|_{w=z}\left\{\frac{(\gamma w-\overline{\gamma w})^{2n} }{(\gamma w-\bar{z} )^{2n}} \frac{(\gamma w-z)}{\gamma w-\bar{\eta}}\frac{1}{\overline{\gamma'(w)^n}}\right\}d^2\eta  d^2z.\end{align*}
Now notice that
\begin{align*}
&\frac{1}{z-\bar{z}}\left.\frac{\pa}{\pa w}\right|_{w=z}\left\{\frac{(\gamma w-\overline{\gamma w})^{2n} }{(\gamma w-\bar{z} )^{2n-1}} \frac{(\gamma w-z)}{(\gamma w-\bar{\eta})^2}\frac{1}{\overline{\gamma'(w)}^n}\right\}\\=&
 \frac{\pa}{\pa z} \left\{\frac{1}{z-\bar{z}}\frac{(\gamma z-\overline{\gamma z})^{2n} }{(\gamma z-\bar{z} )^{2n-1}} \frac{(\gamma z-z)}{(\gamma z-\bar{\eta})^2}\frac{1}{\overline{\gamma'(z)}^n}\right\}-\frac{(\gamma z-\overline{\gamma z})^{2n} }{(\gamma z-\bar{z} )^{2n-1}} \frac{ 1}{(\gamma z-\bar{\eta})^2}\frac{1}{\overline{\gamma'(z)}^n}
\left.\frac{\pa}{\pa z}\right|_{w=z}\frac{\gamma w-z}{z-\bar{z}}\\
=& \frac{\pa}{\pa z} \left\{\frac{1}{z-\bar{z}}\frac{(\gamma z-\overline{\gamma z})^{2n} }{(\gamma z-\bar{z} )^{2n-1}} \frac{(\gamma z-z)}{(\gamma z-\bar{\eta})^2}\frac{1}{\overline{\gamma'(z)}^n}\right\}+\frac{(\gamma z-\overline{\gamma z})^{2n} }{(\gamma z-\bar{z} )^{2n-2}} \frac{ 1}{(\gamma z-\bar{\eta})^2(z-\bar{z})^2}\frac{1}{\overline{\gamma'(z)}^n}.
\end{align*}
Using
$$\frac{\pa \mu}{\pa z}=\frac{2}{z-\bar{z}}\mu(z),$$ integration by parts give
\begin{align*}
\mathscr{M}_1 =& - \frac{2n}{\pi^2}\int_X\int_{\mathbb{U}} \frac{\mu(z)\overline{\nu( \eta)}}{(z-\bar{\eta})^3(\gamma z-\bar{\eta})^2}  \frac{(  z-\bar{ z})^{2n-2} }{(\gamma z-\bar{z} )^{2n-1}}  (\gamma z-z) (\bar{z}-\bar{\eta}) \gamma'(z) ^n  d^2\eta d^2 z\\
&+  \frac{n}{\pi^2}\int_X\int_{\mathbb{U}} \frac{\mu(z)\overline{\nu( \eta)}}{(z-\bar{\eta})^2(\gamma z-\bar{\eta})^2}  \frac{(  z-\bar{ z})^{2n-2} }{(\gamma z-\bar{z} )^{2n-2}}   \gamma'(z) ^n  d^2\eta d^2 z\\
=&\mathscr{M}_3+\mathscr{M}_4.
\end{align*}Similarly, using
\begin{align*}
&\frac{1}{z-\bar{\eta}}\left.\frac{\pa}{\pa w}\right|_{w=z}\left\{\frac{(\gamma w-\overline{\gamma w})^{2n} }{(\gamma w-\bar{z} )^{2n}} \frac{(\gamma w-z)}{\gamma w-\bar{\eta}}\frac{1}{\overline{\gamma'(w)^n}}\right\}\\
=& \frac{\pa}{\pa z}\left\{\frac{1}{z-\bar{\eta} }\frac{(\gamma z-\overline{\gamma z})^{2n} }{(\gamma z-\bar{z} )^{2n}} \frac{(\gamma z-z)}{\gamma z-\bar{\eta}}\frac{1}{\overline{\gamma'(z)}^n}\right\}-\frac{(\gamma z-\overline{\gamma z})^{2n} }{(\gamma z-\bar{z} )^{2n}} \frac{1}{\gamma z-\bar{\eta}}\frac{1}{\overline{\gamma'(z)}^n}\left.\frac{\pa}{\pa z}\right|_{w=z}\frac{\gamma w-z}{z-\bar{\eta}}\\
=& \frac{\pa}{\pa z}\left\{\frac{1}{z-\bar{\eta} }\frac{(\gamma z-\overline{\gamma z})^{2n} }{(\gamma z-\bar{z} )^{2n}} \frac{(\gamma z-z)}{\gamma z-\bar{\eta}}\frac{1}{\overline{\gamma'(z)}^n}\right\}+\frac{(\gamma z-\overline{\gamma z})^{2n} }{(\gamma z-\bar{z} )^{2n}} \frac{1}{( z-\bar{\eta})^2}\frac{1}{\overline{\gamma'(z)}^n},
\end{align*}we find that
\begin{align*}
\mathscr{M}_2 =& -\frac{2(2n-1)}{\pi^2}\int_X\int_{\mathbb{U}} \frac{\mu(z)\overline{\nu( \eta)}}{(z-\bar{\eta})^4(\gamma z-\bar{\eta}) } 
  \frac{(  z-\bar{  z})^{2n-1} }{(\gamma z-\bar{z} )^{2n}} (\gamma z-z)(\bar{z}-\bar{\eta})  \gamma'(z)^nd^2\zeta d^2z\\
&+\frac{ 2n-1}{\pi^2}\int_X\int_{\mathbb{U}} \frac{\mu(z)\overline{\nu( \eta)}}{(z-\bar{\eta})^4  } \frac{(  z-\bar{  z})^{2n} }{(\gamma z-\bar{z} )^{2n}}\gamma'(z)^nd^2\zeta d^2z\\
=&\mathscr{M}_5+\mathscr{M}_6.
\end{align*}Now using the identity
\begin{align*}
\frac{(\gamma z-z)(\bar{z}-\bar{\eta})}{(z-\bar{\eta})(\gamma z-\bar{z})}=1-\frac{(\gamma z-\bar{\eta})(z-\bar{z} )}{(z-\bar{\eta})(\gamma z-\bar{z})},
\end{align*}we find that
\begin{align}
\mathscr{M}_3=& - \frac{2n}{\pi^2}\int_X\int_{\mathbb{U}} \frac{\mu(z)\overline{\nu( \eta)}}{(z-\bar{\eta})^3(\gamma z-\bar{\eta})^2}  \frac{(  z-\bar{ z})^{2n-2} }{(\gamma z-\bar{z} )^{2n-1}}  (\gamma z-z) (\bar{z}-\bar{\eta}) \gamma'(z) ^n  d^2\eta d^2 z\nonumber\\=& - \frac{2n}{\pi^2}\int_X\int_{\mathbb{U}} \frac{\mu(z)\overline{\nu( \eta)}}{(z-\bar{\eta})^2(\gamma z-\bar{\eta})^2}  \frac{(  z-\bar{ z})^{2n-2} }{(\gamma z-\bar{z} )^{2n-2}}    \gamma'(z) ^n  d^2\eta d^2 z\label{eq2_21_5}\\
& + \frac{2n}{\pi^2}\int_X\int_{\mathbb{U}} \frac{\mu(z)\overline{\nu( \eta)}}{(z-\bar{\eta})^3(\gamma z-\bar{\eta}) }  \frac{(  z-\bar{ z})^{2n-1} }{(\gamma z-\bar{z} )^{2n-1}}    \gamma'(z) ^n  d^2\eta d^2 z,\label{eq2_21_6}
\end{align}
\begin{align}
\mathscr{M}_5 =& -\frac{2(2n-1)}{\pi^2}\int_X\int_{\mathbb{U}} \frac{\mu(z)\overline{\nu( \eta)}}{(z-\bar{\eta})^4(\gamma z-\bar{\eta}) } 
  \frac{(  z-\bar{  z})^{2n-1} }{(\gamma z-\bar{z} )^{2n}} (\gamma z-z)(\bar{z}-\bar{\eta})  \gamma'(z)^nd^2\zeta d^2z \nonumber\\
=&-\frac{2(2n-1)}{\pi^2}\int_X\int_{\mathbb{U}} \frac{\mu(z)\overline{\nu( \eta)}}{(z-\bar{\eta})^3(\gamma z-\bar{\eta}) } 
  \frac{(  z-\bar{  z})^{2n-1} }{(\gamma z-\bar{z} )^{2n-1}}   \gamma'(z)^nd^2\zeta d^2z\label{eq2_21_7}\\
&+\frac{2(2n-1)}{\pi^2}\int_X\int_{\mathbb{U}} \frac{\mu(z)\overline{\nu( \eta)}}{(z-\bar{\eta})^4 } 
  \frac{(  z-\bar{  z})^{2n} }{(\gamma z-\bar{z} )^{2n}}   \gamma'(z)^nd^2\zeta d^2z.\label{eq2_21_8}
\end{align}
Notice that \eqref{eq2_21_5} is $-2$ of $\mathscr{M}_4$, \eqref{eq2_21_8} is 2 times $\mathscr{M}_6$, while \eqref{eq2_21_6} and \eqref{eq2_21_7} are multiples of each other. Combining the terms give
\begin{align*}
\mathscr{M}_1+\mathscr{M}_2=&\frac{3(2n-1)}{\pi^2}\int_X\int_{\mathbb{U}} \frac{\mu(z)\overline{\nu( \eta)}}{(z-\bar{\eta})^4 } 
  \frac{(  z-\bar{  z})^{2n} }{(\gamma z-\bar{z} )^{2n}}   \gamma'(z)^nd^2\zeta d^2z\\
& - \frac{n}{\pi^2}\int_X\int_{\mathbb{U}} \frac{\mu(z)\overline{\nu( \eta)}}{(z-\bar{\eta})^2(\gamma z-\bar{\eta})^2}  \frac{(  z-\bar{ z})^{2n-2} }{(\gamma z-\bar{z} )^{2n-2}}    \gamma'(z) ^n  d^2\eta d^2 z\\
& - \frac{2(n-1)}{\pi^2}\int_X\int_{\mathbb{U}} \frac{\mu(z)\overline{\nu( \eta)}}{(z-\bar{\eta})^3(\gamma z-\bar{\eta}) }  \frac{(  z-\bar{ z})^{2n-1} }{(\gamma z-\bar{z} )^{2n-1}}    \gamma'(z) ^n  d^2\eta d^2 z.
\end{align*}Finally, the complex conjugate of \eqref{eq2_6_6} gives 
\begin{align*}
&\frac{3(2n-1)}{\pi^2}\int_X\int_{\mathbb{U}} \frac{\mu(z)\overline{\nu( \eta)}}{(z-\bar{\eta})^4 } 
  \frac{(  z-\bar{  z})^{2n} }{(\gamma z-\bar{z} )^{2n}}   \gamma'(z)^nd^2\zeta d^2z\\
=&-\frac{(2n-1)}{\pi}\int_{X}\mu(z)\overline{\nu(z)}  \frac{(  z-\bar{  z})^{2n-2} }{(\gamma z-\bar{z} )^{2n}}   d^2z \\
=&(-1)^n \frac{2^{2n-2}(2n-1)}{\pi}\int_X \frac{\gamma'(z)^n}{(\gamma z-\bar{z})^{2n}}\mu(z)\overline{\nu(z)} \rho(z)^{1-n}d^2z\\
=&\int_X \mathcal{K}(\gamma z, z)\gamma'(z)^n\mu(z)\overline{\nu(z)} \rho(z)^{1-n}d^2z.
\end{align*}The assertion follows.
\end{proof}

Using this proposition, we can express $\pa_{\mu}\pa_{\bar{\nu}}\log\det N_n$ given in Proposition \ref{prop2_22_1} more explicitly as

\begin{theorem}\label{thm2_22_3} Let $n\geq 2$. Given $\mu,\nu\in \Omega_{-1,1}(X)$,  
\begin{align}\label{eq2_24_3} \pa_{\mu}\pa_{\bar{\nu}}\log\det N_n=\mathscr{X}+\mathscr{Y}+\mathscr{Z},\end{align} where
\begin{equation}\label{eq2_22_6}\begin{split}
\mathscr{X} =&-\frac{n}{\pi^2}\sum_{\gamma\in\Gamma}\int_X\int_{\mathbb{U}}  \frac{\mu(z)\overline{\nu( w)}}{(z-\bar{w})^2(\gamma z-\bar{w})^2}  \frac{(  z-\bar{ z})^{2n-2} }{(\gamma z-\bar{z} )^{2n-2}}    \gamma'(z) ^n  d^2w  d^2z,\\
\mathscr{Y}=&-\frac{2(n-1)}{\pi^2}\sum_{\gamma\in\Gamma}\int_X\int_{\mathbb{U}}   \frac{\mu(z)\overline{\nu( w)}}{(z-\bar{w})^3(\gamma z-\bar{w}) }  \frac{(  z-\bar{ z})^{2n-1} }{(\gamma z-\bar{z} )^{2n-1}}    \gamma'(z) ^n  d^2w d^2z,\\
\mathscr{Z}=&\frac{(n-1)(2n-1)}{\pi}\sum_{\gamma\in\Gamma}\int_X\int_{\mathbb{U}} \frac{(z-\bar{z})^{2n-2}}{(\gamma z-\bar{z})^{2n}}\gamma'(z)^n  \mathcal{G}(z,w) \mu(w)\overline{\nu(w)}\rho(w)d^2wd^2z.
\end{split}\end{equation} 
 
\end{theorem}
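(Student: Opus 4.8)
The identity \eqref{eq2_24_3} is obtained by assembling the two preceding propositions, so the proof is essentially a bookkeeping argument. The plan is to start from Proposition \ref{prop2_22_1}, which already writes $\pa_{\mu}\pa_{\bar{\nu}}\log\det N_n=\int_X\sum_{\gamma\in\Gamma}\mathscr{A}(\gamma z,z,\mu,\nu)\gamma'(z)^n\rho(z)^{1-n}\,d^2z$, and to split the kernel $\mathscr{A}$ of \eqref{eq2_2_1} into its three natural summands
\[
\mathscr{A}_1(z',z)=\int_{\mathbb{U}}(L_{\mu}\mathcal{K}_n)(z',w)(L_{\bar{\nu}}\mathcal{K}_n)(w,z)\rho(w)^{1-n}\,d^2w,
\]
\[
\mathscr{A}_2(z',z)=(1-n)\mathcal{K}_n(z',z)\int_{\mathbb{U}}\mathcal{G}(z,w)\mu(w)\overline{\nu(w)}\rho(w)\,d^2w,\qquad \mathscr{A}_3(z',z)=-\mathcal{K}_n(z',z)\mu(z)\overline{\nu(z)}.
\]
Substituting $z'=\gamma z$, multiplying by $\gamma'(z)^n\rho(z)^{1-n}$, summing over $\gamma\in\Gamma$ and integrating over $X$ yields three sums $S_1,S_2,S_3$ with $\pa_{\mu}\pa_{\bar{\nu}}\log\det N_n=S_1+S_2+S_3$, to be handled one at a time; the interchange of $\sum_{\gamma}$ with the integrals is justified by absolute convergence of the Poincar\'e series, valid since $n\geq2$.

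For $S_1$: plugging $z'=\gamma z$ into $\mathscr{A}_1$ and multiplying by $\gamma'(z)^n\rho(z)^{1-n}$ gives precisely the integrand on the left-hand side of \eqref{eq2_22_3}, so Proposition \ref{prop2_22_2} applies term by term. Summing that identity over $\gamma\in\Gamma$ and relabelling the dummy variable $\eta$ as $w$, its first two terms become exactly $\mathscr{X}$ and $\mathscr{Y}$ of \eqref{eq2_22_6}, while its last term leaves the residual $\mathscr{R}:=\int_X\big(\sum_{\gamma}\mathcal{K}_n(\gamma z,z)\gamma'(z)^n\big)\mu(z)\overline{\nu(z)}\rho(z)^{1-n}\,d^2z$, so that $S_1=\mathscr{X}+\mathscr{Y}+\mathscr{R}$.

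For $S_3$: summing $\mathscr{A}_3$ over $\gamma$ and using \eqref{eq3_5_1} in the form $\sum_{\gamma}\mathcal{K}_n(\gamma z,w)\gamma'(z)^n=K_n(z,w)$ evaluated at $w=z$ gives $S_3=-\mathscr{R}$, which cancels the residual from $S_1$. For $S_2$: rather than recombining into a Poincar\'e series, insert the explicit formula \eqref{eq2_17_2} for $\mathcal{K}_n$ together with $\rho(z)=-4/(z-\bar z)^2$ to get
\[
\mathcal{K}_n(\gamma z,z)\gamma'(z)^n\rho(z)^{1-n}=-\frac{2n-1}{\pi}\,\frac{(z-\bar z)^{2n-2}}{(\gamma z-\bar z)^{2n}}\,\gamma'(z)^n,
\]
whence $(1-n)\sum_{\gamma}\int_X\mathcal{K}_n(\gamma z,z)\gamma'(z)^n\rho(z)^{1-n}\int_{\mathbb{U}}\mathcal{G}(z,w)\mu(w)\overline{\nu(w)}\rho(w)\,d^2w\,d^2z$ equals $\mathscr{Z}$ with exactly the constant $(n-1)(2n-1)/\pi$ displayed in \eqref{eq2_22_6}. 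Collecting, $\pa_{\mu}\pa_{\bar{\nu}}\log\det N_n=S_1+S_2+S_3=\mathscr{X}+\mathscr{Y}+\mathscr{Z}$.

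All the substantive work — the evaluation of the double $\mathcal{K}_n$-integral and the identification of the three rational kernels — has already been done in Propositions \ref{prop2_22_1} and \ref{prop2_22_2}, so the only points that require real care here are the exact cancellation of $\mathscr{R}$ between $S_1$ and $S_3$, and the sign-and-power-of-two arithmetic in the constant of $\mathscr{Z}$, where one must balance the factor $(-1)^n2^{2n-2}(2n-1)/\pi$ from $\mathcal{K}_n$ against $(-4)^{1-n}$ coming from $\rho^{1-n}$. It is also worth noting that the hypothesis $n\geq2$ enters only through Proposition \ref{prop2_22_2}, consistently with the assumption of the theorem.
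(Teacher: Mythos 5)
Your proposal is correct and follows exactly the route the paper intends: Theorem \ref{thm2_22_3} is stated as a direct consequence of Propositions \ref{prop2_22_1} and \ref{prop2_22_2}, and your bookkeeping — splitting $\mathscr{A}$ into its three summands, applying \eqref{eq2_22_3} to the first, cancelling the residual $\mathscr{R}$ against $S_3$ via \eqref{eq3_5_1}, and checking that the constant in $\mathcal{K}_n(\gamma z,z)\gamma'(z)^n\rho(z)^{1-n}$ collapses to $-(2n-1)/\pi$ — is precisely the computation the paper leaves implicit. The only trivial quibble is your closing remark that $n\geq 2$ enters only through Proposition \ref{prop2_22_2}; it is also needed for Proposition \ref{prop2_22_1} and for the absolute convergence of the Poincar\'e series, which you in fact already invoked.
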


At first sight, one might think that the formula \eqref{eq2_22_3} is more complicated than applying \eqref{eq2_22_1} for both $(L_{\mu}\mathcal{K}_n)(z,w)$ and $(L_{\bar{\mu}}\mathcal{K}_n)(w,z)$. However, the latter will make  $$\int_X\int_{\mathbb{U}}\left(L_{\mu}\mathcal{K}_n\right)(\gamma z, w)\gamma'(z)^n \left(L_{\bar{\nu}}\mathcal{K}_n\right)(w, z)\rho(w)^{1-n}\rho(z)^{1-n}d^2w d^2z$$ into a 4-fold integral. In some sense, \eqref{eq2_22_3} is a simplication of the 4-fold integral by integrating out two of the variables. One notice that there is a lack of symmetry in the right-hand side of the formula \eqref{eq2_22_3} as exhibited in the left-hand side. However, we will see that this formula is precisely what we needed when  the second variation of $\log\det N_n$ is compared to the second variation of $\log\Delta_n$. 

Corresponding to the three terms \eqref{eq2_22_6} in $\pa_{\mu}\pa_{\bar{\nu}}\log\det N_n$, for $C=0, H, P, E$ equal respectively to identity, hyperbolic, parabolic and elliptic, the $C$ contribution $\mathscr{E}_C$ to  $\pa_{\mu}\pa_{\bar{\nu}}\log\det N_n$ can be written as 
$$\mathscr{E}_C=\mathscr{X}_C+\mathscr{Y}_C+\mathscr{Z}_C,$$where $\mathscr{X}_C$, $\mathscr{Y}_C$, $\mathscr{Z}_C$ are obtained respectively from $\mathscr{X}$, $\mathscr{Y}$ and $\mathscr{Z}$ by replacing the summation over $\gamma$ in $\Gamma$ to summation over those $\gamma$ that are in the subset of type $C$ elements.

Using \eqref{eq2_22_6}, it is easy to compute the identity contribution $\mathscr{E}_0$ to $\pa_{\mu}\pa_{\bar{\nu}}\log\det N_n$. 
\begin{theorem}\label{thm2_22_2} Let $n\geq 2$. 
Given $\mu,\nu\in\Omega_{-1,1}(X)$, the identity  contribution to $\pa_{\mu}\pa_{\bar{\nu}}\log\det N_n$ is given by
$$\mathscr{E}_0=-\frac{6n^2-6n+1}{12\pi}\langle \mu, \nu\rangle_{\text{WP}}.$$
\end{theorem}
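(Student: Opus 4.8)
The plan is to specialise the three-term formula \eqref{eq2_22_6} of Theorem~\ref{thm2_22_3} to the single group element $\gamma=I$, evaluate the three resulting integrals $\mathscr{X}_0$, $\mathscr{Y}_0$, $\mathscr{Z}_0$ explicitly, and add the coefficients. Throughout I will use the elementary identities $z-\bar z=2iy$, $\rho=y^{-2}$, $d^2z=dx\,dy$, so that $\frac{1}{(z-\bar z)^2}=-\frac14\rho$ and $\frac{d^2z}{(z-\bar z)^2}=-\frac14\,dA(z)$.

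First I would treat $\mathscr{X}_0$ and $\mathscr{Y}_0$. Putting $\gamma=I$, so $\gamma'(z)=1$ and $\gamma z=z$, every $\gamma$-dependent factor in $\mathscr{X}$ and $\mathscr{Y}$ cancels and both integrands collapse to $\frac{\mu(z)\overline{\nu(w)}}{(z-\bar w)^{4}}$. To integrate out $w$ I would invoke the complex conjugate of the reproducing identity \eqref{eq2_6_6}, which gives $\int_{\mathbb{U}}\frac{\overline{\nu(w)}}{(z-\bar w)^{4}}\,d^2w=-\frac{\pi}{3}\,\frac{\overline{\nu(z)}}{(z-\bar z)^{2}}$. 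Combined with $\frac{d^2z}{(z-\bar z)^2}=-\frac14\,dA(z)$ and the definition of the Weil--Petersson pairing, this yields $\mathscr{X}_0=-\frac{n}{12\pi}\langle\mu,\nu\rangle_{\text{WP}}$ and $\mathscr{Y}_0=-\frac{n-1}{6\pi}\langle\mu,\nu\rangle_{\text{WP}}$.

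Next I would handle $\mathscr{Z}_0$. With $\gamma=I$ the prefactor becomes $(z-\bar z)^{-2}$ and the inner integral is $\int_{\mathbb{U}}\mathcal{G}(z,w)\mu(w)\overline{\nu(w)}\rho(w)\,d^2w$. Using the $\Gamma$-invariance of the density $\mu\bar\nu\rho\,d^2w$ together with the invariance of the point-pair invariant (so that $\mathcal{G}(\gamma z,w)=\mathcal{G}(z,\gamma^{-1}w)$), I would unfold this integral over $\mathbb{U}$ into an integral over a fundamental domain against the automorphic kernel $G(z,w)=\sum_{\gamma}\mathcal{G}(\gamma z,w)$ of \eqref{eq3_3_1}; since $G$ is the kernel of $2(\Delta_0+2)^{-1}$, the inner integral equals $2(\Delta_0+2)^{-1}(\mu\bar\nu)(z)$. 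Hence $\mathscr{Z}_0=-\frac{(n-1)(2n-1)}{2\pi}\int_X(\Delta_0+2)^{-1}(\mu\bar\nu)\,dA$. To evaluate the remaining integral I would write $f=(\Delta_0+2)^{-1}(\mu\bar\nu)$, so that $\int_X(\Delta_0 f+2f)\,dA=\langle\mu,\nu\rangle_{\text{WP}}$; since $\Delta_0 1=0$, self-adjointness of $\Delta_0$ gives $\int_X\Delta_0 f\,dA=0$, the cusp boundary terms dropping out because $\mu\bar\nu$ (and hence $f$) decays at the cusps. Thus $\int_X f\,dA=\frac12\langle\mu,\nu\rangle_{\text{WP}}$ and $\mathscr{Z}_0=-\frac{(n-1)(2n-1)}{4\pi}\langle\mu,\nu\rangle_{\text{WP}}$.

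Finally I would add the three pieces: $\mathscr{E}_0=\mathscr{X}_0+\mathscr{Y}_0+\mathscr{Z}_0=-\frac{1}{12\pi}\bigl[\,n+2(n-1)+3(n-1)(2n-1)\,\bigr]\langle\mu,\nu\rangle_{\text{WP}}$, and since $n+2(n-1)+3(n-1)(2n-1)=6n^{2}-6n+1$, the claimed formula follows. The step that needs genuine care is the vanishing of $\int_X\Delta_0 f\,dA$ on the non-compact surface, i.e.\ justifying the absence of boundary contributions from the cusps; everything else is a direct substitution together with the already-established identity \eqref{eq2_6_6}, with the bookkeeping of the numerical constants being the other place one must be careful.
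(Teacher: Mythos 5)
Your proposal is correct and follows essentially the same route as the paper: specialize \eqref{eq2_22_6} to $\gamma=I$, reduce $\mathscr{X}_0+\mathscr{Y}_0$ via the conjugate of the reproducing identity \eqref{eq2_6_6}, and evaluate $\mathscr{Z}_0$ by pairing $(\Delta_0+2)^{-1}(\mu\bar\nu)$ against the constant function. The only cosmetic difference is that the paper disposes of $\mathscr{Z}_0$ via the operator identity $2(\Delta_0+2)^{-1}\mathscr{I}=\mathscr{I}$ (equivalently $\int_X G(z,w)\rho(z)\,d^2z=1$), which sidesteps the cusp boundary-term discussion you flag for $\int_X\Delta_0 f\,dA=0$; the two arguments are otherwise identical and all your constants check out.
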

 \begin{proof}
From \eqref{eq2_22_6}, we immediately find that for the identity contribution  
\begin{align*}
\mathscr{X}_{0}+\mathscr{Y}_0=& -\frac{ 3n-2}{\pi^2}\int_{X}\int_{\mathbb{U}} \frac{\mu(z)\overline{\nu( w)}}{(z-\bar{w})^4}d^2w d^2z,\\
\mathscr{Z}_{0}=&- \frac{(n-1)(2n-1) }{4\pi}\int_X \rho(z)  \int_{X}G(z,w) \mu(w)\overline{\nu(w)}\rho(w)d^2w d^2z.\end{align*}
 Using \eqref{eq2_6_6}, we find that
$$\mathscr{X}_{0}+\mathscr{Y}_0=  -\frac{ 3n-2}{12\pi }\int_{X}\mu(z)\overline{\nu(z)}\rho(z)d^2z=-\frac{ 3n-2}{12\pi }\langle \mu, \nu\rangle_{\text{WP}}.$$

Using the fact that for the function $\mathscr{I}:\mathbb{U}\rightarrow\mathbb{C}$ with constant value 1,
$$(\Delta_0+2)\mathscr{I}(z)=2\mathscr{I}(z),$$ we find that
$$2(\Delta_0+2)^{-1}\mathscr{I}(z)=\mathscr{I}(z).$$ This implies that
$$\int_{X}G(z, w)\rho(z)d^2z =1.$$ Hence,
\begin{align*}
\mathscr{Z}_{0}
=&-\frac{(2n-1)(n-1)}{4\pi}\int_{X} \mu(w)\overline{\nu(w)}\rho(w)d^2w\\
=&-\frac{ (2n-1)(n-1) }{4\pi}\langle \mu, \nu\rangle_{\text{WP}}
\end{align*}
Therefore,
$$\mathscr{E}_0=-\frac{3(2n^2-3n+1)+3n-2}{\pi}\langle \mu, \nu\rangle_{\text{WP}}=-\frac{6n^2-6n+1}{12\pi}\langle \mu, \nu\rangle_{\text{WP}}.$$

\end{proof}
 In Section \ref{vary_det}, we are going to compute $\pa_{\mu}\pa_{\bar{\nu}}\log\Delta_n$ and show that it is equal to the hyperbolic contribution $\mathscr{E}_H$ to $\pa_{\mu}\pa_{\bar{\nu}}\log\det N_n$.

 In Section  \ref{a3} and Section \ref{a4}, we compute respectively the  parabolic contribution $\mathscr{E}_P$ and the elliptic contribution $\mathscr{E}_E$ to $\pa_{\mu}\pa_{\bar{\nu}}\log\det N_n$. These will complete our proof of the local index theorem. We will use techniques for computing Selberg trace formula (see for example \cite{Hejhal_2, Iwaniec, Teo_Sigma}) for this purpose.

 \smallskip
\section{The Second Variation of $\log\det \Delta_n$}\label{vary_det}

Let $n$ be a positive integer. For a cofinite Riemann surface of  type $(g;q;m_1, \ldots, m_v)$, the regularized determinant of $n$-Laplacian $\det\!'\Delta_n$ is defined in the following way (see \cite{Faddeev, Efrat, Koyama_3} or our discussion in \cite{Teo_Sigma}). Let $s$ be a positive number. We first compute the determinant of $\Delta_n+s(s+2n-1)$ using   zeta regularization. Using the terminologies in Section \ref{Laplace},   let
\begin{align*}
\zeta_X(w,s)= \sum_{k=0}^{\infty}\frac{1}{(\lambda_k+s(s+2n-1))^w} -\frac{1}{4\pi}\int_{-\infty}^{\infty}
\frac{1}{\di \left[\left(s+n-\tfrac{1}{2}\right)^2+r^2 \right]^w}\frac{\varphi'}{\varphi}\left(\frac{1}{2}+ir\right)dr
\end{align*}be the spectral zeta function of $X$ associated with the operator $\Delta_n+s(s+2n-1)$. This expression is well-defined when $\text{Re}\,w$ is large enough.
 It can be analytically continued to a neighbourhood of $w=0$. The zeta regularized determinant $\det (\Delta_n+s(s+2n-1))$ is defined as
\begin{align}\label{eq_det} 
\det (\Delta_n+s(s+2n-1))=\exp\left(-\left.\frac{\pa}{\pa w}\zeta_X(w,s)\right|_{w=0}\right).\end{align}
Since $\Delta_n$ has zero eigenvalues with multiplicity $d_n$, $\det (\Delta_n+s(s+2n-1))$ contains the term $[s(s+2n-1)]^{d_n}$.  The regularized determinant of $n$-Laplacian, $\det\!^{\prime}\Delta_n$, is defined by  removing this contribution from the zero eigenvalue. Namely,
\begin{equation}\label{eq314_2}\begin{split}\hspace{2cm}
\det\!^{\prime}\Delta_n=&\lim_{s\rightarrow 0}\frac{\det\left(\Delta_n+s(s+2n-1)\right)}{[s(s+2n-1)]^{d_n}}\\=&\frac{1}{(2n-1)^{d_n}}\lim_{s\rightarrow 0}\frac{\det\left(\Delta_n+s(s+2n-1)\right)}{s^{d_n}}.
\end{split}\end{equation}

The hyperbolic elements in $\Gamma$ can be divided into conjugacy classes. Let $\Pi$ be the set of these conjugacy classes.   For each representative $\gamma$ of a conjugacy class $[\gamma]$, all the elements in that class can be written as
$\alpha\gamma\alpha^{-1}$, where $\alpha$ runs trhough elements of $\Gamma_{\gamma}\backslash\Gamma$. For each $\gamma$, there is a unique $\gamma_0$ such that $\gamma_0$ is primmitive and $\gamma=\gamma_0^{\ell}$ for some positive integer $\ell$. Notice that $\Gamma_{\gamma}=\Gamma_{\gamma_0}$. 

When $\text{Re}\,s>1$, the
 Selberg zeta function of $X=\Gamma\backslash\mathbb{U}$ is  defined by the absolutely convergent product
\begin{align}\label{eq2_23_4}Z(s)=\prod_{[\gamma]\in P}\prod_{k=0}^{\infty}\left(1-\lambda(\gamma)^{-s-k}\right),
\end{align}where  $P$ is the set of  primitive hyperbolic conjugacy classes. For a hyperbolic element $\gamma$ of $\Gamma$,   $\lambda=\lambda(\gamma)>1$ is the multiplier of $\gamma$. It is the unique number larger than 1 for which there is a $\text{PSL}\,(2,\mathbb{R})$-element $\sigma$ such that $$\sigma^{-1} \gamma \sigma=\begin{pmatrix}  \lambda^{1/2} & 0\\0& \lambda^{-1/2}\end{pmatrix}.$$Any elements in the same conjugacy class has the same multiplier.

In \cite{Teo_Sigma}, we have proved the   explicit formula for $\det\!'\Delta_n$.
\begin{theorem}[\cite{Teo_Sigma}]\label{determinant}
When $n$ is a positive integer, the regularized determinant of the $n$-Laplacian $\Delta_n$ of $X$ is given by $$
\det\!^{\prime}\Delta_n=\begin{cases} \mathcal{C}_1Z'(1),\quad &n=1\\\mathcal{C}_nZ(n),\quad &n\geq 2\end{cases},
$$where
 \begin{equation}\label{cn}\begin{split}
\mathcal{C}_n=& \Bigl[ (2\pi)^{2n-1}\Gamma_2(2n)^2\Gamma(2n)^{2n-1}\Bigr]^{\frac{|X|}{4\pi}}\\&\times\prod_{j=1}^v\left\{m_j^{    \frac{ 2\alpha_{m_j}(-n)+1-m_j}{2m_j}}
  \prod_{r=1}^{m_j-1}\Gamma\left(\frac{r}{m_j}\right)^{\frac{2\alpha_{m_j}(r-n)+1-m_j}{2m_j}}\prod_{r=0}^{m_j-1}\Gamma\left(\frac{2n+r}{m_j}\right)^{\frac{2\alpha_{m_j}(r+n)+1-m_j}{2m_j}}\right\}\\
&\times  \left[\frac{2^{2n-1} }{ \pi\Gamma(2n)}\right]^{q/2}(2n-1)^{-d_n}\left(n-\tfrac{1}{2}\right)^{\frac{A}{2}}e^{B\left(n-\frac{1}{2}\right)^2+D},
\end{split}\end{equation}$\Gamma_2(s)$ is the  Alekseevskii-Barnes double gamma function, and $A$, $B$ and $D$ are the constants 
\begin{equation}
\label{constant}\begin{split}
A=&\text{Tr}\,\left[I-\Phi\left(\frac{1}{2}\right)\right],\\
B=&-\frac{|X|}{2\pi},\\
D=&\frac{|X|}{\pi}\zeta'(-1)+\frac{q}{2}\log(2\pi)
+\sum_{j=1}^v\left(\frac{m_j^2-1}{6m_j}-\frac{\alpha_{m_j}(n)(m_j-\alpha_{m_j}(n))}{m_j}\right)\log m_j.
\end{split}\end{equation}If $m$ is a positive integer greater than 1, and $k$ is an integer, $\alpha_m(k)$ is defined to be the least positive residue modulo $m$. 
\end{theorem}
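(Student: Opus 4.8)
The final displayed result is Theorem~\ref{determinant}, the explicit formula for $\det\!'\Delta_n$ in terms of the Selberg zeta function $Z(s)$, together with the constant $\mathcal{C}_n$ given by \eqref{cn} and the auxiliary constants $A,B,D$ in \eqref{constant}. Since this is quoted as a result from the author's earlier paper \cite{Teo_Sigma}, a full proof is not expected here; nevertheless I sketch the approach that establishes it.

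**Overall strategy.** The plan is to relate the zeta-regularized determinant $\det(\Delta_n+s(s+2n-1))$ to the Selberg zeta function $Z(s)$ through the Selberg trace formula applied to a suitable resolvent kernel, and then to track all the local (geometric) contributions. First I would write down the Selberg trace formula for the automorphic Laplacian $\Delta_n$ acting on $n$-differentials, using a test function whose Selberg/Harish-Chandra transform is adapted to the resolvent $(\Delta_n+s(s+2n-1))^{-1}$; concretely one compares the resolvents at two values of the spectral parameter and integrates in $s$. The spectral side produces $\frac{d}{ds}\log\det(\Delta_n+s(s+2n-1))$ (plus the continuous-spectrum contribution encoded by $\varphi'/\varphi$, which is why $\zeta_X(w,s)$ in \eqref{eq_det} already contains the scattering determinant term). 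The geometric side splits, following the conjugacy classes, into the identity contribution, the hyperbolic contribution, the parabolic contribution, and the elliptic contribution.

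**Identifying each piece.** The hyperbolic contribution, after summing over primitive classes $[\gamma]\in P$ and their powers, is exactly $\frac{d}{ds}\log Z(s)$ by the definition \eqref{eq2_23_4}; this is the source of the factor $Z(n)$ (for $n\ge 2$) or $Z'(1)$ (for $n=1$, where $Z$ has a zero at $s=1$ forcing one to differentiate). The identity contribution is a multiple of the hyperbolic area $|X|$ and, upon integrating in $s$ and taking the $w$-derivative at $w=0$, yields the area-dependent factor $\bigl[(2\pi)^{2n-1}\Gamma_2(2n)^2\Gamma(2n)^{2n-1}\bigr]^{|X|/(4\pi)}$ together with the $B\left(n-\tfrac12\right)^2$ exponential and the $\zeta'(-1)$ term in $D$; here the Alekseevskii--Barnes double gamma function $\Gamma_2$ enters through the standard evaluation of the relevant Hurwitz-type zeta integrals. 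The parabolic contribution, one term per cusp, produces the factor $\left[\tfrac{2^{2n-1}}{\pi\Gamma(2n)}\right]^{q/2}$ and the $\tfrac{q}{2}\log(2\pi)$ piece of $D$, while the scattering term contributes $A = \operatorname{Tr}[I-\Phi(\tfrac12)]$ and the factor $\left(n-\tfrac12\right)^{A/2}$. Finally the elliptic contribution, one term per ramification point $w_j$ of order $m_j$, is computed by the classical elliptic orbital integral; its evaluation involves the least-positive-residue function $\alpha_{m_j}$ and produces the $\prod_j\{\cdots\}$ factor in \eqref{cn} as well as the $\log m_j$ terms in $D$. Assembling all four contributions, integrating the logarithmic derivative in $s$ from $s=0$ (using the normalization \eqref{eq314_2} that removes the $[s(s+2n-1)]^{d_n}$ zero-mode factor, which accounts for the $(2n-1)^{-d_n}$), and exponentiating via \eqref{eq_det} gives the stated formula.

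**Main obstacle.** The delicate points are: (i) making the trace formula converge and the $s$-integration legitimate for $n$-differentials with $n\ge 2$, where the relevant Selberg/Harish-Chandra transform must be handled carefully (this is exactly why the $n=1$ case is singled out); (ii) the bookkeeping of the elliptic orbital integrals — evaluating $\sum_{k=1}^{m_j-1}$ over the powers of each elliptic generator and repackaging the resulting finite sums of digamma/Gamma values into the closed form with $\alpha_{m_j}$ is the most error-prone computation; and (iii) correctly identifying the finite parts at $w=0$ so that the double gamma function $\Gamma_2$ and the constants $\zeta'(-1)$, $\log(2\pi)$ appear with the right exponents. These are carried out in detail in \cite{Teo_Sigma}, and for the present paper Theorem~\ref{determinant} is used as a black box.
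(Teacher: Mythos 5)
The paper offers no proof of Theorem \ref{determinant}: it is imported verbatim from \cite{Teo_Sigma} and used as a black box, exactly as you observe. Your sketch — Selberg trace formula for the resolvent of $\Delta_n$, with the spectral side giving $\frac{d}{ds}\log\det(\Delta_n+s(s+2n-1))$ plus the scattering term, and the geometric side splitting into identity, hyperbolic, parabolic, elliptic and scattering contributions that respectively produce the $\Gamma_2$-area factor, $Z(s)$, the $q$-dependent factors, the $\alpha_{m_j}$-products, and $A=\operatorname{Tr}[I-\Phi(\tfrac12)]$, followed by integration in $s$ and removal of the zero modes via \eqref{eq314_2} — is the standard route and is consistent with the methodology of the cited reference, so there is nothing to fault here.
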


The constant $A$ is an even integer \cite{Teo_LMP_2020}. Since the Teichm\"uller space $T(X)$ is connected \cite{Lehto}, $A$ must be a constant.  
Theorem \ref{determinant} shows that on the Teichm\"uller space $T(X)$, there is a constant $\mathcal{C}_n$ so that 
$$\det\!^{\prime}\Delta_n=\begin{cases}\mathcal{C}_1\log Z'(1),\quad & n=1,\\\mathcal{C}_n \log Z(n), \quad & n\geq 2\end{cases}.$$ 

Hence, for any $\mu, \nu \in\Omega_{-1,1}(X)$, 
\begin{align*}
\pa_{\mu}\pa_{\bar{\nu}}\log\det\!^{\prime}\Delta_n =\begin{cases}
\pa_{\mu}\pa_{\bar{\nu}}\log Z'(1),\quad & n=1,\\
\pa_{\mu}\pa_{\bar{\nu}}\log Z(n),\quad & n\geq 2.\end{cases}
\end{align*}

In fact, the logarithmic derivative of the Selberg zeta function $Z(s)$ appears in the Selberg trace formula as  the hyperbolic contribution. Hence,  it is natural to expect that $\pa_{\mu}\log\det\Delta_n$ and $\pa_{\mu}\pa_{\bar{\nu}}\log\det\Delta_n$ only involves hyperbolic contribution.

Since the definition of $Z(s)$   involves the multiplier of hyperbolic elements in $\Gamma$,
let us first  derive the variation of the multiplier of a hyperbolic element in $\Gamma$, a result that was proved in \cite{Gardiner}.

Let 
$$\gamma=\begin{pmatrix} a & b\\ c & d\end{pmatrix}$$ be a hyperbolic element of $\Gamma$. Given $\mu\in \Omega_{-1,1}(X)$, let
$$\gamma_{\varepsilon\mu}=\begin{pmatrix} a^{\varepsilon\mu} & b^{\varepsilon\mu}\\ c^{\varepsilon\mu} & d^{\varepsilon\mu}\end{pmatrix}$$ be the hyperbolic element of $\Gamma_{\varepsilon\mu}$ such thhat
$$\gamma_{\varepsilon\mu}\circ f_{\varepsilon\mu}=f_{\varepsilon\mu}\circ \gamma.$$There exists $\sigma_{\varepsilon\mu}\in \text{PSL}(2,\mathbb{R})$ such that 
$$\sigma_{\varepsilon\mu}^{-1}\circ\gamma_{\varepsilon\mu} \circ\sigma_{\varepsilon\mu}=\begin{pmatrix}\lambda_{\varepsilon\mu}^{1/2} & 0\\0&\lambda_{\varepsilon\mu}^{-1/2}\end{pmatrix},$$where $\lambda_{\varepsilon\mu}=\lambda(\gamma_{\varepsilon\mu})>1$ is the multiplier of $\gamma_{\varepsilon\mu}$. Let 
\begin{align}\label{eq3_2_1}\widehat{f}_{\varepsilon}=\sigma_{\varepsilon\mu}^{-1}\circ f_{\varepsilon\mu}\circ \sigma,\end{align} then $\widehat{f}_{\varepsilon}$ is a quasicomformal mapping with Beltrami differential
\begin{align}\label{eq2_23_5}\breve{\mu}=\mu\circ\sigma\frac{\overline{\sigma'}}{\sigma'}.\end{align}
Let \begin{align}\label{eq2_23_3}\dot\lambda=\left.\frac{\pa}{\pa\varepsilon}\right|_{\varepsilon=0}\lambda_{\varepsilon\mu}.\end{align}
From  $\gamma_{\varepsilon\mu}\circ f_{\varepsilon\mu}=f_{\varepsilon\mu}\circ\gamma$ and \eqref{eq3_2_1}, we have
$$\lambda_{\varepsilon\mu}\widehat{f}_{\varepsilon}(z)=\widehat{f}_{\varepsilon}(\lambda z).$$Taking partial derivative with respect to $\varepsilon$, we have
\begin{align*}\frac{\dot{\lambda}z}{\lambda}=\frac{1}{\lambda}\left.\frac{\pa}{\pa \varepsilon}\right|_{\varepsilon=0}\widehat{f}_{\varepsilon }(\lambda z) - \left.\frac{\pa}{\pa \varepsilon}\right|_{\varepsilon=0}\widehat{f}_{\varepsilon }(z)=\frac{1}{\lambda}F[\breve{\mu}](\lambda z)-F[\breve{\mu}](z),
\end{align*}where
$$F [\breve{\mu}](z)= \left.\frac{\pa}{\pa \varepsilon}\right|_{\varepsilon=0}\widehat{f}_{\varepsilon }(z).$$By \eqref{eq2_17_9}, we find that
 \begin{align*}
\frac{\dot{\lambda}z}{\lambda}=&-\frac{1}{\pi}\int_{\mathbb{U}} \breve{\mu}(\zeta)R(\lambda z,\zeta)\lambda^{-1}d^2\zeta+\frac{1}{\pi}\int_{\mathbb{U}}\breve{\mu}(\zeta)R(z,\zeta)d^2\zeta\\
=& -\frac{1}{\pi}\int_{\mathbb{U}} \breve{\mu}(\zeta)R(\lambda z,\lambda\zeta)\lambda d^2\zeta+\frac{1}{\pi}\int_{\mathbb{U}}\breve{\mu}(\zeta)R(z,\zeta)d^2\zeta\\
=&-\frac{1}{\pi}\int_{\mathbb{U}}\breve{\mu}(\zeta)\left\{\frac{\lambda z-z}{\zeta}-\frac{\lambda^2 z}{\lambda \zeta-1}+\frac{z}{\zeta-1}\right\}d^2\zeta.
 \end{align*}Let
 \begin{align}\label{eq2_14_1}S_{\lambda}=\left\{x+iy\,|\,-\infty<x<\infty, 1<y<\lambda\right\}.\end{align} Then
 \begin{align}\label{eq2_23_7}\mathbb{U}=\bigcup_{k=-\infty}^{\infty}\lambda^k(S_{\lambda}).\end{align}Hence,
 \begin{align*}
\frac{\dot{\lambda}}{\lambda}=&-\frac{1}{\pi}\int_{\mathbb{U}}\breve{\mu}(\zeta)\left\{\frac{\lambda  -1}{\zeta}-\frac{\lambda^2  }{\lambda \zeta-1}+\frac{1}{\zeta-1}\right\}d^2\zeta\\
=&-\frac{1}{\pi}\lim_{M\rightarrow\infty}\sum_{m=-M}^M\int_{S_{\lambda}}\breve{\mu}(\zeta)\left\{ \frac{\lambda  -1}{\lambda^m\zeta}-\frac{\lambda^2  }{\lambda^{m+1} \zeta-1}+\frac{1}{\lambda^{m}\zeta -1}\right\}\lambda^{2m}d^2\zeta\\
=&-\frac{1}{\pi}\lim_{M\rightarrow\infty}\sum_{m=-M}^M\int_{S_{\lambda}}\breve{\mu}(\zeta)\left\{ \frac{\lambda^{m+1}  -\lambda^m}{ \zeta}-\frac{\lambda^{2m+2}  }{\lambda^{m+1} \zeta-1}+\frac{\lambda^{2m}}{\lambda^{m}\zeta-1}\right\} d^2\zeta\\
=&-\frac{1}{\pi}\lim_{M\rightarrow\infty} \int_{S_{\lambda}}\breve{\mu}(\zeta)\left\{ \frac{\lambda^{M+1}  -\lambda^{-M}}{ \zeta}-\frac{\lambda^{2M+2}  }{\lambda^{M+1} \zeta-1}+\frac{\lambda^{-2M}}{\lambda^{-M}\zeta-1}\right\} d^2\zeta\\
=&-\frac{1}{\pi}\lim_{M\rightarrow\infty} \int_{S_{\lambda}}\breve{\mu}(\zeta)\left\{ \frac{\lambda^{M+1}  }{ \zeta}-\frac{\lambda^{2M+2}  }{\lambda^{M+1} \zeta-1} \right\} d^2\zeta\\
=&\frac{1}{\pi}\int_{S_{\lambda}}\frac{\breve{\mu}(\zeta)}{\zeta^2}d^2\zeta.
 \end{align*}
 This proves the result of Gardiner \cite{Gardiner}.
\begin{align}\label{eq2_23_8}
\frac{\dot{\lambda}}{\lambda}=&\frac{1}{\pi}\int_{S_{\lambda}}\frac{\breve{\mu}(\zeta)}{\zeta^2}d^2\zeta.
\end{align} 

 Our next goal is to prove the following formula for $\pa_{\mu}\log\Delta_n$. 

\begin{theorem}\label{thm2_23_1}When $n\geq 2$, 
\begin{equation}\label{eq2_17_5}\begin{split}
 \pa_{\mu}\log \det \Delta_n 
=&\frac{1}{ \pi}\int_X \sum_{\substack{\gamma\in\Gamma\\\gamma\;\text{hyperbolic}}}\frac{\gamma'(z)^n}{(\gamma z-z)^{2}}\left(\frac{z-\bar{z}}{\gamma z-\bar{z}}\right)^{2n-2}\mu(z)d^2z.
\end{split}\end{equation}
\end{theorem}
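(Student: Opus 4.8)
The plan is to reduce the statement to the Selberg zeta function and then identify the resulting hyperbolic sum with the claimed Poincar\'e series by unfolding. By Theorem~\ref{determinant} we have $\det\!^{\prime}\Delta_n=\mathcal{C}_nZ(n)$ for $n\geq 2$, and, as noted after that theorem, $\mathcal{C}_n$ depends only on the signature of $X$ and is therefore constant on $T(X)$; hence $\pa_{\mu}\log\det\Delta_n=\pa_{\mu}\log Z(n)$. Differentiating the Euler product $\log Z(s)=\sum_{[\gamma_0]\in P}\sum_{k=0}^{\infty}\log(1-\lambda_0^{-s-k})$ term by term (licit for $\operatorname{Re}s>1$), where $\lambda_0=\lambda(\gamma_0)$, and specialising to $s=n$ gives
\begin{align*}
\pa_{\mu}\log Z(n)=\sum_{[\gamma_0]\in P}\left(\sum_{k=0}^{\infty}\frac{(n+k)\lambda_0^{-n-k}}{1-\lambda_0^{-n-k}}\right)\frac{\pa_{\mu}\lambda_0}{\lambda_0}.
\end{align*}
Into each term I would insert Gardiner's formula \eqref{eq2_23_8}, so that $\pa_{\mu}\lambda_0/\lambda_0=\tfrac{1}{\pi}\int_{S_{\lambda_0}}\breve{\mu}(\zeta)\zeta^{-2}\,d^2\zeta$ with $\breve{\mu}$ as in \eqref{eq2_23_5} and $\sigma$ conjugating $\gamma_0$ to $\operatorname{diag}(\lambda_0^{1/2},\lambda_0^{-1/2})$.

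Next I would unfold the right-hand side of \eqref{eq2_17_5}. Put $F_{\gamma}(z)=\gamma'(z)^n(\gamma z-z)^{-2}\big((z-\bar z)/(\gamma z-\bar z)\big)^{2n-2}$. A short computation with the M\"obius cocycle identities $(\alpha a-\alpha b)^2=(a-b)^2\alpha'(a)\alpha'(b)$ shows that $F_{\alpha\gamma\alpha^{-1}}(z)=F_{\gamma}(\alpha^{-1}z)\,((\alpha^{-1})'(z))^{2}$, i.e. $F_{\gamma}(z)\mu(z)\,d^2z$ transforms as a $\Gamma$-equivariant density; grouping the hyperbolic elements by conjugacy class and applying the standard unfolding then gives
\begin{align*}
\int_X\sum_{\substack{\gamma\in\Gamma\\\gamma\ \text{hyperbolic}}}F_{\gamma}(z)\mu(z)\,d^2z=\sum_{[\gamma_0]\in P}\int_{\langle\gamma_0\rangle\backslash\mathbb{U}}\sum_{\ell=1}^{\infty}F_{\gamma_0^{\ell}}(z)\mu(z)\,d^2z.
\end{align*}
Changing variables $z=\sigma(\zeta)$ sends $\langle\gamma_0\rangle\backslash\mathbb{U}$ to the strip $S_{\lambda_0}$, $\mu$ to $\breve{\mu}$, and $F_{\gamma_0^{\ell}}$ to $F_{\delta_{\lambda_0^{\ell}}}$ with $\delta_{\lambda}(\zeta)=\lambda\zeta$, so the theorem is reduced, term by term over primitive classes, to the identity
\begin{align*}
\int_{S_{\lambda_0}}\left(\sum_{\ell=1}^{\infty}F_{\delta_{\lambda_0^{\ell}}}(\zeta)\right)\breve{\mu}(\zeta)\,d^2\zeta=\left(\sum_{k=0}^{\infty}\frac{(n+k)\lambda_0^{-n-k}}{1-\lambda_0^{-n-k}}\right)\int_{S_{\lambda_0}}\frac{\breve{\mu}(\zeta)}{\zeta^{2}}\,d^2\zeta.
\end{align*}

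For this core identity I would pass to the coordinate $t=\log\zeta$, in which $\langle\delta_{\lambda_0}\rangle$ acts by $t\mapsto t+\log\lambda_0$ and $\mathbb{U}$ becomes $\{0<\operatorname{Im}t<\pi\}$; since both integrands define $\langle\delta_{\lambda_0}\rangle$-invariant $2$-forms, the integrals can equally be computed over the rectangle $\{0<\operatorname{Re}t<\log\lambda_0,\ 0<\operatorname{Im}t<\pi\}$. Writing $\breve{\mu}=y^2\overline{\breve\phi}$ with $\breve\phi$ holomorphic, the function $\tilde\phi(t)=\breve\phi(\zeta)\zeta^{2}$ is holomorphic and $\log\lambda_0$-periodic in $\operatorname{Re}t$, hence has a Fourier expansion there; in the $t$-coordinate the two kernels become, after absorbing the Jacobian and the $y^2$ factor, functions of $\operatorname{Im}t$ alone, so integrating over $\operatorname{Re}t$ annihilates every nonzero Fourier mode of $\tilde\phi$ and both sides collapse to the zero-mode coefficient times an explicit integral over $0<\theta<\pi$ (with $\zeta=e^{t}$, $\theta=\operatorname{Im}t$). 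That $\theta$-integral is evaluated term by term in $\ell$ by the substitution $w=e^{2i\theta}$ and residues; resumming the geometric series in $\ell$ and reorganising by the divisor structure of the exponents should produce exactly $\tfrac{\pi}{2}\sum_{k\geq 0}(n+k)\lambda_0^{-n-k}/(1-\lambda_0^{-n-k})$, which, since $\int_0^{\pi}\sin^2\theta\,d\theta=\tfrac{\pi}{2}$, matches the right-hand side. (One may instead feed the reproducing identity \eqref{eq2_6_6} for harmonic Beltrami differentials into the strip integral and compute directly.)

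The hard part is this last computation: one must check that the factors $\big((\zeta-\bar\zeta)/(\lambda_0^{\ell}\zeta-\bar\zeta)\big)^{2n-2}$ contribute, after the $\operatorname{Re}t$-integration and the residue evaluation, precisely the combinatorial coefficients that resum into the Selberg-zeta weight $\sum_k(n+k)\lambda_0^{-n-k}/(1-\lambda_0^{-n-k})$ — the bookkeeping between the power index $\ell$ of $\gamma_0$ and the product index $k$ of $Z$ is delicate — and one must justify all interchanges of summation and integration by absolute convergence of the double series over $[\gamma_0]$ and $\ell$, which is where the hypothesis $n\geq 2$ is used. The cocycle computation giving the quadratic-differential transformation of $F_{\gamma}\,dz^2$, and the constancy of $\mathcal{C}_n$ on $T(X)$, are routine.
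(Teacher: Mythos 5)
Your proposal is correct and follows the same skeleton as the paper's proof: reduce to $\pa_{\mu}\log Z(n)$ via Theorem \ref{determinant}, differentiate the Euler product, insert Gardiner's formula \eqref{eq2_23_8}, unfold the hyperbolic sum by conjugacy classes, and reduce everything to a single identity per primitive class over the strip $S_{\lambda_0}$. The one genuine divergence is in how that per-class integral is evaluated. The paper feeds the reproducing formula \eqref{eq2_6_6} into the strip integral, unfolds the inner variable over all of $\mathbb{U}$ to define $Q(\zeta,\lambda)$, pins down its $\zeta$-dependence as $-\zeta^{-2}Q(i,\lambda)$ by holomorphy plus the scaling $x\mapsto vx,\ y\mapsto vy$, and computes $Q(i,\lambda)$ by contour integration in $x$ followed by an elementary rational integral in $y$. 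You instead pass to $t=\log\zeta$, observe that the kernel depends only on $\operatorname{Im}t$ while $\breve\phi(\zeta)\zeta^{2}$ is $\log\lambda_0$-periodic in $\operatorname{Re}t$, so that integrating over $\operatorname{Re}t$ extracts the zero Fourier mode, and then evaluate the remaining $\theta$-integral by residues in $w=e^{2i\theta}$. These are two realizations of the same underlying fact — the cyclic symmetry forces the pairing to see only the $\zeta^{-2}$ component of the kernel — but yours makes that mechanism more transparent, while the paper's recycles Proposition \ref{prop2_2_1}/eq.\ \eqref{eq2_6_6} already in its toolkit and avoids introducing Fourier series. The residue bookkeeping you flag as the hard part does close: for instance at $n=2$ the contour integral over $|w|=1$ of $(w-1)^{2n}w^{-2}(\lambda w-1)^{-(2n-2)}$ picks up residues at $w=0$ and $w=1/\lambda$ summing to $-2(2\lambda-1)/\lambda^{3}$, which reproduces the weight $n\lambda^{1-n}/(\lambda-1)+\lambda^{1-n}/(\lambda-1)^{2}$, i.e.\ exactly the closed form $\lambda^{2-n}/(\lambda-1)^{2}+(n-1)\lambda^{1-n}/(\lambda-1)$ of Lemma \ref{lemma2_23_1} at $s=n$; your organization by primitive classes with the $k$-sum retained and the paper's organization over all of $\Pi$ with the $k$-sum closed are equivalent rearrangements of the same absolutely convergent double series, valid precisely because $n\geq 2$.
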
This is more or less Lemma 4 in \cite{TZ_index_2}, except that we only take the summation over hyperbolic elements in $\Gamma$ instead of over all non-identity elements in $\Gamma$.  By \eqref{eq2_17_7}, 
$$\frac{1}{\pi} \frac{\gamma'(z)^n}{(\gamma z-z)^{2}}\left(\frac{z-\bar{z}}{\gamma z-\bar{z}}\right)^{2n-2}$$ is equal to $$-4  \left.\frac{\pa}{\pa z} \frac{\pa}{\pa w}\rho(w)^{1-n}\mathcal{G}_{n-1,1}(w,z)\right|_{w=\gamma z}\gamma'(z)^n$$ or $$ 4\rho(z)^{1-n}\left.\frac{\pa}{\pa z}\rho(z)^{-1}\frac{\pa}{\pa z}\mathcal{G}_{n, 0}(w,z)\right|_{w=\gamma z}\gamma'(z)^n.$$In \cite{Teo_Sigma}, we have shown that 
\begin{align}\label{eq2_23_2}\frac{1}{2s+2n-3}\frac{d}{ds}\log Z(s+n-1)=\int_X \sum_{\substack{\gamma\in\Gamma\\\gamma\;\text{hyperbolic}}}\mathcal{G}_{n-1,s}(\gamma z,z)\gamma'(z)^{n-1}\rho(z)^{2-n}d^2z.\end{align}One can use this as starting point to prove \eqref{eq2_17_5} as was shown in \cite{TZ_index_2}, with $\mathcal{G}_{0,s}(z,w)$ replaced by $\mathcal{G}_{n-1,s}(z,w)$. However, one then needs to get rid of the derivative with respect to $s$ carefully. In the following, we will give a direct proof similar to the manipulation given in \cite{Mcintyre}.

First we start with a lemma.
\begin{lemma}
\label{lemma2_23_1}
Given a hyperbolic element $\gamma$ in $\Gamma$ with multiplier $\lambda $, let $\dot{\lambda}$ be defined by \eqref{eq2_23_3}, and let $\gamma_0$ be a primitive hyperbolic element in $\Gamma$ so that $\lambda=\lambda_0^{\ell}$ for some positive integer $\ell$. If $\text{Re}\,s>1$ and   $\mu\in\Omega_{-1,1}(X)$, we have
\begin{align}\label{eq3_3_3}
L_{\mu}\log Z(s)=\sum_{[\gamma]\in \Pi}\left[\frac{\lambda^{2-s}}{(\lambda-1)^2}+(s-1)\frac{\lambda^{1-s}}{ \lambda -1}\right] \frac{\dot{\lambda}_0}{\lambda_0}.
\end{align}
\end{lemma}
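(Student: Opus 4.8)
The plan is to differentiate the product formula \eqref{eq2_23_4} for $Z(s)$ termwise and then carry out the same ``unfolding'' over powers of a primitive element that appeared in the computation of $\dot\lambda/\lambda$. First I would take the logarithm of \eqref{eq2_23_4}, obtaining
$$\log Z(s)=\sum_{[\gamma_0]\in P}\sum_{k=0}^\infty \log\!\left(1-\lambda_0^{-s-k}\right),$$
where $\lambda_0>1$ is the multiplier of the primitive class $[\gamma_0]$. Applying $L_\mu$ and using that $L_\mu$ acts only through the variation of the multipliers (the surface $X^{\varepsilon\mu}$ varies holomorphically, and the combinatorics of the conjugacy classes is locally constant), I get
$$L_\mu\log Z(s)=\sum_{[\gamma_0]\in P}\sum_{k=0}^\infty \frac{(s+k)\lambda_0^{-s-k-1}}{1-\lambda_0^{-s-k}}\,\dot\lambda_0 = \sum_{[\gamma_0]\in P}\left(\frac{\dot\lambda_0}{\lambda_0}\right)\sum_{k=0}^\infty \frac{(s+k)\lambda_0^{-s-k}}{1-\lambda_0^{-s-k}}.$$
Here $\dot\lambda_0=\left.\frac{\pa}{\pa\varepsilon}\right|_{\varepsilon=0}\lambda_0^{\varepsilon\mu}$ is given by Gardiner's formula \eqref{eq2_23_8} applied to the primitive element. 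Justifying that the sum may be differentiated termwise (and that $L_\mu$ commutes with the $k$-sum and $[\gamma_0]$-sum) uses the absolute convergence of the product for $\operatorname{Re}s>1$, uniform on compact subsets of Teichm\"uller space; I would state this and move on.

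Next I would expand the geometric series $\frac{\lambda_0^{-s-k}}{1-\lambda_0^{-s-k}}=\sum_{m=1}^\infty \lambda_0^{-m(s+k)}$ and interchange the order of the $k$ and $m$ summations, so that
$$\sum_{k=0}^\infty (s+k)\lambda_0^{-s-k}\sum_{m=1}^\infty\lambda_0^{-m(s+k)} = \sum_{\ell=1}^\infty\ \sum_{\substack{k\ge 0,\ m\ge 1\\ m+? }}\cdots$$
--- more cleanly, I would instead index the non-primitive hyperbolic classes directly: every hyperbolic class $[\gamma]\in\Pi$ is $[\gamma_0^\ell]$ for a unique primitive $[\gamma_0]$ and $\ell\ge 1$, with multiplier $\lambda=\lambda_0^\ell$ and $\dot\lambda_0/\lambda_0$ independent of $\ell$. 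The standard identity from the Selberg-zeta computation is
$$\frac{d}{ds}\log Z(s)=\sum_{[\gamma]\in\Pi}\frac{\log\lambda_0}{\lambda^{1/2}-\lambda^{-1/2}}\,\lambda^{-s+1/2}\cdot\frac{1}{\ell}\cdot(\text{something}),$$
so rather than re-deriving it I would recall from \cite{Teo_Sigma} (or \cite{Iwaniec}) the closed form $\sum_{k\ge 0}\log(1-\lambda_0^{-s-k})$ resums, upon applying $\partial_\varepsilon$, to $\sum_{[\gamma]\in\Pi}\big(\text{coefficient}\big)\dot\lambda_0/\lambda_0$ with the coefficient being exactly what one gets by a single partial-fraction manipulation of $\sum_{k\ge0}(s+k)x^{s+k}/(1-x^{s+k})$ evaluated via $\sum_{k\ge 0}(s+k)y^k = s/(1-y)+y/(1-y)^2$ after substituting $y=\lambda_0^{-1}$, $x^s=\lambda^{-s}$ appropriately. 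Carrying that bookkeeping out yields precisely
$$\frac{\lambda^{2-s}}{(\lambda-1)^2}+(s-1)\frac{\lambda^{1-s}}{\lambda-1}$$
as the coefficient of $\dot\lambda_0/\lambda_0$ for the class $[\gamma]$ with multiplier $\lambda$, which is the claimed \eqref{eq3_3_3}.

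The main obstacle is the resummation: one must pass from the double sum over $(k,m)$ (or over primitive classes and their powers) to the clean sum over all hyperbolic conjugacy classes $\Pi$, keeping careful track of which factors depend on the primitive multiplier $\lambda_0$ versus the full multiplier $\lambda=\lambda_0^\ell$, and verifying that the elementary series $\sum_{k\ge0}(s+k)y^k$ indeed collapses to $\frac{s}{1-y}+\frac{y}{(1-y)^2}$ and then, after the substitution $y=1/\lambda$, rearranges into $\frac{\lambda^{2-s}}{(\lambda-1)^2}+(s-1)\frac{\lambda^{1-s}}{\lambda-1}$. This is purely algebraic but is where sign errors and off-by-one errors in the exponents creep in; everything else (termwise differentiation, Gardiner's formula for $\dot\lambda_0/\lambda_0$, absolute convergence for $\operatorname{Re}s>1$) is standard and can be cited. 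I expect the write-up to spend most of its length on this elementary manipulation, possibly deferring it to computer algebra as the authors do elsewhere.
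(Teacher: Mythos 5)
Your proposal follows essentially the same route as the paper's proof: termwise differentiation of $\log Z(s)$ justified by absolute convergence for $\mathrm{Re}\,s>1$, geometric expansion of $\lambda_0^{-s-k}/(1-\lambda_0^{-s-k})$, reindexing the resulting double sum over primitive classes and their powers as a single sum over $\Pi$, and then summing the arithmetic--geometric series in $k$ (your closed form $\frac{s}{1-y}+\frac{y}{(1-y)^2}$ is algebraically identical to the paper's $\frac{1}{(1-y)^2}+\frac{s-1}{1-y}$). The only imprecision is the substitution at the end, which must be $y=\lambda^{-1}$ with $\lambda=\lambda_0^{\ell}$ the full multiplier of the class $[\gamma]$ (not $y=\lambda_0^{-1}$), but the coefficient you arrive at is correct.
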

\begin{proof}
Since   the infinite product in \eqref{eq2_23_4} that defines $Z(s)$ converges absolutely when $\text{Re}\,s>1$, we can differentiate term by term to obtain
\begin{align*}
L_{\mu}\log Z(s)=&\sum_{[\gamma_0]\in P}\sum_{k=0}^{\infty}\frac{(s+k)\lambda_0^{-s-k-1}}{1-\lambda_0^{-s-k}}\dot{\lambda}_0\\
=&\sum_{[\gamma_0]\in P}\sum_{k=0}^{\infty}\sum_{\ell=1}^{\infty} (s+k)\lambda_0^{-\ell(s+k) } \frac{\dot{\lambda}_0}{\lambda_0}\end{align*}
Using the fact that each $[\gamma]\in \Pi$ is equal to $[\gamma_0^{\ell}]$ for a unique $[\gamma_0]\in P$ and a unique positive integer $\ell$, we find that 
\begin{align*}
L_{\mu}\log Z(s)=&\sum_{[\gamma]\in \Pi}\sum_{k=0}^{\infty}  (s+k)\lambda^{-s-k } \frac{\dot{\lambda}_0}{\lambda_0}\\
=&\sum_{[\gamma]\in \Pi}\lambda^{-s}\left[\frac{1}{(1-\lambda^{-1})^2}+\frac{s-1}{1-\lambda^{-1}}\right] \frac{\dot{\lambda}_0}{\lambda_0}.
\end{align*}The assertion follows.
\end{proof}

\begin{proof}[Proof of Theorem \ref{thm2_23_1}]
 We will prove the Theorem for $n\geq 2$. For the $n=1$ case, extra care need to be taken since the infinite product \eqref{eq2_23_4} is divergent when $s=1$. We will discuss this case in Appendix \ref{n1}.

Call the right-hand side of \eqref{eq2_17_5} as $T_n$. Dividing the hyperbolic elements into conjugacy classes, we have
\begin{align}\label{eq2_17_10}T_n =\sum_{[\gamma]\in\Pi}\frac{1}{ \pi}\int_X\sum_{\alpha\in\Gamma_{\gamma}\backslash\Gamma}\frac{(\alpha\gamma\alpha^{-1})'(z)^n}{(\alpha\gamma\alpha^{-1}z-z)^{2}}\left(\frac{z-\bar{z}}{\alpha\gamma\alpha^{-1} z-\bar{z}}\right)^{2n-2}\mu(z)d^2z.\end{align}
Now it is sufficient to simplify each term in the summation. Using $\sigma$ that conjugates $\gamma$ to a diagonal element, we find that
\begin{align*}
T_n[\gamma]=&\frac{1}{\pi}\int_X\sum_{\alpha\in\Gamma_{\gamma}\backslash\Gamma}\frac{(\alpha\gamma\alpha^{-1})'(z)^n}{(\alpha\gamma\alpha^{-1}z-z)^{2}}\left(\frac{z-\bar{z}}{\alpha\gamma\alpha^{-1} z-\bar{z}}\right)^{2n-2}\mu(z)d^2z\\
=&\frac{1}{\pi} \int_{S_{\lambda_0}} \frac{\lambda^n}{(\lambda-1)^2 z^2}\left(\frac{z-\bar{z}}{\lambda z-\bar{z}}\right)^{2n-2}\breve{\mu}(z)d^2z,
\end{align*}where $\breve{\mu}=\breve{\mu}[\gamma]$ is defined by \eqref{eq2_23_5}, with $\sigma$ an element of $\text{PSL}\,(2,\mathbb{R})$ that conjugates $\gamma$ to $$\begin{pmatrix}\lambda^{1/2}  & 0\\0 & \lambda^{-1/2}\end{pmatrix}. $$ Notice that since $\Gamma_{\gamma}=\Gamma_{\gamma_0}$, after unravelling the summation over $\alpha\in \Gamma_{\gamma}$, the integration is over $S_{\lambda_0}$, not $S_{\lambda}$. Now we use \eqref{eq2_6_6} to write $T_n[\gamma]$ as
\begin{align*}
T_n[\gamma]=&-\frac{3}{\pi^2} \int_{\mathbb{U}}\int_{S_{\lambda_0}}\breve{\mu}(\zeta) \frac{\lambda^n}{(\lambda-1)^2 z^2} \frac{(z-\bar{z})^{2n}}{(\lambda z-\bar{z})^{2n-2}}
\frac{1}{(\zeta-\bar{z})^4}d^2z d^2\zeta.
\end{align*}Using the fact that 
$$\breve{\mu}(\lambda_0z)=\breve{\mu}(z),$$ as well as
\eqref{eq2_23_7}, we can fold up the integral over $\zeta$ and unfold the integral over $z$ to get\begin{align*}
T_n[\gamma]=&-\frac{3}{\pi^2} \int_{S_{\lambda_0}}\breve{\mu}(\zeta) \int_{\mathbb{U}}\frac{\lambda^n}{(\lambda-1)^2 z^2} \frac{(z-\bar{z})^{2n}}{(\lambda z-\bar{z})^{2n-2}}
\frac{1}{(\zeta-\bar{z})^4}d^2z d^2\zeta.
\end{align*}Now let us concentrate on the integral
\begin{align*}
Q(\zeta,\lambda)=&\frac{3}{\pi}  \int_{\mathbb{U}}\frac{\lambda^n}{(\lambda-1)^2 z^2} \frac{(z-\bar{z})^{2n}}{(\lambda z-\bar{z})^{2n-2}}
\frac{1}{(\zeta-\bar{z})^4}d^2z.
\end{align*}
Applying integration by parts with respect to $\bar{z}$ three times, we have
\begin{align*}
Q(\zeta,\lambda)=&-\frac{1}{2\pi}  \int_{\mathbb{U}}\frac{\lambda^n}{(\lambda-1)^2 z^2}\frac{\pa^3}{\pa\bar{z}^3}\left[ \frac{(z-\bar{z})^{2n}}{(\lambda z-\bar{z})^{2n-2}}\right]
\frac{1}{(\zeta-\bar{z}) }d^2z\\
=&\frac{2n(n-1)(2n-1)}{ \pi} \lambda^n(\lambda-1) \int_{\mathbb{U}} \frac{z}{(\zeta-\bar{z}) }  \frac{(z-\bar{z})^{2n-3}}{(\lambda z-\bar{z})^{2n+1}} 
d^2z.
\end{align*}By inspecting the behaviors of the integrands when $y\rightarrow 0^+$ and $y\rightarrow\infty$, one can see that no boundary terms appear during the integration by parts.

Obviously, $Q(\zeta, \lambda)$ is a holomorphic function of $\zeta\in \mathbb{U}$. Hence, it is sufficient to compute $Q(\zeta,\lambda)$ when $\zeta=iv$, $v>0$. We have
\begin{align*}
Q(iv, \lambda)=&(-1)^{n} 2^{2n-1}n(n-1)(2n-1)\lambda^n (\lambda-1) \\&\times\frac{1}{2\pi i} \int_0^{\infty} \int_{-\infty}^{\infty}\frac{x+iy}{(x-i(y+v))}
\frac{y^{2n-3}}{ ((\lambda-1)x+i(\lambda+1)y)^{2n+1}}dxdy.
\end{align*}Now making a change of variables $x\mapsto vx, y\mapsto vy$, we find that $$Q(iv, \lambda)=\frac{1}{v^2}Q(i, \lambda).$$ This implies that
$$Q(\zeta, \lambda)=-\frac{1}{\zeta^2}Q(i, \lambda),$$ and hence, by \eqref{eq2_23_8}, we have
$$T_n[\gamma]=\frac{1}{\pi}\int_{S_{\lambda_0}}\frac{\breve{\mu}(\zeta)}{\zeta^2}d^2\zeta\times Q(i, \lambda)=Q(i,\lambda)\frac{\dot{\lambda}_0}{\lambda_0}.$$
The computation of 
\begin{align*}
Q(i, \lambda)=&  (-1)^{n}2^{2n-1}n(n-1)(2n-1)\lambda^n(\lambda-1) \\&\hspace{2cm}\times\frac{1 }{2\pi i}\int_0^{\infty}\int_{-\infty}^{\infty}\frac{x+iy}{(x-i(y+1))}\frac{y^{2n-3}}{\left((\lambda-1)x+i(\lambda+1)y\right)^{2n+1}}dxdy
\end{align*}is elementary but tedious. For fixed $y>0$, 
$$\int_{-\infty}^{\infty}\frac{x+iy}{(x-i(y+1))}\frac{1}{\left((\lambda-1)x+i(\lambda+1)y\right)^{2n+1}}dx$$ can be considered as an integral of the analytic function
$$g(w)=\frac{w+iy}{(w-i(y+1))}\frac{1}{\left((\lambda-1)w+i(\lambda+1)y\right)^{2n+1}}$$ of $w$ over the real-line. Using contour integration technique,  the integral is equal to $2\pi i$ times the residue of $g(w)$ at the pole $w=i(y+1)$ on the upper half-plane. Therefore,
\begin{align*}
Q(i,\lambda)=&2^{2n-1}n(n-1)(2n-1)\lambda^n(\lambda-1)   \int_0^{\infty}\frac{y^{2n-3}(2y+1)}{\left(2\lambda y+\lambda-1\right)^{2n+1}}dy.
\end{align*}Let $$a=\frac{\lambda-1}{2\lambda}.$$
Then
\begin{align*}
Q(i,\lambda)=&\frac{n(n-1)(2n-1)}{4}\lambda^{-n-1}(\lambda-1)   \int_0^{\infty}\frac{y^{2n-3}(2y+1)}{\left(  y+a\right)^{2n+1}}dy.
\end{align*}Using the fact that if $k$ and $r$ are integers such that $r\geq 1$, $k\geq r+2$, then
$$\int_0^{\infty}\frac{y^r}{(y+a)^k}dy=\frac{1}{a^{k-r-1}}\frac{r!(k-r-2)!}{(k-1)!},$$ we finally obtain
\begin{align*}
Q(i,\lambda)=&\frac{n(n-1)(2n-1)}{4}\lambda^{-n-1}(\lambda-1) \left\{\frac{1}{a^2}\frac{1}{n(2n-1)}+\frac{1}{a^3}\frac{1}{2n(2n-1)(n-1)}\right\}\\
=&(n-1)\frac{\lambda^{1-n}}{\lambda-1}+\frac{\lambda^{2-n}}{(\lambda-1)^2}.
\end{align*}Therefore, we have shown that
$$T_n[\gamma]=\left\{(n-1)\frac{\lambda^{1-n}}{ \lambda-1}+\frac{\lambda^{2-n}}{(\lambda-1)^2}\right\}\frac{\dot{\lambda}_0}{\lambda_0}.$$ By summing over conjugacy classes of hyperbolic elements and using Lemma \ref{lemma2_23_1}, we prove the formula \eqref{eq2_17_5}. 
\end{proof}

\begin{remark}\label{remark3_9_2}
In fact, in the proof of Theorem \ref{thm2_23_1}, we have shown that if $\gamma$ is a hyperbolic element of $\Gamma$ and $\gamma=\gamma_0^{\ell}$, where $\gamma_0$ is primitive and $\ell$ is a positive integer, then
\begin{equation}\label{eq3_3_4}\begin{split}T_n[\gamma]=&\frac{1}{\pi}\int_X\sum_{\alpha\in\Gamma_{\gamma}\backslash\Gamma}\frac{(\alpha\gamma\alpha^{-1})'(z)^n}{(\alpha\gamma\alpha^{-1}z-z)^{2}}\left(\frac{z-\bar{z}}{\alpha\gamma\alpha^{-1} z-\bar{z}}\right)^{2n-2}\mu(z)d^2z\\=&\left\{(n-1)\frac{\lambda^{1-n}}{\lambda-1}+\frac{\lambda^{2-n}}{(\lambda-1)^2}\right\}\frac{\dot{\lambda}_0}{\lambda_0}. \end{split}\end{equation}Fixed a primitive hyperbolic element $\gamma_0$ and summing up the contribution from all $\gamma_0^{\ell}$ for $\ell\geq 1$, the derivation of \eqref{eq3_3_3} and \eqref{eq3_3_4} show  that
\begin{equation}\label{eq2_23_6}\begin{split}
L_{\mu}\log\left[ \prod_{k=0}^{\infty}\left(1-\lambda_0^{-k-n}\right)\right]=&\sum_{\ell=1}^{\infty}\left\{(n-1)\frac{\lambda_0^{\ell(1-n)}}{\lambda_0^{\ell}-1}+\frac{\lambda_0^{\ell(2-n)}}{(\lambda_0^{\ell}-1)^2}\right\}\frac{\dot{\lambda}_0}{\lambda_0}\\
=&\frac{1}{\pi}\int_X\sum_{\gamma\in \tilde{\Gamma}[\gamma_0]} \frac{\gamma'(z)^n}{(\gamma z-z)^{2}}\left(\frac{z-\bar{z}}{\gamma z-\bar{z}}\right)^{2n-2}\mu(z)d^2z,
\end{split}\end{equation}where $$\tilde{\Gamma}[\gamma_0]=\left\{\alpha\gamma_0^{\ell}\alpha^{-1}\,|\,\alpha\in \Gamma_{\gamma_0}\backslash\Gamma, \ell\in\mathbb{Z}^+\right\}$$contains all elements in $\Gamma$ that are conjugate to $\gamma_0^{\ell}$ for some positive integer $\ell$. 

Eq. \eqref{eq2_23_6} can be considered as the variational formula of the local zeta function
$$\prod_{k=0}^{\infty}\left(1-\lambda_0^{-s-k}\right)$$at $s=n$. This formula still holds for $n=1$. In fact, using \eqref{eq2_23_8}, the $n=1$ case is almost a tautology.
\end{remark}

Now we proceed to consider the second variation of $\log\det\Delta_n$.
\begin{theorem}\label{thm2_22_6}When $n\geq 2$, 
\begin{equation}\label{eq2_22_8}\begin{split}
&\pa_{\mu}\pa_{\bar{\nu}}\log \det \Delta_n\\=&- \frac{n}{\pi^2}\sum_{\substack{\gamma\in\Gamma\\\gamma\;\text{hyperbolic}}}\int_X \int_{\mathbb{U}} \frac{\mu(z)\overline{\nu( \eta)}}{(z-\bar{\eta})^2(\gamma z-\bar{\eta})^2}  \frac{(  z-\bar{ z})^{2n-2} }{(\gamma z-\bar{z} )^{2n-2}}    \gamma'(z) ^n  d^2\eta  d^2z \\
& - \frac{2(n-1)}{\pi^2} \sum_{\substack{\gamma\in\Gamma\\\gamma\;\text{hyperbolic}}}\int_X  \int_{\mathbb{U}} \frac{\mu(z)\overline{\nu( \eta)}}{(z-\bar{\eta})^3(\gamma z-\bar{\eta}) }  \frac{(  z-\bar{ z})^{2n-1} }{(\gamma z-\bar{z} )^{2n-1}}    \gamma'(z) ^n  d^2\eta  \\
&+\frac{(n-1)(2n-1)}{ \pi} \sum_{\substack{\gamma\in\Gamma\\\gamma\;\text{hyperbolic}}}\int_X   \int_{ \mathbb{U}} \frac{(z-\bar{z})^{2n-2}}{(\gamma z-\bar{z} )^{2n}} \gamma'(z)^n \mathcal{G}(z,w) \mu(w)\overline{\nu(w)}\rho(w)d^2wd^2z.
\end{split}\end{equation}
\end{theorem}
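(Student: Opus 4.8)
Since $\det\Delta_n=\mathcal C_nZ(n)$ with $\mathcal C_n$ a constant on $T(X)$ (Theorem \ref{determinant}), Theorem \ref{thm2_23_1} gives $\pa_\mu\log\det\Delta_n=\int_X g(z)\,\mu(z)\,d^2z$ with $g(z)=\frac1\pi\sum_{\gamma\,\text{hyperbolic}}\mathcal R_\gamma(z)$ and $\mathcal R_\gamma(z)=\gamma'(z)^n(\gamma z-z)^{-2}(z-\bar z)^{2n-2}(\gamma z-\bar z)^{-(2n-2)}$. A short check (using $\alpha\gamma\alpha^{-1}(\alpha z)-\alpha z=(\gamma z-z)\sqrt{\alpha'(\gamma z)\alpha'(z)}$ for $\alpha\in\text{PSL}(2,\mathbb R)$, etc.) shows $g$ is an automorphic form of type $(2,0)$, so $g\mu$ is an integrable $(1,1)$-form. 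The plan is to compute $\pa_{\bar\nu}$ of this pairing directly: differentiating under the integral sign, and using that the harmonic Beltrami $\mu$ varies in the $\bar\nu$-direction as $L_{\bar\nu}\mu$ (Proposition \ref{vary_differential}; this term must be kept because $g$, not being holomorphic, has not been projected to $\Omega_2(X)$ — see the remarks following Proposition \ref{vary_differential}), we get
$$\pa_\mu\pa_{\bar\nu}\log\det\Delta_n=\int_X (L_{\bar\nu}g)(z)\,\mu(z)\,d^2z+\int_X g(z)\,(L_{\bar\nu}\mu)(z)\,d^2z.$$
As usual it is enough to treat $\mu=\nu$ and polarize. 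I expect the first integral to produce $\mathscr X_H+\mathscr Y_H$ and the second to produce $\mathscr Z_H$, the hyperbolic parts of \eqref{eq2_22_6}, which together are \eqref{eq2_22_8}.

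To compute $L_{\bar\nu}g$ I would argue as in the proof of Proposition \ref{prop1_26_2}, pulling back by $f_{\varepsilon\nu}$. Using $\gamma_{\varepsilon\nu}\circ f_{\varepsilon\nu}=f_{\varepsilon\nu}\circ\gamma$ one obtains the identity
$$\mathcal R_{\gamma_{\varepsilon\nu}}(f_{\varepsilon\nu}(z))\,(f_{\varepsilon\nu})_z(z)^2=\gamma'(z)^n\,\mathscr L_{\varepsilon\nu}(\gamma z,z)\Bigl(\frac{\mathscr M_{\varepsilon\nu}(\gamma z,z)}{\mathscr M_{\varepsilon\nu}(z,z)}\Bigr)^{n-1}$$
in terms of the kernels of Proposition \ref{prop_vary}. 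Since $\mathscr M_{\varepsilon\nu}(z,z)=-\tfrac14(f^{\varepsilon\nu})^*\rho^{\varepsilon\nu}(z)$ and $L_{\bar\nu}\rho=0$ (Proposition \ref{varymetric}), the factor $\mathscr M_{\varepsilon\nu}(z,z)^{-(n-1)}$ contributes nothing to $\tfrac{\pa}{\pa\bar\varepsilon}\big|_{\varepsilon=0}$, so $L_{\bar\nu}g(z)=\tfrac{(z-\bar z)^{2n-2}}{\pi}\sum_{\gamma\,\text{hyperbolic}}\gamma'(z)^n\,\tfrac{\pa}{\pa\bar\varepsilon}\big|_{\varepsilon=0}\!\bigl[\mathscr L_{\varepsilon\nu}(\gamma z,z)\,\mathscr M_{\varepsilon\nu}(\gamma z,z)^{n-1}\bigr]$. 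Applying the last two variational formulas of Proposition \ref{prop_vary} at $\varepsilon=0$ produces two $\eta$-integrals, essentially $\int_{\mathbb U}\overline{\nu(\eta)}(\gamma z-\bar\eta)^{-2}(z-\bar\eta)^{-2}d^2\eta$ (from varying $\mathscr L$) and $\int_{\mathbb U}\overline{\nu(\eta)}(\gamma z-\bar\eta)^{-2}(\bar\eta-\bar z)^{-2}d^2\eta$ (from varying $\mathscr M$); the second is rewritten, by the complex conjugate of Proposition \ref{prop2_2_1} — exactly the step used in the proof of Proposition \ref{prop2_22_2} — as a combination of $\int_{\mathbb U}\overline{\nu(\eta)}(\gamma z-\bar\eta)^{-2}(z-\bar\eta)^{-2}d^2\eta$ and $\int_{\mathbb U}\overline{\nu(\eta)}(\gamma z-\bar\eta)^{-1}(z-\bar\eta)^{-3}d^2\eta$. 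Multiplying by $\mu(z)$, integrating over $X$, and collecting terms, the first integral becomes $\mathscr X_H+\mathscr Y_H$.

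For the second integral I would substitute $L_{\bar\nu}\mu=-4\,\pa_{\bar z}\rho^{-1}\pa_{\bar z}(\Delta_0+2)^{-1}(\mu\bar\nu)$ (formula \eqref{eq2_22_12}) and integrate by parts twice in $\bar z$; the boundary terms vanish by the same inspection of the integrand as $y\to0^+$ and $y\to\infty$ used in the proof of Theorem \ref{thm2_23_1}. This transfers both $\pa_{\bar z}$'s onto $\mathcal R_\gamma$, and a direct computation gives $\pa_{\bar z}\bigl[\rho(z)^{-1}\pa_{\bar z}\mathcal R_\gamma(z)\bigr]=-\tfrac12(n-1)(2n-1)\,\gamma'(z)^n(z-\bar z)^{2n-2}(\gamma z-\bar z)^{-2n}$. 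Since $2(\Delta_0+2)^{-1}(\mu\bar\nu)(z)=\int_{\mathbb U}\mathcal G(z,w)\mu(w)\overline{\nu(w)}\rho(w)\,d^2w$, the second integral equals $\mathscr Z_H$, and adding the two contributions proves \eqref{eq2_22_8}.

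The main obstacle is the first integral. One first has to recognize $\mathcal R_\gamma$ as the pull-back by $f_{\varepsilon\nu}$ of $\mathscr L_{\varepsilon\nu}\,\mathscr M_{\varepsilon\nu}^{n-1}\mathscr M_{\varepsilon\nu}(z,z)^{-(n-1)}$ — this is what makes Proposition \ref{prop_vary} applicable term by term, and it shows the argument uses hyperbolicity of $\gamma$ only through the conjugation relation $\gamma_{\varepsilon\nu}\circ f_{\varepsilon\nu}=f_{\varepsilon\nu}\circ\gamma$ — and one then has to carry out the same partial-fraction rearrangements and application of (the complex conjugate of) Proposition \ref{prop2_2_1} as in the proof of Proposition \ref{prop2_22_2}. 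Once that pull-back identity and the vanishing $L_{\bar\nu}\rho=0$ are available, the rest is bookkeeping entirely parallel to the proof of Proposition \ref{prop2_22_2}, with the summation now restricted to hyperbolic elements.
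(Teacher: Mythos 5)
Your proposal is correct and follows essentially the same route as the paper: the paper also obtains the second variation by applying $\pa_{\bar{\nu}}$ to \eqref{eq2_17_5}, splitting the result into the two kernel-variation terms $\mathfrak{D}_1,\mathfrak{D}_2$ (from Proposition \ref{prop_vary}, with the $\mathscr{M}(z,z)^{-(n-1)}$ factor inert because $L_{\bar{\nu}}\rho=0$), rewriting $\mathfrak{D}_2$ via the complex conjugate of Proposition \ref{prop2_2_1} so that it contributes $(n-1)\mathfrak{D}_1$ plus the $\mathscr{Y}_H$-type term, and handling the $L_{\bar{\nu}}\mu$ term $\mathfrak{D}_3$ exactly as you describe, by Proposition \ref{vary_differential} and two integrations by parts. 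Your pull-back identity for $\mathcal{R}_\gamma$ and the computed value of $\pa_{\bar z}\bigl[\rho^{-1}\pa_{\bar z}\mathcal{R}_\gamma\bigr]$ both check out against the paper's intermediate formulas.
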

\begin{proof}
Applying $\pa_{\bar{\nu}}$ to \eqref{eq2_17_5} using Proposition \ref{prop_vary} and Proposition \ref{varymetric}, we have
\begin{align*}
\pa_{\mu}\pa_{\bar{\nu}}\log \det \Delta_n=&\mathfrak{D}_1+\mathfrak{D}_2+\mathfrak{D}_3,
\end{align*}where
\begin{align*}
\mathfrak{D}_1=&-\frac{1}{ \pi^2} \sum_{\substack{\gamma\in\Gamma\\\gamma\;\text{hyperbolic}}}\int_X\int_{\mathbb{U}}\frac{\mu(z)\overline{\nu( \eta)}}{(z-\bar{\eta})^2(\gamma z-\bar{\eta})^2}  \frac{(  z-\bar{ z})^{2n-2} }{(\gamma z-\bar{z} )^{2n-2}}    \gamma'(z) ^n  d^2\eta  d^2z \\
\mathfrak{D}_2=&-\frac{(n-1)}{\pi^2}\sum_{\substack{\gamma\in\Gamma\\\gamma\;\text{hyperbolic}}}\int_X\int_{\mathbb{U}}\frac{\mu(z)\overline{\nu( \eta)}}{(\gamma z -z)^2(\bar{z}-\bar{\eta})^2(\gamma z-\bar{\eta})^2}  \frac{(  z-\bar{ z})^{2n-2} }{(\gamma z-\bar{z} )^{2n-4}}    \gamma'(z) ^n  d^2\eta  d^2z,\\
\mathfrak{D}_3=&\frac{1}{ \pi} \sum_{\substack{\gamma\in\Gamma\\\gamma\;\text{hyperbolic}}}\int_X\frac{\gamma'(z)^n}{(\gamma(z)-z)^{2}}\left(\frac{z-\bar{z}}{\gamma( z)-\bar{z}}\right)^{2n-2}(L_{\bar{\nu}}\mu)(z)d^2z.
\end{align*}By the complex conjugate version of Proposition \ref{prop2_2_1},
\begin{align}
\mathfrak{D}_2=&-\frac{(n-1)}{\pi^2}\sum_{\substack{\gamma\in\Gamma\\\gamma\;\text{hyperbolic}}}\int_X\int_{\mathbb{U}}\frac{\mu(z)\overline{\nu( \eta)}}{ (z-\bar{\eta})^2(\gamma z-\bar{\eta})^2}  \frac{(  z-\bar{ z})^{2n-2} }{(\gamma z-\bar{z} )^{2n-2}}    \gamma'(z) ^n  d^2\eta  d^2z\label{eq2_22_9}\\
&-\frac{2(n-1)}{\pi^2}\sum_{\substack{\gamma\in\Gamma\\\gamma\;\text{hyperbolic}}}\int_X\int_{\mathbb{U}}\frac{\mu(z)\overline{\nu( \eta)}}{ (z-\bar{\eta})^3(\gamma z-\bar{\eta}) }  \frac{(  z-\bar{ z})^{2n-1} }{(\gamma z-\bar{z} )^{2n-1}}    \gamma'(z) ^n  d^2\eta  d^2z.\nonumber
\end{align}Notice that \eqref{eq2_22_9} is equal to $(n-1)\mathfrak{D}_1$. 
Hence, $\mathfrak{D}_1+\mathfrak{D}_2$ gives the first two terms in \eqref{eq2_22_8}.

Now for $\mathfrak{D}_3$, Proposition \ref{vary_differential} gives
\begin{align*}
\mathfrak{D}_3=&-\frac{4}{ \pi} \sum_{\substack{\gamma\in\Gamma\\\gamma\;\text{hyperbolic}}}\int_X\frac{\gamma'(z)^n}{(\gamma(z)-z)^{2}}\left(\frac{z-\bar{z}}{\gamma z-\bar{z}}\right)^{2n-2}\frac{\pa}{\pa\bar{z}}\rho(z)^{-1}\frac{\pa}{\pa\bar{z}}(\Delta_0+2)^{-1}(\mu\bar{\nu})(z)d^2z.
\end{align*}Integration by parts twice, we find that
\begin{align*}
\mathfrak{D}_3=& \frac{4}{ \pi}\sum_{\substack{\gamma\in\Gamma\\\gamma\;\text{hyperbolic}}}\int_X \frac{\pa}{\pa\bar{z}}\left\{\frac{\gamma'(z)^n}{(\gamma z-z)^{2}}\left(\frac{z-\bar{z}}{\gamma z-\bar{z}}\right)^{2n-2}\right\}\rho(z)^{-1}\frac{\pa}{\pa\bar{z}}(\Delta_0+2)^{-1}(\mu\bar{\nu})(z)d^2z\\
=&  \frac{2(n-1)}{ \pi} \sum_{\substack{\gamma\in\Gamma\\\gamma\;\text{hyperbolic}}} \int_X\frac{\gamma'(z)^n}{(\gamma z-z) }\left( \frac{z-\bar{z}}{\gamma z-\bar{z}}\right)^{2n-1}  \frac{\pa}{\pa\bar{z}}(\Delta_0+2)^{-1}(\mu\bar{\nu})(z)d^2z\\
=& - \frac{2(n-1)}{ \pi}  \sum_{\substack{\gamma\in\Gamma\\\gamma\;\text{hyperbolic}}}\int_X\frac{\pa}{\pa\bar{z}}\left\{ \frac{\gamma'(z)^n}{(\gamma z-z) }\left( \frac{z-\bar{z}}{\gamma z-\bar{z}}\right)^{2n-1}  \right\}(\Delta_0+2)^{-1}(\mu\bar{\nu})(z)d^2z\\
=&\frac{(n-1)(2n-1)}{ \pi}  \sum_{\substack{\gamma\in\Gamma\\\gamma\;\text{hyperbolic}}} \int_X  \frac{(z-\bar{z})^{2n-2}}{(\gamma z-\bar{z} )^{2n}} \gamma'(z)^n  \int_{ \mathbb{U}}\mathcal{G}(z,w) \mu(w)\overline{\nu(w)}\rho(w)d^2wd^2z.
\end{align*}This completes the proof of the theorem.

When we perform integration by parts in the computations of $\mathfrak{D}_3$,  there is no boundary terms that appear. One can verify this by writing the sum over all hyperbolic elements into a double sum, the outer sum is over the conjugacy classes, and the inner sum is over all hyperbolic elements in that class. For a fixed conjugacy class, conjugate a hyperbolic representative to a diagonal element. Then the sum of integrals over all hyperbolic elements in that conjugacy class can be combined  into a single integral over a domain of the form $S_{\lambda}$ \eqref{eq2_14_1}. When we perform integration by parts, the invariant properties of the integrands  clearly show that no boundary terms appear.

\end{proof}

Compare Theorem \ref{thm2_22_6} to Theorem \ref{thm2_22_3}, we immediately obtain
\begin{theorem}\label{thm2_22_7} When $n\geq 2$,
 $\pa_{\mu}\pa_{\bar{\nu}}\log\Delta_n$ is  equal to the hyperbolic contribution $\mathscr{E}_H$ to $\pa_{\mu}\pa_{\bar{\nu}}\log N_n$. 
\end{theorem}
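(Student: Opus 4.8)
The proof is essentially a term-by-term comparison of the two explicit formulas just established, so the plan is to carry out that matching carefully. Recall that Theorem \ref{thm2_22_3} expresses $\pa_{\mu}\pa_{\bar{\nu}}\log\det N_n$ as $\mathscr{X}+\mathscr{Y}+\mathscr{Z}$, where each of $\mathscr{X}$, $\mathscr{Y}$, $\mathscr{Z}$ is a sum over \emph{all} $\gamma\in\Gamma$ of an integral over $X\times\mathbb{U}$ (or $X\times X$). By Remark \ref{remark2_2_1} and the paragraph following Theorem \ref{thm2_22_3}, the hyperbolic contribution $\mathscr{E}_H$ is by definition obtained from $\mathscr{X}+\mathscr{Y}+\mathscr{Z}$ by restricting each $\gamma$-sum to the hyperbolic elements of $\Gamma$: that is, $\mathscr{E}_H=\mathscr{X}_H+\mathscr{Y}_H+\mathscr{Z}_H$. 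First I would write these three pieces out explicitly.

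Next I would place Theorem \ref{thm2_22_6} beside them. The three summands in \eqref{eq2_22_8} are, line by line:
\begin{align*}
&-\frac{n}{\pi^2}\sum_{\substack{\gamma\in\Gamma\\\gamma\;\text{hyperbolic}}}\int_X\int_{\mathbb{U}}\frac{\mu(z)\overline{\nu(\eta)}}{(z-\bar{\eta})^2(\gamma z-\bar{\eta})^2}\frac{(z-\bar{z})^{2n-2}}{(\gamma z-\bar{z})^{2n-2}}\gamma'(z)^n\,d^2\eta\,d^2z,\\
&-\frac{2(n-1)}{\pi^2}\sum_{\substack{\gamma\in\Gamma\\\gamma\;\text{hyperbolic}}}\int_X\int_{\mathbb{U}}\frac{\mu(z)\overline{\nu(\eta)}}{(z-\bar{\eta})^3(\gamma z-\bar{\eta})}\frac{(z-\bar{z})^{2n-1}}{(\gamma z-\bar{z})^{2n-1}}\gamma'(z)^n\,d^2\eta\,d^2z,\\
&\frac{(n-1)(2n-1)}{\pi}\sum_{\substack{\gamma\in\Gamma\\\gamma\;\text{hyperbolic}}}\int_X\int_{\mathbb{U}}\frac{(z-\bar{z})^{2n-2}}{(\gamma z-\bar{z})^{2n}}\gamma'(z)^n\,\mathcal{G}(z,w)\,\mu(w)\overline{\nu(w)}\rho(w)\,d^2w\,d^2z.
\end{align*}
These are, line for line, exactly $\mathscr{X}_H$, $\mathscr{Y}_H$, and $\mathscr{Z}_H$ obtained by restricting the formulas \eqref{eq2_22_6} to hyperbolic $\gamma$: in $\mathscr{X}$ the integration variable is called $w$ rather than $\eta$, but otherwise the integrand in the first line of \eqref{eq2_22_8} coincides with that of $\mathscr{X}$, the second line coincides with that of $\mathscr{Y}$ (up to the same renaming $w\leftrightarrow\eta$), and the third line is literally $\mathscr{Z}$ with the sum restricted to hyperbolic elements. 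So the proof is just the observation that the $\gamma$-by-$\gamma$ integrands agree.

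The only genuinely nontrivial point — and the step I would treat as the main obstacle — is checking that one may restrict, in this comparison, to hyperbolic $\gamma$ without error terms sneaking in: in other words, that Theorem \ref{thm2_22_6} really does produce the hyperbolic part and not, say, the hyperbolic-plus-identity part. This is already handled inside the proof of Theorem \ref{thm2_22_6}: the derivation starts from Theorem \ref{thm2_23_1}, whose statement \eqref{eq2_17_5} has the $\gamma$-sum running only over hyperbolic elements, and the remark after the proof of Theorem \ref{thm2_22_6} verifies that no boundary terms arise when one folds each conjugacy class into an integral over a strip $S_\lambda$ and integrates by parts. Thus the sum over hyperbolic $\gamma$ in \eqref{eq2_22_8} is an honest convergent sum with no extra contributions. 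Granting that, the theorem follows at once. I would therefore write the proof as: ``Comparing the three lines of \eqref{eq2_22_8} in Theorem \ref{thm2_22_6} with $\mathscr{X}$, $\mathscr{Y}$, $\mathscr{Z}$ of \eqref{eq2_22_6} in Theorem \ref{thm2_22_3} (renaming the inner integration variable $w$ to $\eta$ in $\mathscr{X}$ and $\mathscr{Y}$), we see that the first line is $\mathscr{X}_H$, the second is $\mathscr{Y}_H$, and the third is $\mathscr{Z}_H$; hence $\pa_\mu\pa_{\bar\nu}\log\det\Delta_n=\mathscr{X}_H+\mathscr{Y}_H+\mathscr{Z}_H=\mathscr{E}_H$.''
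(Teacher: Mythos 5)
Your proposal is correct and is essentially identical to the paper's own proof: the paper simply states that comparing Theorem \ref{thm2_22_6} with Theorem \ref{thm2_22_3} (i.e.\ matching the three lines of \eqref{eq2_22_8} with $\mathscr{X}_H$, $\mathscr{Y}_H$, $\mathscr{Z}_H$ from \eqref{eq2_22_6}) immediately yields the result. Your additional remark that the absence of boundary terms and the restriction to hyperbolic $\gamma$ are already secured inside the proofs of Theorems \ref{thm2_23_1} and \ref{thm2_22_6} is exactly the right place to locate the only nontrivial content.
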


 \smallskip
\section{Parabolic Contribution}\label{a3}
In this section, we compute the parabolic contribution $\mathscr{E}_P$ to $\pa_{\mu}\pa_{\bar{\nu}}\log\det N_n$ when the surface $X$ has cusps. The result is
\begin{theorem}\label{thm2_24_1}
Let $n\geq 2$ and $q\geq 1$. Given $\mu,\nu\in\Omega_{-1,1}(X)$, the parabolic  contribution to $\pa_{\mu}\pa_{\bar{\nu}}\log\det N_n$ is given by
\begin{align} \mathscr{E}_P=\frac{\pi}{9}\langle \mu, \nu\rangle_{\text{TZ}}^{\text{cusp}}.\label{main_result_P}\end{align}
\end{theorem}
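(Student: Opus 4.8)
The plan is to start from the parabolic contribution $\mathscr{E}_P = \mathscr{X}_P + \mathscr{Y}_P + \mathscr{Z}_P$, with the three pieces obtained from \eqref{eq2_22_6} by restricting the sum over $\gamma\in\Gamma$ to parabolic elements. Parabolic elements fall into conjugacy classes indexed by the cusps $\kappa_i$, $1\le i\le q$; for each cusp we pick $\sigma_i\in\mathrm{PSL}(2,\mathbb{R})$ conjugating the stabilizer $\Gamma_{\kappa_i}$ to the group generated by $z\mapsto z+1$. First I would use the automorphy property of the integrands (the transformation rule stated in Proposition~\ref{prop2_22_1}) to rewrite each term: unfold the sum over $\alpha\in\Gamma_{\kappa_i}\backslash\Gamma$ together with the fundamental domain $F$ into an integral over a standard cusp neighbourhood, namely a strip $\{0<x<1,\ y>Y_0\}$ in the $\sigma_i$-coordinate, and replace $\mu,\nu$ by their pushforwards $\breve\mu_i=\mu\circ\sigma_i\,\overline{\sigma_i'}/\sigma_i'$, which are again harmonic Beltrami differentials. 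After this normalization the parabolic elements become the integer translations $z\mapsto z+k$, $k\ne 0$, and the three sums become sums over $k\in\mathbb{Z}\setminus\{0\}$ of explicit rational kernels in $z,w$ (or $z,\eta$).

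Next I would evaluate the resulting $k$-sums in closed form. In $\mathscr{X}_P$ and $\mathscr{Y}_P$ one meets sums of the shape $\sum_{k\ne 0}(z+k-\bar\eta)^{-2}(z+k-\bar z)^{-(2n-2)}$ and variants; these are computed by recognizing them as derivatives of $\pi\cot\pi\zeta$ (equivalently the Weierstrass/Eisenstein series $\sum_k (\zeta+k)^{-2}$), and by pushing all but the leading inverse power onto $w$- or $\eta$-integrations via integration by parts, exactly as in the proof of Proposition~\ref{prop2_22_2}. For $\mathscr{Z}_P$ the relevant object is $\sum_{k\ne 0} (z+k-\bar z)^{2n-2}(z+k-\bar z)^{-2n}\gamma_k'(z)^n = \sum_{k\ne 0} y^{2n-2}/(2iy+k)^{2n}$-type sums times the Green-kernel factor $\mathcal{G}(z,w)$; summing over $k$ and then over $n$-dependent constants should, after the $y$-integration in the strip, leave something proportional to the constant term in the Fourier expansion of the Green's function, which is how the Eisenstein series at $s=2$ enters. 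The key identity to extract is that the constant ($k=0$) Fourier coefficient of $G(\sigma_i z,w)$ in the cusp produces, upon integration against $y^{-2}\,dy$ over $(Y_0,\infty)$, precisely the ingredient giving $E_i(w,2)$ after one re-folds the $w$-integral back over $X$; this is the mechanism behind $\langle\mu,\nu\rangle_{\mathrm{TZ},i}^{\mathrm{cusp}}=\int_X\mu\bar\nu\,E_i(z,2)\,dA$.

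The main obstacle I anticipate is the careful bookkeeping of the three separate $k$-sums and showing that the $y$-integrals over the cusp strip converge and combine to exactly $\tfrac{\pi}{9}E_i(z,2)$ (with the right rational constant in $n$ independent of $n$ — note the final answer has no $n$-dependence, so a nontrivial cancellation of the $n$-dependent coefficients in $\mathscr{X}_P,\mathscr{Y}_P,\mathscr{Z}_P$ must occur). Concretely, I expect to have to: (i) justify interchanging the $k$-sum with the $\zeta$-integrations and with the strip integration (uniform convergence away from the cusp point, decay of $\mu,\nu$), (ii) identify the Eisenstein series $E_i(z,s;0)$ via its defining series $\sum_{\gamma\in\Gamma_{\kappa_i}\backslash\Gamma}[\mathrm{Im}\,\sigma_i^{-1}\gamma z]^{s}$ after refolding, and (iii) track the numerical constant through the contour integrations (residues at $w=i(y+c)$ as in Proposition~\ref{prop2_22_2}) to land on $\tfrac{\pi}{9}$. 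Summing the contributions of all $q$ cusps then yields $\mathscr{E}_P=\tfrac{\pi}{9}\sum_{i=1}^q\langle\mu,\nu\rangle_{\mathrm{TZ},i}^{\mathrm{cusp}}=\tfrac{\pi}{9}\langle\mu,\nu\rangle_{\mathrm{TZ}}^{\mathrm{cusp}}$, which is \eqref{main_result_P}.
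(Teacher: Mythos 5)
Your overall strategy matches the paper's: restrict to parabolic elements, group them by cusp, unfold the sum over $\Gamma_{\kappa_j}\backslash\Gamma$ against the fundamental domain into the strip $\mathfrak{S}=\{0\le x\le 1,\ y>0\}$ (note: the unfolding gives the \emph{full} strip, not a truncated neighbourhood $y>Y_0$ as you wrote — a truncation would leave an unaccounted compact piece), expand $\hat\mu_j$ in a Fourier series, and compute via contour integration. However, there are two genuine gaps in how you propose to execute this. First, the $\ell$-sums you want to evaluate as derivatives of $\pi\cot\pi\zeta$ are not of that form: after unfolding, the translation parameter $\ell$ appears simultaneously in a factor like $(u-\ell-i(y+v))^{-2}$ \emph{and} in the factor $(\ell+2iy)^{-(2n-2)}$ coming from $(\gamma z-\bar z)^{-(2n-2)}$, so the sum is not a single Eisenstein/cotangent series. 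The paper instead first kills the $\ell$-dependence of the $u$-integral by the partial-fraction identities \eqref{eq2_24_7} and \eqref{eq2_25_5} together with the shift $u\mapsto u+\ell$, reducing that factor to $2/\ell^2$; the residual $\ell$-sum with $(\ell+2iy)^{-(2n-2)}$ is never evaluated in closed form for $\mathscr{X}_{P,j}$ or $\mathscr{Y}_{P,j}$ separately.

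Second, and more importantly, you leave the decisive cancellation as a hope ("a nontrivial cancellation \dots must occur") without identifying its mechanism. The actual structure is: the constant Fourier coefficient $c_0(y)$ of $2(\Delta_0+2)^{-1}(\mu\bar\mu)(\sigma_j z)$ splits as $\alpha(y)+\beta(y)$ with $\alpha(y)=\tfrac{2}{3y}\langle\mu,\mu\rangle_{\mathrm{TZ},j}^{\mathrm{cusp}}$ and $\beta(y)$ an explicit exponentially decaying series (Proposition \ref{prop2_24_1}); correspondingly $\mathscr{Z}_{P,j}=\mathfrak{Z}_1+\mathfrak{Z}_2$, where $\mathfrak{Z}_1$ alone produces $\tfrac{\pi}{9}\langle\mu,\mu\rangle_{\mathrm{TZ},j}^{\mathrm{cusp}}$ via $\sum_{\ell\neq 0}\ell^{-2}=\pi^2/3$, while $\mathfrak{Z}_2$ is shown by repeated integration by parts (using identities like \eqref{eq2_25_8}) to equal exactly $-(\mathscr{X}_{P,j}+\mathscr{Y}_{P,j})$. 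So the entire answer comes from the Green's-function term $\mathscr{Z}_P$, not from a diffuse recombination of all three pieces into $E_i(\cdot,2)$; the link to the TZ metric is made through the Fourier-coefficient identity $\langle\mu,\mu\rangle_{\mathrm{TZ},j}^{\mathrm{cusp}}=\tfrac{3}{128\pi^5}\sum_k|\beta_k^{(j)}|^2/k^5$ (Proposition \ref{prop2_24_2}) rather than by re-identifying the Eisenstein series after refolding. Without these two ingredients your outline cannot be completed as stated.
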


In the following, we give the details of the proof of this theorem.

The Riemann surface $X$ has $q$ cusps corresponding to the $q$ parabolic elements $\kappa_1, \ldots, \kappa_q$. For each $1\leq j\leq q$, there is a $\sigma_j\in \text{PSL}\,(2,\mathbb{R})$ such that
\begin{align}\label{eq2_24_4}\sigma_j^{-1}\kappa_j\sigma_j=\begin{pmatrix} 1 & \pm 1\\0 & 1\end{pmatrix}.\end{align}
If $\gamma$ is a parabolic element of $\Gamma$, there is an $\alpha\in \Gamma$, $1\leq j\leq q$ and  a nonzero integer $\ell$ such that
$$\alpha^{-1}\gamma\alpha=\kappa_j^{\ell}=\sigma_jT_{\pm\ell}\sigma_j^{-1},$$
where $$T_{\ell}=\begin{pmatrix} 1 & \ell\\0 & 1\end{pmatrix}.$$
For $1\leq j\leq q$, the stabilizer of $\kappa_j$ is given by
$$\Gamma_{\kappa_j}=\sigma_jB\sigma_j^{-1},$$ where $B$ is the parabolic subgroup of $\text{PSL}\,(2,\mathbb{R})$ given by
\begin{align}\label{eq2_25_1}B=\left\{T_{\ell}\,|\, \ell\in\mathbb{Z}\right\}.\end{align}

Given $\mu\in\Omega_{-1,1}(X)$, $1\leq j\leq q$, let
\begin{align}\label{eq2_14_7}\hat{\mu}_j(z)=\mu\circ\sigma_j (z)\frac{\overline{\sigma_j '(z)}}{\sigma_j'(z)}.\end{align}Then $\hat{\mu}_j$ is an automorphic $(-1,1)$-form of the group $\sigma_j^{-1}\Gamma\sigma_j$. Since $B$ is a subgroup of  $\sigma_j^{-1}\Gamma\sigma_j$, we find that
\begin{align}\label{eq2_4_1}\hat{\mu}_j(z+1)=\hat{\mu}_j(z)\hspace{1cm}\text{for all}\;z\in\mathbb{U}.\end{align} By definition of harmonic Beltrami differentials, 
$y^{-2}\hat{\mu}_j(z)$ is antiholomorphic. Together with the periodicity \eqref{eq2_4_1}, we find that $\hat{\mu}_j(z)$ can be written as
\begin{align}\label{eq2_18_8}\hat{\mu}_j(z)=y^2\sum_{k=1}^{\infty} \beta_k^{(j)}e^{-2\pi i k\bar{z}}.\end{align}
We can choose a fundamental domain $F$   of $X$ on $\mathbb{U}$ such that
$$\mathfrak{S}=\bigcup_{\alpha\in \Gamma_{\kappa_j}\backslash\Gamma}\left(\sigma_j^{-1}\circ \alpha^{-1}\right) (\overline{F})=\left\{x+iy\,|\,0\leq x\leq 1, y>0\right\}.$$
Then
\begin{align*}
\langle\mu, \mu\rangle_{\text{TZ}, j}^{\text{cusp}}=&\iint\limits_F \mu(z)\overline{\mu(z)}\sum_{\alpha\in \Gamma_{\kappa_j}\backslash\Gamma}\left[\text{Im}\,\left(\sigma_j^{-1}\alpha z\right)\right]^{2}\frac{dxdy}{y^2}\\
=&\int_0^{\infty}\int_0^1\hat{\mu}(z)\overline{\hat{\mu}(z)}dxdy\\
=&\int_0^{\infty}\sum_{k=1}^{\infty}|\beta_k^{(j})|^2e^{-4\pi k y}y^4dy\\
=&\frac{3}{128 \pi^5}\sum_{k=1}^{\infty}\frac{|\beta_k^{(j})|^2}{k^5}.
\end{align*}

This is essentially eq.\ (2.10) in \cite{TZ_index_2}. Let us state it as a proposition.
\begin{proposition} \label{prop2_24_2}
Let $\kappa_j$ be a parabolic generator of the group $\Gamma$ and let $\sigma_j$ be an element of $\text{PSL}\,(2, \mathbb{R})$ so that \eqref{eq2_24_4} holds. Given $\mu\in\Omega_{-1,1}(X)$, let
$$ \mu\circ\sigma_j (z)\frac{\overline{\sigma_j '(z)}}{\sigma_j'(z)}=y^2\sum_{k=1}^{\infty} \beta_k^{(j)}e^{-2\pi i k\bar{z}}.$$Then
\begin{align}\label{eq2_24_5}
\langle\mu, \mu\rangle_{\text{TZ}, j}^{\text{cusp}}=&\frac{3}{128 \pi^5}\sum_{k=1}^{\infty}\frac{|\beta_k^{(j})|^2}{k^5}.
\end{align}
\end{proposition}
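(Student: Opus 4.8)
The plan is to unwind the definition of the cusp Takhtajan--Zograf metric and the Eisenstein series, reduce the integral over the fundamental domain $F$ to an integral over the standard cusp strip $\mathfrak{S} = \{0 \le x \le 1, y > 0\}$ by the usual unfolding trick, and then insert the Fourier expansion of the pulled-back Beltrami differential. First I would recall that the Eisenstein series at $s = 2$, $n = 0$ is $E_j(z,2) = \sum_{\alpha \in \Gamma_{\kappa_j}\backslash\Gamma} \left[\operatorname{Im}(\sigma_j^{-1}\alpha z)\right]^{2}$, so that
\[
\langle\mu,\mu\rangle_{\text{TZ},j}^{\text{cusp}} = \iint_F \mu(z)\overline{\mu(z)}\sum_{\alpha\in\Gamma_{\kappa_j}\backslash\Gamma}\left[\operatorname{Im}(\sigma_j^{-1}\alpha z)\right]^{2}\,\frac{dx\,dy}{y^2}.
\]
Next I would apply the standard unfolding: since the integrand is $\Gamma$-invariant once we sum over the coset representatives, changing variables $z \mapsto \sigma_j^{-1}\alpha z$ collapses the sum-of-integrals-over-$F$ into a single integral over $\mathfrak{S}$, using that $\mathfrak{S} = \bigcup_{\alpha\in\Gamma_{\kappa_j}\backslash\Gamma}(\sigma_j^{-1}\circ\alpha^{-1})(\overline{F})$ as stated in the excerpt, and that $\mu$ transforms with the right automorphy factor so that $\mu(z)\overline{\mu(z)} y^{-2}\,\cdot\,[\operatorname{Im}]^{2}$ becomes $\hat\mu_j(z)\overline{\hat\mu_j(z)}$ in the new coordinate. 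This produces $\int_0^\infty\int_0^1 |\hat\mu_j(z)|^2\,dx\,dy$.

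Then I would substitute the Fourier expansion \eqref{eq2_18_8}, $\hat\mu_j(z) = y^2 \sum_{k\ge 1}\beta_k^{(j)} e^{-2\pi i k\bar z}$, whose existence follows from the periodicity \eqref{eq2_4_1} and the fact that $y^{-2}\hat\mu_j$ is antiholomorphic (hence has an expansion in $e^{-2\pi i k\bar z}$ with only $k \ge 1$, the $k \le 0$ terms being excluded by square-integrability near the cusp). Writing $z = x + iy$, the $x$-integral over $[0,1]$ kills all cross terms by orthogonality of $\{e^{2\pi i k x}\}$, leaving $\int_0^\infty \sum_{k\ge 1} |\beta_k^{(j)}|^2 e^{-4\pi k y} y^4\,dy$. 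The remaining $y$-integral is elementary: $\int_0^\infty e^{-4\pi k y} y^4\,dy = 4!/(4\pi k)^5 = 24/(1024\,\pi^5 k^5) = 3/(128\,\pi^5 k^5)$, which yields the stated identity \eqref{eq2_24_5}.

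The only genuine subtlety — and the step I would be most careful about — is the justification of the unfolding, namely interchanging the sum over $\Gamma_{\kappa_j}\backslash\Gamma$ with the integral and verifying the tiling property of $\mathfrak{S}$ up to measure zero; this is where one uses that $E_j(z,2)$ is given by the absolutely convergent series (valid since $\operatorname{Re} s = 2 > 1$) together with the convergence of $\langle\mu,\mu\rangle_{\text{TZ},j}^{\text{cusp}}$, both of which are guaranteed by the finiteness results recalled earlier. Everything after the unfolding is routine Fourier analysis and a gamma-function evaluation, so no real obstacle remains; the polarization to general $\mu,\nu$ is immediate and I would only remark on it in passing.
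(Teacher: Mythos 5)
Your proposal is correct and follows exactly the paper's own argument: unfold the Eisenstein series integral over $F$ to the strip $\mathfrak{S}$, insert the Fourier expansion \eqref{eq2_18_8}, use orthogonality in $x$, and evaluate $\int_0^\infty y^4 e^{-4\pi k y}\,dy = 3/(128\pi^5 k^5)$. Nothing is missing.
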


Next, we give a proof of the following proposition which is a more precise version of Lemma 2 in \cite{TZ_index_2}.
\begin{proposition}\label{prop2_24_1}
Given $\mu\in\Omega_{-1,1}(X)$,  let $\hat{\mu}_j$ be defined by \eqref{eq2_14_7}, with Fourier expansion given by \eqref{eq2_18_8}. Then 
$2 (\Delta_0+2)^{-1}(\mu\bar{\mu})(\sigma_j z)=2 (\Delta_0+2)^{-1}(\hat{\mu}_j\bar{\hat{\mu}}_j)(z)$ has a Fourier expansion of the form
\begin{align}\label{eq2_24_12}
2 (\Delta_0+2)^{-1}(\mu\bar{\mu})(\sigma_j z)=\sum_{r=-\infty}^{\infty}c_r(y)e^{2\pi i rx}.
\end{align}The term  $c_0(y)$ can be written as a sum of two terms $\alpha(y)$ and $\beta(y)$, where 
$\alpha(y)$ is a multiple of $y^{-1}$ given by
\begin{align}\label{eq2_25_3}
\alpha(y)=\frac{2}{3y}\langle\mu, \mu\rangle_{\text{TZ}, j}^{\text{cusp}},
\end{align}and $\beta(y)$ is an exponentially decaying term  given by
\begin{align}\label{eq2_25_2}
\beta(y)=-  \frac{1}{64\pi^5y}\sum_{k=1}^{\infty}\frac{1}{k^5}\left|\beta_k^{(j)}\right|^2\left(1 + 4\pi ky + 8\pi^2k^2y^2 + 8\pi^3k^3y^3\right)  e^{ -4\pi k y}.
\end{align}
\end{proposition}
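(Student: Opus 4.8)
The plan is to compute the zero-th Fourier coefficient of $2(\Delta_0+2)^{-1}(\hat\mu_j\bar{\hat\mu}_j)$ explicitly, starting from the integral representation provided by Proposition \ref{integral2_24_1}. First I would use the conjugation invariance of the operator $2(\Delta_0+2)^{-1}$ under $\mathrm{PSL}(2,\mathbb R)$ to replace $\mu$ by $\hat\mu_j$ and work on the strip $\mathfrak S$; since $\hat\mu_j$ is periodic with period $1$ in $x$ (by \eqref{eq2_4_1}) and has the Fourier expansion \eqref{eq2_18_8}, the function $2(\Delta_0+2)^{-1}(\hat\mu_j\bar{\hat\mu}_j)$ is also periodic in $x$, giving the expansion \eqref{eq2_24_12}. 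To isolate $c_0(y)$, I would integrate \eqref{eq2_24_12} over $x\in[0,1]$.

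Next, the key computational step: plug $\hat\mu_j(z)=y^2\sum_k\beta_k^{(j)}e^{-2\pi i k\bar z}$ into the double integral of Proposition \ref{integral2_24_1} (transported to the disc coordinates or kept on $\mathbb U$, whichever makes the $\zeta,\eta$ integrals tractable). Because $\overline{\hat\mu_j(\eta)}=(\operatorname{Im}\eta)^2\sum_m \overline{\beta_m^{(j)}}e^{2\pi i m\eta}$, the cross terms $k\neq m$ will integrate to zero after the $x$-average (they carry a nonzero net frequency in the horizontal variable), so only the diagonal $k=m$ survives, producing $\sum_k|\beta_k^{(j)}|^2$ times an elementary one-variable integral in $y$ depending on $k$. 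That integral splits naturally into a piece proportional to $1/y$ — which, matched against the known value \eqref{eq2_24_5} of $\langle\mu,\mu\rangle_{\mathrm{TZ},j}^{\mathrm{cusp}}$, gives $\alpha(y)=\tfrac{2}{3y}\langle\mu,\mu\rangle_{\mathrm{TZ},j}^{\mathrm{cusp}}$ as in \eqref{eq2_25_3} — and an exponentially decaying remainder, which one computes to be the explicit polynomial-times-$e^{-4\pi ky}$ expression in \eqref{eq2_25_2}. An independent (and perhaps cleaner) route is to solve the ODE: $c_0(y)$ must satisfy $\bigl(-y^2\partial_y^2+2\bigr)c_0(y)=2\,\overline{(\ \cdot\ )}$-average of $\hat\mu_j\bar{\hat\mu}_j=2y^4\sum_k|\beta_k^{(j)}|^2 e^{-4\pi ky}$, a second-order inhomogeneous equation whose homogeneous solutions are $y^{-1}$ and $y^2$; variation of parameters with the boundary condition that $c_0$ grows at most like $y^{-1}$ at $\infty$ and has the prescribed cusp behaviour pins down $c_0$ uniquely, and the $1/y$ coefficient is then forced by \eqref{eq2_24_5}.

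The main obstacle I anticipate is bookkeeping rather than conceptual: one must be careful that the integral formula in Proposition \ref{integral2_24_1} is written with a fundamental domain/strip decomposition that legitimately allows interchanging the $x$-average with the $\zeta$- and $\eta$-integrations, and that the unfolding of the Eisenstein-type sums defining $G$ against the periodic $\hat\mu_j$ converges absolutely (the exponential decay of $\hat\mu_j$ into the cusp should take care of this, but it needs a word). A secondary subtlety is correctly separating the ``polynomial growth $\times$ constant'' part from the exponentially small part of the $y$-integral: the constant term $\alpha(y)$ does not come from a single residue but is the value obtained by analytic continuation / by matching the $O(1/y)$ asymptotics, so one should either quote the closed form of $\int_0^\infty t^4 e^{-4\pi k t}(\dots)\,dt$-type integrals carefully or, better, use the ODE argument where the split into $\alpha$ and $\beta$ is transparent because $\alpha$ is precisely the homogeneous $y^{-1}$ piece and $\beta$ the particular solution that decays. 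Once $c_0(y)=\alpha(y)+\beta(y)$ is identified, comparison of the $1/y$-coefficient with Proposition \ref{prop2_24_2} closes the argument.
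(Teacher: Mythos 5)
Your primary route is essentially the paper's proof: starting from Proposition \ref{integral2_24_1}, averaging over $x\in[0,1]$ so that only the diagonal terms $k=m$ of the Fourier expansions survive, reducing to elementary one-variable integrals (the paper evaluates these by successive contour integrations in the horizontal variables), and then splitting the result into the $y^{-1}$ piece identified with $\tfrac{2}{3y}\langle\mu,\mu\rangle_{\text{TZ},j}^{\text{cusp}}$ via \eqref{eq2_24_5} plus the exponentially decaying remainder. This is correct in outline and matches the paper.

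One caution about your proposed ``cleaner'' ODE alternative: the equation $-y^2c_0''+2c_0=2y^4\sum_k|\beta_k^{(j)}|^2e^{-4\pi ky}$ together with decay as $y\to\infty$ determines $c_0$ only up to an arbitrary multiple of the homogeneous solution $y^{-1}$, since that solution itself decays. The coefficient of $y^{-1}$ is genuinely global information (it is extracted in the literature by a Green's identity on the truncated surface, pairing against $E_j(\cdot,2)$, which lies in the kernel of $\Delta_0+2$), and it is not ``forced by \eqref{eq2_24_5}'' --- that equation only rewrites $\langle\mu,\mu\rangle_{\text{TZ},j}^{\text{cusp}}$ in Fourier coefficients and says nothing about $c_0$. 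So the ODE shortcut as stated does not close; stick with the direct computation from Proposition \ref{integral2_24_1}.
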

\begin{proof}
Let $$g(z)=2(\Delta_0+2)^{-1}(\mu\bar{\mu})(\sigma_j z).$$
Since the function $g(z)$ is an automorphic function of the group $\sigma_j^{-1}\Gamma\sigma_j$, which contains $B$ \eqref{eq2_25_1} as a subgroup, we find that 
$$g(z+1)=g(z)\hspace{1cm}\text{for all}\;z\in\mathbb{U}.$$ This implies that $g(z)$ has a Fourier expansion of the form \eqref{eq2_24_12}. By the theory of Fourier series, the term $c_0(y)$ is given by
\begin{align*}
c_0(y)=& \int_0^1 g(x+iy)dx.
\end{align*}
By Proposition \ref{integral2_24_1}, 
\begin{align*}
g(z)=&-\frac{8y^2}{\pi^2}\int_{\mathbb{U}}\int_{\mathbb{U}}\hat{\mu}_j(\zeta)\overline{\hat{\mu}_j(\eta)}\left\{  \frac{1}{( \zeta -\bar{z})^2(\zeta-\bar{\eta})^2(z-\bar{\eta})^2}
+ \frac{2iy }{( \zeta -\bar{z})^3(\zeta-\bar{\eta}) (z-\bar{\eta})^3}\right\}d^2\zeta d^2\eta.
\end{align*}
 By invariance properties,
\begin{align*}
c_0(y)=&-\frac{8y^2}{\pi^2}\int_{-\infty}^{\infty}\int_{\mathfrak{S}}\int_{\mathbb{U}}\hat{\mu}_j(\zeta)\overline{\hat{\mu}_j(\eta)}\left\{  \frac{1}{( \zeta -x+iy)^2(\zeta-\bar{\eta})^2(x+iy-\bar{\eta})^2}
\right.\\&\hspace{3cm}\left.+ \frac{2iy }{( \zeta -x+iy)^3(\zeta-\bar{\eta}) (x+iy-\bar{\eta})^3}\right\}d^2\zeta d^2\eta dx\\
=&c_{0,1}(y)+c_{0,2}(y).
\end{align*}
Let us integrate over $x$ first. This can be done using contour integration technique. In this case, we can consider the real line as the boundary of the upper half plane or the lower half plane. Let us consider it as the boundary of the lower half plane. As a function of the complex variable $w$,
$$ \frac{1}{( \zeta -w+iy)^2(\zeta-\bar{\eta})^2(w+iy-\bar{\eta})^2}$$has a double pole at $w=\bar{\eta}-iy$ on the lower half plane. Therefore,
\begin{align*}
\int_{-\infty}^{\infty} \frac{1}{( \zeta -x+iy)^2(\zeta-\bar{\eta})^2(x+iy-\bar{\eta})^2}dx=&-2\pi i \left.\frac{\pa}{\pa w}\right|_{w=\bar{\eta}-iy} \frac{1}{( \zeta -w+iy)^2(\zeta-\bar{\eta})^2}\\
=&-\frac{4\pi i}{(\zeta-\bar{\eta}+2iy)^3(\zeta-\bar{\eta})^2}.
\end{align*}Similarly,
\begin{align*}
\int_{-\infty}^{\infty} \frac{2iy}{( \zeta -x+iy)^3(\zeta-\bar{\eta})(x+iy-\bar{\eta})^3}dx=&-\frac{2\pi i}{2} \times 2iy\left.\frac{\pa^2}{\pa w^2}\right|_{w=\bar{\eta}-iy} \frac{1}{( \zeta -w+iy)^3(\zeta-\bar{\eta})}\\
=&\frac{24\pi y}{(\zeta-\bar{\eta}+2iy)^5(\zeta-\bar{\eta})}.
\end{align*}
Now we compute the integrals with respect to $\zeta$ and $\eta$. Writing $\zeta=a+ib$, $\eta=u+iv$, we have
\begin{align*}
c_{0,1}(y)=&\frac{32 i y^2}{\pi} \int_{0}^{\infty} \int_0^{\infty} \int_0^1 \int_{-\infty}^{\infty}b^2v^2\sum_{k=1}^{\infty} \beta_k^{(j)}e^{-2\pi i k(a-ib)}\sum_{m=1}^{\infty}
\overline{\beta_m^{(j)}}e^{2\pi i m(u+iv)}\\&\hspace{3cm}\times\frac{1}{(a-u+i(b+v+2y))^3(a-u+i(b+v))^2}dadudbdv.
\end{align*}For fixed $u$, make a change of variables $a\mapsto a+u$. Then integrate over $u$ first, we find that 
\begin{align*}
c_{0,1}(y)=&\frac{32 i y^2}{\pi} \int_{0}^{\infty}\int_0^{\infty}    \int_{-\infty}^{\infty}b^2v^2\sum_{k=1}^{\infty} \left|\beta_k^{(j)}\right|^2e^{-2\pi i ka}e^{-2\pi k(b+v)}\\&\hspace{3cm}\times \frac{1}{(a+i(b+v+2y))^3(a+i(b+v))^2}da dbdv.
\end{align*}Making another change of variables $b\mapsto b-v$ and integrate over $v$ first, we have
\begin{align*}
c_{0,1}(y)=&\frac{32 i y^2}{\pi} \int_{0}^{\infty}\int_0^{b}    \int_{-\infty}^{\infty}(b-v)^2v^2\sum_{k=1}^{\infty} \left|\beta_k^{(j)}\right|^2  \frac{e^{-2\pi i ka}e^{-2\pi kb}}{(a+i(b +2y))^3(a+ib)^2}da dvdb \\
=& \frac{16  i y^2}{15\pi}\int_{0}^{\infty}b^5\int_{-\infty}^{\infty} \sum_{k=1}^{\infty} \left|\beta_k^{(j)}\right|^2 \frac{e^{-2\pi i ka}e^{-2\pi kb}}{(a+i(b +2y))^3(a+ib)^2}da  db.
\end{align*}In the same way, we find that
\begin{align*}
c_{0,2}(y)
=&-\frac{32 y^3}{5\pi}\int_{0}^{\infty}b^5\int_{-\infty}^{\infty} \sum_{k=1}^{\infty} \left|\beta_k^{(j)}\right|^2 \frac{e^{-2\pi i ka}e^{-2\pi kb}}{(a+i(b +2y))^5(a+ib)}da  db.
\end{align*}Now to integrate over $a$ using contour integration technique, we need to consider the real line as boundary of the upper lower plane because of the factor $e^{-2\pi i ka}$. By carefully evaluating the residues over the two poles $-ib$ and $-i(b+2y)$, and summing up the results, we obtain
\begin{align*}
c_0(y)=&\alpha(y)+\beta(y),
\end{align*}
where
\begin{align*}
\alpha(y)=&\frac{8\pi}{15y}\int_{0}^{\infty}b^5  \sum_{k=1}^{\infty} k\left|\beta_k^{(j)}\right|^2  e^{-4\pi kb}  db\\
=&\frac{1}{64\pi^5y}\sum_{k=1}^{\infty}\frac{1}{k^5}\left|\beta_k^{(j)}\right|^2\\
=&\frac{2}{3y}\langle\mu, \mu\rangle_{\text{TZ}, j}^{\text{cusp}},
\end{align*}and
\begin{align*}
\beta(y)=&-\frac{8\pi}{15y}\int_{0}^{\infty}b^5  \sum_{k=1}^{\infty} k\left|\beta_k^{(j)}\right|^2\left(1 + 4\pi ky + 8\pi^2k^2y^2 + 8\pi^3k^3y^3\right)  e^{-4\pi kb-4\pi k y}  db\\=&-\frac{1}{64\pi^5y}\sum_{k=1}^{\infty}\frac{1}{k^5}\left|\beta_k^{(j)}\right|^2\left(1 + 4\pi ky + 8\pi^2k^2y^2 + 8\pi^3k^3y^3\right)  e^{ -4\pi k y}.
\end{align*}From the expressions, we find that $\alpha(y)$ is proportional to $y^{-1}$ and is related to the TZ cusp metric, and $\beta(y)$ is an exponentially decaying term.
\end{proof}

\begin{proof}[Proof of Theorem \ref{thm2_24_1}]
It is sufficient to consider the case where $\nu=\mu$. 

The set of parabolic elements in $\Gamma$ can be divided into $q$ disjoint subsets $\mathscr{P}_1, \ldots, \mathscr{P}_q$,  where $\mathscr{P}_j$ contains those parabolic elements that are conjugate to $\kappa_j^{\ell}$ for some nonzero integer $\ell$. Call the corresponding contribution to $\mathscr{E}_P$ as $\mathscr{E}_{P,j}$. Corresponding to \eqref{eq2_24_3}, it can be written as the sum of $\mathscr{X}_{P,j}$, $\mathscr{Y}_{P,j}$ and $\mathscr{Z}_{P,j}$.

Let us first concentrate on $\mathscr{X}_{P,j}$. 
\begin{equation}\label{eq3_4_9}\begin{split}
\mathscr{X}_{P,j} =&-\frac{n}{\pi^2} \sum_{\ell\neq 0}\sum_{\alpha\in\Gamma_{\kappa_j}\backslash\Gamma} \int_X\int_{\mathbb{U}}   \frac{\mu(z)\overline{\mu( w)}}{(z-\bar{w})^2(\alpha\kappa_j^{\ell}\alpha^{-1} z-\bar{w})^2} \\&\hspace{4cm}\times \frac{(  z-\bar{ z})^{2n-2} }{(\alpha\kappa_j^{\ell}\alpha^{-1}  z-\bar{z} )^{2n-2}}   [(\alpha\kappa_j^{\ell}\alpha^{-1}) '(z) ]^n  d^2w  d^2z\\
=&-\frac{n}{\pi^2} \sum_{\ell\neq 0} \int_{\mathfrak{S}}\int_{\mathbb{U}} \frac{\hat{\mu}_j(z)\overline{\hat{\mu}_j( w)}}{(z-\bar{w})^2(  z+\ell-\bar{w})^2}   \frac{(  z-\bar{ z})^{2n-2} }{(   z+\ell-\bar{z} )^{2n-2}}     d^2w  d^2z.
\end{split}\end{equation}
Let $z=x+iy$ and $w=u+iv$, and let $\hat{\mu}_j$ be given by \eqref{eq2_18_8}. Then
\begin{align*}
\mathscr{X}_{P,j}=&(-1)^n\frac{2^{2n-2}n}{\pi^2}\int_0^{\infty}\int_0^{\infty}\int_0^1 \int_{-\infty}^{\infty}\sum_{\ell\neq 0}v^2y^2\sum_{k=1}^{\infty}\beta_k^{(j)}e^{-2\pi i k(x-iy)}
\sum_{m=1}^{\infty}\overline{\beta_m^{(j)}}e^{2\pi i m(u+iv)}\\& \hspace{3cm}\times\frac{1}{( u-x-\ell-i(y+v))^2(u-x-i(y+v))^2}\frac{y^{2n-2}}{(\ell+2iy)^{2n-2}}du dxdv dy.
\end{align*}For fixed $x$, replace $u$ by $u+x$, interchange the order of integration and integrate over $x$ first, we find that
\begin{align}
\mathscr{X}_{P,j}
=&(-1)^n\frac{2^{2n-2}n}{\pi^2}\int_0^{\infty}\int_0^{\infty}  \int_{-\infty}^{\infty}\sum_{\ell\neq 0} v^2\sum_{k=1}^{\infty}\left|\beta_k^{(j)}\right|^2e^{-2\pi k(y+v)}
 e^{2\pi i ku}\nonumber\\& \hspace{3cm}\times\frac{1}{( u-\ell-i(y+v))^2(u-i(y+v))^2}\frac{y^{2n}}{(\ell+2iy)^{2n -2}}du  dv dy.\label{eq2_24_6}
\end{align}
It is straightforward to  verify that for any complex number $w$,
\begin{align}\label{eq2_24_7}
\frac{1}{w^2( w-\ell )^2 }=\frac{1}{\ell^2}\left\{\frac{1}{w^2}+\frac{1}{((w-\ell)^2}\right\}+\frac{2}{\ell^3}\left\{\frac{1}{w }-\frac{1}{w-\ell}\right\}.
\end{align}
For fixed $\ell\neq 0$, use \eqref{eq2_24_7} to split the integral \eqref{eq2_24_6} into four terms and apply change of variables $u\mapsto u+\ell$ to the term that has $u-\ell$, we find that
$$\int_{-\infty}^{\infty} 
  \frac{e^{2\pi i ku}}{( u-\ell-i(y+v))^2(u-i(y+v))^2}du =\frac{2}{\ell^2}\int_{-\infty}^{\infty} 
  \frac{e^{2\pi i ku}}{ (u-i(y+v))^2}du.$$
Using contour integration technique,  we have
  $$\int_{-\infty}^{\infty} 
  \frac{e^{2\pi i ku}}{ (u-i(y+v))^2}du=-4\pi^2 ke^{-2\pi k (y+v)}.$$
Therefore,
\begin{align}
\mathscr{X}_{P,j}=&(-1)^{n-1}2^{2n+1}n\sum_{\ell\neq 0}\frac{1}{\ell^2}\int_0^{\infty}\int_0^{\infty}   v^2 \sum_{k=1}^{\infty}k\left|\beta_k^{(j)}\right|^2e^{-4\pi k(y+v)}
  \frac{y^{2n}}{(\ell+2iy)^{2n-2}}   dv dy\nonumber\\
=&(-1)^{n-1}\frac{2^{2n-2}n}{4\pi^3}\sum_{\ell\neq 0}\frac{1}{\ell^2}\sum_{k=1}^{\infty}\frac{1}{k^2}\left|\beta_k^{(j)}\right|^2\int_0^{\infty} e^{-4\pi k y}
  \frac{y^{2n}}{(\ell+2iy)^{2n-2}}    dy.\label{eq3_4_7}
\end{align}

For the    term $\mathscr{Y}_{P,j}$, follow the same steps as that for $\mathscr{X}_{P,j}$. Instead of \eqref{eq2_24_7}, we have
\begin{align}\label{eq2_25_5}\frac{1}{w^3(w-\ell)}=\frac{1}{\ell^3}\left\{\frac{1}{w-\ell}-\frac{1}{w}\right\}-\frac{1}{\ell^2}\frac{1}{w^2}-\frac{1}{\ell}\frac{1}{w^3}.\end{align}
After a change of variables $u\mapsto u+\ell$ for the first term, we find that the contribution from the first two terms in \eqref{eq2_25_5} cancel.  After integration with respect to $v$,  we obtain
 \begin{align}
\mathscr{Y}_{P,j}
=& (-1)^{n}\frac{2^{2n-2}(n-1)}{4\pi^3}\sum_{\ell\neq 0}\frac{1}{\ell^2}\sum_{k=1}^{\infty}\frac{1}{k^2}\left|\beta_k^{(j)}\right|^2\int_0^{\infty} e^{-4\pi k y}
  \frac{2iy^{2n+1}}{(\ell+2iy)^{2n-1}}    dy\label{eq2_25_9}\\
&+(-1)^{n-1}\frac{2^{2n-2}(n-1)}{2\pi^2}\sum_{\ell\neq 0}\frac{1}{\ell }\sum_{k=1}^{\infty}\frac{1}{k }\left|\beta_k^{(j)}\right|^2\int_0^{\infty} e^{-4\pi k y}
  \frac{y^{2n+1}}{(\ell+2iy)^{2n-1}}    dy.\nonumber
\end{align}
For the term \eqref{eq2_25_9}, we split out the part $2iy$ and  rewrite it as $2iy+\ell -\ell$. This gives
\begin{align}
\mathscr{Y}_{P,j}
=& (-1)^{n}\frac{2^{2n-2}(n-1)}{4\pi^3}\sum_{\ell\neq 0}\frac{1}{\ell^2}\sum_{k=1}^{\infty}\frac{1}{k^2}\left|\beta_k^{(j)}\right|^2\int_0^{\infty} e^{-4\pi k y}
  \frac{ y^{2n }}{(\ell+2iy)^{2n-2}}    dy \nonumber\\
&+(-1)^{n-1}\frac{2^{2n-2}(n-1)}{4\pi^3}\sum_{\ell\neq 0}\frac{1}{\ell }\sum_{k=1}^{\infty}\frac{1}{k^2}\left|\beta_k^{(j)}\right|^2\int_0^{\infty} e^{-4\pi k y}
  (1+2\pi k y)\frac{y^{2n }}{(\ell+2iy)^{2n-1}}    dy \label{eq2_25_7}.
\end{align}
Using the fact that
\begin{align}\label{eq2_25_8}
(2n-2)\frac{y^{2n-3}}{(\ell+2iy)^{2n-1 }} =\frac{1}{\ell} \frac{d}{dy} \frac{y^{2n-2}}{(\ell  +2iy)^{2n-2}},
\end{align}integration by parts of the second term \eqref{eq2_25_7} in $\mathscr{Y}_{P,j}$ gives
\begin{align*}
&(-1)^{n-1}\frac{2^{2n-2}(n-1)}{4\pi^3}\sum_{\ell\neq 0}\frac{1}{\ell }\sum_{k=1}^{\infty}\frac{1}{k^2}\left|\beta_k^{(j)}\right|^2\int_0^{\infty} e^{-4\pi k y}
  (1+2\pi k y)\frac{y^{2n }}{(\ell+2iy)^{2n-1}}    dy\\
=&(-1)^{n }\frac{2^{2n-2}   }{8\pi^3}\sum_{\ell\neq 0}\frac{1}{\ell^2 }\sum_{k=1}^{\infty}\frac{1}{k^2}\left|\beta_k^{(j)}\right|^2\int_0^{\infty} e^{-4\pi k y}(3+4\pi k y-8\pi^2k^2y^2)
  \frac{y^{2n}}{(\ell+2iy)^{2n-2}}    dy.
\end{align*}Thus,
\begin{align*}
&\mathscr{X}_{P,j}+\mathscr{Y}_{P,j}\\=& (-1)^{n }\frac{2^{2n-2}   }{8\pi^3}\sum_{\ell\neq 0}\frac{1}{\ell^2 }\sum_{k=1}^{\infty}\frac{1}{k^2}\left|\beta_k^{(j)}\right|^2\int_0^{\infty} e^{-4\pi k y}(1+4\pi k y-8\pi^2k^2y^2)
  \frac{y^{2n}}{(\ell+2iy)^{2n-2}}    dy.
\end{align*}
For the term $\mathscr{Z}_{P,j}$, using the same reasoning as that for $\mathscr{X}_{P,j}$, we find that
\begin{align*}
\mathscr{Z}_{P,j}=(-1)^{n-1}\frac{2^{2n-2}(n-1)(2n-1)}{\pi} \sum_{\ell\neq 0}\int_0^{\infty}\int_0^1 \frac{y^{2n-2}}{(\ell+2iy)^{2n}} 2(\Delta_0+2)^{-1}\left(\hat{\mu}_j\bar{\hat{\mu}}_j\right)(z)dxdy.
\end{align*}The Fourier expansion of $ 2(\Delta_0+2)^{-1}\left(\hat{\mu}_j\bar{\hat{\mu}}_j\right)(z)$ is given by \eqref{eq2_24_12}. After integration  over $x$, we find that only the $c_0(y)=\alpha(y)+\beta(y)$ term \eqref{eq2_25_3} and \eqref{eq2_25_2} contribute. Hence,
\begin{align*}
\mathscr{Z}_{P,j}=(-1)^{n-1}\frac{2^{2n-2}(n-1)(2n-1)}{\pi} \sum_{\ell\neq 0}\int_0^{\infty}  \frac{y^{2n-2}}{(\ell+2iy)^{2n}} c_0(y) dy
=\mathfrak{Z}_1+\mathfrak{Z}_2,
\end{align*}where $\mathfrak{Z}_1$ is obtained from $\mathscr{Z}_{P,j}$ by replacing $c_0(y)$ by $\alpha(y)$, and $\mathfrak{Z}_2$ is by $\beta(y)$.
A direct integration by parts $(2n-3)$ times give
\begin{align*}
\mathfrak{Z}_1=&(-1)^{n-1}\frac{2^{2n-1}(n-1)(2n-1)}{3\pi} \langle\mu, \mu\rangle_{\text{TZ}, j}^{\text{cusp}}\sum_{\ell\neq 0}\int_0^{\infty}  \frac{y^{2n-3}}{(\ell+2iy)^{2n}}   dy\\
=&\frac{(-1)^{n-1}}{(2i)^{2n-3}}\frac{2^{2n-1} }{3\pi} \langle\mu, \mu\rangle_{\text{TZ}, j}^{\text{cusp}}\sum_{\ell\neq 0}\int_0^{\infty}  \frac{1}{(\ell+2iy)^{3}}   dy.
\end{align*}Finally, an integration and the summation formula
\begin{align}\label{eq3_4_8}\sum_{\ell\neq 0}\frac{1}{\ell^2}=\frac{\pi^2}{3}\end{align}produce
\begin{align*}
\mathfrak{Z}_1=&\frac{(-1)^{n-1}}{(2i)^{2n-2}}\frac{2^{2n-2} }{3\pi} \langle\mu, \mu\rangle_{\text{TZ}, j}^{\text{cusp}}\sum_{\ell\neq 0}\frac{1}{\ell^2}\\
=&\frac{\pi}{9} \langle\mu, \mu\rangle_{\text{TZ}, j}^{\text{cusp}}.
\end{align*}This is the desired term for the parabolic contribution $\mathscr{E}_{P,j}$. To complete the proof of the theorem, we need to show that the sum of $\mathscr{X}_{P,j}$, $\mathscr{Y}_{P,j}$ and $\mathfrak{Z}_2$ is zero.

For the $\mathfrak{Z}_2$ term, \eqref{eq2_25_2} gives
\begin{align*}
\mathfrak{Z}_2=&(-1)^{n}\frac{2^{2n-2}(n-1)(2n-1)}{64\pi^6} \sum_{\ell\neq 0} \sum_{k=1}^{\infty}\frac{1}{k^5}\left|\beta_k^{(j)}\right|^2\\&\times \int_0^{\infty}  \frac{y^{2n-3}}{(\ell+2iy)^{2n}}  \left(1 + 4\pi ky + 8\pi^2k^2y^2 + 8\pi^3k^3y^3\right)  e^{ -4\pi k y} dy.
\end{align*}Since
\begin{align*}
(2n-1)\frac{y^{2n-2}}{(\ell+2iy)^{2n }} =\frac{1}{\ell} \frac{d}{dy} \frac{y^{2n-1}}{(\ell  +2iy)^{2n-1}},
\end{align*}we find that
\begin{align*}
\mathfrak{Z}_2=&(-1)^{n-1}\frac{2^{2n-2}(n-1) }{64\pi^6} \sum_{\ell\neq 0}\frac{1}{\ell} \sum_{k=1}^{\infty}\frac{1}{k^5}\left|\beta_k^{(j)}\right|^2\\&\times \int_0^{\infty}  \frac{y^{2n-1}}{(\ell+2iy)^{2n-1}}\frac{d}{dy}\left\{y^{-1}  \left(1 + 4\pi ky + 8\pi^2k^2y^2 + 8\pi^3k^3y^3\right)  e^{ -4\pi k y} \right\}dy\\
=&(-1)^{n}\frac{2^{2n-2}(n-1) }{64\pi^6} \sum_{\ell\neq 0}\frac{1}{\ell} \sum_{k=1}^{\infty}\frac{1}{k^5}\left|\beta_k^{(j)}\right|^2\\&\times \int_0^{\infty}  \frac{y^{2n-3}}{(\ell+2iy)^{2n-1}}  \left(1 + 4\pi ky + 8\pi^2k^2y^2 + 16\pi^3k^3y^3+ 32\pi^4k^4y^4\right)  e^{ -4\pi k y}  dy.
\end{align*}Now using \eqref{eq2_25_8}, another integration by parts give
\begin{align*}
\mathfrak{Z}_2=&(-1)^{n-1}\frac{2^{2n-2}  }{8\pi^3} \sum_{\ell\neq 0}\frac{1}{\ell^2} \sum_{k=1}^{\infty}\frac{1}{k^2}\left|\beta_k^{(j)}\right|^2 \int_0^{\infty}  \frac{y^{2n}}{(\ell+2iy)^{2n-2}}  \left(1 + 4\pi ky - 8\pi^2k^2y^2  \right)  e^{ -4\pi k y}  dy.
\end{align*}This proves that
$$\mathscr{X}_{P,j}+\mathscr{Y}_{P,j}+\mathfrak{Z}_2=0,$$ and thus completes the proof that the 
 parabolic contribution $\mathscr{E}_P$ is indeed given by \eqref{main_result_P}.
\end{proof}

Setting $n=1$ in $\mathscr{X}_{P,j}$ \eqref{eq3_4_7}, the integration over $y$ can be performed directly and the summation over $\ell$ is simple. Using \eqref{eq3_4_8},
we have
\begin{equation}\label{eq3_9_19}\begin{split}
\mathscr{X}_{P,j}=&\frac{1}{12\pi}  \sum_{k=1}^{\infty}\frac{1}{k^2}\left|\beta_k^{(j)}\right|^2\int_0^{\infty}y^2e^{-4\pi k y}dy\\
=&  \frac{1}{384\pi^4}\sum_{k=1}^{\infty}\frac{|\beta_k^{(j})|^2}{k^5}\\&=\frac{\pi}{9}\langle\mu, \mu\rangle_{\text{TZ}, j}^{\text{cusp}}.
\end{split}\end{equation}
  From this and the definition of $\mathscr{X}_{P,j}$ given by \eqref{eq3_4_9},
we   obtain an explicit integral formula for the parabolic TZ metric.
\begin{corollary}\label{cor3_3_1}
Let $\mu, \nu\in\Omega_{-1,1}(X)$ and let $\kappa_j$ be an elliptic generator of $\Gamma$. Then 
\begin{align}\label{eq3_3_5}
\langle\mu, \nu\rangle_{\text{TZ},j}^{\text{cusp}}=& -\frac{9}{\pi^3}\sum_{\gamma\in \tilde{\Gamma}[\kappa_j]}\int_X\int_{\mathbb{U}}\frac{\mu(z)\overline{\nu(w)}}{(z-\bar{w})^2(\gamma z-\bar{w})^2}\gamma'(z)d^2wd^2z,
\end{align}where
\begin{align*}
\tilde{\Gamma}[\kappa_j]=\left\{\alpha\kappa_j^{\ell}\alpha^{-1}\,|\,\alpha\in \Gamma_{\kappa_j}\backslash\Gamma, \ell\in\mathbb{Z}, \ell\neq 0\right\}.
\end{align*}
\end{corollary}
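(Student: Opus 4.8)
The plan is to extract Corollary \ref{cor3_3_1} as essentially a repackaging of the computations already carried out in the proof of Theorem \ref{thm2_24_1}, specifically the chain \eqref{eq3_9_19} combined with the definition \eqref{eq3_4_9} of $\mathscr{X}_{P,j}$. First I would note that it suffices to treat the diagonal case $\nu=\mu$; the general bilinear identity then follows by polarization, since both sides of \eqref{eq3_3_5} are sesquilinear in $(\mu,\nu)$.

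Next I would unwind \eqref{eq3_4_9}: the definition there expresses
\[
\mathscr{X}_{P,j}=-\frac{1}{\pi^2}\sum_{\gamma\in\tilde{\Gamma}[\kappa_j]}\int_X\int_{\mathbb{U}}\frac{\mu(z)\overline{\mu(w)}}{(z-\bar w)^2(\gamma z-\bar w)^2}\,\frac{(z-\bar z)^{2n-2}}{(\gamma z-\bar z)^{2n-2}}\,\gamma'(z)^n\,d^2w\,d^2z
\]
(with $n=1$ this is exactly the sum appearing in \eqref{eq3_3_5}, the $n$-dependent factors collapsing to $1$ and $\gamma'(z)^n$ becoming $\gamma'(z)$). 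Here $\tilde\Gamma[\kappa_j]=\{\alpha\kappa_j^{\ell}\alpha^{-1}\mid\alpha\in\Gamma_{\kappa_j}\backslash\Gamma,\ \ell\neq0\}$ is precisely the set of parabolic elements of $\Gamma$ conjugate into the cyclic group generated by $\kappa_j$, matching the indexing set in the statement. Then I would invoke \eqref{eq3_9_19}, which evaluates this same quantity (for $n=1$) as $\tfrac{\pi}{9}\langle\mu,\mu\rangle_{\text{TZ},j}^{\text{cusp}}$ via: Fourier-expanding $\hat\mu_j$ as in Proposition \ref{prop2_24_2}, folding the $z$-integral over the strip $\mathfrak S$, carrying out the $u$-integral by contour integration, summing the elementary series $\sum_{\ell\neq0}\ell^{-2}=\pi^2/3$ from \eqref{eq3_4_8}, and finally matching against the Fourier formula \eqref{eq2_24_5} for $\langle\mu,\mu\rangle_{\text{TZ},j}^{\text{cusp}}$. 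Solving $\mathscr{X}_{P,j}=\tfrac{\pi}{9}\langle\mu,\mu\rangle_{\text{TZ},j}^{\text{cusp}}$ for $\langle\mu,\mu\rangle_{\text{TZ},j}^{\text{cusp}}$ yields the constant $-9/\pi^3$ in front of the double sum, giving \eqref{eq3_3_5} in the diagonal case.

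The only genuine subtlety — and the step I would be most careful about — is checking convergence and the legitimacy of interchanging the sum over $\gamma\in\tilde\Gamma[\kappa_j]$ with the integrations for $n=1$, since the Poincaré-series estimates used freely for $n\geq2$ degrade at $n=1$ (as the paper itself flags in the introduction and Appendix \ref{n1}). In fact here the situation is benign: after conjugating $\kappa_j$ to $T_{\pm1}$ and unfolding $\Gamma_{\kappa_j}\backslash\Gamma$, the sum over $\ell\neq0$ together with the $y$-integral produces only the absolutely convergent series $\sum_{\ell\neq0}\ell^{-2}$, so the rearrangement is justified directly rather than through a general $n=1$ workaround. I would remark this explicitly and then conclude by polarization, noting that the resulting alternative integral formula for $\langle\mu,\nu\rangle_{\text{TZ},j}^{\text{cusp}}$ is the "byproduct" advertised in the abstract.
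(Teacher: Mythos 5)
Your proposal is correct and follows essentially the same route as the paper: the paper obtains \eqref{eq3_3_5} precisely by setting $n=1$ in the already-derived expression \eqref{eq3_4_7} for $\mathscr{X}_{P,j}$, carrying out the elementary $y$-integral and the $\sum_{\ell\neq 0}\ell^{-2}$ summation to get \eqref{eq3_9_19}, and then matching against the definition \eqref{eq3_4_9}. Your additional remark that the $n=1$ convergence is benign here (the unfolded sum reduces to $\sum_{\ell\neq 0}\ell^{-2}$ times an absolutely convergent Fourier series) is a correct and worthwhile point of care that the paper leaves implicit.
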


\begin{remark}
The formula \eqref{eq3_3_5} can be compared to the following integral formula for the Weil-Petersson metric.
\begin{align*}
\langle \mu, \nu\rangle_{\text{WP}}= \frac{12}{\pi}\int_X\int_{\mathbb{U}}\frac{\mu(z)\overline{\nu(w)}}{(z-\bar{w})^4}d^2wd^2z.
\end{align*}
\end{remark}
\smallskip
\section{Ellitpic Contribution}\label{a4}

In this section, we compute the elliptic contribution $\mathscr{E}_E$ to $\pa_{\mu}\pa_{\bar{\nu}}\log\det N_n$ when the surface $X$ has ramification points. The result is
\begin{theorem}\label{thm2_24_2}Let $n\geq 2$ and $v\geq 1$.
Given $\mu,\nu\in\Omega_{-1,1}(X)$, the elliptic  contribution to $\pa_{\mu}\pa_{\bar{\nu}}\log\det N_n$ is given by
\begin{align}\mathscr{E}_E=& \sum_{j=1}^v\mathfrak{B}(m_j,n)\langle \mu, \nu\rangle_{\text{TZ}, j}^{\text{ell}}.\label{main_result_E}\end{align}where $\mathfrak{B}(m,n)$ is the constant  
\begin{align*}
\mathfrak{B}(m,n)=\frac{m}{4}\left[B_2\left(\left\{\frac{n-1}{m}\right\}\right)-\frac{1}{6m^2}\right]-\frac{n-1}{2}\left[B_1\left(\left\{\frac{n-1}{m}\right\}\right)+\frac{1}{2m}\right].
\end{align*}Here $B_1(x)=x-\di\frac{1}{2}$ and $\di B_2(x)=x^2-x+\frac{1}{6}$ are the first and second Bernoulli polynomials, and $\{x\}$ is the fractional part of $x$. 
\end{theorem}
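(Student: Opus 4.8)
The strategy parallels the proof of the parabolic contribution (Theorem \ref{thm2_24_1}), but now with the elliptic elements of $\Gamma$ playing the role of the parabolic ones. First I would partition the elliptic elements of $\Gamma$ into $v$ families $\mathscr{T}_1,\dots,\mathscr{T}_v$, where $\mathscr{T}_j$ consists of those $\gamma$ conjugate in $\Gamma$ to a power $\tau_j^{k}$, $1\le k\le m_j-1$; write $\mathscr{E}_E=\sum_{j=1}^v\mathscr{E}_{E,j}$ and, according to Theorem \ref{thm2_22_3}, $\mathscr{E}_{E,j}=\mathscr{X}_{E,j}+\mathscr{Y}_{E,j}+\mathscr{Z}_{E,j}$. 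For a fixed ramification point $w_j$, conjugate $\tau_j$ by an element $\sigma_j\in\mathrm{PSL}(2,\mathbb{R})$ so that $\sigma_j^{-1}\tau_j\sigma_j$ is a rotation of the unit disc model; it is cleanest to pass to the disc coordinate via the map $h$ of \eqref{eq2_2_2}, so that $\tau_j$ becomes multiplication by $\zeta_{m_j}=e^{2\pi i/m_j}$ fixing $0$. Then the pulled-back Beltrami differential $\hat\mu_j=\mu\circ\sigma_j\,\overline{\sigma_j'}/\sigma_j'$ is automorphic for the cyclic group of order $m_j$ and, being harmonic, has a Taylor-type expansion in the disc adapted to that symmetry.

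Next I would unfold each of $\mathscr{X}_{E,j}$, $\mathscr{Y}_{E,j}$, $\mathscr{Z}_{E,j}$: the sum over $\alpha\in\Gamma_{\tau_j}\backslash\Gamma$ folds the integral over $X$ back to an integral over a fundamental sector for the cyclic group, and the remaining finite sum over $k=1,\dots,m_j-1$ of the translates $\tau_j^{k}$ is handled with the disc-coordinate identities \eqref{eq2_7_1} (the analogues of \eqref{eq2_24_7}, \eqref{eq2_25_5} used in the parabolic case). The key computational input is the explicit form of $2(\Delta_0+2)^{-1}(\mu\bar\mu)$ near the fixed point: from Proposition \ref{integral2_24_1} (equivalently Proposition \ref{integral2_24_1}'s disc version appearing in the proof), $\mathcal{G}(w_j,z)=\check{\mathcal{G}}(0,z)$ has the closed form computed in Section \ref{importantformula}, and I would extract from it the piece contributing to $\langle\mu,\nu\rangle_{\mathrm{TZ},j}^{\mathrm{ell}}$, namely $2\int_X G(w_j,z)\mu\bar\nu\,dA$, exactly as $\alpha(y)$ was isolated in Proposition \ref{prop2_24_1}. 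The $\mathscr{X}$ and $\mathscr{Y}$ terms are integrals of rational functions over the sector which, after using the harmonicity identity \eqref{eq2_6_6} and integrating one variable by residues, reduce to finite sums over $k$ of elementary integrals; these finite sums over the $m_j$-th roots of unity are precisely what produce the Bernoulli polynomials: the relevant identity is
\begin{equation*}
\sum_{k=1}^{m-1}\frac{1}{|1-\zeta_m^{k}|^{2}}=\frac{m^2-1}{12},\qquad
\sum_{k=1}^{m-1}\frac{\zeta_m^{k(n-1)}}{1-\zeta_m^{k}}=\frac{m-1}{2}-m\Big\{\tfrac{n-1}{m}\Big\},
\end{equation*}
together with their higher analogues, which are exactly the Hurwitz-type evaluations encoding $B_1$ and $B_2$ at the fractional part $\{(n-1)/m\}$.

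Having computed each piece I would combine $\mathscr{X}_{E,j}$, $\mathscr{Y}_{E,j}$ and the "non-metric" part of $\mathscr{Z}_{E,j}$ and check, exactly as in the parabolic case, that the exponentially-decaying / non-$G(w_j,\cdot)$ contributions cancel among the three, leaving only the term proportional to $\langle\mu,\mu\rangle_{\mathrm{TZ},j}^{\mathrm{ell}}$ with coefficient $\mathfrak{B}(m_j,n)$; polarization then gives the general $\mu,\nu$ statement. The main obstacle I anticipate is bookkeeping the dependence on $n$ modulo $m_j$: unlike the parabolic case, where the fixed point is "at infinity" and the local expansion of $\hat\mu_j$ involves all positive frequencies uniformly, here the cyclic symmetry forces congruence conditions on the Taylor exponents of $\hat\mu_j$ and on the powers of $z$ appearing in $K_n$ and $\mathcal{G}$, so the residue calculations and root-of-unity sums must be organized by the residue of $n-1$ mod $m_j$; getting the Bernoulli polynomials to come out with the precise normalization $\mathfrak{B}(m,n)=\tfrac{m}{4}\big[B_2(\{\tfrac{n-1}{m}\})-\tfrac{1}{6m^2}\big]-\tfrac{n-1}{2}\big[B_1(\{\tfrac{n-1}{m}\})+\tfrac{1}{2m}\big]$ is where the care is needed. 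As in the rest of the paper, the purely elementary integral evaluations can be delegated to computer algebra.
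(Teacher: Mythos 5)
Your plan follows essentially the same route as the paper: partition the elliptic elements by conjugacy into the $\tilde\Gamma[\tau_j]$, unfold to a sector of the disc and average over the $m_j$ rotations, expand the pulled-back harmonic Beltrami differential with the congruence condition $k\equiv 0\bmod m_j$, and evaluate the resulting root-of-unity sums via the Bernoulli-polynomial identities of Proposition \ref{prop2_27_1}. One small caveat: in the elliptic case there is no exponentially decaying piece analogous to $\beta(y)$ --- the zero-mode $a_0(r)$ of $2(\Delta_0+2)^{-1}(\check\mu_j\overline{\check\mu}_j)$ is computed in closed form (Proposition \ref{prop2_26_1}), the integral terms from $\mathscr{X}_{E,j}$, $\mathscr{Y}_{E,j}$, $\mathscr{Z}_{E,j}$ cancel after repeated integration by parts, and $\mathfrak{B}(m_j,n)\langle\mu,\mu\rangle_{\text{TZ},j}^{\text{ell}}$ emerges entirely from the boundary terms at $r=0$.
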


The Riemann surface has $v$ ramification points corresponding to the $v$ elliptic elements $\tau_1, \ldots, \tau_v$ of orders $m_1, \ldots, m_v$ respectively. For each $1\leq j\leq v$, there is a fractional linear transformation $h_j$ mapping $\mathbb{D}$ biholomorphically onto $\mathbb{U}$, and such that
$$h_j^{-1}\circ \tau_j \circ h_j =R_{\theta_j},$$ and $h_j$ maps $w_j$, the fixed point of $\tau_j$ in $\mathbb{U}$, to the origin in $\mathbb{D}$. Here
$$ R_{\theta}=\begin{pmatrix} e^{i\theta} & 0\\0 & e^{-i\theta}\end{pmatrix},\hspace{1cm}\theta_j=\di \frac{\pi}{m_j}.$$

For a fixed positive integer $m\geq 2$, let $U_m$ be the subgroup of $\text{PSU}\,(1,1)$ generated  by $R_{\frac{\pi}{m}}$. Then the subgroup of $\Gamma$ generated by $\tau_j$ is
$h_jU_mh_j^{-1}$.

Given $\mu\in\Omega_{-1,1}(X)$, $1\leq j\leq v$, let
\begin{align}\label{eq2_6_4}\check{\mu}_j(z)=\mu\circ h_j (z)\frac{\overline{h_j '(z)}}{h_j'(z)}.\end{align}
Then $\check{\mu}_j$ is an automorphic $(-1,1)$-form of the group $h_j^{-1}\Gamma h_j$. Since $U_m$ is a subgroup of  $h_j^{-1}\Gamma h_j$, we find that
\begin{align}\label{eq2_4_2}\check{\mu}_j(e^{2i\theta_j}z)e^{-4i\theta_j}=\check{\mu}_j(z)\hspace{1cm}\text{for all}\;z\in\mathbb{U}.\end{align} By definition of harmonic Beltrami differentials, 
$(1-|z|^2)^2\check{\mu}_j(z)$ is antiholomorphic. Together with  \eqref{eq2_4_2}, we find that $\check{\mu}_j(z)$ can be written as
\begin{align}\label{eq2_26_1}\check{\mu}_j(z)=\frac{(1-|z|^2)^2}{4}\sum_{\substack{k\geq 2\\k\equiv 0\,\text{mod}\,m_j}}(k^3-k) \chi_k^{(j)}\bar{z}^{k-2},\end{align}where only the terms for which $k\equiv 0\;\text{mod}\;m_j$ are present. For simplicity, we will write this as
$$\check{\mu}_j(z)=\frac{(1-|z|^2)^2}{4}\sum_{k=2}^{\infty}(k^3-k) \chi_k^{(j)}\bar{z}^{k-2},$$ understanding that $\chi_k^{(j)}=0$ if $k\not\equiv 0\;\text{mod}\;m_j$.
By definition of the elliptic TZ metric \eqref{eq3_4_2}, we find that 
\begin{align*}
\langle\mu, \mu\rangle_{\text{TZ}, j}^{\text{ell}}=&2\int_{\mathbb{U}}  \mathcal{G}(w_j, z)\mu(z)\overline{\mu(z)} \rho(z)d^2z\\
=&8\int_{\mathbb{D}}\check{\mathcal{G}}(0, z)\check{\mu}(z)\overline{\check{\mu}(z)}\frac{d^2z}{(1-|z|^2)^2},
\end{align*}
 Here $\check{\mathcal{G}}(w, z)$ is given by \eqref{eq2_7_3}.

  Proposition \ref{integral2_24_1} gives the following.

\begin{lemma}\label{lemma2_6_1} If $\check{\mu}$ and $\check{\nu}$ are harmonic Beltrami differentials on $\mathbb{D}$, then
\begin{equation}\label{eq2_6_1}
\begin{split}
2(\Delta_0+2)^{-1}(\check{\mu}\overline{\check{\nu}})(z)=&\int_{\mathbb{D}}\check{\mathcal{G}}(z,w)\check{\mu}(w)\overline{\check{\nu}(w)}\rho(w)d^2w\\
=&\frac{2}{\pi^2}\int_{\mathbb{D}}\int_{\mathbb{D}}\check{\mu}(\zeta)\overline{\check{\nu}(\eta)}\left\{\frac{(1-|z|^2)^2}{(1-\zeta \bar{z})^2(1-\zeta\bar{\eta})^2(1-z\bar{\eta})^2}\right.\\
&\hspace{3.5cm}+\left.\frac{ (1-|z|^2)^3}{(1-\zeta \bar{z})^3(1-\zeta\bar{\eta})(1-z\bar{\eta})^3}\right\}d^2\zeta d^2\eta.
\end{split}
\end{equation}
\end{lemma}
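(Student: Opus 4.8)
The plan is to obtain \eqref{eq2_6_1} directly from Proposition \ref{integral2_24_1} by a change of variables to the unit disc, exactly in the spirit of the proofs of Propositions \ref{prop2_2_1} and \ref{integral2_24_1}. First I would recall from \eqref{eq2_7_1} the transformation identities under the biholomorphism $h:\mathbb{D}\to\mathbb{U}$, namely $\frac{h'(\zeta)h'(\eta)}{(h(\zeta)-h(\eta))^2}=\frac{1}{(\zeta-\eta)^2}$ and $\frac{h'(\zeta)\overline{h'(\eta)}}{(h(\zeta)-\overline{h(\eta)})^2}=-\frac{1}{(1-\zeta\bar\eta)^2}$, which convert each of the three Cauchy-type factors appearing in the kernel of \eqref{eq2_24_10} into the corresponding disc kernels $\frac{1}{1-\zeta\bar z}$, $\frac{1}{1-\zeta\bar\eta}$, $\frac{1}{1-z\bar\eta}$ up to powers of derivatives. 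Since $\check\mu$ and $\check\nu$ are the pullbacks of $\mu$ and $\nu$ under $h$, with the automorphy weights of a $(-1,1)$-form absorbing the $|h'|$ factors, the weight $\rho(z)^{1-n}=\rho(z)$ (here $n=0$ in the relevant operator, i.e.\ we use the weight $\rho$) and the measures $d^2\zeta\,d^2\eta$ transform consistently so that the right-hand side of \eqref{eq2_24_10} becomes precisely the double integral in \eqref{eq2_6_1}.

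The key bookkeeping step is to check the powers of $(1-|z|^2)$ on the right-hand side. On $\mathbb{U}$ the factors are $(z-\bar z)^2$ and $(z-\bar z)^3$ in the two terms, and $(z-\bar z) = 2i\,\mathrm{Im}\,z$ transforms under $h$ with $|h'(z)|^2$ producing the Jacobian that matches $(1-|z|^2)$; combined with the two $(\zeta-\bar z)$-type factors in each denominator (which each carry one $h'$), the net weight in the first term is $(1-|z|^2)^2$ and in the second term $(1-|z|^2)^3$, exactly as written. One must also track that $2(\Delta_0+2)^{-1}$ is conformally natural in the appropriate sense: since $\Delta_0$ is the Laplacian on functions (weight $(0,0)$), its pullback under the isometry $h$ of the hyperbolic metric is again $\Delta_0$, and $(\Delta_0+2)^{-1}$ commutes with pullback, so the left-hand side of \eqref{eq2_6_1} is simply the pullback of $2(\Delta_0+2)^{-1}(\mu\bar\nu)$, written in disc coordinates.

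The main obstacle I anticipate is not conceptual but notational: making sure the factor of $4$ and the $\frac{1}{(1-|z|^2)^2}$ versus $\rho(z)$ conventions are consistent, since $\rho(z) = \frac{4}{(1-|z|^2)^2}$ in the disc model (as used implicitly via \eqref{eq2_2_2} and the computations of $\check{\mathcal{G}}(0,w)$), and the integrals in Proposition \ref{integral2_24_1} carry $\rho(w)^{1-n}$ which for the operator $2(\Delta_0+2)^{-1}$ acting on functions times densities must be interpreted with the correct power. I would verify the constant by testing on the constant function, using the already-established fact $2(\Delta_0+2)^{-1}\mathscr{I} = \mathscr{I}$ (from the proof of Theorem \ref{thm2_22_2}), or simply by transporting the identity \eqref{eq2_24_10} verbatim, since that identity was proved without any normalization ambiguity. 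Once the change of variables is written out carefully, the statement follows immediately, so the proof is short: quote Proposition \ref{integral2_24_1}, apply $h$, use \eqref{eq2_7_1}, and collect terms.
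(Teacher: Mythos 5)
Your proposal is correct and takes essentially the same route as the paper: Lemma \ref{lemma2_6_1} is stated there as an immediate consequence of Proposition \ref{integral2_24_1}, the identification being exactly the change of variables by $h$ via \eqref{eq2_7_1} that you describe (and the disc kernel is in fact already computed, as $\widetilde{\Lambda}$, inside the proof of that proposition). Your bookkeeping of the powers of $(1-|z|^2)$ and of the $h'$ factors absorbed into $\check{\mu},\check{\nu}$ and the Jacobians is the right check, and the constant is fixed as you say.
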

Since $h_j$ maps the elliptic fixed point $w_j$ to the origin, Lemma \ref{lemma2_6_1} can be used to express the elliptic Takhtajan-Zograf metric in terms of Fourier coefficients.
\begin{proposition}
Given $\mu, \nu \in \Omega_{-1,1}(X)$, $1\leq j\leq v$, let $\check{\mu}_j, \check{\nu}_j$ be defined by \eqref{eq2_6_4}. Then 
\begin{align*}
\langle\mu, \nu\rangle_{\text{TZ}, j}^{\text{ell}}
=&\frac{4}{\pi^2}\int_{\mathbb{D}} \int_{\mathbb{D}}\check{\mu}(\zeta)\overline{\check{\nu}(\eta)}\mathscr{H}(\zeta,\eta) d^2\zeta d^2\eta,
\end{align*}where
$$\mathscr{H}(\zeta, \eta)=\frac{1}{(1-\zeta\bar{\eta})^2}+\frac{1}{1-\zeta\bar{\eta}}=\sum_{k=2}^{\infty}k(\zeta\bar{\eta})^{k-2}.$$
If
$$\check{\mu}_j(z)=\mu\circ h_j (z)\frac{\overline{h_j '(z)}}{h_j'(z)}=\frac{(1-|z|^2)^2}{4}\sum_{k=2}^{\infty}(k^3-k) \chi_k^{(j)}\bar{z}^{k-2},$$ then
\begin{align}\label{eq2_26_8}
\langle\mu, \mu\rangle_{\text{TZ}, j}^{\text{ell}}=& \sum_{k=2}^{\infty}k\left| \chi_k^{(j)}\right|^2.
\end{align}
\end{proposition}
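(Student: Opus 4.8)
The plan is to compute the right-hand side of the claimed identity \eqref{eq2_26_8} directly from the integral formula for $\langle\mu,\mu\rangle_{\text{TZ},j}^{\text{ell}}$ just derived, namely
\[
\langle\mu, \mu\rangle_{\text{TZ}, j}^{\text{ell}}
=\frac{4}{\pi^2}\int_{\mathbb{D}} \int_{\mathbb{D}}\check{\mu}_j(\zeta)\overline{\check{\mu}_j(\eta)}\,\mathscr{H}(\zeta,\eta)\, d^2\zeta\, d^2\eta,
\]
by substituting the Fourier expansion \eqref{eq2_26_1} of $\check{\mu}_j$ and the power-series expansion $\mathscr{H}(\zeta,\eta)=\sum_{k=2}^\infty k(\zeta\bar\eta)^{k-2}$, then carrying out the $\zeta$- and $\eta$-integrals term by term using the orthogonality relation \eqref{eq2_2_3}, i.e.\ $\int_{\mathbb{D}}u^m\bar u^n\,d^2u=0$ when $m\ne n$.

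First I would write
\[
\check{\mu}_j(\zeta)\overline{\check{\mu}_j(\eta)}
=\frac{(1-|\zeta|^2)^2(1-|\eta|^2)^2}{16}\sum_{k,l\ge 2}(k^3-k)(l^3-l)\chi_k^{(j)}\overline{\chi_l^{(j)}}\,\bar\zeta^{\,k-2}\eta^{\,l-2},
\]
and multiply by $\mathscr{H}(\zeta,\eta)=\sum_{p\ge 2}p\,\zeta^{\,p-2}\bar\eta^{\,p-2}$. In the double integral over $\mathbb{D}^2$ the variables separate: the $\zeta$-integral forces $p-2=k-2$, i.e.\ $p=k$, and the $\eta$-integral forces $p-2=l-2$, i.e.\ $p=l$, so only the diagonal $k=l=p$ survives, collapsing the triple sum to a single sum over $k\ge 2$. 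The remaining radial integrals are $\int_{\mathbb{D}}|\zeta|^{2k-4}(1-|\zeta|^2)^2\,d^2\zeta$, which is elementary; using $\int_{\mathbb{D}}|u|^{2m}(1-|u|^2)^2\,d^2u=\pi\,\frac{2}{(m+1)(m+2)(m+3)}$ (or computed directly via the Beta integral) one gets a clean rational function of $k$. Collecting all constants — the $4/\pi^2$ prefactor, the $1/16$ from the normalization of $\check\mu_j$, the two factors of $(k^3-k)=k(k-1)(k+1)$, the surviving weight $k$ from $\mathscr{H}$, and the two radial integrals each contributing $\pi/\big((k-1)k(k+1)\big)$ after shifting $m=k-2$ — should telescope to exactly $k\,|\chi_k^{(j)}|^2$, giving \eqref{eq2_26_8}. (I would double-check the exponent bookkeeping: with $m=k-2$ the factor $(m+1)(m+2)(m+3)=(k-1)k(k+1)$, which is precisely $k^3-k$, so each radial integral cancels one copy of $k^3-k$, leaving the single leftover $k$.)

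The main obstacle is purely bookkeeping: making sure every numerical constant is tracked correctly through the several layers of normalization — in particular reconciling the factor $(1-|z|^2)^2/4$ in \eqref{eq2_26_1}, the factor $(k^3-k)$ pulled out of the Fourier expansion, and the $4/\pi^2$ in the integral formula — so that nothing spurious is left over. There is no conceptual difficulty; the argument is a term-by-term evaluation justified by absolute convergence of the series (which holds since $\mu\in\Omega_{-1,1}(X)$ has finite Weil--Petersson norm and hence the Fourier coefficients $\chi_k^{(j)}$ decay fast enough), combined with the orthogonality of monomials on the disc. One should also note that the sum effectively runs only over $k\equiv 0\pmod{m_j}$ because $\chi_k^{(j)}=0$ otherwise, but this does not affect the formula as stated since the vanishing terms contribute zero on both sides.
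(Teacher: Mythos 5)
Your proposal is correct and follows essentially the same route as the paper, which presents the proposition as an immediate consequence of Lemma \ref{lemma2_6_1} evaluated at $z=0$ (giving the kernel $\mathscr{H}$) followed by exactly the term-by-term monomial-orthogonality computation you describe. One bookkeeping slip in your final tally: each radial integral equals $\int_{\mathbb{D}}|u|^{2k-4}(1-|u|^2)^2\,d^2u=\frac{2\pi}{(k-1)k(k+1)}$ (the value you correctly state in the parenthetical), not $\frac{\pi}{(k-1)k(k+1)}$ as in your last accounting sentence; with the correct factor the constants cancel to give exactly $k\,|\chi_k^{(j)}|^2$ rather than $\frac{k}{4}|\chi_k^{(j)}|^2$.
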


If $\gamma$ is an elliptic element of $\Gamma$, there is an $\alpha\in \Gamma$, $1\leq j\leq v$ and  an integer $1\leq\ell\leq m_j-1$ such that
$$\alpha^{-1}\gamma\alpha=\tau_j^{\ell}=h_jR_{l\theta_j}h_j^{-1}.$$
For $1\leq j\leq v$, the stabilizer of $\tau_j$ is 
$$\Gamma_{\tau_j}=h_jU_mh_j^{-1},$$which is precisely the subgroup generated by $\tau_j$.

We can choose a fundamental domain $F$   of $X$ on $\mathbb{U}$ such that
$$\mathscr{D}_j=\bigcup_{\alpha\in \Gamma_{\tau_j}\backslash\Gamma}\left(h_j^{-1}\circ \alpha^{-1}\right) (\overline{F})=\left\{(r\cos\theta, r\sin\theta)\,|\, 0<r<1, 0\leq \theta\leq 2\theta_j\right\}$$is the sector of the unit disc with central angle $2\theta_j$. It is $1/m_j$ of the unit disc.

\begin{remark}\label{remark2_26_1}
Using the notation in Remark \ref{remark2_2_1}, the elliptic contribution $\mathscr{E}_E$ to $\pa_{\mu}\pa_{\bar{\nu}}\log\det N_n$ can be written as
\begin{align*}
\mathscr{E}_E=\int\limits_F \sum_{j=1}^v\sum_{\ell=1}^{m_j-1}\sum_{\alpha\in \Gamma_{\tau_j}\backslash\Gamma}\mathscr{A}(\alpha\tau_j^{\ell}\alpha^{-1} z, z, \mu, \nu)\left[(\alpha\tau_j^{\ell}\alpha^{-1})'(z) \right]^n\rho(z)^{1-n}d^2z
=&\sum_{j=1}^v\mathscr{E}_{E,j}.
\end{align*}By making a change of variables using the map $h_j$, we find that
$$\mathscr{E}_{E,j}=\sum_{\ell=1}^{m_j-1}\int_{\mathscr{D}_j}\check{ \mathscr{A}}_j (e^{2i\ell\theta_j}z, z, \check{\mu}_j, \check{\nu}_j)  \frac{(1-|z|^2)^{2n-2}}{2^{2n-2}}d^2z,$$ 
where
\begin{equation}\label{eq2_5_1}\begin{split}
 \check{\mathscr{A}}_j(z',z, \check{\mu}, \check{\nu})
=&\mathscr{A}(h_j(z'),h_j(z),\mu,\nu)h_j'(z')^n\overline{h_j'(z)}^n.
\end{split}\end{equation}

Given a positive integer $k$, let $e^{2ik\theta_j}\mathscr{D}_j$ be the sector $\mathscr{D}_j$ rotated by an angle of $2k\theta_j$. It is easy to check that for any positive integer $k$ and $\ell$,
$$\int_{e^{2ik\theta_j}\mathscr{D}_j}\check{ \mathscr{A}}_j(e^{2i\ell\theta_j}z, z, \check{\mu}_j, \check{\nu}_j)  \frac{(1-|z|^2)^{2n-2}}{2^{2n-2}}d^2z=\int_{\mathscr{D}_j}\check{ \mathscr{A}}_j (e^{2i\ell\theta_j}z, z, \check{\mu}_j, \check{\nu}_j)  \frac{(1-|z|^2)^{2n-2}}{2^{2n-2}}d^2z.$$This means that instead of integrating over the sector $\mathscr{D}_j$, one can integrate over any sectors of the unit disc with central angle $2\theta_j$. 

Since
$$\bigcup_{k=1}^{m_j} e^{2ik\theta_j}\mathscr{D}_j=\mathbb{D},$$ and integrating over each of the sectors $e^{2ik\theta_j}\mathscr{D}_j$ produce the same result, 
one can replace the integral over $\mathscr{D}_j$ by the average of the integrals over the $m_j$ sectors. This gives
$$\mathscr{E}_{E,j}=\frac{1}{m_j}\sum_{\ell=1}^{m_j-1}\int_{\mathbb{D}}\check{ \mathscr{A}} (e^{2i\ell\theta_j}z, z, \check{\mu}_j, \check{\nu}_j)  \frac{(1-|z|^2)^{2n-2}}{2^{2n-2}}d^2z.$$ 
\end{remark}
Let us now   give a more precise version of Lemma 1 in \cite{TZ_index_3}, a counterpart of Proposition \ref{prop2_24_1} for elliptic generators.

\begin{proposition}\label{prop2_26_1}
Given $\mu\in\Omega_{-1,1}(X)$,  let $\check{\mu}_j$ be defined by \eqref{eq2_6_4}, with Fourier expansion given by \eqref{eq2_26_1}. Then 
$ 2 (\Delta_0+2)^{-1}(\check{\mu}_j\bar{\check{\mu}}_j)(z)$ has a Fourier expansion of the form
\begin{align}\label{eq2_26_2}
2 (\Delta_0+2)^{-1}(\check{\mu}_j\bar{\check{\mu}}_j)(re^{i\theta})=\sum_{k=-\infty}^{\infty}a_k(r)e^{2 i k\theta}.
\end{align}The term  $a_0(r)$ is given by
 \begin{align}\label{eq2_26_3}
 a_0(r)=\frac{(1-r^2)^2}{2}\sum_{k=2}^{\infty}k \left|\chi_k^{(j)}\right|^2\left\{\sum_{s =1}^{k-1} s^2 r^{2s-2}-\frac{k (k-1)^2}{4}r^{2k-2}\right\}.
\end{align}In particular, $a_0(r)$ is a function of $r^2$.
\end{proposition}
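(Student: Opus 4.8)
The plan is to compute $a_0(r)$ directly from the double-integral expression for $2(\Delta_0+2)^{-1}(\check\mu_j\bar{\check\mu}_j)$ supplied by Lemma \ref{lemma2_6_1}, specialized to the disc, exactly paralleling the proof of Proposition \ref{prop2_24_1} in the cusp case but now in polar rather than horocyclic coordinates. First I would write $g(z)=2(\Delta_0+2)^{-1}(\check\mu_j\bar{\check\mu}_j)(z)$ and observe that, since $U_{m_j}\subset h_j^{-1}\Gamma h_j$, the function $g$ satisfies $g(e^{2i\theta_j}z)=g(z)$; more importantly the abelian rotation subgroup makes each of the two kernels in \eqref{eq2_6_1} produce, after the angular integration $\frac{1}{2\pi}\int_0^{2\pi}(\cdot)\,d\theta$, only the diagonal Fourier terms. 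So the strategy is: plug the Fourier expansion \eqref{eq2_26_1} of $\check\mu_j$ into \eqref{eq2_6_1}, expand each of the factors $(1-\zeta\bar z)^{-2},(1-\zeta\bar\eta)^{-2},(1-z\bar\eta)^{-2}$ (and the cubic variants) via the binomial series \eqref{eq2_2_4}, and use the orthogonality relation \eqref{eq2_2_3} $\int_{\mathbb D}u^m\bar u^n\,d^2u=0$ for $m\ne n$ to collapse the $\zeta$- and $\eta$-integrals. The $\theta$-average of $g(re^{i\theta})$ extracts $a_0(r)$, and after the dust settles it is a single sum over $k$ of $|\chi_k^{(j)}|^2$ times an explicit rational-times-$r^{2k-2}$ type expression; the claim is that this collapses to \eqref{eq2_26_3}.

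Concretely, I would first reduce to the case $z=0$ is \emph{not} available here (unlike the cusp proof, where only $c_0(y)$ mattered and one could translate) because we genuinely need the $r$-dependence, so instead I would keep $z=re^{i\theta}$ and carry out the $\zeta,\eta$ integrals exactly. Writing $\check\mu_j(\zeta)=\frac{(1-|\zeta|^2)^2}{4}\sum_k(k^3-k)\chi_k^{(j)}\bar\zeta^{\,k-2}$ and similarly for $\check\nu_j=\check\mu_j$, the $\zeta$-integral against $(1-\zeta\bar z)^{-2}(1-\zeta\bar\eta)^{-2}$ (after expanding both geometric factors and using $\int_{\mathbb D}|\zeta|^{2p}(1-|\zeta|^2)^2\,d^2\zeta=\frac{\pi}{(p+1)(p+2)(p+3)}$, the same elementary integral used twice already in Section \ref{importantformula}) produces a convolution-type sum in the exponents; the $\eta$-integral does the mirror thing. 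I expect that, just as in the proofs of Propositions \ref{prop2_2_1} and \ref{integral2_24_1}, the cubic term and the quadratic term recombine so that the per-$k$ coefficient simplifies drastically. The two pieces $\sum_{s=1}^{k-1}s^2 r^{2s-2}$ and $-\frac{k(k-1)^2}{4}r^{2k-2}$ in \eqref{eq2_26_3} should come respectively from the "generic" part of the double sum and from a boundary/diagonal contribution; isolating them correctly is the bookkeeping heart of the argument. The prefactor $\frac{(1-r^2)^2}{2}$ is forced by the overall $(1-|z|^2)^2$ in \eqref{eq2_6_1} together with the $1/4$-normalizations.

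The main obstacle I anticipate is the combinatorial collapse: organizing the triple-indexed sum (one index from $\check\mu_j$, one from $\check\nu_j$, and the summation indices from the three binomial expansions) so that the $\eta$-orthogonality forces the $\check\nu_j$-index to equal the $\check\mu_j$-index $k$ — giving $|\chi_k^{(j)}|^2$ — and then resumming the remaining free index into the closed form $\sum_{s=1}^{k-1}s^2r^{2s-2}-\frac{k(k-1)^2}{4}r^{2k-2}$. This is exactly the kind of elementary-but-tedious computation the introduction warns is done "using computer algebra"; I would verify the identity symbolically for several small values of $k$ and $m_j$, then present the general closed form. Once $a_0(r)$ is in hand, the statement that it is a function of $r^2$ is immediate since every term is a power of $r^2$ times $(1-r^2)^2$. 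A secondary point to check is convergence and the legitimacy of interchanging the summations with the integrals over $\mathbb D$, which follows from the absolute convergence of the Fourier series \eqref{eq2_26_1} (a finite linear combination in any compact computation, or dominated uniformly on compacta) together with the fact that $\check{\mathcal G}$ is integrable against $\rho$ near the diagonal; I would note this briefly rather than belabor it.
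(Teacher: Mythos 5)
Your proposal follows essentially the same route as the paper's proof: substitute the Fourier expansion \eqref{eq2_26_1} into the disc form \eqref{eq2_6_1} of the kernel, expand the three factors by \eqref{eq2_2_4}, use the orthogonality \eqref{eq2_2_3} to force the diagonal terms (equivalently, take the angular average, which also forces the $\check{\nu}$-index to equal $k$ and yields $|\chi_k^{(j)}|^2$), evaluate the elementary moment integrals over $\mathbb{D}$, and resum --- the paper then closes with the algebraic identity converting $\sum_{s=1}^{k-1}s^2(k-s)r^{2s-2}+\tfrac{1-r^2}{4}\sum_{s=2}^{k}s^2(s-1)^2r^{2s-4}$ into $k\sum_{s=1}^{k-1}s^2r^{2s-2}-\tfrac{k^2(k-1)^2}{4}r^{2k-2}$, which is the ``bookkeeping heart'' you anticipated. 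The only slip is a harmless constant: $\int_{\mathbb{D}}|\zeta|^{2p}(1-|\zeta|^2)^2\,d^2\zeta=\frac{2\pi}{(p+1)(p+2)(p+3)}$, not $\frac{\pi}{(p+1)(p+2)(p+3)}$.
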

\begin{proof}  Since $2 (\Delta_0+2)^{-1}(\check{\mu}_j\bar{\check{\mu}}_j)(re^{i\theta})$  is a smooth function on the disc $\mathbb{D}$, it is periodic of period $2\pi$ in $\theta$. Hence, it has a Fourier expansion of the form \eqref{eq2_26_2}.
To find $a_0(r)$,  we use the formula  of $2 (\Delta_0+2)^{-1}(\check{\mu}_j\bar{\check{\mu}}_j)(z)$ given by \eqref{eq2_6_1}. 
From the expansion
\begin{align*}
\frac{1}{(1-\zeta\bar{z})^2(1-\zeta\bar{\eta})^2(1-z\bar{\eta})^2}=&\sum_{s_1=1}^{\infty}s_1(\zeta\bar{z})^{s_1-1}\sum_{s_2=1}^{\infty}s_2(\zeta\bar{\eta})^{s_2-1}
\sum_{s_3=1}^{\infty}s_3(z\bar{\eta})^{s_3-1},\\
\frac{1}{(1-\zeta\bar{z})^3(1-\zeta\bar{\eta}) (1-z\bar{\eta})^3}=&\frac{1}{4}\sum_{s_1=2}^{\infty}s_1(s_1 -1)(\zeta\bar{z})^{s_1-2}\sum_{s_2=0}^{\infty} (\zeta\bar{\eta})^{s_2 }
\sum_{s_3=2}^{\infty}s_3(s_3-1)(z\bar{\eta})^{s_3-2},
\end{align*}we find that for the $a_0(r)$ term, we only need those terms with $s_1=s_3$.

Substituting the Fourier expansion of $\check{\mu}_j$, eq. \eqref{eq2_6_1} gives
\begin{align*}
&2(\Delta_0+2)^{-1}(\check{\mu}\overline{\check{\mu}})(re^{i\theta}) \\
=&\frac{(1-|z|^2)^2}{8\pi^2}\int_{\mathbb{D}}\int_{\mathbb{D}}(1-|\zeta|^2)^2 (1-|\eta|^2)^2\sum_{k=2}^{\infty}(k^3-k) \chi_k^{(j)}\bar{\zeta}^{k-2} 
\sum_{m=2}^{\infty}(m^3-m) \overline{\chi_m^{(j)}}\bar{\eta}^{m-2}\\
&\times 
\left\{\frac{1}{(1-\zeta \bar{z})^2(1-\zeta\bar{\eta})^2(1-z\bar{\eta})^2}  +\frac{ (1-|z|^2) }{(1-\zeta \bar{z})^3(1-\zeta\bar{\eta})(1-z\bar{\eta})^3}\right\}d^2\zeta d^2\eta.
\end{align*} This shows that for the $a_0(r)$ term, we must have
$$k=s_1+s_2=s_3+s_2=m.$$ 
Let $s=s_1$. Then $s_2=k-s$. We find that
\begin{align*}
a_0(r)=&\frac{(1-r^2)^2}{8\pi^2}\int_{\mathbb{D}}\int_{\mathbb{D}}(1-|\zeta|^2)^2 (1-|\eta|^2)^2\sum_{k=2}^{\infty}(k^3-k)^2 \left|\chi_k^{(j)}\right|^2
\\& \times\left\{\sum_{s =1}^{k-1}s^2(k-s)r^{2s-2} +\frac{(1-r^2)}{4}\sum_{s =2}^{k}s^2(s-1)^2r^{2s-4}\right\}|\zeta|^{2k-4}|\eta|^{2k-4}d^2\zeta d^2\eta\\
=& \frac{(1-r^2)^2}{2}\sum_{k=2}^{\infty} \left|\chi_k^{(j)}\right|^2\left\{\sum_{s =1}^{k-1}s^2(k-s)r^{2s-2}+\frac{(1-r^2)}{4}\sum_{s =2}^{k}s^2(s-1)^2r^{2s-4}\right\}.
\end{align*}Now,
\begin{align*}
(1-r^2)\sum_{s =2}^{k}s^2(s-1)^2r^{2s-4}=&\sum_{s=1}^{k-1}s^2(s+1)^2r^{2s-2}-\sum_{s=1}^k s^2(s-1)^2r^{2s-2}\\=&4\sum_{s=1}^{k-1}s^3r^{2s-2}-k^2(k-1)^2r^{2k-2}.
\end{align*}Thus,
\begin{align*}
\sum_{s =1}^{k-1}s^2(k-s)r^{2s-2} +\frac{(1-r^2)}{4}\sum_{s =2}^{k}s^2(s-1)^2r^{2s-4}=k\sum_{s =1}^{k-1} s^2 r^{2s-2}-\frac{k^2(k-1)^2}{4}r^{2k-2}.
\end{align*}The formula \eqref{eq2_26_3} follows immediately. 
 
\end{proof}
Next, we need another proposition.
\begin{proposition}\label{prop2_27_1}
Let $n$ be a positive integer, $m$ be an integer larger than $1$, and $k$  an integer divisble by $m$.  If $\omega=e^{\frac{2\pi i }{m}}$, and  $\ell$  is an integer such that $1\leq \ell\leq m-1$, then
\begin{gather}
\sum_{s=1}^{k-1}s(k-s)\omega^{\ell(s-1)}= \frac{2k}{(1-\omega^{\ell})^2},\label{eq2_27_3}\\
 \sum_{s=2}^{k }s(s-1)\omega^{\ell (k-s) }= \frac{k(k-1)-k(k+1)\omega^{\ell}}{(1-\omega^{\ell})^2},\label{eq2_27_4}\\
   \sum_{\ell=1}^{m-1}\frac{\omega^{\ell n}}{1-\omega^{\ell}}=m\left[B_1\left(\left\{\frac{n-1}{m}\right\}\right)+\frac{1}{2m}\right], \label{eq2_27_7}\\
  \sum_{\ell=1}^{m-1}\frac{\omega^{\ell n}}{(1-\omega^{\ell})^2}=-\frac{m^2}{2}\left[B_2\left(\left\{\frac{n-1}{m}\right\}\right)-\frac{1}{6m^2}\right]. \label{eq2_27_5}
  \end{gather} Here $\di B_1(x)=x-\frac{1}{2}$ and $B_2(x)=\di x^2-x+\frac{1}{6}$ are respectively the first and second Bernoulli polynomials, and $\{x\}$ is the fractional part of $x$.

\end{proposition}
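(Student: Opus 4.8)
The plan is to prove the four identities in the order given, with \eqref{eq2_27_3} and \eqref{eq2_27_4} as elementary power-series manipulations, and \eqref{eq2_27_7}, \eqref{eq2_27_5} as the substantive discrete-Fourier / Bernoulli computations. For \eqref{eq2_27_3}, I would start from the generating-function identity $\sum_{s\geq 1} s\, x^{s-1} = (1-x)^{-2}$, hence $\big(\sum_{s\geq 1} s x^{s-1}\big)^2 = \sum_{k\geq 1}\big(\sum_{s=1}^{k-1} s(k-s)\big) x^{k-2} = (1-x)^{-4}$ — but more directly, since $\omega^{\ell}$ is a primitive $m$-th root of unity and $m\mid k$, I can evaluate $\sum_{s=1}^{k-1} s(k-s)\omega^{\ell(s-1)}$ by writing it as $k\sum_{s=1}^{k-1}s\,\omega^{\ell(s-1)} - \sum_{s=1}^{k-1}s^2\omega^{\ell(s-1)}$ and using the finite-sum formulas for $\sum s\, x^{s-1}$ and $\sum s^2 x^{s-1}$ over $s=1,\dots,k-1$; the boundary terms collapse because $\omega^{\ell k}=1$, leaving exactly $2k/(1-\omega^{\ell})^2$. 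Identity \eqref{eq2_27_4} is handled the same way: reindex $s\mapsto k-s$ so that $\sum_{s=2}^{k} s(s-1)\omega^{\ell(k-s)} = \sum_{t=0}^{k-2}(k-t)(k-t-1)\omega^{\ell t}$, expand the quadratic in $t$, apply the finite geometric-type sums, and simplify using $\omega^{\ell k}=1$.

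For \eqref{eq2_27_7} and \eqref{eq2_27_5} the idea is to recognize the left-hand sides as values of the Hurwitz-type / Bernoulli-number finite Fourier sums. Concretely, I would use the classical identity expressing Bernoulli polynomials as discrete Fourier transforms over $m$-th roots of unity: for $0\le a\le m-1$,
\begin{align*}
B_r\!\left(\frac{a}{m}\right) = -\frac{r!}{m^{r}}\,\frac{1}{?}\ \cdots
\end{align*}
— more precisely I would use the generating identity $\sum_{\ell=0}^{m-1}\frac{\omega^{\ell n}}{1-\omega^{\ell}x}$ and its derivative at $x=1$, or equivalently the Gauss-sum / cotangent expansions $\sum_{\ell=1}^{m-1}\frac{\omega^{\ell n}}{1-\omega^{\ell}} = \frac{m-1}{2} - \big(n \bmod m\big)$ and $\sum_{\ell=1}^{m-1}\frac{\omega^{\ell n}}{(1-\omega^{\ell})^2}$ expressed via $B_2$. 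The cleanest route is: set $f(x)=\sum_{\ell=1}^{m-1}\frac{\omega^{\ell n}}{1-\omega^{\ell}x}$, expand geometrically as $\sum_{j\ge 0}x^j\sum_{\ell=1}^{m-1}\omega^{\ell(n+j)} = \sum_{j\ge 0}x^j(m\,[m\mid n+j]-1)$, recognize this as a rational function with a pole at $x=1$ of order governing the $B_1$ term, and extract the regularized value at $x=1$ via $\big(1-x\big)f(x)\big|_{x=1}$ for \eqref{eq2_27_7} and the next Laurent coefficient (using $\frac{\partial}{\partial x}$) for \eqref{eq2_27_5}. Matching the resulting closed forms against $B_1(\{(n-1)/m\})+\frac{1}{2m}$ and $B_2(\{(n-1)/m\})-\frac{1}{6m^2}$ is then a direct check using $B_1(x)=x-\frac12$, $B_2(x)=x^2-x+\frac16$ and $\{(n-1)/m\} = \big((n-1)\bmod m\big)/m$.

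The main obstacle will be \eqref{eq2_27_5}: getting the order-of-pole bookkeeping right at $x=1$ so that the finite part is exactly $-\frac{m^2}{2}[B_2(\{(n-1)/m\})-\frac{1}{6m^2}]$, including the correct handling of the $j$ for which $m\mid n+j$ versus $m\nmid n+j$ (these contribute an arithmetic-progression sum whose second-order behavior is what produces $B_2$). An alternative, possibly safer, route for \eqref{eq2_27_5} is to differentiate \eqref{eq2_27_7}: observe $\frac{d}{dn}$ makes no sense, but $\sum_\ell \frac{\omega^{\ell n}}{(1-\omega^\ell)^2}$ can be obtained from $\sum_\ell \frac{\omega^{\ell n}}{1-\omega^\ell}$ by applying the operator $x\frac{d}{dx}$ to $\sum_\ell \frac{\omega^{\ell n}x^{\ell?}}{\cdots}$ after inserting a parameter; I would pick whichever bookkeeping is least error-prone and verify the final formula against a small case ($m=2$ or $m=3$) by direct summation as a sanity check. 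Everything else is routine finite-sum algebra and will be done by the computer-algebra methods the paper already relies on.
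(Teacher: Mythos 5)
Your plan is correct in outline, and for \eqref{eq2_27_3} and \eqref{eq2_27_4} it coincides with the paper's proof: the paper likewise writes the closed form of the polynomial sums $\sum_{s=1}^{k-1}s(k-s)x^{s-1}$ and $\sum_{s=2}^{k}s(s-1)x^{k-s}$ as rational functions and substitutes $x=\omega^{\ell}$, using $\omega^{\ell k}=1$ to collapse the boundary terms; splitting into $k\sum s\,\omega^{\ell(s-1)}-\sum s^{2}\omega^{\ell(s-1)}$ is only a cosmetic variant. For \eqref{eq2_27_7} and \eqref{eq2_27_5} your route is genuinely different. The paper stays entirely within finite algebra: it inverts the problem by expressing $\frac{1}{1-\omega^{\ell}}=-\frac{1}{m}\sum_{s=1}^{m}s\,\omega^{\ell(s-1)}$ and $\frac{1}{(1-\omega^{\ell})^{2}}=\frac{1}{2m}\sum_{s=1}^{m}s(m-s)\omega^{\ell(s-1)}$ (the latter being the $k=m$ case of \eqref{eq2_27_3}), then interchanges the $\ell$- and $s$-sums and applies the orthogonality relation $\sum_{\ell=1}^{m}\omega^{\ell q}=m\,[m\mid q]$, so that exactly one $s$ survives and the Bernoulli values drop out with no limiting process. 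Your route instead introduces the auxiliary generating function $f(x)=\sum_{\ell=1}^{m-1}\frac{\omega^{\ell n}}{1-\omega^{\ell}x}$, expands geometrically for $|x|<1$, and extracts the value (resp.\ the derivative, which shifts $n$ to $n+1$ and must be re-indexed) at the removable singularity $x=1$. Both methods hinge on the same orthogonality of roots of unity; the paper's buys a limit-free, purely finite computation, while yours is more systematic and generalizes to higher powers of $(1-\omega^{\ell})^{-1}$, at the cost of the Laurent-coefficient bookkeeping at $x=1$ that you yourself flag as the error-prone step (and which is visible in the unfinished Fourier--Bernoulli formula in your writeup). Carried out carefully, your computation does reproduce $-j_{0}+\frac{m-1}{2}$ with $j_{0}\equiv -n\ (\mathrm{mod}\ m)$, which matches the paper's $r-\frac{m-1}{2}$ with $r=(n-1)\bmod m$, so there is no gap in substance, only detail left to fill in.
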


\begin{proof}It is elementary to verify that if $x\neq 1$,
\begin{align*}
\sum_{s=1}^{k-1}s(k-s)x^{ s-1}=&\frac{(k-1)x^{k+1}-(k+1)x^k+(k+1)x-k+1}{(x-1)^3},\\
 \sum_{s=2}^{k }s(s-1)x^{  k-s }=& \frac{2x^{k+1}-k(k+1)x^2+2(k^2-1)x-k(k-1)}{(x-1)^3}.
\end{align*}Notice that $\omega^{\ell}\neq 1$ since $1\leq \ell\leq m-1$. By setting $x=\omega^{\ell}$ and noting that $\omega^k=1$ since $k$ is divisible by $m$, we have
\begin{align*}
\sum_{s=1}^{k-1}s(k-s)\omega^{\ell( s-1)}=&\frac{2k\omega^{\ell}-2k}{(\omega^{\ell}-1)^3}=\frac{2k}{(1-\omega^{\ell})^2},\\
\sum_{s=2}^{k }s(s-1)\omega^{\ell (k-s) }=&\frac{2k^2\omega^{\ell}-k(k+1)\omega^{2\ell} -k(k-1)}{(\omega^{\ell}-1)^3}= \frac{k(k-1)-k(k+1)\omega^{\ell}}{(1-\omega^{\ell})^2}.
\end{align*}
 To prove \eqref{eq2_27_7}, we use the fact that
\begin{align*}
\sum_{s=1}^{m}sx^{s-1 }=&\frac{mx^{m +1}-(m+1)x^m+1}{(1-x)^2}.
\end{align*}By setting $x=\omega^{\ell}$ and using the fact that $\omega^{\ell m}=1$, we find that
\begin{align*}
\sum_{s=1}^{m }s\omega^{\ell (s-1)}=-\frac{m}{1-\omega^{\ell}}.
\end{align*}
Now multiply by $\omega^{\ell n}$ on both sides and sum over $\ell$ from 1 to $m-1$, we obtain
\begin{align}
\sum_{\ell=1}^{m-1}\frac{\omega^{\ell n}}{1-\omega^{\ell}}=&-\frac{1}{m}\sum_{\ell=1}^{m-1}\sum_{s=1}^{m }s\omega^{\ell (s+n-1)}\nonumber\\
=& -\frac{1}{m}\sum_{\ell=1}^{m }\sum_{s=1}^{m }s\omega^{\ell (s+n-1)}+\frac{1}{m}\sum_{s=1}^{m }s. \label{eq3_4_3}\end{align}
For the first term in \eqref{eq3_4_3}, sum over $\ell$ first  using
\begin{align}\label{eq2_27_8}
\sum_{\ell=1}^{m }\omega^{\ell q}=\begin{cases} 0,\quad & q\not\equiv 0\mod m\\m,\quad & q\equiv 0\mod m\end{cases},
\end{align}  Notice that when $s$ runs from 1 to $m$, there is exactly one $s$ such that $s+n-1$ is divisible by $m$, which is
$s=m-r$, where $r$ is the remainder when $n-1$ is divided by $m$. 
Therefore,
\begin{align*}
\sum_{\ell=1}^{m-1}\frac{\omega^{\ell n}}{1-\omega^{\ell}}=&-(m-r)+\frac{m+1}{2}\\
=&-\frac{m-1}{2}+r.
\end{align*}Since
 $$\left\{\frac{n-1}{m}\right\}= \frac{r}{m},$$ we find that
$$\sum_{\ell=1}^{m-1}\frac{\omega^{\ell n}}{1-\omega^{\ell}}=m\left[\left\{\frac{n-1}{m}\right\}-\frac{1}{2}\right]+\frac{1}{2},$$which is \eqref{eq2_27_7}.

Eq. \eqref{eq2_27_5} has been proved in \cite{TZ_index_3} as part of the proof of Lemma 2. We give a  proof here similar to the proof of \eqref{eq2_27_7}. Setting $k=m$ in \eqref{eq2_27_3}, we find that
\begin{align*}
\frac{1}{(1-\omega^{\ell})^2}=&\frac{1}{2m}\sum_{s=1}^{m}s(m-s)\omega^{\ell( s-1)}.
\end{align*}The summation over $s$ is extended to $s=m$ as this term  is zero.
Now multiply by $\omega^{\ell n}$ on both sides and sum over $\ell$ from 1 to $m-1$, we obtain
\begin{align}
\sum_{\ell=1}^{m-1}\frac{\omega^{\ell n}}{(1-\omega^{\ell})^2}=&\frac{1}{2m}\sum_{\ell=1}^{m-1}\sum_{s=1}^{m}s(m-s)\omega^{\ell( s+n-1)}\nonumber\\
=&\frac{1}{2m}\sum_{\ell=1}^{m }\sum_{s=1}^{m}s(m-s)\omega^{\ell( s+n-1)}-\frac{1}{2m}\sum_{s=1}^{m}s(m-s).\label{eq2_27_6}
\end{align}Using \eqref{eq2_27_8}, we have
$$\sum_{s=1}^{m}\sum_{\ell=1}^{m }s(m-s)\omega^{\ell( s+n-1)}=mr (m-r).$$
The second summation in \eqref{eq2_27_6} is elementary and it is given by 
$$\sum_{s=1}^{m}s(m-s)=\frac{m^3-m}{6}.$$ Hence
\begin{align*}
\sum_{\ell=1}^{m-1}\frac{\omega^{\ell n}}{(1-\omega^{\ell})^2}=&-\frac{1}{2 }\left\{m^2 \left[\left(\frac{r}{m}\right)^2- \frac{r}{m} +\frac{1}{6}\right]-\frac{1}{6}\right\},
\end{align*}which proves \eqref{eq2_27_5}.
\end{proof}

Now we can return to Theorem \ref{thm2_24_2}.
\begin{proof}[Proof of Theorem \ref{thm2_24_2}.] 
It is sufficient to consider the case where $\nu=\mu$. 

As discussed in Remark \ref{remark2_26_1},   $$\mathscr{E}_E=\sum_{j=1}^{v}\mathscr{E}_{E,j},$$ where $\mathscr{E}_{E,j}$ is the sum of all the elliptic elements of $\Gamma$ that are conjugate to $\tau_j^{\ell}$ for some $1\leq \ell\leq m_j-1$.  Corresponding to \eqref{eq2_24_3}, $\mathscr{E}_{E,j}$ can be written as the sum of $\mathscr{X}_{E,j}$, $\mathscr{Y}_{E,j}$ and $\mathscr{Z}_{E,j}$.

To simplify notation, let $\omega=e^{2i \theta_j}=e^{\frac{2\pi i }{m_j}}$. 
By Remark \ref{remark2_26_1} and \eqref{eq2_22_6}, we have
\begin{equation}\label{eq2_26_4}\begin{split}
\mathscr{X}_{E,j} =&-\frac{n}{\pi^2m_j}\sum_{\ell=1}^{m_j-1}\omega^{\ell n} \int_{\mathbb{D}}\int_{\mathbb{D}}  \frac{\check{\mu}_j(z)\overline{\check{\mu}_j( \eta)}}{(1-z\bar{\eta})^2(1-\omega^{\ell}z\bar{\eta})^2}  
\frac{( 1-|z|^2)^{2n-2} }{(1-\omega^{\ell}|z|^2 )^{2n-2}}      d^2\eta  d^2z,\\
\mathscr{Y}_{E,j}=&-\frac{2(n-1)}{\pi^2m_j}\sum_{\ell=1}^{m_j-1}\omega^{\ell n} \int_{\mathbb{D}}\int_{\mathbb{D}}  \frac{\check{\mu}_j(z)\overline{\check{\mu}_j( \eta)}}{(1-z\bar{\eta})^3(1-\omega^{\ell}z\bar{\eta})}  
\frac{( 1-|z|^2)^{2n-1} }{(1-\omega^{\ell}|z|^2 )^{2n-1}}      d^2\eta  d^2z.
\end{split}\end{equation} These integrals can be evaluated using the same method as in the proof of Proposition \ref{prop2_26_1}. Integrating out the $\eta$ variable and the circular part of $z$, and make a change of variables $r^2\mapsto r$, one immediately obtains
\begin{align*}
\mathscr{X}_{E,j} =&-\frac{n}{ 8m_j}\sum_{\ell=1}^{m_j-1}\omega^{\ell n}\sum_{k=2}^{\infty}(k^3-k) \left|\chi_k^{(j)}\right|^2 \int_0^1
\frac{( 1-r)^{2n} }{(1-\omega^{\ell}r )^{2n-2}} \sum_{s=1}^{k-1}s(k-s)\omega^{\ell(s-1)}r^{k-2}   dr,\\
\mathscr{Y}_{E,j} =&-\frac{n-1}{ 8m_j}\sum_{\ell=1}^{m_j-1}\omega^{\ell n}\sum_{k=2}^{\infty}(k^3-k) \left|\chi_k^{(j)}\right|^2 \int_0^1
\frac{( 1-r)^{2n+1} }{(1-\omega^{\ell}r )^{2n-1}} \sum_{s=2}^{k }s(s-1)\omega^{\ell (k-s) }r^{k-2}   dr.
\end{align*}
Notice that since $\chi_k=0$ if $k\not\equiv 0 \;\text{mod}\;m_j$, we only need to consider those terms with $k\equiv 0\;\text{mod}\;m_j$.   Using \eqref{eq2_27_3} and \eqref{eq2_27_4}, we have
\begin{align}
\mathscr{X}_{E,j} =&-\frac{n}{ 4m_j}\sum_{\ell=1}^{m_j-1}\frac{\omega^{\ell n}}{(1-\omega^{\ell})^2}\sum_{k=2}^{\infty}k(k^3-k) \left|\chi_k^{(j)}\right|^2 \int_0^1
\frac{( 1-r)^{2n} }{(1-\omega^{\ell}r )^{2n-2}}  r^{k-2}   dr,\label{eq3_4_5}\\
\mathscr{Y}_{E,j} =&-\frac{n-1}{ 8m_j}\sum_{\ell=1}^{m_j-1}\frac{\omega^{\ell n}}{(1-\omega^{\ell})^2}\sum_{k=2}^{\infty}(k^3-k) \left|\chi_k^{(j)}\right|^2\nonumber\\&\hspace{3cm}\times \int_0^1
\frac{( 1-r)^{2n+1} }{(1-\omega^{\ell}r )^{2n-1}} \left[k(k-1)-k(k+1)\omega^{\ell}\right]r^{k-2}   dr.\nonumber
\end{align}

For the term $\mathscr{Z}_{E,j}$, it is given by 
\begin{align*}
\mathscr{Z}_{E,j}=&-\frac{(n-1)(2n-1)}{\pi m_j}\sum_{\ell=1}^{m_j-1}\omega^{\ell n} \int_{\mathbb{D}} \frac{(1-|z|^2)^{2n-2}}{(1-\omega^{\ell}|z|^2)^{2n}}2(\Delta_0+2)^{-1}(\check{\mu}_j\bar{\check{\mu}}_j)(z)d^2z.
\end{align*} One can refer to \eqref{eq2_7_1} for the change of variables which results in an extra minus sign. Using polar coordinates and subsituting the Fourier expansion of $2(\Delta_0+2)^{-1}(\check{\mu}_j\bar{\check{\mu}}_j)(z)$ \eqref{eq2_26_2}, we find that only the $a_0(r)$ term \eqref{eq2_26_3} would contribute. 
\begin{align*}
\mathscr{Z}_{E,j}=&-\frac{2(n-1)(2n-1)}{  m_j}\sum_{\ell=1}^{m_j-1}\omega^{\ell n} \int_0^1 \frac{(1-r^2)^{2n-2}}{(1-\omega^{\ell}r^2)^{2n}}a_0(r)rdr\\
=&-\frac{ (n-1)(2n-1)}{ 2 m_j}\sum_{\ell=1}^{m_j-1}\omega^{\ell n} \sum_{k=2}^{\infty}k \left|\chi_k^{(j)}\right|^2\int_0^1 \frac{(1-r )^{2n }}{(1-\omega^{\ell}r )^{2n}} \left\{\mathfrak{V}_k(r)+\mathfrak{W}_k(r)\right\}dr\\
=&\mathfrak{Y}_1+\mathfrak{Y}_2,
\end{align*}where $\mathfrak{Y}_1$ and $\mathfrak{Y}_2$ are respectively the part that contains $\mathfrak{V}_k(r)$ and $\mathfrak{W}_k(r)$, with
\begin{align}
\mathfrak{V}_k(r)=&\sum_{s=1}^{k-1}s^2r^{s-1}=\frac{\mathfrak{U}_k(r)}{(1-r)^3},\nonumber\\
\mathfrak{U}_k(r)=& (2k^2-2k-1)r^{k}-k^2r^{k-1}-(k-1)^2r^{k+1}+r+1,\label{eq2_26_10}\\
\mathfrak{W}_k(r)=&- \frac{k (k-1)^2}{4}r^{k-1}.\nonumber
\end{align}

Applying integration by parts twice to   $\mathfrak{Y}_1$  give
\begin{align*}
\mathfrak{Y}_1=&-\frac{ (n-1)(2n-1)}{ 2 m_j}\sum_{\ell=1}^{m_j-1}\omega^{\ell n}   \sum_{k=2}^{\infty}k \left|\chi_k^{(j)}\right|^2 \int_0^1 \frac{(1-r )^{2n }}{(1-\omega^{\ell}r )^{2n}}  \mathfrak{V}_k(r) dr\\
=&A_0- \frac{ (n-1)  }{ 2 m_j}\sum_{\ell=1}^{m_j-1}\frac{\omega^{\ell n}  }{1-\omega^{\ell}}  \sum_{k=2}^{\infty}k \left|\chi_k^{(j)}\right|^2\int_0^1  \frac{(1-r )^{2n-1 }}{(1-\omega^{\ell}r )^{2n-1}}\left[\frac{d}{dr} \frac{\mathfrak{U}_k(r)}{1-r} \right]dr\\
=&A_0+B_0- \frac{ 1 }{ 4 m_j}\sum_{\ell=1}^{m_j-1}\frac{ \omega^{\ell n}}{(1-\omega^{\ell})^2}   \sum_{k=2}^{\infty}k^2(k-1) \left|\chi_k^{(j)}\right|^2\\
&\hspace{3cm}\times \int_0^1    \frac{(1-r )^{2n  }}{(1-\omega^{\ell}r )^{2n-2}} \left[ (k^2-1)r^{k-2}-(k^2-2k)r^{k-3}\right] dr.
\end{align*} 
Here $A_0$  and $B_0$ are the boundary terms at $r=0$ which appear from integration by parts.
\begin{equation}\label{eq2_27_9}\begin{split}
A_0=&- \frac{(n-1)}{2m_j}\sum_{\ell=1}^{m_j-1}\frac{\omega^{\ell n}}{1-\omega^{\ell}}  \sum_{k=2}^{\infty}k \left|\chi_k^{(j)}\right|^2  \mathfrak{U}_k(0)\\=& -\frac{(n-1)}{2m_j}\sum_{\ell=1}^{m_j-1}\frac{\omega^{\ell n}}{1-\omega^{\ell}}  \sum_{k=2}^{\infty}k \left|\chi_k^{(j)}\right|^2 , \\
B_0=&-\frac{1}{4m_j}\sum_{\ell=1}^{m_j-1}\frac{\omega^{\ell n}}{(1-\omega^{\ell})^2}\sum_{k=2}^{\infty}k \left|\chi_k^{(j)}\right|^2 \left(\mathfrak{U}_k(0)+\mathfrak{U}_k'(0)\right)\\=&-\frac{ 1 }{ 2 m_j}\sum_{\ell=1}^{m_j-1}\frac{\omega^{\ell n}}{(1-\omega^{\ell})^2}\sum_{k=2}^{\infty}k \left|\chi_k^{(j)}\right|^2 \left(1-2\delta_{k,2}\right).
\end{split}\end{equation} The term $\delta_{k,2}$ only appears if $\chi_k^{(j)}\neq 0$ when $k=2$. This can only happen if $m_j=2$.

For $\mathfrak{Y}_2$, we only apply integration by parts once. We have
\begin{align*}
\mathfrak{Y}_2=&-\frac{ (n-1)(2n-1)}{ 2 m_j}\sum_{\ell=1}^{m_j-1}\omega^{\ell n} \int_0^1 \frac{(1-r )^{2n }}{(1-\omega^{\ell}r )^{2n}}  \mathfrak{W}_k(r) dr\\
=& \frac{ (n-1) }{ 8 m_j}\sum_{\ell=1}^{m_j-1}\frac{\omega^{\ell n}}{(1-\omega^{\ell})} \sum_{k=2}^{\infty}k^2(k-1)^2 \left|\chi_k^{(j)}\right|^2 \int_0^1   \frac{(1-r )^{2n}}{(1-\omega^{\ell}r )^{2n-1}}\left[(k-1)r^{k-2}-(k+1)r^{k-1}\right]dr.
\end{align*}There is no boundary term that appear in this case. 

Now it is straightforward to find that
\begin{align*}
 \mathscr{Y}_{E,j}+\mathfrak{Y}_2
=& -\frac{ (n-1) }{ 4 m_j}\sum_{\ell=1}^{m_j-1}\frac{\omega^{\ell n}}{(1-\omega^{\ell})^2} \sum_{k=2}^{\infty}k^2(k-1)  \left|\chi_k^{(j)}\right|^2 \int_0^1   \frac{(1-r )^{2n}}{(1-\omega^{\ell}r )^{2n-1}}\\&\hspace{1cm}\times \left[ 2k\left(1-\omega^{\ell} \right)-(k+1)(1-\omega^{\ell}r)  \right]r^{k-2}dr.\end{align*}It follows that
\begin{align*}
  \mathscr{X}_{E,j}+\mathscr{Y}_{E,j}+\mathfrak{Y}_2=&\mathfrak{Y}_3+\mathfrak{Y}_4,
\end{align*}where 
\begin{align*}
\mathfrak{Y}_3=&-\frac{1}{ 4m_j}\sum_{\ell=1}^{m_j-1}\frac{\omega^{\ell n}}{(1-\omega^{\ell})^2}\sum_{k=2}^{\infty}k(k^3-k) \left|\chi_k^{(j)}\right|^2 \int_0^1
\frac{( 1-r)^{2n} }{(1-\omega^{\ell}r )^{2n-2}}  r^{k-2}   dr,\\
\mathfrak{Y}_4=&-\frac{ (n-1) }{ 2 m_j}\sum_{\ell=1}^{m_j-1}\frac{\omega^{\ell n}}{(1-\omega^{\ell}) } \sum_{k=2}^{\infty}k^3(k-1)  \left|\chi_k^{(j)}\right|^2 \int_0^1   \frac{(1-r )^{2n}}{(1-\omega^{\ell}r )^{2n-1}} r^{k-2}dr.
\end{align*}Applying integration by parts to $\mathfrak{Y}_4$, we have
\begin{align*}
\mathfrak{Y}_4=&C_0-\frac{ 1 }{ 4 m_j}\sum_{\ell=1}^{m_j-1}\frac{\omega^{\ell n}}{(1-\omega^{\ell})^2 } \sum_{k=2}^{\infty}k^3(k-1)  \left|\chi_k^{(j)}\right|^2\\
&\hspace{3cm}\times \int_0^1   \frac{(1-r )^{2n}}{(1-\omega^{\ell}r )^{2n-2}}\left((k-2)r^{k-3}-(k+1) r^{k-2}\right)dr,
\end{align*}where $C_0$ is the boundary term which only present in the $k=2$ term. It is given by
\begin{align*}C_0=&-\frac{ 1 }{ 4 m_j}\sum_{\ell=1}^{m_j-1}\frac{\omega^{\ell n}}{(1-\omega^{\ell})^2 } k^3(k-1)  \left|\chi_k^{(j)}\right|^2 \delta_{k,2}\\
=&-\frac{ 2 }{   m_j}\sum_{\ell=1}^{m_j-1}\frac{\omega^{\ell n}}{(1-\omega^{\ell})^2 }   \left|\chi_2^{(j)}\right|^2.
\end{align*}This term cancels with the $\delta_{k,2}$ term in $B_0$ \eqref{eq2_27_9}. 

Collecting all the terms, we find that
\begin{align*}
\mathscr{E}_{E,j}=&\mathscr{X}_{E,j}+\mathscr{Y}_{E,j}+\mathcal{Z}_{E,j}\\
=&\mathfrak{Y}_1+\mathfrak{Y}_3+\mathfrak{Y}_4\\
=&A_0+B_0+C_0.\end{align*}The terms with integrals cancel and we are left with those terms that come  from integration by parts. From \eqref{eq2_27_9} and Proposition \ref{prop2_27_1}, we find that
\begin{align*}
\mathscr{E}_{E,j}=&-\frac{1}{2m_j}\left\{\sum_{\ell=1}^{m_j-1}\frac{\omega^{\ell n}}{(1-\omega^{\ell})^2}+(n-1)\sum_{\ell=1}^{m_j-1}\frac{\omega^{\ell n}}{1-\omega^{\ell}} \right\}\langle \mu, \nu\rangle_{\text{TZ}, j}^{\text{ell}}\\
=&\left\{\frac{m_j}{4}\left[B_2\left(\left\{\frac{n-1}{m_j}\right\}\right)-\frac{1}{6m_j^2}\right]-\frac{n-1}{2}\left[B_1\left(\left\{\frac{n-1}{m_j}\right\}\right)+\frac{1}{2m_j}\right]\right\}
\langle \mu, \nu\rangle_{\text{TZ}, j}^{\text{ell}}.
\end{align*} This proves \eqref{main_result_E}.
\end{proof}

Setting $n=1$ in $\mathscr{X}_{E,j}$ \eqref{eq3_4_5}, we can directly integrate over $r$ to give
\begin{equation*}\begin{split}
\mathscr{X}_{E,j} =&-\frac{1}{ 2m_j}\sum_{\ell=1}^{m_j-1}\frac{\omega^{\ell  }}{(1-\omega^{\ell})^2}\sum_{k=2}^{\infty}k \left|\chi_k^{(j)}\right|^2.
\end{split}\end{equation*} Using \eqref{eq2_27_5} and \eqref{eq2_26_8}, this shows  that when $n=1$, 
\begin{align}\label{eq3_9_20}\mathscr{X}_{E,j}=\frac{m_j^2-1}{24  m_j} \langle \mu, \nu\rangle_{\text{TZ}, j}^{\text{ell}}.\end{align} From the definition of $\mathscr{X}_{E,j}$ \eqref{eq2_26_4},
 we  obtain an explicit integral formula for the elliptic TZ metric, which is an analog of Corollary \ref{cor3_3_1}.
\begin{corollary}\label{cor3_3_2}
Let $\mu, \nu\in\Omega_{-1,1}(X)$ and let $\tau_j$ be an elliptic generator of $\Gamma$ of order $m_j\geq 2$. Then 
\begin{align}\label{eq3_3_6}
\langle\mu, \nu\rangle_{\text{TZ},j}^{\text{ell}}=& -\frac{24m_j}{(m_j^2-1)}\frac{1}{\pi^2}\sum_{\gamma\in \tilde{\Gamma}[\tau_j]}\int_X\int_{\mathbb{U}}\frac{\mu(z)\overline{\nu(w)}}{(z-\bar{w})^2(\gamma z-\bar{w})^2}\gamma'(z)d^2wd^2z,
\end{align}where
\begin{align*}
\tilde{\Gamma}[\tau_j]=\left\{\alpha\tau_j^{\ell}\alpha^{-1}\,|\,\alpha\in \Gamma_{\tau_j}\backslash\Gamma, \ell=1, 2, \ldots, m_j-1\right\}.
\end{align*}
\end{corollary}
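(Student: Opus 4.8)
The statement is the $n=1$ analogue of Corollary \ref{cor3_3_1}, and the plan is to mimic exactly the route used there: extract the single term $\mathscr{X}_{E,j}$ from the elliptic-contribution machinery, evaluate it by hand in the $n=1$ case, and then invert the resulting scalar identity to express the elliptic TZ metric as an integral. Concretely, I would start from formula \eqref{eq2_26_4}, which gives $\mathscr{X}_{E,j}$ as a double integral over $\mathbb{D}\times\mathbb{D}$ against $\check{\mu}_j(z)\overline{\check{\nu}_j(\eta)}$; this formula was derived in the proof of Theorem \ref{thm2_24_2} and is valid for all $n\geq 1$ since it only uses the change of variables via $h_j$, Remark \ref{remark2_26_1}, and \eqref{eq2_22_6}, none of which require $n\geq 2$. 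Setting $n=1$ collapses the factors $(1-|z|^2)^{2n-2}/(1-\omega^\ell|z|^2)^{2n-2}$ to $1$, and the analysis already carried out up to \eqref{eq3_4_5} then yields
\begin{equation*}
\mathscr{X}_{E,j} =-\frac{1}{ 2m_j}\sum_{\ell=1}^{m_j-1}\frac{\omega^{\ell }}{(1-\omega^{\ell})^2}\sum_{k=2}^{\infty}k \left|\chi_k^{(j)}\right|^2,
\end{equation*}
where I have used that the $r$-integral $\int_0^1(1-r)^{2n}r^{k-2}\,dr$ with $n=1$ combines with the coefficient $k(k^3-k)$ to produce precisely $k$ after the elementary Beta-integral evaluation. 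This is the computation summarised in \eqref{eq3_9_20}.

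Next I would apply the two ingredients from Proposition \ref{prop2_27_1} and the earlier expansions: equation \eqref{eq2_27_5} identifies $\sum_{\ell=1}^{m_j-1}\omega^{\ell}/(1-\omega^\ell)^2$ with $-\tfrac{1}{2}(m_j^2-1)/(6)\cdot(1/m_j^0)$-type Bernoulli data — more precisely, with $n=1$ one has $\{(n-1)/m_j\}=0$, so $B_2(0)=\tfrac16$ and the sum equals $-\tfrac{m_j^2}{2}\cdot\bigl(\tfrac16-\tfrac1{6m_j^2}\bigr) = -\tfrac{m_j^2-1}{12}$. Combined with \eqref{eq2_26_8}, namely $\langle\mu,\mu\rangle^{\mathrm{ell}}_{\mathrm{TZ},j}=\sum_{k\geq2}k|\chi_k^{(j)}|^2$, this gives
\begin{equation*}
\mathscr{X}_{E,j}=\frac{m_j^2-1}{24 m_j}\,\langle\mu,\mu\rangle^{\mathrm{ell}}_{\mathrm{TZ},j},
\end{equation*}
which is \eqref{eq3_9_20}. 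Finally, I would read \eqref{eq2_26_4} backwards, unfolding the sector integral over $\mathbb{D}$ into the group sum over $\tilde{\Gamma}[\tau_j]$ and transporting from $\mathbb{D}$ back to $\mathbb{U}$ via $h_j$ (this is exactly the passage from \eqref{eq2_24_3}/\eqref{eq2_22_6} to \eqref{eq2_26_4}, run in reverse, specialised to $n=1$ where the $\rho$-weights disappear). This rewrites $\mathscr{X}_{E,j}$ as $-\tfrac{1}{\pi^2}\sum_{\gamma\in\tilde{\Gamma}[\tau_j]}\int_X\int_{\mathbb{U}}\mu(z)\overline{\nu(w)}(z-\bar w)^{-2}(\gamma z-\bar w)^{-2}\gamma'(z)\,d^2w\,d^2z$, and solving the scalar relation above for $\langle\mu,\nu\rangle^{\mathrm{ell}}_{\mathrm{TZ},j}$ (with the general bilinear case recovered by polarization) produces \eqref{eq3_3_6}.

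The one point requiring genuine care — the "main obstacle" — is the convergence and well-definedness of the $n=1$ Poincaré-type series $\sum_{\gamma\in\tilde{\Gamma}[\tau_j]}$, since the paper has repeatedly flagged that $n=1$ sums of this shape are delicate (cf.\ the remarks in Section \ref{localindex} and the need for Appendix \ref{n1}). Here, however, the sum runs only over the finite set of conjugates of $\tau_j^\ell$, $1\leq\ell\leq m_j-1$, modulo the stabilizer $\Gamma_{\tau_j}$, so after folding each $\Gamma_{\tau_j}$-orbit the integral is genuinely over the full disc $\mathbb{D}$ and the integrand $\check\mu_j(z)\overline{\check\nu_j(\eta)}\mathscr{H}$-type kernel is bounded there; convergence is then immediate and no regularization is needed. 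The remaining steps are all the elementary Beta-integral and finite geometric-sum manipulations already in place from the proof of Theorem \ref{thm2_24_2}, specialised by substituting $n=1$, so I expect no further difficulty.
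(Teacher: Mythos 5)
Your proposal is correct and follows essentially the same route as the paper: specialize the already-derived formula \eqref{eq3_4_5} for $\mathscr{X}_{E,j}$ to $n=1$, evaluate the resulting Beta integral, apply \eqref{eq2_27_5} and \eqref{eq2_26_8} to obtain \eqref{eq3_9_20}, and then read the definition \eqref{eq2_26_4} backwards (with polarization) to get \eqref{eq3_3_6}. The only additions beyond the paper's argument are your explicit remarks on convergence of the $n=1$ elliptic sum, which are consistent with what the paper leaves implicit.
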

 
\smallskip
  \section{The $n=1$ case}\label{n1}
 In this section, we prove the local index theorem for the $n=1$ case.  

Most of the formulas appeared in the $n\geq 2$ cases look much simpler if we set $n=1$. However,  the Poincar$\acute{\text{e}}$ series involved might not be convergent.
This is implied by the result in \cite{Patterson}, which says that for a cofinite Fuchsian group, the Poincar$\acute{\text{e}}$ series
\begin{align}\label{eq3_5_2}
\sum_{\gamma\in\Gamma}\frac{1}{\left(u(\gamma z, w)+1\right)^s} 
\end{align}is convergent when $\text{Re}\,s>1$, divergent when $\text{Re}\,s<1$. Since $u(\gamma z, w)+1 $ is a positive number,  the series \eqref{eq3_5_2} is also divergent when $s=1$.

By \eqref{eq3_4_1}, the dimension of the space of holomorphic one-differentials on $X$ is equal to the genus $g$ of $X$. Hence, if the  Riemann surface $X$ has genus 0, it does not have holomorphic one-differentials and thus $N_1$ is not defined. In this case, we need to give an alternative interpretation of the local index theorem. 

 We also notice that in the $n=1$ case, the metric \eqref{eq3_8_1} defined on the space of integrable holomorphic one-forms $\Omega_1(X)$ does not depend on the hyperbolic metric on $X$. 

In the following, we will need some theories about abelian differentials (meromorphic one-forms) on $X$ and its connection to the Green's function $G_{0,1}(z,w)$. In the liteature \cite{Schiffer, Springer, Fay_Theta, Kra}, this theory is usually developed only for compact Riemann surfaces. Therefore, we will explain everything in detail for general cofinite Riemann surfaces.


\begin{figure}[ht]
\begin{center}
\includegraphics[scale=0.57]{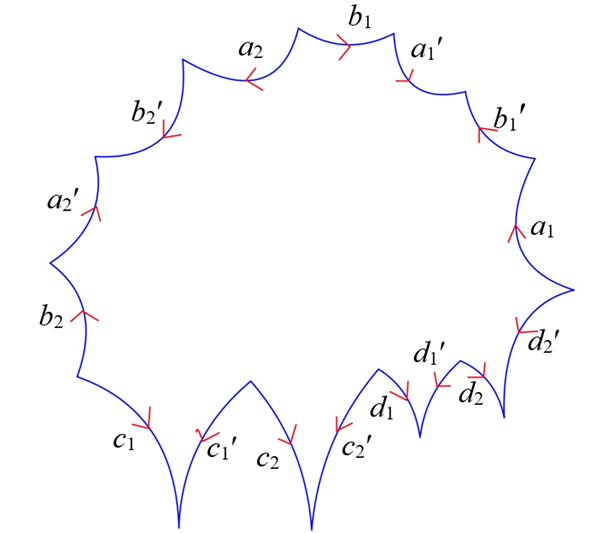}
\caption{\label{fig1} A fundamental domain of $X$.}
\end{center}
\end{figure}


On the Riemann surface $X$ whose Fuchsian group is generated by $2g$  hyperbolic elements $\alpha_1$, $\beta_1$,  $\ldots$, $\alpha_g$, $\beta_g$, $q$ parabolic elements $\kappa_1$, $\ldots$, $\kappa_q$, and $v$ elliptic elements $\tau_1, \ldots, \tau_v$, one can choose a fundamental domain $F$ of $X$ as shown in Figure.\ \ref{fig1}. The boundary of $F$ consists of $4g+2q+2v$ edges $a_1, \ldots, a_g$, $a_1', \ldots, a_g'$, $b_1, \ldots, b_g$, $b_1', \ldots, b_g'$, $c_1, \ldots, c_q$, $c_1', \ldots, c_q'$, $d_1, \ldots, d_v$, $d_1', \ldots, d_v'$, such that
\begin{gather*}  \alpha_j\left(a_j'\right)=a_j,\,\beta_j\left(b_j'\right)=b_j, \quad1\leq j\leq g,\\
 \kappa_j\left(c_j'\right)=c_j,\quad 1\leq j\leq q,\\\tau_j\left(d_j'\right)=d_j,\quad1\leq j\leq v.\end{gather*}

If $g\geq 1$, the projections of $a_1, \ldots, a_g$, $b_1, \ldots, b_g$ to the Riemann surface $X$ then form  a canonical  homology basis of $X$. We will abuse notation and denote this homology basis as $\{a_1, \ldots, a_g, b_1, \ldots, b_g\}$. There is a canocial basis of  holomorphic one-differentials $\{\phi_1, \ldots, \phi_g\}$ which satisfies
\begin{align}\label{eq3_9_1}
\oint_{a_j}\phi_i(z)dz=\delta_{i,j}.
\end{align}With a fixed homology basis, such a basis of $\Omega_1(X)$ is unique since the dimension of $\Omega_1(X)$ is $g$. Our first claim is this canonical basis varies holomorphically with respect to moduli.

\begin{proposition}Let $X$ be a Riemann surface with positive genus.
For each $X^{\mu}\in T(X)$, let $\{\phi_1^{\mu}, \ldots, \phi_g^{\mu}\}$ be the canonical basis of $\Omega_1(X^{\mu})$ with respect to the canonical homology basis $\{a_1^{\mu}, \ldots, a_g^{\mu}, b_1^{\mu}, \ldots, b_g^{\mu}\}$, where $a_j^{\mu}=f^{\mu}(a_j)$ and $b_j^{\mu}=f^{\mu}(b_j)$ for $1\leq j\leq g$. Then $\{\phi_1^{\mu}, \ldots, \phi_g^{\mu}\}$ varies holomorphically with respec to moduli.

\end{proposition}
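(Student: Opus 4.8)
The plan is to show that the canonical basis $\{\phi_1^\mu,\ldots,\phi_g^\mu\}$ is obtained from \emph{any} holomorphically varying basis of $\Omega_1(X^\mu)$ by a change-of-basis matrix whose entries depend holomorphically on the moduli. First I would invoke the fact, already mentioned in Section~\ref{localindex} (via the Bers integral operator, cf.\ \cite{Bers_integral, Mcintyre_Teo}), that there exists \emph{some} family $\{\psi_1^\mu,\ldots,\psi_g^\mu\}$ of bases of $\Omega_1(X^\mu)$ varying holomorphically with respect to moduli, i.e.\ $\pa_{\bar\varepsilon}[(f^{\varepsilon\mu})^*\psi_k^{\varepsilon\mu}](z)=0$. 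The canonical basis must then be a linear combination $\phi_i^\mu=\sum_k P_{ik}(\mu)\,\psi_k^\mu$ for a unique invertible matrix $P(\mu)$, and the whole task reduces to proving $\mu\mapsto P(\mu)$ is holomorphic.

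The second step is to express $P(\mu)$ through period integrals. Applying the normalization \eqref{eq3_9_1} to $\phi_i^\mu$ gives $\delta_{ij}=\oint_{a_j^\mu}\phi_i^\mu=\sum_k P_{ik}(\mu)\oint_{a_j^\mu}\psi_k^\mu$, so that $P(\mu)=A(\mu)^{-1}$ where $A(\mu)$ is the $g\times g$ matrix of $a$-periods $A_{kj}(\mu)=\oint_{a_j^\mu}\psi_k^\mu(z)\,dz$. Since matrix inversion is holomorphic on the open set of invertible matrices (and $A(\mu)$ is invertible because $\{\psi_k^\mu\}$ is a basis and the period map on $\Omega_1$ is injective), it suffices to show each $A_{kj}(\mu)$ is a holomorphic function of $\mu$. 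The key manoeuvre here is to transport the integral back to the fixed surface $X$ by the quasiconformal map $f^\mu$: with $a_j^\mu=f^\mu(a_j)$ and using that the one-form $\psi_k^\mu(w)\,dw$ pulls back to $(f^\mu)^*\psi_k^\mu$, the change of variables $w=f^\mu(z)$ yields
\begin{align*}
A_{kj}(\mu)=\oint_{f^\mu(a_j)}\psi_k^\mu(w)\,dw=\oint_{a_j}\bigl[(f^{\mu})^*\psi_k^{\mu}\bigr](z)\,dz,
\end{align*}
an integral over the \emph{fixed} contour $a_j\subset \mathbb{U}$.

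The third and final step is to differentiate under the integral sign: since $(f^{\varepsilon\mu})^*\psi_k^{\varepsilon\mu}(z)$ depends holomorphically on $\varepsilon$ pointwise (that is precisely the holomorphic-dependence hypothesis on the family $\psi_k$), and the dependence is smooth enough to justify interchanging $\pa_{\bar\varepsilon}$ with the contour integral (the contour $a_j$ is compact and lies in the interior of $\mathbb{U}$, away from the cusps and the elliptic points, so there is no integrability issue), we get $\pa_{\bar\varepsilon}A_{kj}(\varepsilon\mu)=\oint_{a_j}\pa_{\bar\varepsilon}[(f^{\varepsilon\mu})^*\psi_k^{\varepsilon\mu}](z)\,dz=0$. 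Hence $A(\mu)$, and therefore $P(\mu)=A(\mu)^{-1}$, is holomorphic, and so $\phi_i^\mu=\sum_k P_{ik}(\mu)\psi_k^\mu$ varies holomorphically with respect to moduli, as claimed.

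The main obstacle I anticipate is purely bookkeeping: making precise that $f^\mu$ carries the marked homology cycle $a_j$ to $a_j^\mu$ compatibly with the chosen fundamental domain in Figure~\ref{fig1}, and that the pull-back formula for one-forms holds on the nose (rather than up to the $\pa$-term that appears for $(-1,1)$-tensors in Proposition~\ref{vary_differential}). For a genuine holomorphic one-form $\psi_k^\mu\,dw$, however, the pull-back is simply the naive one $\psi_k^\mu\circ f^\mu\cdot f^\mu_z\,dz$ with no extra correction, because a holomorphic one-differential is a $(1,0)$-tensor and $f^\mu$ is holomorphic off $\mathbb{U}$ but the relevant composition is governed by $f_\mu$ on $\mathbb{U}$; one must use the complex structure of $T(X)$ (the holomorphic dependence being phrased through $f^\mu$) to conclude the vanishing of $\pa_{\bar\varepsilon}$. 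A secondary point to check is that the period lattice / a-period matrix is genuinely invertible along the whole Teichm\"uller space, which follows from the Riemann bilinear relations applied to the cofinite surface after noting that integrable holomorphic one-forms have at worst the mild behaviour at cusps that still makes those relations valid.
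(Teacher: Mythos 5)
Your proposal is correct in substance but takes a genuinely different route from the paper. The paper argues directly on the canonical basis: differentiating the normalization $\delta_{i,j}=\oint_{a_j^{\varepsilon\mu}}\phi_i^{\varepsilon\mu}\,dz$ after pulling the contour back to $a_j$ shows that all $a$-periods of $L_{\bar\mu}\phi_i$ vanish, and a separate computation of $\pa_{\bar z}(L_{\bar\mu}\phi_i)$ (which is $O(\varepsilon)$ termwise and hence vanishes at $\varepsilon=0$) shows that $L_{\bar\mu}\phi_i$ is itself a holomorphic one-differential; it is therefore zero. You instead import from Bers' theory the existence of \emph{some} holomorphically varying basis $\{\psi_k^\mu\}$ and reduce the problem to holomorphy of the inverse $a$-period matrix $A(\mu)^{-1}$. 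Both arguments ultimately rest on the same two facts --- differentiation of period integrals over a fixed contour, and the injectivity of the $a$-period map on $\Omega_1$ (which you use for invertibility of $A(\mu)$ and the paper uses to conclude $L_{\bar\mu}\phi_i=0$) --- but the paper's version is self-contained for this proposition, whereas yours outsources the analytic content to the cited existence theorem; in exchange, yours avoids having to verify directly that $L_{\bar\mu}\phi_i$ is holomorphic in $z$.

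One point in your write-up deserves tightening. The identity $\oint_{f^\mu(a_j)}\psi_k^\mu(w)\,dw=\oint_{a_j}[(f^\mu)^*\psi_k^\mu](z)\,dz$ is not exact: the change of variables $w=f^{\varepsilon\mu}(z)$ produces $dw=f^{\varepsilon\mu}_z(dz+\varepsilon\mu\,d\bar z)$, so the pulled-back contour integral carries the extra term $\oint_{a_j}[(f^{\varepsilon\mu})^*\psi_k^{\varepsilon\mu}](z)\,\varepsilon\mu(z)\,d\bar z$ (this is exactly how the paper writes its analogue in \eqref{eq3_7_1}). Your ``main obstacle'' paragraph gestures at this but the explanation offered (that the pull-back of a $(1,0)$-tensor has ``no extra correction'') is not the right reason. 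The correct resolution is simply that the extra term carries an explicit factor of $\varepsilon$, so its $\pa_{\bar\varepsilon}$-derivative at $\varepsilon=0$ vanishes and the conclusion $\pa_{\bar\varepsilon}A_{kj}(\varepsilon\mu)|_{\varepsilon=0}=0$ survives. With that repair your argument goes through.
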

\begin{proof} It is sufficient to prove that for each $1\leq i\leq g$ and $\mu\in\Omega_{-1,1}(X)$, $L_{\bar{\mu}}\phi_i=0$.
Since
\begin{equation}\label{eq3_7_1}\begin{split}
\delta_{i,j}=&\oint_{a_j^{\varepsilon\mu}}\phi_i^{\varepsilon\mu}(z)dz\\=&\oint_{a_j}\phi_i^{\varepsilon\mu}(f^{\varepsilon\mu}(z))\left(f^{\varepsilon\mu}_z(z)dz+f^{\varepsilon\mu}_{\bar{z}}(z)d\bar{z}\right)\\
=& \oint_{a_j}\phi_i^{\varepsilon\mu}(f^{\varepsilon\mu}(z))f^{\varepsilon\mu}_z(z)\left(dz+\varepsilon\mu(z) d\bar{z}\right),
\end{split}\end{equation}we immediately find that
\begin{align}\label{eq3_8_4}
0=&\left.\frac{\pa}{\pa\bar{\varepsilon}}\right|_{\varepsilon=0} \oint_{a_j}\phi_i^{\varepsilon\mu}(f^{\varepsilon\mu}(z))f^{\varepsilon\mu}_z(z)\left(dz+\varepsilon\mu(z) d\bar{z}\right)
=\oint_{a_j}(L_{\bar{\mu}}\phi_i)(z)dz.
\end{align}On the other hand,
\begin{equation}\label{eq3_10_9}\begin{split}
\frac{\pa}{\pa\bar{z}}(L_{\bar{\mu}}\phi_i)(z)=&\frac{\pa}{\pa\bar{z}}\left.\frac{\pa}{\pa\bar{\varepsilon}}\right|_{\varepsilon=0}\phi_i^{\varepsilon\mu}(f^{\varepsilon\mu}(z))f^{\varepsilon\mu}_z(z)\\
=&\left.\frac{\pa}{\pa\bar{\varepsilon}}\right|_{\varepsilon=0}\left\{(\phi_i^{\varepsilon\mu})_z(f^{\varepsilon\mu}(z))f^{\varepsilon\mu}_{ \bar{z}}(z)f^{\varepsilon\mu}_z(z)+\phi_i^{\varepsilon\mu}(f^{\varepsilon\mu}(z))f^{\varepsilon\mu}_{z\bar{z}}(z)\right\}\\
=&\left.\frac{\pa}{\pa\bar{\varepsilon}}\right|_{\varepsilon=0}\left\{\varepsilon(\phi_i^{\varepsilon\mu})_z(f^{\varepsilon\mu}(z))\mu(z)(f^{\varepsilon\mu}_z(z))^2+\phi_i^{\varepsilon\mu}(f^{\varepsilon\mu}(z))\frac{\pa}{\pa z }(\varepsilon\mu(z)f^{\varepsilon\mu}_{z}(z)) \right\}\\
=&0.
\end{split}\end{equation}Therefore,
$(L_{\bar{\mu}}\phi_i)(z)$ is a holomorphic one-differential. Since all its $a$-periods are zero by \eqref{eq3_8_4}, we find that $(L_{\bar{\mu}}\phi_i)(z)=0$. This shows that the canonical basis of holomorphic one-differentials   varies holomorphically. 
\end{proof}

The period matrix $\tau$ of $X$ is a $g\times g$ matrix defined by
$$\tau_{k,l}=\oint_{b_l}\phi_k(z)dz.$$It is a symmetric matrix with positive definite imaginary parts \cite{Springer, Kra}. 
For the Gram matrix $N_1$ associated to this basis $\{\phi_1, \ldots, \phi_g\}$, Riemann bilinear relation implies that  \cite{Springer, Kra} $N_1$ is the imaginary part of the matrix $\tau$. Namely,
\begin{align*}
N_1 =& \text{Im}\;\tau.
\end{align*}In particular, $N_1$ is real-valued. By Lemma \ref{lemma1_27_1}, we find that the projection kernel $K_1(z,w)$ (also known as Bergman kernel) is given by
\begin{align}\label{eq3_9_8}
K_1(z,w)=\sum_{k=1}^g\sum_{l=1}^g  [(\text{Im}\,\tau)^{-1}]_{k,l}\phi_l(z)\overline{\phi_k(w)}.
\end{align}

For the variation of the components of $N_1$, it can be deduced from the   Rauch's formula \cite{Rauch}. We reproduce the proof as follows. For convenience, we abbreviate $N_1$ as $N$. 
\begin{proposition} 
Let $\{\phi_1, \ldots, \phi_g\}$ be the canonical basis of $\Omega_1(X)$, and let $N$ be the corresponding Gram matrix. Rauch's formula  \cite{Rauch} implies that
\begin{align}\label{eq3_9_4}
L_{\mu}N_{k,l}=&-\int_X \phi_k(z) \phi_l(z)\mu(z)d^2z.
\end{align}
\end{proposition}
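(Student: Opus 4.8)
The plan is to compute $L_\mu N_{k,l}$ directly from the definition of the Gram matrix entry in terms of periods, exploiting the fact that the canonical basis $\{\phi_1,\dots,\phi_g\}$ varies holomorphically and that $N = \mathrm{Im}\,\tau$ is real-valued. Recall that by the Riemann bilinear relations one has, for the canonical basis normalized by $\oint_{a_j}\phi_i\,dz=\delta_{ij}$,
\begin{align*}
N_{k,l} = (\mathrm{Im}\,\tau)_{k,l} = \frac{i}{2}\left(\overline{\tau_{k,l}}-\tau_{k,l}\right) = \frac{i}{2}\int_X\left(\phi_k\overline{\phi_l}-\overline{\phi_k}\phi_l\right)\,\text{(suitably interpreted)},
\end{align*}
or more usefully $\tau_{k,l}=\oint_{b_l}\phi_k\,dz$ and $\overline{N_{k,l}}=N_{k,l}$. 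First I would write $N_{k,l}$ as a period integral and apply $\partial/\partial\varepsilon$ at $\varepsilon=0$ to the $b$-period formula $\tau_{k,l}^{\varepsilon\mu}=\oint_{b_l}\phi_k^{\varepsilon\mu}\,dz$, pulling back to $X$ exactly as in the computation \eqref{eq3_7_1}: since $\phi_k^{\varepsilon\mu}$ and $f^{\varepsilon\mu}$ vary holomorphically, $\partial_{\bar\varepsilon}\tau_{k,l}^{\varepsilon\mu}=0$, so $\tau$ is holomorphic on $T(X)$ and therefore $L_\mu N_{k,l}=\tfrac{1}{2i}L_\mu\tau_{k,l}$ (using $N=\tfrac{1}{2i}(\tau-\bar\tau)$ and $\partial_{\bar\varepsilon}\bar\tau_{k,l}=\overline{\partial_\varepsilon\tau_{k,l}}$ does not enter; only the holomorphic piece survives the $\partial_\varepsilon$).

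The substantive step is then to evaluate $L_\mu\tau_{k,l}=\left.\partial_\varepsilon\right|_{\varepsilon=0}\oint_{b_l}\phi_k^{\varepsilon\mu}(f^{\varepsilon\mu}(z))f^{\varepsilon\mu}_z(z)(dz+\varepsilon\mu\,d\bar z)$. Expanding this gives $\oint_{b_l}(L_\mu\phi_k)(z)\,dz + \oint_{b_l}\phi_k(z)\mu(z)\,d\bar z$. The first term is a period of the holomorphic-in-moduli variation $L_\mu\phi_k$; unlike the $a$-periods it need not vanish, but I will not need to know it because the key observation is that $L_\mu\tau_{k,l}$ must be expressible through Rauch's variational formula, which states precisely that $L_\mu\tau_{k,l}=-\int_X\phi_k\phi_l\,\mu\,d^2z$ (using $\phi_l dz$ as the dual differential and integrating the Beltrami differential against the product of the two canonical abelian differentials). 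Combined with $N_{k,l}=\mathrm{Im}\,\tau_{k,l}$ and the fact that $N$ being real makes its variation equal to the variation of $\tau$ up to the factor handled above — more directly, $L_\mu N_{k,l}= L_\mu\tau_{k,l}$ once one notes that $\bar\tau$ contributes only to $L_{\bar\mu}$ — one obtains \eqref{eq3_9_4}.

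Concretely, the cleanest route I would present: use $2i\,N_{k,l}=\tau_{k,l}-\bar\tau_{l,k}$ (symmetry of $\tau$ gives $\bar\tau_{l,k}=\bar\tau_{k,l}$), apply $\partial_\varepsilon|_0$, note $\partial_\varepsilon\bar\tau_{k,l}=0$ since $\tau$ is antiholomorphic-free — wait, rather $\partial_\varepsilon|_0\bar\tau_{k,l}^{\varepsilon\mu}=\overline{\partial_{\bar\varepsilon}|_0\tau_{k,l}^{\varepsilon\mu}}=0$ by holomorphicity of $\tau$ on $T(X)$ — hence $2i\,L_\mu N_{k,l}=L_\mu\tau_{k,l}$, and then invoke Rauch's formula $L_\mu\tau_{k,l}=-2i\int_X\phi_k\phi_l\,\mu\,d^2z$ in the normalization where the $\tfrac{1}{2i}$ matches the $d^2z=\tfrac{i}{2}dz\wedge d\bar z$ convention. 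This yields $L_\mu N_{k,l}=-\int_X\phi_k(z)\phi_l(z)\mu(z)\,d^2z$.

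The main obstacle is bookkeeping of constants: reconciling the $d^2z$ versus $dz\wedge d\bar z$ normalization, the factor $1/(2i)$ relating $\mathrm{Im}\,\tau$ to $\tau-\bar\tau$, and the precise statement of Rauch's formula (which classically reads $\partial\tau_{k,l}/\partial(\text{moduli})$ paired against a quadratic differential $\phi_k\phi_l\,dz^2$). I would verify the constant by a direct derivation: starting from $L_\mu\tau_{k,l}=\oint_{b_l}(L_\mu\phi_k)\,dz+\oint_{b_l}\phi_k\mu\,d\bar z$, one shows the boundary/period term $\oint_{b_l}(L_\mu\phi_k)dz$ can be rewritten, using $\partial_{\bar z}(L_\mu\phi_k)=0$ (as in \eqref{eq3_10_9}) together with the period normalization $\oint_{a_j}(L_\mu\phi_k)dz=0$ from \eqref{eq3_8_4}, and Stokes' theorem on the fundamental domain $F$ with its edge identifications, into an area integral; carrying this through against the edge-pairing relations converts everything into $-\int_X\phi_k\phi_l\,\mu\,d^2z$, with no leftover constant. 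This self-contained derivation is exactly the ``reproduce the proof'' the paper promises, so I would write it out at that level of detail rather than merely citing \cite{Rauch}.
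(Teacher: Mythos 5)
Your proposal is correct, but it takes a genuinely different route from the paper. You pass through the period matrix: holomorphicity of $\tau$ on $T(X)$ gives $2i\,L_\mu N_{k,l}=L_\mu\tau_{k,l}$, and you then evaluate $L_\mu\tau_{k,l}=\oint_{b_l}(L_\mu\phi_k)\,dz+\oint_{b_l}\phi_k\mu\,d\bar z$ by observing that $\eta=(L_\mu\phi_k)\,dz+\phi_k\mu\,d\bar z$ is closed (since $\partial_{\bar z}(L_\mu\phi_k)=\partial_z(\phi_k\mu)$) with vanishing $a$-periods, so the reciprocity law gives $\oint_{b_l}\eta=-\int_X\eta\wedge\phi_l\,dz=-2i\int_X\phi_k\phi_l\mu\,d^2z$; the constants do come out right with $dz\wedge d\bar z=-2i\,d^2z$. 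The paper instead never touches the $b$-periods: it uses the vanishing of $\oint_{a_j}\bigl[(L_\mu\phi_k)\,dz+\phi_k\mu\,d\bar z\bigr]$ to identify the projection $\mathscr{P}_1 L_\mu\phi_k=-\sum_j\bigl(\oint_{a_j}\phi_k\mu\,d\bar w\bigr)\phi_j$, computes $L_\mu N_{k,l}=-\sum_j N_{j,l}\oint_{a_j}\phi_k\mu\,d\bar w$, then separately expands $\int_X\phi_k\phi_l\mu\,d^2z$ by projecting $\overline{\phi_k\mu}$ onto $\Omega_1(X)$ (again via $a$-periods) to get $\sum_j\overline{N_{j,l}}\oint_{a_j}\phi_k\mu\,d\bar w$, and concludes from the realness of $N$. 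Your route is the classical Rauch argument and is arguably more transparent about where the quadratic differential $\phi_k\phi_l$ comes from; the paper's route buys a derivation that stays entirely within $a$-period data and $L^2$ projections. The one point you should make explicit if you write out the Stokes/reciprocity step is that on a cofinite (non-compact) surface the bilinear relation for $\eta\wedge\phi_l\,dz$ picks up no contribution from the cusps — the paper's projection-based argument quietly relies on the analogous integrability, so neither route escapes this check, but yours makes it visible as a boundary term.
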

\begin{proof}
From \eqref{eq3_7_1}, we find that
\begin{align*}
\oint_{a_j}(L_{\mu}\phi_i)(z)dz+\oint_{a_j}\phi_i(z)\mu(z)d\bar{z}=0.
\end{align*}
This implies that $\left(\mathscr{P}_1L_{\mu}\phi_i\right)(z)$, the projection of $(L_{\mu}\phi_i)(z)$ to $\Omega_1(X)$, has $a_j$ period given by $$c_j=-\di \oint_{a_j}\phi_i(z)\mu(z)d\bar{z}.$$ Therefore, 
\begin{align}\label{eq3_8_6}
\left(\mathscr{P}_1L_{\mu}\phi_i\right)(z)=\sum_{j=1}^g c_j\phi_j(z)=-\sum_{j=1}^g \left(\oint_{a_j}\phi_i(w)\mu(w)d\bar{w}\right)\phi_j(z).
\end{align} 
From this, we have
\begin{equation}\label{eq3_8_7}\begin{split}
L_{\mu}N_{k,l}=&\int_X(\mathscr{P}_1L_{\mu}\phi_k)(z)\overline{\phi_l(z)}d^2z\\
=&-\sum_{j=1}^g \left(\oint_{a_j}\phi_k(w)\mu(w)d\bar{w}\right)\int_X\phi_j(z)\overline{\phi_l(z)}d^2z\\
=&-\sum_{j=1}^g N_{j,l}  \oint_{a_j}\phi_k(w)\mu(w)d\bar{w}.
\end{split}\end{equation}On the other hand, since $\overline{\phi_k(z)\mu(z)}$ is a $(1,0)$-differential, its projection to $\Omega_1(X)$ can be written as a linear combination  of the canonical basis $\phi_1(z), \ldots, \phi_g(z)$, with coefficients determined by its $a$-periods. This gives
\begin{align*}
\left(\mathscr{P}_1 [\overline{\phi_k\mu}]\right)(z)=\left(\oint_{a_j}\overline{\phi_k(w)\mu(w)}dw\right) \phi_j(z).
\end{align*}Therefore,
\begin{equation}\label{eq3_9_2}\begin{split}
\int_X \phi_k(z) \phi_l(z)\mu(z)d^2z=&\int_{X} \phi_l(z)\overline{(\mathscr{P}_1[\overline{\phi_k\mu}])(z)}d^2z\\
=&\sum_{j=1}^g\left(\oint_{a_j} \phi_k(w)\mu(w)d\bar{w}\right) \int_X\overline{\phi_j(z)}\phi_l(z)d^2z \\
=&\sum_{j=1}^g \overline{N_{j,l}}  \oint_{a_j}\phi_k(w)\mu(w)d\bar{w}.
\end{split}\end{equation}Using the fact that $N$ is a real-matrix, comparing \eqref{eq3_9_2} to \eqref{eq3_8_7} prove that
\begin{align*}
L_{\mu}N_{k,l}=&-\int_X \phi_k(z) \phi_l(z)\mu(z)d^2z.
\end{align*}
\end{proof}

Now we can derive the first and second variations of $\log\det N_1$.
\begin{theorem}\label{thm3_10_3}
Given $\mu, \nu\in\Omega_{-1,1}(X)$, 
\begin{align}\label{eq3_9_3}
\pa_{\mu}\log\det N_1=- \sum_{k=1}^g\sum_{l=1}^g(N^{-1})_{k,l}\int_X \phi_k(z) \phi_l(z)\mu(z)d^2z,
\end{align}
\begin{align}\label{eq3_9_14}
\pa_{\mu}\pa_{\bar{\nu}}\log\det N_1=&-\int_X\int_X K_1(z,w)K_1(z,w)\mu(z)\overline{\nu(w)}d^2w d^2 z.
\end{align}
\end{theorem}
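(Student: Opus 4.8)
The plan is to derive the two formulas directly from Proposition~\ref{prop1_26_1}, the Rauch variational formula \eqref{eq3_9_4}, and the expression \eqref{eq3_9_8} for the Bergman kernel, in the same spirit as the proof of Theorem~\ref{thm1_29_1} but now exploiting the fact that in the $n=1$ case the metric density $\rho$ drops out of the pairing \eqref{eq3_8_1} (so there is no $(\Delta_0+2)^{-1}$ term) and that $N_1=\operatorname{Im}\tau$ is a \emph{real} matrix. First I would establish \eqref{eq3_9_3}: by \eqref{v1} of Proposition~\ref{prop1_26_1} we have $\pa_\mu\log\det N_1=\operatorname{Tr}(N^{-1}\pa_\varepsilon N^{\varepsilon\mu}|_{\varepsilon=0})$, and substituting \eqref{eq3_9_4} for $\pa_\mu N_{k,l}=L_\mu N_{k,l}$ gives the claimed formula immediately upon writing out the trace as $\sum_{k,l}(N^{-1})_{k,l}L_\mu N_{l,k}=\sum_{k,l}(N^{-1})_{k,l}L_\mu N_{k,l}$, using that $N^{-1}$ is Hermitian (indeed real symmetric) and that $\int_X\phi_k\phi_l\mu\,d^2z$ is symmetric in $k,l$.

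Next I would prove \eqref{eq3_9_14}. Starting from \eqref{v2}, the second variation splits as $-\operatorname{Tr}(N^{-1}(\pa_{\bar\varepsilon}N)N^{-1}(\pa_\varepsilon N))+\operatorname{Tr}(N^{-1}\pa_{\bar\varepsilon}\pa_\varepsilon N)$. For the mixed second derivative of $N$ one differentiates $N_{k,l}^{\varepsilon\mu}=\int_X(f^{\varepsilon\mu})^*\phi_k^{\varepsilon\mu}\,\overline{(f^{\varepsilon\mu})^*\phi_l^{\varepsilon\mu}}\,(1-|\varepsilon\mu|^2)\,d^2z$ — note that the factor $\rho^{1-n}$ is $\rho^0=1$ here, so the $L_\mu L_{\bar\nu}\rho$ contribution that produced $T_3,T_4$ in Theorem~\ref{thm1_29_1} is entirely absent; the only surviving terms are $\int_X(L_\mu\phi_k)\overline{(L_\nu\phi_l)}\,d^2z$ and $-\int_X\phi_k\overline{\phi_l}|\mu|^2\,d^2z$. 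Crucially, by \eqref{eq3_10_9} and its conjugate $L_{\bar\nu}\phi_l$ is holomorphic; combining \eqref{eq3_8_6} with the orthogonal decomposition $L_\mu\phi_k=\mathscr P_1L_\mu\phi_k+(1-\mathscr P_1)L_\mu\phi_k$ and the analogue of \eqref{eq1_29_8}, together with Lemma~\ref{lemma1_27_1} for $K_1$, one can collapse the $T_1+T_2$-type combination exactly as in the proof of Theorem~\ref{thm1_29_1}; the leftover $-\int_X\phi_k\overline{\phi_l}|\mu|^2\,d^2z$ term, contracted against $\sum_{k,l}(N^{-1})_{l,k}$, becomes $-\int_X K_1(z,z)|\mu(z)|^2\,d^2z$, and this is precisely the diagonal of the kernel identity one needs. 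The cleanest route, however, is to observe that by \eqref{eq3_9_4} and the reality of $N$, $\pa_{\bar\nu}N_{k,l}=\overline{L_\nu N_{l,k}}=-\int_X\overline{\phi_l(z)\phi_k(z)}\,\overline{\nu(z)}\,d^2z$, so substituting both first-variation formulas into $-\operatorname{Tr}(N^{-1}(\pa_{\bar\nu}N)N^{-1}(\pa_\varepsilon N))$ and using $\pa_{\bar\varepsilon}\pa_\varepsilon N=0$ in the $n=1$, purely-holomorphic setting (which one must check from the second-variation expansion) yields, after inserting $\sum_{k,l}(N^{-1})_{k,l}\phi_l(z)\overline{\phi_k(w)}=K_1(z,w)$ twice, exactly $-\int_X\int_X K_1(z,w)K_1(z,w)\mu(z)\overline{\nu(w)}\,d^2w\,d^2z$.

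The main obstacle is verifying that $\pa_{\bar\varepsilon}\pa_\varepsilon N^{\varepsilon\mu}|_{\varepsilon=0}$ does \emph{not} contribute (equivalently, that the second-variation analysis genuinely reduces to the product-of-first-variations term). One has to show two things: that $\int_X(L_\mu\phi_k)\overline{(L_\nu\phi_l)}\,d^2z - \int_X\phi_k\overline{\phi_l}|\mu|^2\,d^2z$, when contracted with $(N^{-1})_{l,k}$ and combined with the $-\operatorname{Tr}(N^{-1}(\pa_{\bar\nu}N)N^{-1}(\pa_\varepsilon N))$ term, produces precisely the stated double integral; and that the boundary/period contributions implicit in passing from $L_\mu\phi_k$ to its projection $\mathscr P_1 L_\mu\phi_k$ are accounted for correctly. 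This is where one must use the projection identity $\int_X K_1(z,\zeta)(L_\mu K_1)(\zeta,w)\,d^2\zeta=0$ (the $n=1$ analogue of \eqref{eq1_29_2}, valid because $L_\mu K_1$ is anti-holomorphic in $w$ by Proposition~\ref{prop1_26_2} together with the special-basis definition noted in Lemma~\ref{lemma1_27_1}) to kill the cross terms, leaving only the clean kernel expression. I would present this by mirroring the bookkeeping of the proof of Theorem~\ref{thm1_29_1} line by line, simply dropping every term that carried a factor $(1-n)$ or $(\Delta_0+2)^{-1}$, and then substituting \eqref{eq3_9_8} at the end.
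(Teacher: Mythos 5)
Your derivation of \eqref{eq3_9_3} is correct and is exactly the paper's argument, and your evaluation of the product-of-first-variations term $-\operatorname{Tr}\bigl(N^{-1}(\pa_{\bar{\nu}}N)N^{-1}(\pa_{\mu}N)\bigr)$ --- inserting the Rauch formula, its conjugate, and \eqref{eq3_9_8} twice --- correctly produces $-\int_X\int_X K_1(z,w)^2\mu(z)\overline{\nu(w)}\,d^2w\,d^2z$; that computation is also the heart of the paper's proof. The gap is in disposing of the remaining term $\operatorname{Tr}\bigl(N^{-1}\pa_{\bar{\varepsilon}}\pa_{\varepsilon}N\bigr)$. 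You assert $\pa_{\bar{\varepsilon}}\pa_{\varepsilon}N=0$ and propose to "check it from the second-variation expansion," but that expansion reads $\int_X(L_{\mu}\phi_k)\overline{(L_{\nu}\phi_l)}\,d^2z-\int_X\phi_k\overline{\phi_l}\,\mu\bar{\nu}\,d^2z$, and its vanishing is nowhere near apparent in that form ($L_{\mu}\phi_k$ is not even holomorphic: $\pa_{\bar z}L_{\mu}\phi_k=\pa_z(\phi_k\mu)$). Mirroring the bookkeeping of Theorem \ref{thm1_29_1} as you suggest only collapses $T_1+T_2$ into $\int\int(L_{\mu}K_1)(z,w)(L_{\bar{\nu}}K_1)(w,z)$ and returns you to the intermediate formula \eqref{eq1_29_5} with $n=1$; converting \emph{that} into the stated double-$K_1$ formula is precisely the step that requires the Poincar\'e-series manipulations of Proposition \ref{prop2_22_2}, which fail for $n=1$ because the series \eqref{eq3_5_2} diverges at $s=1$. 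So the route you sketch for closing the gap is circular.

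The missing idea is to differentiate the Rauch formula itself rather than the unexpanded trace: applying $\pa_{\bar{\nu}}$ to $\pa_{\mu}N_{k,l}=-\int_X\phi_k\phi_l\mu\,d^2z$, the factor $\phi_k\phi_l$ contributes nothing since the canonical basis varies holomorphically, and the only surviving entry-wise term is $-\int_X\phi_k\phi_l\,(L_{\bar{\nu}}\mu)\,d^2z$, which vanishes by Proposition \ref{vary_differential} and integration by parts because $L_{\bar{\nu}}\mu=-4\pa_{\bar z}\rho^{-1}\pa_{\bar z}(\Delta_0+2)^{-1}(\mu\bar{\nu})$ is $\bar{\pa}$-exact while $\phi_k\phi_l$ is a holomorphic quadratic differential. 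Note that this is the opposite of your closing instruction to "drop every term carrying $(\Delta_0+2)^{-1}$": the $(\Delta_0+2)^{-1}$ enters essentially through $L_{\bar{\nu}}\mu$ and must be killed by integration by parts, not discarded on the grounds that its coefficient would be $(1-n)$. With that one step supplied, your computation of the product term completes the proof exactly as in the paper.
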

\begin{proof}
For the first variation, one obtains from \eqref{v1} and \eqref{eq3_9_4} that 
\begin{align*}
\pa_{\mu}\log\det N_1=\sum_{k=1}^g\sum_{l=1}^g(N^{-1})_{k,l}L_{\mu}N_{l,k}=-\sum_{k=1}^g\sum_{l=1}^g(N^{-1})_{k,l}\int_X \phi_k(z) \phi_l(z)\mu(z)d^2z.
\end{align*}This proves \eqref{eq3_9_3}.

For the second variation, it is more convenient to apply $\pa_{\bar{\nu}}$ to \eqref{eq3_9_3} directly. Since $\phi_j(z)$, $1\leq j\leq g$ varies holomorphically, we find that
\begin{equation}\label{eq3_9_7}\begin{split}
\pa_{\mu}\pa_{\bar{\nu}}\log\det N_1=&- \sum_{k=1}^g\sum_{l=1}^g(L_{\bar{\nu}}N^{-1})_{k,l}\int_X \phi_k(z) \phi_l(z)\mu(z)d^2z\\&-\sum_{k=1}^g\sum_{l=1}^g(N^{-1})_{k,l}\int_X \phi_k(z) \phi_l(z)(L_{\bar{\nu}}\mu)(z)d^2z.
\end{split}\end{equation} Using Proposition \ref{vary_differential}, and the fact that $\phi_k(z)\phi_l(z)$ is holomorphic,  integration by parts shows that the second term in \eqref{eq3_9_7} is zero. Namely,
$$\sum_{k=1}^g\sum_{l=1}^g(N^{-1})_{k,l}\int_X \phi_k(z) \phi_l(z)(L_{\bar{\nu}}\mu)(z)d^2z=0.$$For the first term in \eqref{eq3_9_7}, \eqref{eq3_9_6} gives
\begin{align*}
(L_{\bar{\nu}}N^{-1})_{k,l}=-\sum_{m=1}^g\sum_{q=1}^g (N^{-1})_{k,m}\left(L_{\bar{\nu}}N_{m,q}\right)(N^{-1})_{q,l}.
\end{align*}Using   the complex conjugate of \eqref{eq3_9_4}, we find that
\begin{align*}
&\pa_{\mu}\pa_{\bar{\nu}}\log\det N_1\\=& -\sum_{k=1}^g\sum_{l=1}^g\sum_{m=1}^g\sum_{q=1}^g (N^{-1})_{k,m}
\int_X\overline{\phi_m(w)}\overline{\phi_q(w)}\overline{\nu(w)}d^2w (N^{-1})_{q,l}\int_X \phi_k(z) \phi_l(z)\mu(z)d^2z\\
=&-\int_X\int_X K_1(z,w)K_1(z,w)\mu(z)\overline{\nu(w)}d^2w d^2 z.
\end{align*}
This proves \eqref{eq3_9_14}.
\end{proof}

Notice that if 
\begin{align}\label{eq3_9_15}
K_1(z,w)=-\frac{1}{\pi}\sum_{\gamma\in\Gamma}\frac{\gamma'(z)}{(\gamma z-\bar{w})^2},
\end{align}then the formula \eqref{eq3_9_14} for $\pa_{\mu}\pa_{\bar{\nu}}\log\det N_1$ is equal to the $\mathscr{X}$ term in \eqref{eq2_22_6} when $n=1$. Since the $\mathscr{Y}$ and $\mathscr{Z}$ terms in \eqref{eq2_22_6} vanish if $n=1$, we find that heuristically, \eqref{eq3_9_14} is "proved" by Theorem \ref{thm2_22_3}. 
Unfortunately, since the series \eqref{eq3_5_2} is divergent when $s=1$, the series \eqref{eq3_9_15} does not converge absolutely, and we also do not know whether it converges conditionally.

If $\psi(z)$ is an integrable holomorphic function in $\mathbb{D}$ with Taylor series expansion
$$\psi(z)=\sum_{k=0}^{\infty}c_kz^{k},$$ one can show that
$$\frac{1}{\pi}\int_{\mathbb{D}}\frac{\psi(w)}{(1-z\bar{w})^2}d^2w=\psi(z).$$
Using the map $h$ \eqref{eq2_2_2} that maps $\mathbb{D}$ biholomorphically onto $\mathbb{U}$, we  find that if $\phi$ is holomorphic on $\mathbb{U}$, then
\begin{align}\label{eq3_9_12}
-\frac{1}{\pi}\int_{\mathbb{U}}\frac{\phi(w)}{(z-\bar{w})^2}d^2w=\phi(z).
\end{align}
If  $\phi\in \Omega_{1}(X)$, 
  we  can fold up the integral using the automorphy of $\phi$, which gives
\begin{equation}\label{eq3_9_9}\begin{split}
-\frac{1}{\pi}\int_{\mathbb{U}}\frac{\phi(w)}{(z-\bar{w})^2}d^2w=&-\frac{1}{\pi} \sum_{\gamma\in\Gamma}\int_{\gamma  (F)}\frac{\phi(w)}{(z-\bar{w})^2}d^2w\\
=&-\frac{1}{\pi} \sum_{\gamma\in\Gamma}\int_F \frac{\overline{\gamma'(w)}}{(z-\overline{\gamma w})^2}\phi(w)d^2w\\
=&-\frac{1}{\pi} \sum_{\gamma\in\Gamma}\int_F \frac{\gamma'(z)}{(\gamma z-\overline{  w})^2} \phi(w)d^2w.
\end{split}\end{equation}Hence, if interchanging the summation and integration can be justified, we will conclude that  the series
\eqref{eq3_9_15} has the reproducing property of the Bergman kernel $K_1(z,w)$. Unfortunately, since   the series \eqref{eq3_9_15} does not converge absolutely, it is difficult to justify the interchange of summation and integration. Nevertheless, Fay \cite[Theorem 2.5]{Fay} proved that if $X$ is compact, then for any $\alpha>1$,
\begin{align}\label{eq3_9_5}
K_1(z,w)=-\frac{1}{\pi}\lim_{R\rightarrow \infty}\sum_{\substack{\gamma\in\Gamma\\ u(\gamma z, w)\leq R}}\frac{\gamma'(z)}{(\gamma z-\bar{w})^2}\left(1-\frac{u(\gamma z, w)}{R}\right)^{\alpha}.
\end{align}In other words, the formula \eqref{eq3_9_15} holds after some regularization.  However, we do not want to pursue this line further.
Instead, we turn to the Green's function and the determinant of the Laplacian $\Delta_1$. 

As we discuss in Appendix \ref{resolvent}, when $\text{Re}\,s>1$, the resolvent kernel $G_{0,s}(z,w)$ can be defined by the absolutely convergent series 
\begin{align*}
G_{0,s}(z,w)=\sum_{\gamma\in\Gamma}\mathcal{G}_{0,s}(\gamma z, w),
\end{align*}where
\begin{align}\label{eq3_9_23}
\mathcal{G}_{0,s}( z, w)=\frac{1}{4\pi}\sum_{k=0}^{\infty}\frac{\Gamma(s+k)\Gamma(s+k)}{k!\,\Gamma(2s+k)}\frac{1}{(u(z,w)+1)^{s+k}}.
\end{align}
When $z\neq w$, it admits an a meromorphic continuation in $s$ to the entire complex $s$-plane \cite{Fay, Iwaniec}. When $z\neq w$, one can show that (see Appendix \ref{resolvent}):
\begin{align*}
G_{0,1}(z,w)=\lim_{s\rightarrow 1^+}\left\{G_{0,s}(z,w)-\frac{1}{|X|}\frac{1}{s(s-1)}\right\}.
\end{align*}It follows that 
\begin{align}
\pa_w\pa_zG_{0,1}(z,w)=\lim_{s\rightarrow 1^+}\di\frac{\pa}{\pa w}  \frac{\pa}{\pa z} G_{0,s}(z,w).
\end{align}A direct computation with \eqref{eq3_9_23} shows that
\begin{align}\label{eq3_10_5}
\lim_{s\rightarrow 1^+}\frac{\pa}{\pa w}  \frac{\pa}{\pa z} \mathcal{G}_{0,s}(z,w)=-\frac{1}{4\pi}\frac{1}{(z-w)^2}.
\end{align}
This implies that $\pa_w\pa_zG_{0,1}(z,w)$ is a symmetric bi-differential of the second kind, and as $z\rightarrow w$,
\begin{align}\label{eq3_8_3}
\frac{\pa}{\pa w}\frac{\pa}{\pa z}G_{0,1}(z,w)=-\frac{1}{4\pi}\frac{1}{(z-w)^2}+O(1).
\end{align}Using \eqref{eq3_8_3}, one can show that for any holomorphic one-differential $\phi(z)$,
\begin{align*}
\text{p.v.}\;\int_X\frac{\pa}{\pa w}\frac{\pa}{\pa z}G_{0,1}(z,w)\overline{\phi(w)}d^2w=0.
\end{align*}Here $\text{p.v.}$ stands for principal value of the integral. By the characterization property of the Schiffer kernel $\Omega(z,w)$ as the unique  symmetric bi-differential on $\bar{X}\times\bar{X}$ with a double pole of bi-residue 1 on the diagonal $z=w$, and 
$$\text{p.v.}\;\int_X\Omega(z,w)\overline{\phi(w)}d^2w=0$$ for any $\phi\in\Omega_1(X)$, \eqref{eq3_8_3}  shows that \cite[Theorem 2.3]{Fay}
\begin{align*}
\frac{\pa}{\pa w}\frac{\pa}{\pa z}G_{0,1}(z,w)=-\frac{1}{4\pi}\Omega(z,w).
\end{align*} 

For $C=0, H, P, E$, we can define the identity, hyperbolic, parabolic and elliptic parts of $\pa_z\pa_wG_{0,1}(z,w)$ by
\begin{align}\label{eq3_10_7}
\pa_z\pa_wG^{C}_{0,1}(z,w)=\lim_{s\rightarrow 1^+}\sum_{\substack{\gamma\in \Gamma\\\gamma \;\text{is in the set C}}}\frac{\pa^2}{\pa z\pa w}\mathcal{G}_{0,s}(\gamma z, w)\gamma'(z).
\end{align}
As is mentioned in Remark \ref{remark3_9_2}, if $\gamma_0$ is a representative of a primitive hyperbolic class, $\lambda_0$ is its multiplier, then
\begin{align*}
L_{\mu}\log\left[\prod_{k=0}^{\infty}\left(1-\lambda_0^{-k-1}\right)\right]=&\frac{1}{\pi}\int_X\sum_{\gamma\in\tilde{\Gamma}[\gamma_0]}\frac{\gamma'(z)}{(\gamma z-z)^2}\mu(z)d^2z.
\end{align*}Here $\tilde{\Gamma}[\gamma_0]$ is the set of elements in $\Gamma$ that are conjugate to $\gamma_0^{\ell}$ for some positive integer $\ell$.

Since $Z(s)$ \eqref{eq2_23_4} has a meromorphic continuation to the complex plane with a simple zero at $s=1$ (see e.g.\ \cite{Iwaniec}), as $s\rightarrow 1^+$,
$$\log Z(s)=\log(s-1)+\log Z'(1)+O(s-1).$$It follows that
\begin{align*}
L_{\mu}\log\Delta_1=&L_{\mu}\log Z'(1)\\
=&\lim_{s\rightarrow 1^+}L_{\mu}\log Z(s)\\
=&\lim_{s\rightarrow 1^+}L_{\mu}\log\left[\prod_{[\gamma_0]\in P}\prod_{k=0}^{\infty}\left(1-\lambda_0^{-k-s}\right)\right]\\
=&\lim_{s\rightarrow 1^+}\lim_{R\rightarrow \infty}L_{\mu}\log\left[\prod_{\substack{[\gamma_0]\in P\\\lambda_0\leq R}}\prod_{k=0}^{\infty}\left(1-\lambda_0^{-k-s}\right)\right].
\end{align*}By uniqueness of analytic continuation, one can interchange the two limits. Thus,
\begin{align}
L_{\mu}\log\Delta_1=&\lim_{R\rightarrow \infty}\lim_{s\rightarrow 1^+}L_{\mu}\log\left[\prod_{\substack{[\gamma_0]\in P\\\lambda_0\leq R}}\prod_{k=0}^{\infty}\left(1-\lambda_0^{-k-s}\right)\right]\nonumber\\
=&\lim_{R\rightarrow \infty} L_{\mu}\log\left[\prod_{\substack{[\gamma_0]\in P\\\lambda_0\leq R}}\prod_{k=0}^{\infty}\left(1-\lambda_0^{-k-1}\right)\right]\nonumber\\
=&\frac{1}{\pi}\lim_{R\rightarrow \infty}\int_X\sum_{\substack{[\gamma_0]\in P\\\lambda_0\leq R}}\sum_{\gamma\in\tilde{\Gamma}[\gamma_0]}\frac{\gamma'(z)}{(\gamma z-z)^2}\mu(z)d^2z\label{eq3_10_1}\\
=&\frac{1}{\pi}\lim_{R\rightarrow \infty}\int_X\sum_{\substack{\gamma \in\Gamma\\\gamma\;\text{hyperbolic}\\\lambda(\gamma)\leq R }}\frac{\gamma'(z)}{(\gamma z-z)^2} \mu(z)d^2z\label{eq3_10_2}\\
=&-4\lim_{R\rightarrow \infty}\lim_{s\rightarrow 1^+}\int_X\sum_{\substack{\gamma \in\Gamma\\\gamma\;\text{hyperbolic}\\\lambda(\gamma)\leq R }}\pa_z\pa_w\mathcal{G}_{0,s}(\gamma z, z)\gamma'(z)  \mu(z)d^2z\label{eq3_10_3}\\
=&-4 \lim_{s\rightarrow 1^+}\int_X\sum_{\substack{\gamma \in\Gamma\\\gamma\;\text{hyperbolic}  }}\pa_z\pa_w\mathcal{G}_{0,s}(\gamma z, z) \gamma'(z) \mu(z)d^2z \label{eq3_10_4}\\
=&-4\int_X\pa_z\pa_wG_{0,1}^H(\gamma z, z)  \mu(z)d^2z.\nonumber
\end{align}From \eqref{eq3_10_1} to \eqref{eq3_10_2}, we use the fact that the union of all elements in $\tilde{\Gamma}[\gamma_0]$ when $\gamma_0$ runs through primitive hyperbolic conjugacy classes  is the set of all hyperbolic elements. Eq.\ \eqref{eq3_10_5} is used for the equality between \eqref{eq3_10_2} and \eqref{eq3_10_3}. Using the uniqueness of analytic continuation again, we can interchange the two limit in \eqref{eq3_10_3}. For fixed $s>1$, one can take the $R\rightarrow \infty$ and obtain \eqref{eq3_10_4}.

Let us state this as a theorem.
\begin{theorem}\label{thm3_10_2}
Given $\mu\in\Omega_{-1,1}(X)$, the first variation of $\log\det\Delta_1$ is given by
\begin{align*}
\pa_{\mu}\log\Delta_1=&-4\int_X\pa_z\pa_wG_{0,1}^H(  z, z)  \mu(z)d^2z\\
=&\frac{1}{\pi}\lim_{R\rightarrow \infty}\int_X\sum_{\substack{\gamma \in\Gamma\\\gamma\;\text{hyperbolic}\\\lambda(\gamma)\leq R }}\frac{\gamma'(z)}{(\gamma z-z)^2} \mu(z)d^2z.
\end{align*}Here $G_{0,1}^H(\gamma z, z)$ is defined by \eqref{eq3_10_7} with $C=H$.
\end{theorem}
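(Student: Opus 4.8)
The plan is to derive the first variation of $\log\det\Delta_1$ by mimicking the structure already established for $n\ge 2$ in Theorem \ref{thm2_23_1} and Lemma \ref{lemma2_23_1}, but with all the convergence issues handled by working at $\operatorname{Re}s>1$ and analytically continuing to $s=1$. The key known input is that $\det\Delta_1 = \mathcal{C}_1 Z'(1)$ (Theorem \ref{determinant}), so $L_\mu\log\det\Delta_1 = L_\mu\log Z'(1)$, and that $Z(s)$ has a simple zero at $s=1$ so that $\log Z(s) = \log(s-1) + \log Z'(1) + O(s-1)$ as $s\to 1^+$. This lets me replace $L_\mu\log Z'(1)$ by $\lim_{s\to 1^+}L_\mu\log Z(s)$, since the $\log(s-1)$ term is independent of moduli and hence killed by $L_\mu$.

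First I would recall from Lemma \ref{lemma2_23_1} (and Remark \ref{remark3_9_2}, specialized to $n=1$) that for $\operatorname{Re}s>1$, the variation of the local zeta factor $\prod_{k\ge 0}(1-\lambda_0^{-s-k})$ equals, after summing over a primitive class, $\frac{1}{\pi}\int_X\sum_{\gamma\in\tilde\Gamma[\gamma_0]}\frac{\gamma'(z)}{(\gamma z - z)^2}\mu(z)\,d^2z$ — this identity, as the excerpt notes, holds already at $n=1$ and is ``almost a tautology'' via Gardiner's formula \eqref{eq2_23_8}. Then I would write $L_\mu\log Z'(1) = \lim_{s\to 1^+}L_\mu\log[\prod_{[\gamma_0]\in P}\prod_k(1-\lambda_0^{-k-s})]$ and truncate the product over primitive classes with $\lambda_0\le R$, taking the limit $R\to\infty$. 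The crucial manoeuvre is to interchange the $s\to 1^+$ and $R\to\infty$ limits: this is justified by uniqueness of analytic continuation, since for each truncated finite product the $L_\mu\log$ is a genuine analytic function of $s$ near $s=1$, and the truncated sums converge (uniformly on compacta in $s$) to the analytic continuation of $L_\mu\log Z(s)$. After interchanging, for each fixed $R$ the $s\to 1^+$ limit is taken termwise, yielding $\frac{1}{\pi}\int_X\sum_{[\gamma_0],\,\lambda_0\le R}\sum_{\gamma\in\tilde\Gamma[\gamma_0]}\frac{\gamma'(z)}{(\gamma z-z)^2}\mu(z)\,d^2z$; and since the union of all $\tilde\Gamma[\gamma_0]$ over primitive hyperbolic classes exhausts the hyperbolic elements, the $R\to\infty$ limit is exactly $\frac{1}{\pi}\lim_{R\to\infty}\int_X\sum_{\gamma\text{ hyperbolic},\,\lambda(\gamma)\le R}\frac{\gamma'(z)}{(\gamma z-z)^2}\mu(z)\,d^2z$, which is the second displayed formula.

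To obtain the first displayed formula in terms of the Green's function, I would then substitute the identity \eqref{eq3_10_5}, namely $\lim_{s\to 1^+}\pa_w\pa_z\mathcal{G}_{0,s}(z,w) = -\frac{1}{4\pi}(z-w)^{-2}$, so that $\frac{1}{\pi}\frac{\gamma'(z)}{(\gamma z-z)^2}\mu(z) = -4\lim_{s\to1^+}\pa_z\pa_w\mathcal{G}_{0,s}(\gamma z,z)\gamma'(z)\mu(z)$; once more interchanging the $R\to\infty$ and $s\to 1^+$ limits (same analytic-continuation justification) and using that for fixed $s>1$ the sum over all hyperbolic $\gamma$ converges absolutely (the resolvent series is absolutely convergent), I reach $-4\lim_{s\to 1^+}\int_X\sum_{\gamma\text{ hyp}}\pa_z\pa_w\mathcal{G}_{0,s}(\gamma z,z)\gamma'(z)\mu(z)\,d^2z = -4\int_X \pa_z\pa_w G_{0,1}^H(z,z)\mu(z)\,d^2z$ by the definition \eqref{eq3_10_7}. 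The main obstacle, and the step needing the most care, is the double interchange of limits: I must check that the truncated products/sums over $\lambda_0\le R$ (resp.\ $\lambda(\gamma)\le R$) converge to an analytic function of $s$ on a neighbourhood of $s=1$ uniformly enough that the order of $\lim_{R\to\infty}$ and $\lim_{s\to 1^+}$ may be swapped — this is precisely where the absolute convergence for $\operatorname{Re}s>1$ together with the meromorphic continuation of $Z(s)$ and of $G_{0,s}$ (cited from \cite{Iwaniec, Fay}) does the work, and where the genus-zero degenerate case ($N_1$ undefined) is irrelevant because this theorem concerns only $\det\Delta_1$, which is defined regardless of $g$.
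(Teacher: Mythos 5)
Your proposal follows the paper's own argument essentially step for step: expressing $L_\mu\log\det\Delta_1$ as $\lim_{s\to 1^+}L_\mu\log Z(s)$ via the simple zero of $Z$ at $s=1$, invoking the $n=1$ case of the local zeta factor variation from Remark \ref{remark3_9_2}, interchanging the $s\to 1^+$ and $R\to\infty$ limits by uniqueness of analytic continuation, and then converting to the Green's function form via \eqref{eq3_10_5} and \eqref{eq3_10_7}. The proof is correct and takes the same route as the paper.
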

In other words, Theorem \ref{thm2_23_1} holds when $n=1$ after we "regularize" the summation in the integral by analytic continuation. Using the same approach, we can find the second variation.
\begin{theorem}\label{thm3_10_1}
Given $\mu,\nu\in\Omega_{-1,1}(X)$, the second variation of $\log\det\Delta_1$ is given by
\begin{align*}
\pa_{\mu}\pa_{\bar{\nu}}\log\Delta_1=-\frac{1}{\pi^2}\lim_{R\rightarrow \infty}\int_X\int_{\mathbb{U}}\sum_{\substack{\gamma \in\Gamma\\\gamma\;\text{hyperbolic}\\\lambda(\gamma)\leq R }}\frac{\gamma'(z)}{(\gamma z-\bar{w})^2(z-\bar{w})^2} \mu(z)\overline{\nu(w)}d^2wd^2z.
\end{align*} 
\end{theorem}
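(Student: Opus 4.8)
The plan is to mimic the derivation of the first variation $\pa_\mu\log\Delta_1$ given just above (Theorem \ref{thm3_10_2}), applying $\pa_{\bar\nu}$ to the regularized hyperbolic sum and tracking which terms survive. Concretely, I would start from the identity
\begin{align*}
\pa_{\mu}\log\Delta_1=\frac{1}{\pi}\lim_{R\rightarrow\infty}\int_X\sum_{\substack{\gamma\in\Gamma\\\gamma\;\text{hyperbolic}\\\lambda(\gamma)\leq R}}\frac{\gamma'(z)}{(\gamma z-z)^2}\mu(z)d^2z,
\end{align*}
and note that the right-hand side is exactly the $n=1$ specialization of the formula proved in Theorem \ref{thm2_23_1}, so that the variation $\pa_{\bar\nu}$ of this quantity is computed by the same manipulation that produced Theorem \ref{thm2_22_6} from Theorem \ref{thm2_23_1}. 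When $n=1$, the second and third terms of \eqref{eq2_22_8} (the $\mathscr{Y}$-type term, carrying a factor $n-1$, and the $\mathscr{Z}$-type term, carrying $(n-1)(2n-1)$) both vanish identically, leaving only the $\mathscr{X}$-type term with $n=1$. This gives precisely
\begin{align*}
\pa_{\mu}\pa_{\bar\nu}\log\Delta_1=-\frac{1}{\pi^2}\lim_{R\rightarrow\infty}\int_X\int_{\mathbb{U}}\sum_{\substack{\gamma\in\Gamma\\\gamma\;\text{hyperbolic}\\\lambda(\gamma)\leq R}}\frac{\gamma'(z)}{(\gamma z-\bar w)^2(z-\bar w)^2}\mu(z)\overline{\nu(w)}d^2wd^2z,
\end{align*}
which is the claimed formula.

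To make this rigorous I would work at the level of the cutoff sum $\lambda(\gamma)\leq R$ (equivalently, at finite $s>1$ in the resolvent picture), where all series converge absolutely and the variational formulas of Proposition \ref{prop_vary}, Proposition \ref{prop2_2_1} and Proposition \ref{vary_differential} apply term by term. Applying $\pa_{\bar\nu}$ to $\gamma'(z)/(\gamma z-z)^2$ produces: (i) a piece from varying the kernel $1/(\gamma z-\bar z)^{2n-2}$-type factors — which for $n=1$ is absent since the exponent $2n-2=0$; (ii) the main piece from varying $\mathcal{H}(z,w)=1/(z-\bar w)^2$ via Proposition \ref{prop_vary}, giving the displayed double integral; and (iii) a piece involving $L_{\bar\nu}\mu$, which by Proposition \ref{vary_differential} equals $-4\pa_{\bar z}\rho^{-1}\pa_{\bar z}(\Delta_0+2)^{-1}(\mu\bar\nu)$ and, after integration by parts against the holomorphic factor $\gamma'(z)/(\gamma z-z)^2$ (whose $\pa_{\bar z}$-derivative vanishes when $n=1$, since the surviving boundary-free integration by parts in the proof of Theorem \ref{thm2_22_6} carried the factor $n-1$), contributes zero. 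Thus only term (ii) remains, reproducing $\mathscr X$ at $n=1$.

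The main obstacle is the interchange of the $R\to\infty$ limit with the variational derivative $\pa_{\bar\nu}$ and with the double integration over $X\times\mathbb U$. I would handle this exactly as in the proof of Theorem \ref{thm3_10_2}: pass to the resolvent $G_{0,s}$ with $\text{Re}\,s>1$, where $\sum_{\gamma}\mathcal{G}_{0,s}(\gamma z,w)$ converges absolutely, carry out the variational computation and the $w$-integration there (obtaining an analytic function of $s$), and then invoke uniqueness of analytic continuation to justify interchanging the $s\to 1^+$ and $R\to\infty$ limits, finally using \eqref{eq3_10_5} — the identity $\lim_{s\to1^+}\pa_w\pa_z\mathcal{G}_{0,s}(z,w)=-\tfrac{1}{4\pi}(z-w)^{-2}$ — to convert back to the elementary kernel. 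A secondary point requiring care is checking that no boundary terms arise when integrating by parts in the $L_{\bar\nu}\mu$ term; this follows, as in the proof of Theorem \ref{thm2_22_6}, by folding the sum over a hyperbolic conjugacy class into a single integral over a strip $S_\lambda$ and using the invariance of the integrands, so that the $n=1$ factor $(n-1)=0$ kills the term before any boundary contribution can matter.
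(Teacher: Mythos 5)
Your proposal is correct and follows essentially the same route as the paper: differentiate the regularized first-variation formula of Theorem \ref{thm3_10_2}, justify exchanging $\pa_{\bar{\nu}}$ with the $s\to1^+$ and $R\to\infty$ limits by uniqueness of analytic continuation, obtain the displayed double integral from Proposition \ref{prop_vary} applied to $\gamma'(z)/(\gamma z-z)^2$, and kill the $L_{\bar{\nu}}\mu$ term by integration by parts against this holomorphic factor via Proposition \ref{vary_differential}. The only cosmetic difference is that you frame the surviving term as the $n=1$ specialization of the $\mathscr{X}$-term of Theorem \ref{thm2_22_6}, whereas the paper computes it directly; the substance is identical.
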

\begin{proof}
Again, due to the uniqueness of analytic continuation, we can interchange the partial derivative $\pa_{\bar{\nu}}$ with the $s\rightarrow 1^+$ and $R\rightarrow\infty$ limit and obtain
\begin{align*}
\pa_{\mu}\pa_{\bar{\nu}}\log\Delta_1=&\frac{1}{\pi}\lim_{R\rightarrow \infty}\int_X\sum_{\substack{\gamma \in\Gamma\\\gamma\;\text{hyperbolic}\\\lambda(\gamma)\leq R }}\left\{L_{\bar{\nu}}\frac{\gamma'(z)}{(\gamma z-z)^2}\right\} \mu(z)d^2z\\
&+\frac{1}{\pi}\lim_{R\rightarrow \infty}\int_X\sum_{\substack{\gamma \in\Gamma\\\gamma\;\text{hyperbolic}\\\lambda(\gamma)\leq R }}\frac{\gamma'(z)}{(\gamma z-z)^2} (L_{\bar{\nu}}\mu)(z)d^2z.
\end{align*}
 Now by Proposition \ref{prop_vary}, 
\begin{align*}
L_{\bar{\nu}}\frac{\gamma'(z)}{(\gamma z-z)^2}=-\frac{1}{\pi}\int_{\mathbb{U}}\frac{\gamma'(z)}{(\gamma z-\bar{w})^2(z-\bar{w})^2}\overline{\nu(w)}d^2w.
\end{align*}On the other hand, for any hyperbolic element $\gamma$,
$$\frac{\gamma'(z)}{(\gamma z-z)^2}$$ is holomorphic in $z$.  Proposition \ref{vary_differential} gives $$(L_{\bar{\nu}}\mu)(z)=-4\bar{\pa}\rho(z)^{-1}\bar{\pa}(\Delta_0+2)^{-1}(\mu\bar{\nu})(z).$$ Integration by parts then shows that 
$$\int_X \frac{\gamma'(z)}{(\gamma z-z)^2} (L_{\bar{\nu}}\mu)(z)d^2z=0.$$ The assertion follows.
\end{proof}
 
 To finish the proof of the local index theorem for the $n=1$ case, we use the approach in \cite{TZ_index_2, TZ_index_3}. First, we have the following.
\begin{theorem}
Given $\mu\in\Omega_{-1,1}(X)$, 
\begin{align}\label{eq3_10_6}
\int_X\pa_z\pa_wG_{0,1}^P(  z, z)  \mu(z)d^2z=\int_X\pa_z\pa_wG_{0,1}^E(  z, z)  \mu(z)d^2z=0.
\end{align}Hence, the first variation of $\log\det\Delta_1$ can also be written as
\begin{align}\label{eq3_10_8}
\pa_{\mu}\log\Delta_1=&-4\int_X\left(\pa_z\pa_w G_{0,1}(z, z)-\pa_z\pa_wG_{0,1}^0(z,z)\right)  \mu(z)d^2z.
\end{align}
\end{theorem}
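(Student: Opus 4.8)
The plan is to show that the parabolic and elliptic parts of $\pa_z\pa_wG_{0,1}(z,w)$ restricted to the diagonal, when paired against $\mu\in\Omega_{-1,1}(X)$, integrate to zero; the formula \eqref{eq3_10_8} is then immediate since $\pa_z\pa_wG_{0,1}=\pa_z\pa_wG_{0,1}^0+\pa_z\pa_wG_{0,1}^H+\pa_z\pa_wG_{0,1}^P+\pa_z\pa_wG_{0,1}^E$ and Theorem \ref{thm3_10_2} identifies the hyperbolic part with $-\tfrac14\pa_\mu\log\Delta_1$ (after the $s\to1^+$ regularization). So everything reduces to \eqref{eq3_10_6}.

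First I would treat the parabolic part. By definition \eqref{eq3_10_7}, and using \eqref{eq3_10_5}, the integrand is, up to constants and the $s\to1^+$ limit, a sum over parabolic $\gamma$ of $\gamma'(z)/(\gamma z-z)^2$ times $\mu(z)$. I would group the parabolic elements as in Section \ref{a3}: they fall into $q$ families $\mathscr{P}_j$, each consisting of conjugates of $\kappa_j^\ell$, $\ell\neq 0$. For a fixed $j$, conjugate $\kappa_j$ to the standard translation via $\sigma_j$, replace $\mu$ by $\hat\mu_j$ of \eqref{eq2_14_7} with its Fourier expansion \eqref{eq2_18_8}, and unfold the integral over the fundamental domain into an integral over the strip $\mathfrak{S}=\{0\le x\le 1,\ y>0\}$, so that the relevant piece becomes
\begin{align*}
\sum_{\ell\neq 0}\int_{\mathfrak S}\frac{1}{(z+\ell-z)^2}\hat\mu_j(z)\,d^2z=\left(\sum_{\ell\neq 0}\frac{1}{\ell^2}\right)\int_0^\infty\int_0^1 y^2\sum_{k\ge1}\beta_k^{(j)}e^{-2\pi ik(x-iy)}\,dx\,dy.
\end{align*}
The inner $x$-integral of $e^{-2\pi ikx}$ over $[0,1]$ vanishes for every $k\ge1$, so this term is zero; one must of course carry the $s\to1^+$ regularization through the same manipulations (the relevant Poincaré-type series converges for $\mathrm{Re}\,s>1$, and the $x$-average still kills all Fourier modes $k\ge1$), which is routine. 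Hence the parabolic contribution to \eqref{eq3_10_6} vanishes.

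Next the elliptic part. Here I would use the setup of Section \ref{a4}: elliptic elements split into $v$ families of conjugates of $\tau_j^\ell$, $1\le\ell\le m_j-1$; conjugate $\tau_j$ to a rotation $R_{\ell\theta_j}$ via $h_j$, pass to the disc, replace $\mu$ by $\check\mu_j$ of \eqref{eq2_6_4} with Fourier expansion \eqref{eq2_26_1}, and (as in Remark \ref{remark2_26_1}) average over the $m_j$ sectors so the integral becomes $\tfrac1{m_j}\sum_{\ell=1}^{m_j-1}\int_{\mathbb D}$ of $(1-|z|^2)^2\check{\mathcal K}$-type data. After integrating out the angular variable, only those Fourier modes $k$ survive for which the total power of $e^{i\theta}$ is zero, which forces terms of the form $\sum_{\ell=1}^{m_j-1}\omega^{\ell n}$ with $n=1$, i.e. $\sum_{\ell=1}^{m_j-1}\omega^{\ell}=-1\neq0$ in general — so the vanishing is \emph{not} from an orthogonality of characters. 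Instead, the mechanism is the same as in the proof of Theorem \ref{thm2_24_2} specialized to $n=1$: the radial integral is a total derivative and the only surviving boundary term at $r=0$ is $\mathfrak U_k(0)$-type, but with $n=1$ the prefactor $(n-1)$ in $A_0$ of \eqref{eq2_27_9} and the $(n-1)(2n-1)$ prefactor of $\mathscr Z_{E,j}$ both vanish, leaving only $\mathscr X_{E,j}$; and $\mathscr X_{E,j}$ is precisely what contributes to $\mathscr E_{E,j}$, not to $\pa_z\pa_wG^E_{0,1}$. More directly: $\pa_z\pa_wG^E_{0,1}(z,z)$ involves only the single term $\gamma'(z)/(\gamma z-z)^2$ (no $\mathcal G$-kernel, no $2n-1$ factors), and when one repeats the $n=1$ specialization of $\mathscr X_{E,j}$ in \eqref{eq3_9_20}, the radial integral $\int_0^1\tfrac{(1-r)^2}{(1-\omega^\ell r)^0}r^{k-2}dr$ together with the factor $k(k^3-k)$ produces, after the character sum \eqref{eq2_27_5}, exactly $\tfrac{m_j^2-1}{24m_j}\langle\mu,\mu\rangle^{\mathrm{ell}}_{\mathrm{TZ},j}$ for $\mathscr X_{E,j}$; but the piece entering $\pa_z\pa_wG^E_{0,1}$ is obtained from the residue expansion \eqref{eq3_10_5} with \emph{no} extra $(z-\bar z)$-weight, and the corresponding radial integral is $\int_0^1 r^{k-2}\,dr$ times $\sum_{s}s(k-s)\omega^{\ell(s-1)}=2k/(1-\omega^\ell)^2$, which when multiplied by the remaining $k$-dependent normalization and summed against \eqref{eq2_27_5} telescopes against the boundary-term contributions and gives $0$. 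I would carry out this radial integration-by-parts explicitly (it is short for $n=1$) and invoke Proposition \ref{prop2_27_1} for the $\ell$-sums; as in Section \ref{a4} one checks no boundary terms at $r=1$ appear and the $r=0$ terms cancel.

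\textbf{Main obstacle.} The delicate point is \emph{not} any single integral but the legitimacy of commuting the $s\to1^+$ limit (and the $R\to\infty$ truncation used in Theorems \ref{thm3_10_2}, \ref{thm3_10_1}) with the unfolding, the sector-averaging, and the termwise differentiation under the integral. I would handle this exactly as in the $n\ge2$ case: for $\mathrm{Re}\,s>1$ all the Poincaré-type series converge absolutely (Patterson's estimate \eqref{eq3_5_2}), every manipulation above is then justified verbatim, the resulting expression is an explicit meromorphic function of $s$ regular at $s=1$, and one evaluates at $s=1$ at the very end; uniqueness of analytic continuation lets one interchange limits. The genuinely new content — that the elliptic radial integrals for the \emph{unweighted} kernel $\gamma'(z)/(\gamma z-z)^2$ cancel against their boundary terms when $n=1$ — is a finite computation that mirrors, with all the $(n-1)$ and $(2n-1)$ factors set appropriately, the bookkeeping already done in the proof of Theorem \ref{thm2_24_2}.
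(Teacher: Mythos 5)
Your parabolic argument is correct and is exactly the paper's: unfold to the strip, the $\ell$-sum factors out as $\sum_{\ell\neq 0}\ell^{-2}$, and the $x$-integral of $e^{-2\pi ikx}$ over $[0,1]$ kills every mode $k\geq 1$ of $\hat\mu_j$. The deduction of \eqref{eq3_10_8} from \eqref{eq3_10_6} and Theorem \ref{thm3_10_2} is also fine.

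The elliptic part, however, contains a genuine error. The quantity $\int_X\pa_z\pa_wG_{0,1}^{E}(z,z)\mu(z)\,d^2z$ is \emph{linear} in $\mu$, and after conjugating $\tau_j$ to the rotation $R_{\ell\theta_j}$ the diagonal kernel is simply
\begin{equation*}
\frac{\gamma'(z)}{(\gamma z-z)^2}\Big|_{\gamma z=e^{2i\ell\theta_j}z}=\frac{e^{2i\ell\theta_j}}{(e^{2i\ell\theta_j}-1)^2}\,\frac{1}{z^2},
\end{equation*}
so the $\ell$-sum factors out as a constant and the whole contribution reduces to $\int_{\mathbb D}\check\mu_j(z)z^{-2}\,d^2z$. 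Since $\check\mu_j(z)=\tfrac{(1-|z|^2)^2}{4}\sum_k(k^3-k)\chi_k^{(j)}\bar z^{k-2}$ with $k\geq 2$, every term has angular dependence $e^{-ik\theta}$ with $k\neq 0$, and the integral vanishes by the orthogonality \eqref{eq2_2_3} — i.e.\ precisely by the angular character orthogonality that you explicitly rule out ("the vanishing is \emph{not} from an orthogonality of characters"). The mechanism you propose instead — radial integration by parts with cancellation of $r=0$ boundary terms against the sums $\sum_s s(k-s)\omega^{\ell(s-1)}$ and \eqref{eq2_27_5} — belongs to the \emph{second}-variation computation of Theorem \ref{thm2_24_2}, whose integrand is quadratic in $\mu$ (a double integral in $z$ and $\eta$ producing the convolution sums over $s$); it is not the structure of the present single integral, and your mode-counting ("the total power of $e^{i\theta}$ is zero ... forces $\sum_\ell\omega^{\ell}$") is incorrect for the integrand $z^{-2}\bar z^{k-2}$, whose total angular weight is $-k\neq 0$ for every contributing $k$. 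As written, the elliptic half of \eqref{eq3_10_6} is not established; the fix is the one-line orthogonality argument above.
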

Eq. \eqref{eq3_10_6} is the manifestation of the fact that the parabolic contribution and the elliptic contribution in the resolvent trace formula are moduli independent. The parabolic case in \eqref{eq3_10_6} was proved in Lemma 3 in \cite{TZ_index_2}, and the elliptic case was proved in Theorem 1 in \cite{TZ_index_3}.
\begin{proof}
Let us use the notation in Section \ref{a3} for the parabolic case.
By dividing the parabolic elements into the subsets $\tilde{\Gamma}[\kappa_j]$, $1\leq j\leq q$, we have
\begin{align*}
\int_X\pa_z\pa_wG_{0,1}^P(  z, z)  \mu(z)d^2z=&\sum_{j=1}^{q}\mathscr{P}_j,
\end{align*}where
\begin{equation}\label{eq3_10_13}\begin{split}
\mathscr{P}_j=&\int_{\mathfrak{S}}\sum_{\ell\neq 0}\pa_z\pa_w\mathcal{G}_{0,1}( T_{\ell} z, z)T_{\ell}'(z)  \mu(z)d^2z\\
=&-\frac{1}{4\pi}\sum_{\ell\neq 0}\int_{\mathfrak{S}}\frac{1}{(z+\ell -z)^2}\hat{\mu}_j(z)d^2z\\
=&-\frac{\pi}{12}\int_0^{\infty}\int_0^1 y^2\sum_{k=1}^{\infty}\beta_k^{(j)}e^{-2\pi i k (x-iy)}dxdy.
\end{split}\end{equation}Integrating over $x$ shows that $\mathscr{P}_j=0.$ This proves that the first term in \eqref{eq3_10_6} is zero.

Now we use the notation in Section \ref{a4} for the elliptic case. By dividing the elliptic elements into the subsets $\tilde{\Gamma}[\tau_j]$, $1\leq j\leq v$, 
we have
\begin{align*}
\int_X\pa_z\pa_wG_{0,1}^E(  z, z)  \mu(z)d^2z=&\sum_{j=1}^{v}\mathscr{Q}_j,
\end{align*}where
\begin{equation}\label{eq3_10_14}\begin{split}
\mathscr{Q}_j=&\frac{1}{m_j}\int_{\mathbb{D}}\sum_{\ell=1}^{m_j-1}\pa_z\pa_w\check{\mathcal{G}}_{0,1}( e^{2i\ell\theta_j} z, z)  e^{2i\ell\theta_j}  \check{\mu}_j(z)d^2z\\
=&-\frac{1}{4\pi m_j}\sum_{\ell=1}^{m_j-1}\int_{\mathbb{D}}\frac{e^{2i\ell\theta_j}}{( e^{2i\ell\theta_j} z -z)^2}\check{\mu}_j(z)d^2z\\
=&-\frac{1}{16\pi m_j}\sum_{\ell=1}^{m_j-1}\frac{e^{2i\ell\theta_j}}{(e^{2i\ell\theta_j}-1)^2}\int_{\mathbb{D}}\frac{(1-|z|^2)^2}{z^2}\sum_{k=2}^{\infty}(k^3-k)\chi_k^{(j)}\bar{z}^{k-2}d^2z.
\end{split}\end{equation}By \eqref{eq2_2_3}, 
$\mathscr{Q}_j=0.$ This proves that the second term in \eqref{eq3_10_6} is also zero.

Eq. \eqref{eq3_10_8} follows from the fact that 
\begin{align*}\pa_z\pa_w G_{0,1}(z, z)-\pa_z\pa_wG_{0,1}^0(z,z)=\pa_z\pa_w G_{0,1}^H(z, z)+\pa_z\pa_wG_{0,1}^P(z,z)+\pa_z\pa_wG_{0,1}^E(z,z),
\end{align*}Theorem \ref{thm3_10_2} and eq. \eqref{eq3_10_6}.
\end{proof}

Define the bi-differential $\omega(z,w)$ on $\bar{X}\times \bar{X}$ by
\begin{equation}\label{eq3_10_10}\begin{split}
\omega(z,w)=&\Omega(z,w)+\pi \sum_{k=1}^g\sum_{l=1}^g (N^{-1})_{k,l}\phi_k(z)\phi_l(w)\\=&-4\pi \frac{\pa^2}{\pa z\pa w}G_{0,1}(z,w)+\pi \sum_{k=1}^g\sum_{l=1}^g (N^{-1})_{k,l}\phi_k(z)\phi_l(w).
\end{split}\end{equation}It is a symmetric bi-differential with double pole of bi-residue 1 along the diagonal $z=w$. In the case the genus of the Riemann surface $X$ is zero, the second term in \eqref{eq3_10_10} is absent.
By the result of Theorem 2.3 in \cite{Fay}, $\omega(z,w)$ has zero $a$-periods. 
Takhtajan has provided us the following simpler proof of this result in a private communication.
\begin{proposition}
The bi-differential $\omega(z,w)$ on $\bar{X}\times \bar{X}$ defined by \eqref{eq3_10_10} has zero $a$-periods.
\end{proposition}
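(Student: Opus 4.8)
The plan is to compute the $a$-periods of the bi-differential $\omega(z,w)$ defined by \eqref{eq3_10_10} directly, treating the two summands separately and showing the contribution of the Schiffer kernel $\Omega(z,w)$ is cancelled exactly by the holomorphic correction term. Fix a cycle $a_j$ on $X$ and regard the variable $w$ (say) as fixed, off the cycle $a_j$; we must show $\oint_{a_j}\omega(z,w)\,dz=0$, where the integral is taken along the lift of $a_j$ to the fundamental domain $F$.

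First I would recall, from the discussion preceding the statement, that $-4\pi\,\pa_z\pa_w G_{0,1}(z,w)=\Omega(z,w)$ is the Schiffer kernel, the unique symmetric bi-differential on $\bar X\times\bar X$ with a double pole of bi-residue $1$ along $z=w$ satisfying $\mathrm{p.v.}\int_X\Omega(z,w)\overline{\phi(w)}\,d^2w=0$ for all $\phi\in\Omega_1(X)$. The key input is the classical decomposition of the Schiffer kernel in terms of the Bergman kernel: namely $\Omega(z,w)=\pi K_1(z,w)\big|_{\text{holo. part}}$ up to the holomorphic-in-$z$, antiholomorphic-in-$w$ ambiguity — more precisely, the period of $\Omega(\cdot,w)$ around $a_j$ is the same as that of the reproducing kernel and can be read off from the $a$-normalization of the canonical basis. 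So the strategy is: (i) express $\oint_{a_j}\Omega(z,w)\,dz$ in terms of the canonical basis $\{\phi_1,\dots,\phi_g\}$ and the period matrix; (ii) express $\oint_{a_j}\pi\sum_{k,l}(N^{-1})_{k,l}\phi_k(z)\phi_l(w)\,dz$ using the normalization $\oint_{a_j}\phi_k(z)\,dz=\delta_{kj}$ from \eqref{eq3_9_1}; (iii) observe the two cancel.

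Concretely, for step (ii), $\oint_{a_j}\pi\sum_{k=1}^g\sum_{l=1}^g(N^{-1})_{k,l}\phi_k(z)\phi_l(w)\,dz=\pi\sum_{l=1}^g\Big(\sum_{k=1}^g(N^{-1})_{k,l}\delta_{kj}\Big)\phi_l(w)=\pi\sum_{l=1}^g(N^{-1})_{j,l}\phi_l(w)$. For step (i), I would use the characterization of $\Omega(z,w)=-4\pi\pa_z\pa_wG_{0,1}(z,w)$: since $\pa_z\pa_wG_{0,1}(z,w)$ is a symmetric bi-differential of the second kind, its $a_j$-period in $z$ is a holomorphic one-differential in $w$ (by the same argument as in \eqref{eq3_10_10}: it has no poles as a function of $w$ away from the diagonal and is annihilated by $\pa_{\bar w}$ once the period is taken, the diagonal pole integrating out since $a_j$ avoids $w$), hence equals some linear combination $\sum_l c_l^{(j)}\phi_l(w)$; the coefficients $c_l^{(j)}$ are then pinned down by pairing against the antiholomorphic basis and invoking the vanishing-projection property of $\Omega$ together with the Riemann bilinear relation identifying $N_1=\mathrm{Im}\,\tau$ — this forces $\oint_{a_j}\Omega(z,w)\,dz=-\pi\sum_{l=1}^g(N^{-1})_{j,l}\phi_l(w)$. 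Adding (i) and (ii) gives zero, which is the claim.

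The main obstacle I anticipate is step (i): making rigorous the claim that the $a_j$-period of $\pa_z\pa_wG_{0,1}(z,w)$ in $z$ is a holomorphic one-differential in $w$ with precisely the coefficients $-\pi(N^{-1})_{j,l}$, without circular reliance on \cite[Thm.~2.3]{Fay} (whose conclusion is exactly what we are reproving). The cleanest route is probably Takhtajan's suggested argument: write $\omega(z,w)$ itself, verify it is a symmetric bi-differential of the second kind with the stated double pole, compute $\oint_{a_j}\omega(z,w)\,dz$ as a holomorphic one-differential $\psi_j(w)$ in $w$, then pair $\psi_j$ against each $\overline{\phi_m}$: using $\mathrm{p.v.}\int_X\Omega(z,w)\overline{\phi_m(w)}\,d^2w=0$ and $\int_X\phi_l(w)\overline{\phi_m(w)}\,d^2w=N_{l,m}$ one gets $\int_X\psi_j(w)\overline{\phi_m(w)}\,d^2w=\pi\sum_l(N^{-1})_{j,l}N_{l,m}\cdot(\text{period constant})$, and matching this against the fact that the $a$-periods of a holomorphic differential determine it uniquely (via \eqref{eq3_9_1}) forces $\psi_j\equiv0$. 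I would also need to justify interchanging $\oint_{a_j}$ with the principal-value integral over $X$, which is routine since $a_j$ is a fixed compact cycle disjoint from the diagonal and the singularity of $\Omega$ is integrable after the period is taken. One must take a little care near the cusps and elliptic points when "$X$" is non-compact, but since all the relevant differentials are integrable and the canonical cycles $a_j$ can be taken in a compact core of $X$ (away from the cusp neighborhoods and ramification points), the argument goes through verbatim.
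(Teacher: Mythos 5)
There is a genuine gap at the central step of your argument, namely the evaluation of $\oint_{a_j}\Omega(z,w)\,dz$. You assert that pairing against the antiholomorphic basis and invoking $\mathrm{p.v.}\int_X\Omega(z,w)\overline{\phi_m(w)}\,d^2w=0$ pins this period down as $-\pi\sum_l(N^{-1})_{j,l}\phi_l(w)$, but if you actually carry out the interchange you describe, you get the opposite of what you want. Writing $\psi_j(w)=\oint_{a_j}\omega(z,w)\,dz$ and interchanging the cycle integral with the principal-value integral gives
\begin{align*}
\int_X\psi_j(w)\overline{\phi_m(w)}\,d^2w \;=\; \oint_{a_j}\Bigl(\mathrm{p.v.}\!\int_X\Omega(z,w)\overline{\phi_m(w)}\,d^2w\Bigr)dz \;+\;\pi\sum_{k,l}(N^{-1})_{k,l}\,\delta_{kj}\,N_{l,m}\;=\;0+\pi\,\delta_{jm},
\end{align*}
which forces $\psi_j=\pi\sum_l(N^{-1})_{j,l}\phi_l\neq 0$ rather than $\psi_j\equiv 0$. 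The flaw is the interchange itself: the set $\{z\in a_j\}\times X$ is \emph{not} disjoint from the diagonal ($w$ ranges over all of $X$, so $w=z$ does occur for $z\in a_j$), the double pole $|z-w|^{-2}$ is not absolutely integrable in two dimensions, and the defect produced by commuting $\oint_{a_j}$ with the singular principal-value integral is exactly the Schiffer--Bergman discrepancy $\pi K_1$, i.e.\ exactly the content of Fay's Theorem 2.3 that you correctly flag as circular to invoke. So the correct value of $\oint_{a_j}\Omega(z,w)\,dz$ cannot be extracted this way; your own computation, done honestly, "disproves" the proposition.

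The paper's proof sidesteps principal values entirely by routing through the \emph{mixed} derivative. Since $d_w\bigl(\pa_z G_{0,1}(z,w)\bigr)=\pa_w\pa_z G_{0,1}\,dw+\pa_{\bar w}\pa_z G_{0,1}\,d\bar w$ is an exact $1$-form in $w$, its integral over the closed cycle $a_j$ vanishes, so the $(1,0)$-period of $\pa_w\pa_zG_{0,1}$ equals minus the $(0,1)$-period of $\pa_{\bar w}\pa_zG_{0,1}$. The $(0,1)$-form $\mathscr{B}(z,w)=-4\pa_{\bar w}\pa_z G_{0,1}(z,w)$ has only a simple-pole-derived singularity, its pairing with holomorphic one-differentials is an honest residue computation (equation \eqref{eq3_12_4}), and its antiholomorphic projection in $w$ is identified with the Bergman kernel $K_1(z,w)$; the normalization \eqref{eq3_9_1} then makes the two period contributions cancel. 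Your step (ii) and the general shape of the cancellation are right, but without the passage through $\pa_z\pa_{\bar w}G_{0,1}$ (or some equivalent device) step (i) does not go through.
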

\begin{proof}If the genus of the Riemann surface $X$ is zero, there is nothing to prove.
If the genus $g$ is positive, let
\begin{align*}
\mathscr{B}(z,w)=-4\frac{\pa^2}{\pa\bar{w}\pa z}G_{0,1}(z,w).
\end{align*}One can compute directly that
\begin{align*}
\frac{\pa }{ \pa z}\mathcal{G}_{0,1}(z,w)=-\frac{1}{4\pi}\frac{1}{(z-w)}\frac{(w-\bar{w})}{(z-\bar{w})}.
\end{align*}Therefore, when $z\rightarrow w$, 
\begin{align*}
\frac{\pa }{ \pa z}G_{0,1}(z,w)=-\frac{1}{4\pi}\frac{1}{z-w}+O(1).
\end{align*}
For fixed $z$, $\mathscr{B}(z,w)$ is a $(0,1)$-differential in $w$. Hence, its projection $(\overline{\mathscr{P}_{1, w}}\mathscr{B})(z,w) $  to $\Omega_{0,1}(X)=\overline{\Omega_{1,0}(X)}$ can be written as a linear combination of $\{\overline{\phi_1(w)}, \ldots, \overline{\phi_g(w)}\}$, with coefficients depending on $z$. Namely,
\begin{align}\label{eq3_12_1}
(\overline{\mathscr{P}_{1,w}}\mathscr{B})(z,w) =\sum_{l=1}^g c_l(z)\overline{\phi_l(w)}.
\end{align}
Notice that  if $\phi\in\Omega_{1,0}(X)$,
\begin{equation}\label{eq3_12_4}\begin{split}
\int_X\left(\overline{\mathscr{P}_{1,w}}\mathscr{B}\right) (z,w) \phi(w)d^2w=&-4\int_X  \frac{\pa^2}{\pa\bar{w}\pa z}G_{0,1}(z,w) \phi(w)d^2w\\
=&-2i\lim_{\varepsilon\rightarrow 0^+}\oint_{|z-w|=\varepsilon}\frac{\pa }{ \pa z}G_{0,1}(z,w)\phi(w)dw\\
=&\frac{1}{2\pi i} \lim_{\varepsilon\rightarrow 0^+}\oint_{|z-w|=\varepsilon}\frac{ \phi(w) }{ w-z}dw\\=& \phi(z).
\end{split}\end{equation}Multiply $\phi_k(w)$ to both sides of \eqref{eq3_12_1} and integrate over $w$, \eqref{eq3_12_4} gives
\begin{align*}
\sum_{l=1}^g N_{k,l}c_l(z)=\phi_k(z).
\end{align*}Hence, 
\begin{equation}\label{eq3_12_2}\begin{split}
(\overline{\mathscr{P}_{1, w}}\mathscr{B})(z,w) 
=&\sum_{k=1}^{g}\sum_{l=1}^g(N^{-1})_{k,l}\phi_k(z)\overline{\phi_l(w)}.
\end{split}
\end{equation} This shows that $(\overline{\mathscr{P}_{1, w}}\mathscr{B})(z,w) $ is the Bergman kernel $K_1(z,w)$. Now for $1\leq j\leq g$, the $a_j$-period of $\Omega(z,w)$ (as a one-differential in $w$) is given by
\begin{align*}
\oint_{a_j}\omega(z,w)dw=& -4\pi \oint_{a_j}\frac{\pa^2}{\pa w\pa z}G_{0,1}(z,w)dw+\pi \sum_{k=1}^g\sum_{l=1}^g (N^{-1})_{k,l}\phi_k(z)\oint_{a_j}\phi_l(w)dw\\
=&4\pi \oint_{a_j}\frac{\pa^2}{\pa \bar{w}\pa z}G_{0,1}(z,w)d\bar{w}+\pi \sum_{k=1}^g\sum_{l=1}^g (N^{-1})_{k,l}\phi_k(z)\delta_{l,j}\\
=&-\pi \oint_{a_j}(\overline{\mathscr{P}_{1, w}}\mathscr{B})(z,w) d\bar{w}+\pi \sum_{k=1}^g (N^{-1})_{k,j}\phi_k(z)\\
=&-\pi \sum_{k=1}^g\sum_{l=1}^g (N^{-1})_{k,l}\phi_k(z)\overline{\oint_{a_j}\phi_l(w)dw}+\pi \sum_{k=1}^g (N^{-1})_{k,j}\phi_k(z)\\
=&0.
\end{align*}This proves the assertion. 
\end{proof}

Finally, we prove the local index theorem for the $n=1$ case.
\begin{theorem}\label{thm3_10_4}
Given $\mu, \nu\in\Omega_{-1,1}(X)$, we have
\begin{align*}
\pa_{\mu}\pa_{\bar{\nu}}\log\frac{\det\Delta_1}{\det N_1}=&-\mathscr{E}_0-\mathscr{E}_P-\mathscr{E}_E,
\end{align*}where $\mathscr{E}_0$, $\mathscr{E}_P$ and $\mathscr{E}_E$ are  the identity, parabolic and elliptic contributions given respectively by
\begin{align*}
\mathscr{E}_0=&-\frac{1}{12\pi}\langle\mu,\nu\rangle,\\
\mathscr{E}_P=&\frac{\pi}{9}\sum_{j=1}^q\langle\mu,\nu\rangle_{\text{TZ},j}^{\text{cusp}},\\
\mathscr{E}_E=& \sum_{j=1}^v\frac{m_j^2-1}{24m_j}\langle\mu,\nu\rangle_{\text{TZ},j}^{\text{ell}}.
\end{align*}
\end{theorem}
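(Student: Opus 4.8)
The plan is to assemble the machinery built up in this section. First I would combine Theorem~\ref{thm3_10_3} (for $\pa_{\mu}\pa_{\bar{\nu}}\log\det N_1$) with Theorem~\ref{thm3_10_1} (for $\pa_{\mu}\pa_{\bar{\nu}}\log\det\Delta_1$), split the variation of $\log\det N_1$ into identity, hyperbolic, parabolic and elliptic contributions in the spirit of Theorem~\ref{thm2_22_3}, recognize the hyperbolic contribution as $\pa_{\mu}\pa_{\bar{\nu}}\log\det\Delta_1$, and evaluate the three remaining contributions using the $n=1$ specializations already recorded in \eqref{eq3_9_19}, \eqref{eq3_9_20}, and the proof of Theorem~\ref{thm2_22_2}.

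By Theorem~\ref{thm3_10_3}, $\pa_{\mu}\pa_{\bar{\nu}}\log\det N_1=-\int_X\int_X K_1(z,w)^2\mu(z)\ov{\nu(w)}\,d^2w\,d^2z$. I would represent the Bergman kernel $K_1$ by the regularized Poincar\'e series of Fay's formula \eqref{eq3_9_5} — or, staying inside the Green's-function framework of this section, through $\Omega(z,w)=-4\pi\pa_z\pa_w G_{0,1}(z,w)$, the vanishing of the $a$-periods of $\omega(z,w)=\Omega(z,w)+\pi\sum_{k,l}(N^{-1})_{k,l}\phi_k(z)\phi_l(w)$, and the absolutely convergent series $G_{0,s}(z,w)=\sum_{\gamma\in\Gamma}\mathcal{G}_{0,s}(\gamma z,w)$ valid for $\operatorname{Re}s>1$. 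Inserting this representation into the double integral, folding one of the two $\Gamma$-sums against a fundamental domain just as in the passage from Theorem~\ref{thm1_29_1} to Theorem~\ref{thm2_22_3}, and performing the limit ($s\to1^{+}$, or $R\to\infty$ for the Fay regularization) only at the end — with the exchange of limits justified by uniqueness of analytic continuation, exactly as in the proof of Theorem~\ref{thm3_10_2} — rewrites $\pa_{\mu}\pa_{\bar{\nu}}\log\det N_1$ as $\mathscr{X}_0+\mathscr{X}_H+\mathscr{X}_P+\mathscr{X}_E$, the $n=1$ value of the decomposition in Theorem~\ref{thm2_22_3}. Here I use that at $n=1$ the $\mathscr{Y}$ and $\mathscr{Z}$ terms of \eqref{eq2_22_6} vanish identically, so only the $\mathscr{X}$-type terms survive.

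The hyperbolic piece is then $\mathscr{X}_H=-\frac{1}{\pi^2}\lim_{R\to\infty}\int_X\int_{\mathbb{U}}\sum_{\gamma\text{ hyperbolic},\ \lambda(\gamma)\le R}\gamma'(z)(\gamma z-\bar w)^{-2}(z-\bar w)^{-2}\mu(z)\ov{\nu(w)}\,d^2w\,d^2z$, which is precisely $\pa_{\mu}\pa_{\bar{\nu}}\log\det\Delta_1$ by Theorem~\ref{thm3_10_1}; hence $\pa_{\mu}\pa_{\bar{\nu}}\log\frac{\det\Delta_1}{\det N_1}=-\mathscr{X}_0-\mathscr{X}_P-\mathscr{X}_E$. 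It remains to evaluate these three at $n=1$: the computation in the proof of Theorem~\ref{thm2_22_2} gives $\mathscr{X}_0=-\frac{1}{12\pi}\langle\mu,\nu\rangle_{\text{WP}}$ (there $\mathscr{Y}_0=\mathscr{Z}_0=0$ when $n=1$), \eqref{eq3_9_19} gives $\mathscr{X}_{P,j}=\frac{\pi}{9}\langle\mu,\mu\rangle^{\text{cusp}}_{\text{TZ},j}$ and hence, after polarization and summing over cusps, $\mathscr{X}_P=\frac{\pi}{9}\sum_{j=1}^q\langle\mu,\nu\rangle^{\text{cusp}}_{\text{TZ},j}$, and \eqref{eq3_9_20} gives $\mathscr{X}_E=\sum_{j=1}^v\frac{m_j^2-1}{24m_j}\langle\mu,\nu\rangle^{\text{ell}}_{\text{TZ},j}$. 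This is the claimed identity. In the genus-zero case $N_1$ is undefined; one then drops the $\log\det N_1$ term, replaces $\omega(z,w)$ by $\Omega(z,w)$ (the holomorphic correction being absent), and the same bookkeeping yields $\pa_{\mu}\pa_{\bar{\nu}}\log\det\Delta_1=-\mathscr{E}_0-\mathscr{E}_P-\mathscr{E}_E$.

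The hard part is the non-absolute convergence of the Poincar\'e series for $K_1$: one cannot square it and split the result by conjugacy type naively. Either the regularizing factor $(1-u/R)^{\alpha}$ must be carried through the folding and the type splitting before letting $R\to\infty$, or the whole argument is carried out at $\operatorname{Re}s>1$ with $G_{0,s}$ and then pushed to $s=1$ by analytic continuation — in which case one must also track how the projection $\ov{\mathscr{P}_{1,w}}$ (which recovers $K_1$ from $\pa_z\pa_{\bar w}G_{0,1}$) and the vanishing statements \eqref{eq3_10_6} interact with the decomposition. Once the folded hyperbolic sum is shown to be literally the one in Theorem~\ref{thm3_10_1}, the evaluations of $\mathscr{X}_0$, $\mathscr{X}_P$, $\mathscr{X}_E$ are routine.
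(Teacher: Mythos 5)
Your plan is essentially the route the paper explicitly declines to take, and the obstacle you flag in your last paragraph is not a technicality but the missing idea. To split $\pa_{\mu}\pa_{\bar{\nu}}\log\det N_1=-\int_X\int_X K_1(z,w)^2\mu(z)\ov{\nu(w)}\,d^2w\,d^2z$ by conjugacy type you must (a) represent $K_1$ by the Poincar\'e series \eqref{eq3_9_15}, (b) square it into a double sum over $\Gamma\times\Gamma$, (c) unfold one sum against one fundamental-domain integral, and (d) regroup the resulting single sum by the type of $\gamma$. Since \eqref{eq3_9_15} is not absolutely convergent (the paper notes it is not even known to converge conditionally, and Fay's regularization \eqref{eq3_9_5} is quoted only for compact $X$), none of (b)--(d) is justified, and the two remedies you sketch do not repair this: the truncation $u(\gamma z,w)\le R$ with the factor $(1-u/R)^{\alpha}$ is not compatible with the unfolding substitution once the series is squared, and the family $\pa_z\pa_{\bar w}G_{0,s}$ for $s>1$ does not tend to $K_1$ as $s\to1^{+}$ --- one recovers $K_1$ only after applying the projection $\ov{\mathscr{P}_{1,w}}$ to $-4\pa_z\pa_{\bar w}G_{0,1}$, and the square of a projected kernel is not the projection of the square. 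The paper itself calls the $n=1$ specialization of Theorem \ref{thm2_22_3} only a ``heuristic'' proof of \eqref{eq3_9_14} and says of this line that it does not want to pursue it further.

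What the paper does instead is work at the level of the \emph{first} variation of the ratio. Combining \eqref{eq3_9_3} with \eqref{eq3_10_8} gives $\pa_{\mu}\log\frac{\det\Delta_1}{\det N_1}=\frac{1}{\pi}\int_X\bigl(\omega(z,w)-\frac{1}{(z-w)^2}\bigr)\big|_{w=z}\,\mu(z)\,d^2z$, where $\omega(z,w)$ is the bi-differential \eqref{eq3_10_10} with vanishing $a$-periods. Applying $\pa_{\bar{\nu}}$, the key point is that $L_{\bar{\nu}}\omega$ is a holomorphic bi-differential with zero $a$-periods and hence vanishes identically; the Weil--Petersson term then comes from $L_{\bar{\nu}}$ of the local singular part $1/(z-w)^2$ via Proposition \ref{prop_vary} and \eqref{eq2_6_6}, while the TZ terms come from pairing $(L_{\bar{\nu}}\mu)$ against the parabolic and elliptic parts of $\pa_z\pa_wG_{0,1}$ and transferring the derivative using the moduli-independence \eqref{eq3_10_6}; the hyperbolic part and the holomorphic correction are annihilated by integration by parts. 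No squaring of a divergent series ever occurs, and the pieces are only at the very end identified with $\mathscr{X}_P$ and $\mathscr{X}_E$ at $n=1$ via \eqref{eq3_9_19} and \eqref{eq3_9_20}. To salvage your approach you would need to prove that the regularized, type-split decomposition of $K_1(z,w)^2$ converges to the $n=1$ value of Theorem \ref{thm2_22_3}, which is exactly the step left unestablished.
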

\begin{proof}
By \eqref{eq3_9_3} and \eqref{eq3_10_8}, we find that if $\mu\in\Omega_{-1,1}(X)$, 
\begin{align*}
\pa_{\mu}\log\frac{\det\Delta_1}{\det N_1}=\frac{1}{\pi}\int_X\left. \left(\omega(z,w)-\frac{1}{(z-w)^2}\right)\right|_{w=z}\mu(z)d^2z.
\end{align*}Using the same reasoning as \eqref{eq3_8_4} and \eqref{eq3_10_9}, we find that
$L_{\bar{\nu}}\omega(z,w)$ is a holomorphic one-differential in $z$ and in $w$ with zero $a$-periods. Hence, it must be zero. Therefore,
\begin{align*}
\pa_{\mu}\pa_{\bar{\nu}}\log\frac{\det\Delta_1}{\det N_1}=&-\frac{1}{\pi}\int_X\left.L_{\bar{\nu}}\frac{1}{(z-w)^2} \right|_{w=z} \mu(z)d^2z \\&+\frac{1}{\pi}\int_X\left. \left(\omega(z,w)-\frac{1}{(z-w)^2}\right)\right|_{w=z}(L_{\bar{\nu}}\mu)(z)d^2z\\
=&\mathscr{O}_1+\mathscr{O}_2.
\end{align*}Now by Proposition \ref{prop_vary}, 
\begin{align*}
\mathscr{O}_1=&-\frac{1}{\pi}\int_X\left.L_{\bar{\nu}}\frac{1}{(z-w)^2} \right|_{w=z} \mu(z)d^2z\\=&\frac{1}{\pi^2}\int_X\int_{\mathbb{U}}\frac{\overline{\nu(w)}}{(z-\bar{w})^2(z-\bar{w})^2}d^2wd^2z\\
=&\frac{1}{12\pi}\int_X\mu(z)\overline{\nu(z)}\rho(z)d^2z\\
=&\frac{1}{12\pi}\langle\mu,\nu\rangle_{\text{WP}}.
\end{align*}This is the identity-contribution term $-\mathscr{E}_0$.

For the term $\mathscr{O}_2$, notice that
\begin{align*}
\omega(z,w)-\frac{1}{(z-w)^2}=&-4 \pi \frac{\pa^2}{\pa z\pa w}G_{0,1}^H(z,w)+\pi \sum_{k=1}^g\sum_{l=1}^g (N^{-1})_{k,l}\phi_k(z)\phi_l(w)\\
&-4 \pi \frac{\pa^2}{\pa z\pa w}G_{0,1}^P(z,w)-4 \pi \frac{\pa^2}{\pa z\pa w}G_{0,1}^E(z,w).
\end{align*}
In the proofs of Theorem \ref{thm3_10_3} and Theorem \ref{thm3_10_1}, we have shown that
\begin{align*}
\int_X \frac{\pa^2}{\pa z\pa w}G_{0,1}^H(z,z) (L_{\bar{\nu}}\mu)(z)d^2z=\int_X \sum_{k=1}^g\sum_{l=1}^g (N^{-1})_{k,l}\phi_k(z)\phi_l(z)(L_{\bar{\nu}}\mu)(z)d^2z=0.
\end{align*}
Therefore,
\begin{align*}
\mathscr{O}_2=&-4\int_X \frac{\pa^2}{\pa z\pa w}G_{0,1}^P(z,z)(L_{\bar{\nu}}\mu)(z)d^2z-4\int_X \frac{\pa^2}{\pa z\pa w}G_{0,1}^E(z,z)(L_{\bar{\nu}}\mu)(z)d^2z\\
=&\mathscr{O}_3+\mathscr{O}_4.
\end{align*}

Consider now the parabolic contribution $\mathscr{O}_3$. Eq.\ \eqref{eq3_10_6} implies that
\begin{align*}
\mathscr{O}_3=&-4\int_X \frac{\pa^2}{\pa z\pa w}G_{0,1}^P(z,z)(L_{\bar{\nu}}\mu)(z)d^2z\\=&4\int_X \left(L_{\bar{\nu}}\frac{\pa^2}{\pa z\pa w}G_{0,1}^P(z,z)\right)\mu(z)d^2z\\
=&\frac{1}{\pi^2}\int_X\int_{\mathbb{U}}\sum_{\substack{\gamma\in\Gamma\\\gamma\;\text{is parabolic}}}\frac{\gamma'(z)}{(\gamma z-\bar{w})^2(z-\bar{w})^2}\mu(z)\overline{\nu(w)}d^2wd^2z.
\end{align*}This is just the term $-\mathscr{X}_P$ \eqref{eq2_22_6} when $n=1$. By \eqref{eq3_9_19}, 
\begin{align*}
\mathscr{O}_3= &-\frac{\pi}{9}\sum_{j=1}^q\langle\mu,\nu\rangle_{\text{TZ},j}^{\text{cusp}}=-\mathscr{E}_P.
\end{align*}Similarly,
 \begin{align*}
\mathscr{O}_4=&-4\int_X \frac{\pa^2}{\pa z\pa w}G_{0,1}^E(z,z)(L_{\bar{\nu}}\mu)(z)d^2z \\
=&\frac{1}{\pi^2}\int_X\int_{\mathbb{U}}\sum_{\substack{\gamma\in\Gamma\\\gamma\;\text{is elliptic}}}\frac{\gamma'(z)}{(\gamma z-\bar{w})^2(z-\bar{w})^2}\mu(z)\overline{\nu(w)}d^2wd^2z
\end{align*}  is just the term $-\mathscr{X}_E$ \eqref{eq2_22_6} when $n=1$. By \eqref{eq3_9_20}, 
\begin{align*}
\mathscr{O}_4= &- \sum_{j=1}^v\frac{m_j^2-1}{24m_j}\langle\mu,\nu\rangle_{\text{TZ},j}^{\text{ell}}=-\mathscr{E}_E.
\end{align*}
Thus,  
\begin{align*}
 \pa_{\mu}\pa_{\bar{\nu}}\log\frac{\det\Delta_1}{\det N_1}=&\mathscr{O}_1+\mathscr{O}_2=\mathscr{O}_1+ \mathscr{O}_3+\mathscr{O}_4=-\mathscr{E}_0-\mathscr{E}_P-\mathscr{E}_E
\end{align*}is the sum of the identity, parabolic  and  elliptic contributions. This completes the proof of the theorem.

\end{proof}

\smallskip
\section{ Conclusion}

We first conclude the local index theorem.
Collecting the results from Section \ref{a1}, Section \ref{vary_det}, Section \ref{a3}, Section \ref{a4}, Section \ref{n1}, we have proved the following.

\begin{theorem}[Local Index Theorem]~\\Let $n\geq 1$, and let $X$ be a cofinite hyperbolic Riemann surface of type $(g;q;m_1, m_2, \ldots, m_v)$, uniformized by the Fuchsian group $\Gamma$. Assume that the Teichm\"uller space $T(X)$ has positive dimension, then for any  $\mu, \nu\in\Omega_{-1,1}(X)$,  
\begin{equation}\label{eq2_27_11}\begin{split}
\pa_{\mu}\pa_{\bar{\nu}}\log\frac{\det \Delta_n}{\det N_n}=&\frac{6n^2-6n+1}{12\pi}\langle \mu,\nu\rangle_{\text{WP}}-\frac{\pi}{9}\langle \mu, \nu\rangle_{\text{TZ}}^{\text{cusp}}-\sum_{j=1}^v\mathfrak{B}(m_j,n)\langle \mu, \nu\rangle_{\text{TZ}, j}^{\text{ell}}
\end{split}\end{equation}on $T(X)$. Here  
$\mathfrak{B}(m,n)$ is the constant  
\begin{align*}
\mathfrak{B}(m,n)=\frac{m}{4}\left[B_2\left(\left\{\frac{n-1}{m}\right\}\right)-\frac{1}{6m^2}\right]-\frac{n-1}{2}\left[B_1\left(\left\{\frac{n-1}{m}\right\}\right)+\frac{1}{2m}\right],
\end{align*}where $B_1(x)=x-\di\frac{1}{2}$ and $\di B_2(x)=x^2-x+\frac{1}{6}$ are the first and second Bernoulli polynomials, and $\{x\}$ is the fractional part of $x$. 
\end{theorem}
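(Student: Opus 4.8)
The plan is to assemble the final local index theorem by simply collecting the pieces proved in the preceding sections, treating the cases $n\geq 2$ and $n=1$ separately. For $n\geq 2$, the key identity is the decomposition from Remark \ref{remark2_2_1},
\[
\pa_{\mu}\pa_{\bar{\nu}}\log\det N_n=\mathscr{E}_0+\mathscr{E}_{H}+\mathscr{E}_{P}+\mathscr{E}_{E},
\]
into identity, hyperbolic, parabolic, and elliptic contributions. First I would record Theorem \ref{thm2_22_2}, which gives $\mathscr{E}_0=-\tfrac{6n^2-6n+1}{12\pi}\langle\mu,\nu\rangle_{\text{WP}}$; then Theorem \ref{thm2_22_7}, which identifies $\mathscr{E}_H=\pa_{\mu}\pa_{\bar{\nu}}\log\det\Delta_n$; then Theorem \ref{thm2_24_1}, which gives $\mathscr{E}_P=\tfrac{\pi}{9}\langle\mu,\nu\rangle_{\text{TZ}}^{\text{cusp}}$ (with the convention that this term is absent when $q=0$); and finally Theorem \ref{thm2_24_2}, which gives $\mathscr{E}_E=\sum_{j=1}^v\mathfrak{B}(m_j,n)\langle\mu,\nu\rangle_{\text{TZ},j}^{\text{ell}}$ (absent when $v=0$). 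Rearranging,
\[
\pa_{\mu}\pa_{\bar{\nu}}\log\det N_n-\pa_{\mu}\pa_{\bar{\nu}}\log\det\Delta_n
=-\frac{6n^2-6n+1}{12\pi}\langle\mu,\nu\rangle_{\text{WP}}+\frac{\pi}{9}\langle\mu,\nu\rangle_{\text{TZ}}^{\text{cusp}}+\sum_{j=1}^v\mathfrak{B}(m_j,n)\langle\mu,\nu\rangle_{\text{TZ},j}^{\text{ell}},
\]
and multiplying by $-1$ yields \eqref{eq2_27_11}. I would also note that by the harmonicity argument in Section \ref{localindex}, the left side is independent of the chosen holomorphic family of bases, so the statement is well-posed on $T(X)$.

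For the $n=1$ case, the argument is the content of Theorem \ref{thm3_10_4}: there one has directly
\[
\pa_{\mu}\pa_{\bar{\nu}}\log\frac{\det\Delta_1}{\det N_1}=-\mathscr{E}_0-\mathscr{E}_P-\mathscr{E}_E,
\]
with $\mathscr{E}_0=-\tfrac{1}{12\pi}\langle\mu,\nu\rangle_{\text{WP}}$, $\mathscr{E}_P=\tfrac{\pi}{9}\sum_{j=1}^q\langle\mu,\nu\rangle_{\text{TZ},j}^{\text{cusp}}$, and $\mathscr{E}_E=\sum_{j=1}^v\tfrac{m_j^2-1}{24m_j}\langle\mu,\nu\rangle_{\text{TZ},j}^{\text{ell}}$. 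To see that this matches \eqref{eq2_27_11} at $n=1$ I would check the three coefficients: $\tfrac{6\cdot1-6+1}{12\pi}=\tfrac{1}{12\pi}$; for the elliptic coefficient, since $\{0/m\}=0$, $B_1(0)=-\tfrac12$ and $B_2(0)=\tfrac16$, so $\mathfrak{B}(m,1)=\tfrac{m}{4}\big(\tfrac16-\tfrac{1}{6m^2}\big)-0=\tfrac{m^2-1}{24m}$, in agreement; and the parabolic coefficient $\tfrac{\pi}{9}$ is identical. Thus the two cases combine into the single statement for all $n\geq1$.

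The only genuine subtlety — the place where I expect the bookkeeping to require care rather than the $n\geq2$ sign-collecting — is confirming that the $n=1$ result is not merely a special case of formulas that fail to converge there. The hyperbolic/identity Poincar\'e series used throughout Sections \ref{a1}--\ref{a4} diverge at $n=1$, and Section \ref{n1} instead obtains $\pa_{\mu}\pa_{\bar{\nu}}\log\det\Delta_1$ and $\pa_{\mu}\pa_{\bar{\nu}}\log\det N_1$ independently via Rauch's variational formula, the resolvent kernel $G_{0,s}$, the Schiffer bi-differential, and the regularized ($R\to\infty$, $s\to1^+$) sums, together with the vanishing \eqref{eq3_10_6} of the parabolic and elliptic parts of $\pa_z\pa_wG_{0,1}$ at the diagonal. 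So for $n=1$ I would quote Theorem \ref{thm3_10_4} directly and simply verify the coefficient matching above; no new estimate is needed. Assembling and stating the theorem, noting the case split and the convention that the cusp (resp.\ elliptic) term is vacuous when $q=0$ (resp.\ $v=0$), then completes the proof.
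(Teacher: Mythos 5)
Your proposal is correct and follows essentially the same route as the paper, which likewise proves the theorem by collecting Theorems \ref{thm2_22_2}, \ref{thm2_22_7}, \ref{thm2_24_1}, \ref{thm2_24_2} for $n\geq 2$ and invoking Theorem \ref{thm3_10_4} for $n=1$; your sign bookkeeping and the coefficient check $\mathfrak{B}(m,1)=\tfrac{m^2-1}{24m}$ are both right. The added remarks on well-posedness and on why $n=1$ needs the separate regularized treatment are consistent with the paper's own discussion.
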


Divide the group  elements of $\Gamma$ into four disjoint subsets containing respectively the identity, hyperbolic, parabolic and elliptic elements.  When $n\geq 2$, the formula for $\pa_{\mu}\pa_{\bar{\nu}}\log\det N_n$ can be written as a sum of four parts -- the identity, hyperbolic, parabolic and elliptic contributions.
The WP-metric term in \eqref{eq2_27_11} appears as the identity contribution of $\pa_{\mu}\pa_{\bar{\nu}}\log\det N_n$. The TZ-cusp metric and TZ-elliptic metric  terms appear respectively as the parabolic contribution and the elliptic contribution of $\pa_{\mu}\pa_{\bar{\nu}}\log\det N_n$. The term  $\pa_{\mu}\pa_{\bar{\nu}}\log\det \Delta_n$ only depends on the hyperbolic elements, and it is equal to the hyperbolic contribution of $\pa_{\mu}\pa_{\bar{\nu}}\log\det N_n$, thus cancelling each other in the right-hand side of \eqref{eq2_27_11}.

We have used Alhfors variation formulas presented in Proposition \ref{prop_vary} exclusively to calculate variations. Proposition \ref{prop2_2_1} is crucial in our computations as it is used to turn singular integrals into regular ones, thus allowing us to perform explicit calculations using appropriate coordinate systems and integration techniques, inclusive of contour integration techniques. 

As a byproduct of our computations, we presented two new formulas \eqref{eq3_3_5} and \eqref{eq3_3_6} for the parabolic TZ metric and the elliptic TZ metric. These formulas exhibit that the two TZ metrics can be defined using analogous formulas. They also  permit us to find the variations of these metrics using Ahlfors variation formulas. For example, if $\beta, \mu, \nu \in \Omega_{-1,1}(X)$ and $1\leq j\leq q$, Proposition \ref{prop_vary} and \eqref{eq3_3_5} give
\begin{align}\label{eq3_9_22}
\pa_{\beta} \langle\mu, \nu\rangle_{\text{TZ},j}^{\text{cusp}}=&  \frac{9}{\pi^4}\sum_{\gamma\in \tilde{\Gamma}[\kappa_j]}\int_X\int_{\mathbb{U}}\int_{\mathbb{U}} \mu(z)\beta(\zeta)\overline{\nu(w)}\left\{\frac{1}{(z-\zeta)^2(\zeta-\overline{\gamma w})^2(  z-\bar{w})^2}\right.\\&\hspace{5cm}\left.+\frac{1}{(z-\zeta)^2(\zeta-\bar{  w})^2( z-\overline{\gamma w})^2}\right\}\overline{\gamma'(w)}d^2\zeta d^2zd^2w.
\end{align}This formula clearly exhibits a symmetry in the role of $\beta$ and $\mu$, thus proving that
$$\pa_{\beta} \langle\mu, \nu\rangle_{\text{TZ},j}^{\text{cusp}}=\pa_{\mu} \langle\beta, \nu\rangle_{\text{TZ},j}^{\text{cusp}},$$
which means that the parabolic TZ metric is K\"ahler \cite{TZ_index_2}. Comparing the two formulas \eqref{eq3_3_5} and \eqref{eq3_3_6}, we conclude that the same computation shows that the elliptic TZ metric is also K\"ahler \cite{TZ_index_3}.

 \bigskip

\bigskip
\appendix


\section{The Resolvent Kernel}\label{resolvent}
In this section, we review some  facts about the resolvent kernels of the $n$-Laplacians (see for example \cite{Fay, Iwaniec}). We follow the convention in \cite{Iwaniec, Fischer, Teo_Sigma}. The normalization is  different from that of \cite{TZ_index_1, TZ_index_2, TZ_index_3}.

When $n$ is a nonnegative integer, $s$ is a complex number with $\text{Re}\,s>1-n$, the kernel $G_{n,s}(z,w)$ of the resolvent $(\Delta_n+s(s+2n-1))^{-1}$ of the $n$-Laplacian $\Delta_n$ on $X$ can be obtained by method of averaging images \cite{Iwaniec}:
\begin{align}\label{eq0903_3}
G_{n,s}(z,w)=\sum_{\gamma\in\Gamma}\mathcal{G}_{n,s}(\gamma z, w)\gamma'(z)^n, \hspace{1cm} z, w\in\mathbb{U}.
\end{align}Here $\mathcal{G}_{n,s}(z,w)$  is the kernel of the resolvent   $(\Delta_n+s(s+2n-1))^{-1}$  on the upper-half plane $\mathbb{U}$. It can be written as
\begin{equation}\label{eq0903_2}
\mathcal{G}_{n,s}(z,w)= \Psi_{n,s}(u(z,w))H_n(z,w),\end{equation} with  $u(z,w)$ being the point-pair invariant \eqref{eq1_24_1}, and
$$H_n(z,w)=\frac{(-4)^n}{(z-\bar{w})^{2n}}$$is up to a constant, equal to the kernel $\mathcal{K}_n(z,w)$ \eqref{eq2_17_2}.

Notice that $H_n(z,z)=\rho(z)^n$.
Since $H_n(\sigma z, \sigma w)\sigma'(z)^n\overline{\sigma'(w)}^n=H_n(z,w)$ for any $\sigma\in \text{PSL}\,(2,\mathbb{R})$, we find that
\begin{align*}
\mathcal{G}_{n,s}(\sigma z, \sigma w)\sigma'(z)^n\overline{\sigma'(w)}^n=\mathcal{G}_{n,s}(z,w)
\end{align*}for any $\sigma\in \text{PSL}\,(2,\mathbb{R})$.
In other words, $\mathcal{G}_{n,s}(z,w)$ is a point-pair invariant kernel on $\mathbb{H}$.

When $\text{Re}\,s>0$, $\Psi(u)=\Psi_{n,s}(u)$  is determined by the differential equation
 $$(\Delta_n+s(s+2n-1))\mathcal{G}_{n,s}(z,w)=0 \quad  \text{when} \;z\neq w,$$ and the condition that
 when $u\rightarrow 0^+$,
$$\Psi(u)\sim \frac{1}{4\pi} \log\frac{1}{u}.$$   
It is given by the explicit formula
\begin{align}\label{eq2_17_6}
  \Psi_{n,s}(u)=&\frac{1}{4\pi} \sum_{k=0}^{\infty}\frac{\Gamma(s+k)\Gamma(s+2n+k)}{k!\;\Gamma(2s+2n+k)}(u+1)^{-k-s}.  \end{align} One can check that when $n=0$, $s=2$, the kernel of $2(\Delta_0+2)^{-1}$ given by \eqref{eq3_3_1} is indeed equal to $2G_{0,2}(z,w)$, with $\mathcal{G}_{0,2}(z,w)$ given by \eqref{eq0903_2} and \eqref{eq2_17_6}.

When $s=0$, 
 \begin{align}\Psi_{n,0}(u)=\frac{1}{4\pi}\log\frac{1}{u},\label{eq2_24_1}\end{align} so that \begin{align*}
\Delta_n\mathcal{G}_{n,0}(z,w)=&-4\rho(z)^{n-1}\frac{\pa}{\pa z}\rho(z)^{-n}\frac{\pa}{\pa\bar{z}}\mathcal{G}_{n,0}(z,w)\\
=&\frac{1}{\pi}\rho(z)^{n-1}\frac{\pa}{\pa z} \frac{1}{\bar{z}-\bar{w}}\left(\frac{z-\bar{z}}{z-\bar{w}}\right)^{2n-1}\\
=&-\mathcal{K}_n(z,w),
\end{align*}where $\mathcal{K}_n(z,w)$ is the projection kernel \eqref{eq2_17_2}.

Another special case is the kernel $\mathcal{G}_{n-1,1}(z,w)$ when $n\geq 1$. By \eqref{eq2_17_6},
\begin{align*}
\Psi_{n-1,1}(u)
=&\frac{1}{4\pi}\sum_{k=0}^{\infty}\frac{ 1}{  2n+k-1}\frac{1}{(u+1)^{k+1}}.
\end{align*}It follows that\begin{align*}
\frac{\pa \Psi_{n-1,1}}{\pa u}=&-\frac{1}{4\pi}\sum_{k=0}^{\infty}\frac{ k+1}{  2n+k-1}\frac{1}{(u+1)^{k+2}}.
\end{align*}By straightforward computation, one finds that
\begin{align*}
&\rho(z)^{n-1}\frac{\pa}{\pa z}\rho(z)^{1-n}\mathcal{G}_{n-1,1}(z,w)\\
=&\Psi_{n-1,1}'(u)\frac{\pa u}{\pa z}\frac{(-4)^{n-1}}{(z-\bar{w})^{2n-2}}
 -(2n-2)\Psi_{n-1,1}(u)\frac{(-4)^{n-1}}{(z-\bar{w})^{2n-1}}+\frac{2n-2}{z-\bar{z}}\Psi_{n-1,1}(u)\frac{(-4)^{n-1}}{(z-\bar{w})^{2n-2}}\\
=&\frac{1}{4\pi}\frac{(-4)^{n-1}}{(z-\bar{w})^{2n-2}}\sum_{k=0}^{\infty}\frac{1}{ 2n+k-1}\frac{1}{(u+1)^{k+1}}
\left\{ -\frac{(k+1)}{(u+1)}\frac{\pa u}{\pa z}-\frac{2n-2}{z-\bar{w}}+\frac{2n-2}{z-\bar{z}}\right\}\\
=&\frac{1}{4\pi}\frac{(-4)^{n-1}}{(z-\bar{w})^{2n-2}}\sum_{k=0}^{\infty} \frac{1}{(u+1)^{k+1}}\frac{(\bar{z}-\bar{w})}{(z-\bar{w})(z-\bar{z})}\\
=&-\frac{1}{4\pi}\frac{(-4)^{n-1}}{(z-\bar{w})^{2n-1}}\frac{w-\bar{w}}{z-w}.
\end{align*}
It is straightforward to show that this is equal to $\di-\rho(w)^{-1}\frac{\pa}{\pa w}\mathcal{G}_{n, 0}(z,w)$. Namely,
\begin{align*}
-\rho(w)^{-1}\frac{\pa}{\pa w}\mathcal{G}_{n, 0}(z,w)
=&-\frac{1}{4\pi}\frac{w-\bar{w}}{z-w} \frac{ (-4)^{n-1}}{  (z-\bar{w})^{2n-1}}
= \rho(z)^{n-1}\frac{\pa}{\pa z}\rho(z)^{1-n}\mathcal{G}_{n-1,1}(z,w).\end{align*}
It follows that
\begin{equation}\label{eq2_17_7}\begin{split}
-\frac{\pa}{\pa w}\rho(w)^{-1}\frac{\pa}{\pa w}\mathcal{G}_{n, 0}(z,w)
=&-\frac{1}{4\pi}\frac{1}{(z-w)^2} \frac{ (-4)^{n-1}}{  (z-\bar{w})^{2n-2}}\\=&\frac{\pa}{\pa w} \rho(z)^{n-1}\frac{\pa}{\pa z}\rho(z)^{1-n}\mathcal{G}_{n-1,1}(z,w).
\end{split}\end{equation}
When $n=1$, \eqref{eq2_17_7} gives 
\begin{align*}
\frac{\pa}{\pa w}\rho(w)^{-1}\frac{\pa}{\pa w}\mathcal{G}_{1, 0}(z,w)=-\frac{\pa}{\pa w}  \frac{\pa}{\pa z} \mathcal{G}_{0,1}(z,w)
=&\frac{1}{4\pi}\frac{1}{(z-w)^2}.
\end{align*}In this case, one cannot use the method of averaging images \eqref{eq0903_3} to construct the kernel $\di-\pa_w\rho(w)^{-1}\pa_wG_{1, 0}(z,w)$ or 
$\di\pa_w\pa_zG_{0,1}(z,w)$ on $X$, since the Poincar$\acute{\text{e}}$ series
does not converge absolutely \cite{Patterson}.

Recall that $\Delta_0$ is a positive semi-definite operator. Therefore,  $(\Delta_0+s(s-1))$ is invertible when $\text{Re}\,s>1$. If $f$ is a square-integrable function on $X$, and $$\left(\Delta_0+s(s-1)\right)f(z)=g(z),$$ then the kernel $G_{0,s}(z,w)$ is defined so that
\begin{align*}
\int_X G_{0,s}(z,w)g(w)\rho(w)d^2w=f(z).
\end{align*} 
If $f(z)$ is the constant function that is equal to 1, $g(z)=s(s-1)$. This implies that
\begin{align}
\int_XG_{0,s}(z,w)\rho(w)d^2w=\frac{1}{s(s-1)}.
\end{align}

The null-space $W$ of the Laplacian operator $\Delta_0$ is the one-dimensional space consists of constant functons. For any square-integrable function $f:X\rightarrow\mathbb{C}$, its projection to the null-space $W$ is the constant 
$$f_0=\frac{\langle f, 1\rangle}{\langle 1, 1\rangle}  =\frac{1}{|X|}\int_X f(z)\rho(z)d^2z.$$
Hence, the projection of $f$ onto the orthogonal complement $W^{\perp}$ is $f-f_0$. 
If
$\Delta_0f =g$, then  the operator $\Delta_0^{-1}$ must map $g$ to the projection of $f$ to $W^{\perp}$, the orthogonal complement of $W$. In other words, we must have
\begin{align*}
\int_X G_{0,1}(z,w)g(w)d^2 w=f(z)-f_0.
\end{align*}

Given $f:X\rightarrow \mathbb{C}$ a square-integrable functon, let
$g_s:X\rightarrow\mathbb{C}$ be the function
$$g_s(z)=(\Delta_0+s(s-1))f(z).$$ Then
$$g_s(z)-g_{s'}(z)=\left(s(s-1)-s'(s'-1)\right)f(z).$$
From this, we see that $g_s(z)$ is an analytic function of $s$.  
Moreover, 
\begin{align*}
\int_X g_s(z)\rho(z)d^2z=&\int_X \left(-4\rho(z)^{-1}\frac{\pa^2}{\pa z\pa\bar{z}}f(z)+s(s-1)f(z)\right)\rho(z)d^2z\\
=&s(s-1)\langle f, 1\rangle.
\end{align*}Therefore,
\begin{align*}
& \int_X \left(G_{0,s}(z,w)-\frac{1}{|X|}\frac{1}{s(s-1)}\right)g_s(w)\rho(w)d^2w\\=& f(z)-\frac{1}{|X|}\frac{1}{s(s-1)}\int_Xg_s(w)\rho(w)d^2w\\
=&f(z)-\frac{\langle f, 1\rangle}{\langle 1, 1\rangle}\\
=&f(z)-f_0
\end{align*}

Taking $s\rightarrow 1^+$, we find that
\begin{align*}
 \int_X \lim_{s\rightarrow 1^+}\left(G_{0,s}(z,w)-\frac{1}{|X|}\frac{1}{s(s-1)}\right)g_1(w)\rho(w)d^2w=f(z)-f_0.
\end{align*}By the analyticity of $g_s(w)$ as well as the characterization property of $G_{0,1}(z,w)$, we find that when $z\neq w$ on $X$, then 
\begin{align*}
G_{0,s}(z,w)=\frac{1}{|X|}\frac{1}{s(s-1)}+G_{0,1}(z,w)+O(s-1), \hspace{1cm}\text{as} \; s\rightarrow 1^+.
\end{align*}Equivalently, when $z\neq w$,
\begin{align*}
G_{0,1}(z,w)=\lim_{s\rightarrow 1^+}\left\{G_{0,s}(z,w)-\frac{1}{|X|}\frac{1}{s(s-1)}\right\}.
\end{align*}It follows that when $z\neq w$, the limit
\begin{align*}
\lim_{s\rightarrow 1^+}\di\frac{\pa}{\pa w}  \frac{\pa}{\pa z} G_{0,s}(z,w)
\end{align*}exists and is equal to $\di\pa_w\pa_zG_{0,1}(z,w)$.  
This also implies that $\pa_w\pa_z G_{0,1}(z,w)$ is symmetric in $z$ and $w$, and it has a double pole along the diagonal of $X\times X$. Moreover, since
\begin{align*}
-4\rho(z)^{-1}\frac{\pa}{\pa \bar{z}}\left(\frac{\pa}{\pa w}  \frac{\pa}{\pa z} G_{0,1}(z,w)\right)=&\lim_{s\rightarrow 1^+}-4\rho(z)^{-1}\frac{\pa}{\pa \bar{z}}\left(\frac{\pa}{\pa w}  \frac{\pa}{\pa z} G_{0,s}(z,w)\right)\\
=&\lim_{s\rightarrow 1^+}\frac{\pa}{\pa w}\left(\Delta_{0, z}G_{0,s}(z,w)\right)\\
=&-\lim_{s\rightarrow 1^+}s(s-1)\frac{\pa}{\pa w}G_{0,s}(z,w)\\
=&0
\end{align*}
 when $z\neq w$, we find that $\pa_z\pa_wG(z,w)$ is holomorphic in $z$ and $w$ away from the diagonal.
\smallskip
\section{An Interesting Formula}\label{interesting_formula}
In this section, we prove  an alternative expression for $\pa_{\mu}\pa_{\bar{\nu}}\log\det N_n $ that might be interesting of its own right.
\begin{theorem}Let $n\geq 1$. 
Given $\mu, \nu\in \Omega_{-1,1}(X)$,  $\pa_{\mu}\pa_{\bar{\nu}}\log\det N_n $ can also be expressed as
 
\begin{align}\label{eq1_29_10}
\pa_{\mu}\pa_{\bar{\nu}}\log\det N_n 
=&-\int_X\int_X(L_{\mu}L_{\bar{\nu}}K_n)(z,w)K_n(w,z)\rho(w)^{1-n}\rho(z)^{1-n}d^2wd^2z.
\end{align}
\end{theorem}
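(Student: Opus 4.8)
The plan is to mimic the computation of Theorem \ref{thm1_29_1}, but this time organize the second $\varepsilon$-derivative so that both derivatives act on the kernel $K_n$ rather than being split between $K_n$ and $\rho$ and the basis. Concretely, starting again from the pulled-back expression
$$
(N^{\varepsilon\mu})_{kl}=\int_X (f^{\varepsilon\mu})^*\phi_k^{\varepsilon\mu}\,\overline{(f^{\varepsilon\mu})^*\phi_l^{\varepsilon\mu}}\,\big((f^{\varepsilon\mu})^*\rho^{\varepsilon\mu}\big)^{1-n}(1-|\varepsilon\mu|^2)\,d^2z,
$$
I would substitute the reproducing identity $\phi_k^{\varepsilon\mu}=\mathscr{P}_n^{\varepsilon\mu}\phi_k^{\varepsilon\mu}$ and use the projection kernel to write everything in terms of $K_n^{\varepsilon\mu}$ and the fixed basis. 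The key algebraic input is the identity \eqref{eq1_28_1}, $K=\int_X K(z,\zeta)K(\zeta,w)\rho(\zeta)^{1-n}d^2\zeta$, together with its first and second variations; differentiating twice and using that $(L_\mu K)(z,w)$ is anti-holomorphic in $w$ (Proposition \ref{prop1_26_2}) and, by the conjugate statement, that $(L_{\bar\nu}K)(z,w)$ is holomorphic in $z$, one gets projection relations exactly like \eqref{eq1_29_1}, \eqref{eq1_29_2} that collapse the various fourfold integrals.

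The main steps, in order, would be: (1) re-derive Proposition \ref{prop1_26_1} but keep the right-hand side grouped as $\mathrm{Tr}\big((N^{\varepsilon\mu})^{-1}\partial_{\bar\varepsilon}\partial_\varepsilon N^{\varepsilon\mu}\big)-\mathrm{Tr}\big((N^{\varepsilon\mu})^{-1}\partial_{\bar\varepsilon}N^{\varepsilon\mu}(N^{\varepsilon\mu})^{-1}\partial_\varepsilon N^{\varepsilon\mu}\big)$; (2) compute $\partial_{\bar\varepsilon}\partial_\varepsilon(N^{\varepsilon\mu})_{kl}|_0$ using Proposition \ref{varymetric} ($L_\mu\rho=0$, $L_\mu L_{\bar\nu}\rho=2\rho(\Delta_0+2)^{-1}(\mu\bar\nu)$) and the holomorphic variation of the $\phi_j$, exactly as in Theorem \ref{thm1_29_1}; (3) rewrite all occurrences of $L_\mu\phi_k$, $L_{\bar\nu}\phi_l$, and the $(\Delta_0+2)^{-1}$ and $|\mu|^2$ terms using the second variation of the reproducing identity $\phi_k=\int_X K(\cdot,w)\phi_k(w)\rho(w)^{1-n}d^2w$ — differentiating this twice produces precisely the terms $L_\mu L_{\bar\nu}\phi_k$, $(L_\mu K)$-against-$(L_{\bar\nu}\phi)$, $(L_{\bar\nu}K)$-against-$(L_\mu\phi)$, and $(L_\mu L_{\bar\nu}K)$-against-$\phi_k$, plus the metric-variation contributions, and these are arranged so that after contracting with $(N^{-1})_{lk}$ and using $\sum (N^{-1})_{kl}\phi_l(z)\overline{\phi_k(w)}=K_n(z,w)$ (Lemma \ref{lemma1_27_1}) everything telescopes; (4) use the projection/annihilation identities to kill the cross terms, leaving only $-\int_X\int_X (L_\mu L_{\bar\nu}K_n)(z,w)K_n(w,z)\rho^{1-n}\rho^{1-n}$.

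An alternative, and probably cleaner, route is to start directly from \eqref{eq1_29_5} of Theorem \ref{thm1_29_1} and show the right-hand side equals the claimed expression. One writes $L_\mu L_{\bar\nu}K_n$ by applying $L_{\bar\nu}$ to \eqref{eq1_29_8}, i.e. to $(L_\mu K)(z,w)=L_\mu\phi_k$-type relations; more systematically, apply $L_\mu L_{\bar\nu}$ to \eqref{eq1_28_1}. By the Leibniz rule this gives $L_\mu L_{\bar\nu}K = (L_\mu L_{\bar\nu}K)\ast K + K\ast(L_\mu L_{\bar\nu}K) + (L_\mu K)\ast(L_{\bar\nu}K) + (L_{\bar\nu}K)\ast(L_\mu K) + (\text{metric terms from }L_\mu L_{\bar\nu}\rho^{1-n})$, where $\ast$ denotes the convolution $\int_X(\,\cdot\,)(z,\zeta)(\,\cdot\,)(\zeta,w)\rho(\zeta)^{1-n}d^2\zeta$. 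Integrating this against $K_n(w,z)\rho^{1-n}\rho^{1-n}$ over $X\times X$, the holomorphy/anti-holomorphy of $L_\mu K$ and $L_{\bar\nu}K$ (Proposition \ref{prop1_26_2} and its conjugate) force $(L_\mu L_{\bar\nu}K)\ast K$ and $K\ast(L_\mu L_{\bar\nu}K)$ to integrate to the trace of $L_\mu L_{\bar\nu}K$ and the $(L_\mu K)\ast(L_{\bar\nu}K)$ cross term to reproduce the first term of \eqref{eq1_29_5}; the metric terms must match the second and third terms of \eqref{eq1_29_5} via the $(\Delta_0+2)^{-1}$ identity.

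The hard part will be bookkeeping the boundary/convergence issues and, more substantively, correctly identifying the contribution of $L_\mu L_{\bar\nu}(\rho^{1-n})$ inside the double variation of the reproducing identity — this is where the $(1-n)\int K_n G\mu\bar\nu$ and $-\int K_n\mu\bar\nu$ terms of \eqref{eq1_29_5} come from, and one must check they are exactly cancelled/reproduced by the analogous terms appearing when one expands $L_\mu L_{\bar\nu}K_n$ via \eqref{eq1_28_1}. The cleanest way to control this is to note that $K_n$ depends on moduli only through $\rho$ (the surface, hence $\Gamma$, and the metric density) in the pulled-back picture, so $L_\mu L_{\bar\nu}K_n$ inherits precisely the metric-variation terms, and then invoke Proposition \ref{varymetric} once more. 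I expect no genuinely new estimate is needed — the statement is a formal consequence of Lemma \ref{lemma1_27_1}, Proposition \ref{prop1_26_2}, Proposition \ref{varymetric}, and the reproducing identity — but the algebra of tracking which of the four ``convolution'' terms survives after using anti-holomorphy is the step most prone to sign and index errors.
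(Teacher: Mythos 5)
Your second, ``cleaner'' route is exactly the paper's proof: the paper applies $\pa^2/\pa\varepsilon\pa\bar{\varepsilon}$ to the reproducing identity \eqref{eq1_28_1}, obtains the Leibniz expansion with the two $(L_{\mu}L_{\bar{\nu}}K)$-convolution terms, the two cross terms, and the metric-variation terms from $\rho^{1-n}$, then integrates against $K_n(w,z)\rho(w)^{1-n}\rho(z)^{1-n}$ and uses \eqref{eq1_29_1}--\eqref{eq1_29_2} to kill one cross term and reduce the other to the first term of \eqref{eq1_29_5}, so that the result follows by comparison with Theorem \ref{thm1_29_1}. Your proposal is correct, and the only caveat is the one you already flag yourself: of the two cross terms, exactly one vanishes and the other survives, which is where the bookkeeping must be done carefully.
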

\begin{proof}
Apply $\di\frac{\pa^2}{\pa\varepsilon\pa\bar{\varepsilon}}$ to \eqref{eq1_28_1} and set $\varepsilon=0$, we have
\begin{align*}
 &(L_{\mu}L_{\bar{\mu}}K_n)(z,w)\\=&\int_{X}(L_{\mu}L_{\bar{\mu}}K_n)(z,\zeta)K_n(\zeta,w)\rho(\zeta)^{1-n}d^2\zeta+\int_{X}K_n(z,\zeta)(L_{\mu}L_{\bar{\mu}}K_n)(\zeta,w)\rho(\zeta)^{1-n}d^2\zeta\\
&+\int_{X}(L_{\mu}K_n)(z,\zeta)(L_{\bar{\mu}}K_n)(\zeta,w)\rho(\zeta)^{1-n}d^2\zeta+\int_{X}(L_{\bar{\mu}}K_n)(z,\zeta)(L_{\mu}K_n)(\zeta,w)\rho(\zeta)^{1-n}d^2\zeta\\
&+(1-n)\int_{X}K_n(z,\zeta)K_n(\zeta,w)\rho(\zeta)^{1-n}\int_{X}G(\zeta,\eta)|\mu(\eta)|^2\rho(\eta)d^2\eta d^2\zeta\\&
-\int_{X}K_n(z,\zeta)K_n(\zeta,w)\rho(\zeta)^{1-n}|\mu(\zeta)|^2d^2\zeta.
\end{align*}Now multiplying by $K_n(w,z)\rho(z)^{1-z}\rho(w)^{1-n}$ and integrate over $w$ and $z$, we have
\begin{align*}
 &\int_{X}\int_{X}(L_{\mu}L_{\bar{\mu}}K_n)(z,w)K_n(w,z)\rho(w)^{1-n}\rho(z)^{1-n}d^2wd^2z\\=&\int_{X}\int_{X}(L_{\mu}L_{\bar{\mu}}K_n)(z,\zeta)K_n(\zeta,z)\rho(\zeta)^{1-n} \rho(z)^{1-n}d^2\zeta d^2z\\&+\int_{X}\int_{X}K_n(w,\zeta)(L_{\mu}L_{\bar{\mu}}K_n)(\zeta,w)\rho(\zeta)^{1-n}\rho(w)^{1-n}d^2wd^2\zeta\\
&+\int_{X}\int_{X}\int_{X}(L_{\mu}K_n)(z,\zeta)(L_{\bar{\mu}}K_n)(\zeta,w)K_n(w,z)\rho(z)^{1-z}\rho(w)^{1-n}\rho(\zeta)^{1-n}d^2\zeta d^2z d^2w\\&+\int_{X}\int_{X}\int_{X}(L_{\bar{\mu}}K_n)(z,\zeta)(L_{\mu}K_n)(\zeta,w)K_n(w,z)\rho(z)^{1-z}\rho(w)^{1-n}\rho(\zeta)^{1-n}d^2\zeta d^2z d^2w\\
&+(1-n)\int_{X} K_n(\zeta,\zeta)\rho(\zeta)^{1-n}\int_{X}G(\zeta,\eta)|\mu(\eta)|^2\rho(\eta)d^2\eta d^2\zeta
-\int_{X}K_n( \zeta, \zeta)\rho(\zeta)^{1-n}|\mu(\zeta)|^2d^2\zeta.
\end{align*}By \eqref{eq1_29_1},
\begin{align*}
\int_{X}\int_{X}\int_{X}(L_{\mu}K_n)(z,\zeta)(L_{\bar{\mu}}K_n)(\zeta,w)K_n(w,z)\rho(z)^{1-z}\rho(w)^{1-n}\rho(\zeta)^{1-n}d^2\zeta d^2z d^2w=0.
\end{align*}
By \eqref{eq1_29_2},
\begin{align*}
&\int_{X}\int_{X}\int_{X}(L_{\bar{\mu}}K_n)(z,\zeta)(L_{\mu}K_n)(\zeta,w)K_n(w,z)\rho(z)^{1-z}\rho(w)^{1-n}\rho(\zeta)^{1-n}d^2\zeta d^2z d^2w\\
=&\int_{X}\int_{X} (L_{\bar{\mu}}K_n)(z,\zeta)(L_{\mu}K_n)(\zeta,z) \rho(z)^{1-z} \rho(\zeta)^{1-n}d^2\zeta d^2z.
\end{align*}
 Therefore,
\begin{align*}
&\int_X\int_X(L_{\mu}K_n)(z,w)(L_{\bar{\nu}}K_n)(w,z)\rho(w)^{1-n}\rho(z)^{1-n}d^2wd^2z\\
&+(1-n)\int_{X} K_n(z, z)\rho(z)^{1-n}\int_{X}G(z,w) \mu(w)\overline{\nu(w)}\rho(w)d^2w d^2z
\\&-\int_{X}K_n( z, z)\rho(z)^{1-n}\mu(z)\overline{\nu(z)}d^2z\\=&-\int_{X}\int_{X}(L_{\mu}L_{\bar{\mu}}K_n)(z,w)K_n(w,z)\rho(w)^{1-n}\rho(z)^{1-n}d^2wd^2z.
\end{align*}By Theorem \ref{thm1_29_1}, this proves \eqref{eq1_29_10}.

\end{proof}

\bibliographystyle{amsalpha}
\bibliography{ref}

\providecommand{\bysame}{\leavevmode\hbox to3em{\hrulefill}\thinspace}
\providecommand{\MR}{\relax\ifhmode\unskip\space\fi MR }
\providecommand{\MRhref}[2]{%
  \href{http://www.ams.org/mathscinet-getitem?mr=#1}{#2}
}
\providecommand{\href}[2]{#2}
\begin{thebibliography}{VKF73}

\bibitem[Ahl61]{Ahlfors_remarks}
Lars~V. Ahlfors, \emph{Some remarks on {T}eichm\"{u}ller's space of {R}iemann
  surfaces}, Ann. of Math. (2) \textbf{74} (1961), 171--191. \MR{204641}

\bibitem[Ahl62]{Ahlfors_curvature}
\bysame, \emph{Curvature properties of {T}eichm\"{u}ller's space}, J. Analyse
  Math. \textbf{9} (1961/62), 161--176. \MR{136730}

\bibitem[Ber66]{Bers_integral}
Lipman Bers, \emph{A non-standard integral equation with applications to
  quasiconformal mappings}, Acta Math. \textbf{116} (1966), 113--134.
  \MR{192046}

\bibitem[BK86]{BK}
A.~A. Belavin and V.~G. Knizhnik, \emph{Complex geometry and the theory of
  quantum strings}, Zh. \`Eksper. Teoret. Fiz. \textbf{91} (1986), no.~2,
  364--390. \MR{888370}

\bibitem[Efr88]{Efrat}
Isaac Efrat, \emph{Determinants of {L}aplacians on surfaces of finite volume},
  Comm. Math. Phys. \textbf{119} (1988), no.~3, 443--451. \MR{969211}

\bibitem[Fay73]{Fay_Theta}
John~D. Fay, \emph{Theta functions on {R}iemann surfaces}, Lecture Notes in
  Mathematics, Vol. 352, Springer-Verlag, Berlin-New York, 1973. \MR{0335789}

\bibitem[Fay77]{Fay}
\bysame, \emph{Fourier coefficients of the resolvent for a {F}uchsian group},
  J. Reine Angew. Math. \textbf{293(294)} (1977), 143--203. \MR{506038}

\bibitem[Fis87]{Fischer}
J\"{u}rgen Fischer, \emph{An approach to the {S}elberg trace formula via the
  {S}elberg zeta-function}, Lecture Notes in Mathematics, vol. 1253,
  Springer-Verlag, Berlin, 1987. \MR{892317}

\bibitem[FK92]{Kra}
H.~M. Farkas and I.~Kra, \emph{Riemann surfaces}, second ed., Graduate Texts in
  Mathematics, vol.~71, Springer-Verlag, New York, 1992. \MR{1139765}

\bibitem[Gar75]{Gardiner}
Frederick~P. Gardiner, \emph{Schiffer's interior variation and quasiconformal
  mapping}, Duke Math. J. \textbf{42} (1975), 371--380. \MR{382637}

\bibitem[Hej83]{Hejhal_2}
Dennis~A. Hejhal, \emph{The {S}elberg trace formula for {${\rm PSL}(2,\,{\bf
  R})$}. {V}ol. 2}, Lecture Notes in Mathematics, vol. 1001, Springer-Verlag,
  Berlin, 1983. \MR{711197}

\bibitem[Iwa02]{Iwaniec}
Henryk Iwaniec, \emph{Spectral methods of automorphic forms}, second ed.,
  Graduate Studies in Mathematics, vol.~53, American Mathematical Society,
  Providence, RI; Revista Matem\'{a}tica Iberoamericana, Madrid, 2002.
  \MR{1942691}

\bibitem[Koy91]{Koyama_3}
Shin-ya Koyama, \emph{Determinant expression of {S}elberg zeta functions.
  {III}}, Proc. Amer. Math. Soc. \textbf{113} (1991), no.~2, 303--311.
  \MR{1062391}

\bibitem[Leh87]{Lehto}
Olli Lehto, \emph{Univalent functions and {T}eichm\"{u}ller spaces}, Graduate
  Texts in Mathematics, vol. 109, Springer-Verlag, New York, 1987. \MR{867407}

\bibitem[MT06]{Mcintyre}
A.~McIntyre and L.~A. Takhtajan, \emph{Holomorphic factorization of
  determinants of {L}aplacians on {R}iemann surfaces and a higher genus
  generalization of {K}ronecker's first limit formula}, Geom. Funct. Anal.
  \textbf{16} (2006), no.~6, 1291--1323. \MR{2276541}

\bibitem[MT08]{Mcintyre_Teo}
Andrew McIntyre and Lee-Peng Teo, \emph{Holomorphic factorization of
  determinants of {L}aplacians using quasi-{F}uchsian uniformization}, Lett.
  Math. Phys. \textbf{83} (2008), no.~1, 41--58. \MR{2377945}

\bibitem[Pat76]{Patterson}
S.~J. Patterson, \emph{The exponent of convergence of {P}oincar\'{e} series},
  Monatsh. Math. \textbf{82} (1976), no.~4, 297--315. \MR{425114}

\bibitem[Rau65]{Rauch}
H.~E. Rauch, \emph{A transcendental view of the space of algebraic {R}iemann
  surfaces}, Bull. Amer. Math. Soc. \textbf{71} (1965), 1--39. \MR{213543}

\bibitem[Spr57]{Springer}
George Springer, \emph{Introduction to {R}iemann surfaces}, Addison-Wesley
  Publishing Co., Inc., Reading, Mass., 1957. \MR{0092855}

\bibitem[SS54]{Schiffer}
Menahem Schiffer and Donald~C. Spencer, \emph{Functionals of finite {R}iemann
  surfaces}, Princeton University Press, Princeton, N. J., 1954. \MR{0065652}

\bibitem[Teo20]{Teo_LMP_2020}
Lee-Peng Teo, \emph{Ruelle zeta function for cofinite hyperbolic {R}iemann
  surfaces with ramification points}, Lett. Math. Phys. \textbf{110} (2020),
  no.~1, 61--82. \MR{4047682}

\bibitem[Teo21]{Teo_Sigma}
\bysame, \emph{Resolvent trace formula and determinants of {$n$} {L}aplacians
  on orbifold {R}iemann surfaces}, SIGMA Symmetry Integrability Geom. Methods
  Appl. \textbf{17} (2021), Paper No. 083, 40. \MR{4310915}

\bibitem[TT06]{Memoir}
Leon~A. Takhtajan and Lee-Peng Teo, \emph{Weil-{P}etersson metric on the
  universal {T}eichm\"{u}ller space}, Mem. Amer. Math. Soc. \textbf{183}
  (2006), no.~861, viii+119. \MR{2251887}

\bibitem[TZ87]{TZ_index_1}
L.~A. Takhtajan and P.~G. Zograf, \emph{A local index theorem for families of
  {$\overline\partial$}-operators on {R}iemann surfaces}, Russ. Math. Surv.
  \textbf{42} (1987), no.~6, 169--190; original Russian version in Uspekhi Mat.
  Nauk \textbf{42} (1987), no.6, 133--150. \MR{0933998}

\bibitem[TZ91]{TZ_index_2}
\bysame, \emph{A local index theorem for families of
  {$\overline\partial$}-operators on punctured {R}iemann surfaces and a new
  {K}\"{a}hler metric on their moduli spaces}, Comm. Math. Phys. \textbf{137}
  (1991), no.~2, 399--426. \MR{1101693}

\bibitem[TZ19]{TZ_index_3}
Leon~A. Takhtajan and Peter Zograf, \emph{Local index theorem for orbifold
  {R}iemann surfaces}, Lett. Math. Phys. \textbf{109} (2019), no.~5,
  1119--1143. \MR{3946487}

\bibitem[Ven82]{Venkov}
A.~B. Venkov, \emph{Spectral theory of automorphic functions}, Proc. Steklov
  Inst. Math. (1982), no.~4(153), ix+163 pp. (1983), A translation of Trudy
  Mat. Inst. Steklov. {{\bf{1}}53} (1981). \MR{692019}

\bibitem[VKF73]{Faddeev}
A.~B. Venkov, V.~L. Kalinin, and L.~D. Faddeev, \emph{A nonarithmetic
  derivation of the {S}elberg trace formula}, Zap. Nau\v{c}n. Sem. Leningrad.
  Otdel. Mat. Inst. Steklov. (LOMI) \textbf{37} (1973), 5--42, Differential
  geometry, Lie groups and mechanics. \MR{0506043}

\bibitem[Wol86]{Wolpert_ChernForm}
Scott~A. Wolpert, \emph{Chern forms and the {R}iemann tensor for the moduli
  space of curves}, Invent. Math. \textbf{85} (1986), no.~1, 119--145.
  \MR{842050}

\end{thebibliography}
\end{document}